\def\chaptermark#1{}
\def\chapter{%
  \if@openright\cleardoublepage\else\clearpage\fi
  \thispagestyle{plain}\global\@topnum\z@
  \@afterindenttrue \secdef\@chapter\@schapter}
\def\@chapter[#1]#2{\refstepcounter{chapter}%
  \ifnum\c@secnumdepth<\z@ \let\@secnumber\@empty
  \else \let\@secnumber\thechapter \fi
  \typeout{\chaptername\space\@secnumber}%
  \def\@toclevel{0}%
  \ifx\chaptername\appendixname \@tocwriteb\tocappendix{chapter}{#2}%
  \else \@tocwriteb\tocchapter{chapter}{#2}\fi
  \chaptermark{#1}%
  \addtocontents{lof}{\protect\addvspace{10\p@}}%
  \addtocontents{lot}{\protect\addvspace{10\p@}}%
  \@makechapterhead{#2}\@afterheading}
\def\@schapter#1{\typeout{#1}%
  \let\@secnumber\@empty
  \def\@toclevel{0}%
  \ifx\chaptername\appendixname \@tocwriteb\tocappendix{chapter}{#1}%
  \else \@tocwriteb\tocchapter{chapter}{#1}\fi
  \chaptermark{#1}%
  \addtocontents{lof}{\protect\addvspace{10\p@}}%
  \addtocontents{lot}{\protect\addvspace{10\p@}}%
  \@makeschapterhead{#1}\@afterheading}
\newcommand\chaptername{Chapter}
\def\@makechapterhead#1{\global\topskip 7.5pc\relax
  \begingroup
  \fontsize{\@xivpt}{18}\bfseries\centering
    \ifnum\c@secnumdepth>\m@ne
      \leavevmode \hskip-\leftskip
      \rlap{\vbox to\z@{\vss
          \centerline{\normalsize\mdseries
              \uppercase\@xp{\chaptername}\enspace\thechapter}
          \vskip 3pc}}\hskip\leftskip\fi
     #1\par \endgroup
  \skip@34\p@ \advance\skip@-\normalbaselineskip
  \vskip\skip@ }
\def\@makeschapterhead#1{\global\topskip 7.5pc\relax
  \begingroup
  \fontsize{\@xivpt}{18}\bfseries\centering
  #1\par \endgroup
  \skip@34\p@ \advance\skip@-\normalbaselineskip
  \vskip\skip@ }
\def\appendix{\par
  \c@chapter\z@ \c@section\z@
  \let\chaptername\appendixname
  \def\thechapter{\@Alph\c@chapter}}
\newcounter{chapter}
\newif\if@openright
\numberwithin{equation}{section}
\newcommand{\F}{\mathcal{F}}
\newcommand{\nchi}{{\raise.3ex\hbox{\(\chi\)}}}
\newcommand{\N}{\mathbb{N}}
\newcommand{\R}{\mathbb{R}}
\renewcommand{\d}{{\mathrm d}}
\newcommand{\mm}{\mathfrak{m}}
\newcommand{\lims}{\varlimsup}
\newcommand{\eps}{\varepsilon}
\newcommand{\spt}{\mathrm{spt}\,}
\newcommand{\fr}{\penalty-20\null\hfill\(\blacksquare\)}
\newtheorem{theorem}{Theorem}[section]
\newtheorem{corollary}[theorem]{Corollary}
\newtheorem{lemma}[theorem]{Lemma}
\newtheorem{proposition}[theorem]{Proposition}
\newtheorem{definition}[theorem]{Definition}
\newtheorem{example}[theorem]{Example}
\newtheorem{remark}[theorem]{Remark}
\newtheorem{theoremAPP}{Theorem}[chapter]
\newtheorem{corollaryAPP}[theoremAPP]{Corollary}
\newtheorem{lemmaAPP}[theoremAPP]{Lemma}
\newtheorem{propositionAPP}[theoremAPP]{Proposition}
\newtheorem{definitionAPP}[theoremAPP]{Definition}
\newtheorem{remarkAPP}[theoremAPP]{Remark}
\title{Ultralimits of pointed metric measure spaces}
\author{Enrico Pasqualetto}
\address[Enrico Pasqualetto]{Scuola Normale Superiore, Piazza dei Cavalieri 7,
56126 Pisa, Italy}
\email{enrico.pasqualetto@sns.it}
\author{Timo Schultz}
\address[Timo Schultz]{Fakultät für Mathematik, Universität Bielefeld,
Postfach 100131, 33501 Bielefeld, Germany.}
\email{tschultz@math.uni-bielefeld.de}
\begin{document}
\date{\today}
\renewcommand{\thechapter}{\Roman{chapter}}
\keywords{Ultralimit, metric measure space, Gromov--Hausdorff convergence}
\subjclass[2020]{53C23, 51F30, 54A20, 30L05}
\begin{abstract}
The aim of this paper is to study ultralimits of pointed metric measure
spaces (possibly unbounded and having infinite mass). We prove that
ultralimits exist under mild assumptions and are consistent with the
pointed measured Gromov--Hausdorff convergence. We also introduce
a weaker variant of pointed measured Gromov--Hausdorff convergence,
for which we can prove a version of Gromov's compactness theorem by
using the ultralimit machinery. This compactness result shows that,
a posteriori, our newly introduced notion of convergence is equivalent
to the pointed measured Gromov one. Another byproduct of our
ultralimit construction is the identification of direct and inverse
limits in the category of pointed metric measure spaces.
\end{abstract}
\maketitle
\tableofcontents
\chapter{Introduction}

It is well-known that in the uniformly doubling framework, the Gromov--Hausdorff limit of a sequence of metric spaces is isometric to the ultralimit of that sequence. Such a relation between Gromov--Hausdorff convergence and ultralimit construction still exists in the setting of pointed metric spaces. Quite recently in \cite{Elek}, Elek generalised -- using the so-called Loeb measure construction --  ultralimits to the context of bounded metric measure spaces with finite measure, and established an analogous connection between ultralimits and the \(\underline\square\)-convergence \`a la Gromov. 

In this paper we generalise the ultralimit construction to the setting where both the metric and the measure can be unbounded, and hence we consider pointed metric measure spaces. We also give a fairly complete picture of its connection to related notions of convergence. More precisely, we prove that, under mild assumptions, pointed measured Gromov--Hausdorff limits are isomorphic to the (support of) the ultralimit. Moreover, we prove that the isomorphism holds also for limits in the pointed measured Gromov topology introduced in \cite{Gigli-Mondino-Savare}. In doing so, we study a weaker variant of pointed measured Gromov--Hausdorff convergence and prove a version of Gromov's compactness theorem in this setting. It turns out that in the end this notion of convergence is in fact equivalent to the pointed measured Gromov convergence.

With the aid of ultralimits of metric measure spaces, we
study direct and inverse limits of sequences of pointed metric
measure spaces, where the metric spaces under consideration are complete, separable, and equipped with boundedly finite Borel
measures. More specifically, we will characterise when these
limits exist in such category and explain how they look like.
\section{General overview}
The aim of this paper is to study ultralimits of metric measure spaces,
as well as their relation with other important kinds of convergence,
such as the measured Gromov--Hausdorff one and its variants.
Before passing to the contents of the paper, we describe the motivations
behind the different notions of convergence for spaces.
\subsection{The role of metric measure spaces}
The notion of a `metric measure space' certainly plays a prominent role
in many fields of mathematics, one of the reasons being that it is
used to provide natural generalisations of manifolds, where all types of
singularities are allowed. At a purely metric level, important classes
of spaces carrying a rich geometric structure are studied. In this
regard, \emph{Alexandrov spaces} constitute a significant example: introduced
by A.\ D.\ Alexandrov in \cite{Ale51}, they are metric spaces where a
synthetic lower bound on the sectional curvature is imposed via a geodesic
triangles comparison. See \cite{BBI01} and the references therein.

Nevertheless, other relevant concepts of curvature (or, rather, of curvature
bounds) require the interplay between distance and `volume measure' to be
captured; in these situations, one needs to work in the category of metric
measure spaces. This is the case, for instance, of lower Ricci curvature
bounds: after the introduction of \emph{Ricci limit spaces} in the series of
pioneering papers \cite{Cheeger-Colding97I,Cheeger-Colding97II,
Cheeger-Colding97III} by Cheeger--Colding in the nineties,
the notion of a \emph{\(\sf CD\) space} (standing for
\(\sf C\)urvature-{\sf D}imension condition) has been proposed by
Lott--Villani \cite{Lott-Villani09} and Sturm \cite{Sturm06I,Sturm06II}.
The former are obtained by
approximation from sequences of Riemannian manifolds satisfying
uniform lower Ricci bounds, while the latter is based on a purely
intrinsic description of a lower Ricci curvature bound.
More precisely, the \(\sf CD\) condition is expressed in terms of
the convexity properties of suitable entropy functionals in the
Wasserstein space of probability measures having finite second-order
moment; in particular, no smooth structure is involved in the definition.
It is then particularly evident that the nonsmooth theory of lower Ricci
bounds must be formulated in the setting of metric measure spaces,
as the entropy functionals under consideration depend both on the
distance and the measure. Let us mention that the family of \(\sf CD\)
spaces -- differently from that of Ricci limits -- contains also
(non-Riemannian) Finsler manifolds. This observation led to the
investigation of the so-called \emph{\(\sf RCD\) condition}, which aims
at selecting those `infinitesimally Hilbertian' \(\sf CD\) spaces that resemble
a Riemannian manifold. The class of \(\sf RCD\) spaces has been
widely studied as well, starting from \cite{AmbrosioGigliSavare11-2,
AmbrosioGigliMondinoRajala12,Gigli12}.
\subsection{Measured Gromov--Hausdorff convergence}
A crucial concept in metric geometry is the \emph{Gromov--Hausdorff
distance}, whose introduction dates back to \cite{Edwards75,Gromov81}.
Roughly speaking, it is a distance on the `space of compact metric spaces'
that quantifies how far two metric spaces are from being isometric.
When dealing with non-compact metric spaces, it is often useful to fix some reference
points and to work with the \emph{pointed Gromov--Hausdorff convergence}.

What often makes a notion of convergence worthwhile is its guarantee of \emph{stability} and \emph{compactness} for a suitable class of objects. 
In the framework of Gromov--Hausdorff convergence, the former refers to the fact
that many classes of spaces -- interesting from a geometric
perspective -- are closed under Gromov--Hausdorff convergence; for
instance, Alexandrov spaces have this property, as observed
by Grove--Petersen \cite{GP91}. The latter means that every sequence of spaces
(where uniform bounds are imposed) subconverges to some limit
space; in this direction, the key result in the Alexandrov framework
was achieved by Burago--Gromov--Perel'man in \cite{BGP92}. For a thorough
account on Gromov--Hausdorff convergence, we refer to the monograph
\cite{BBI01}.

When a measure enters into play -- thus moving from metric spaces
to metric measure spaces -- one proper notion is that of
\emph{(pointed) measured Gromov--Hausdorff convergence}, introduced
by Fukaya in \cite{Fukaya87} to study the behaviour of the eigenvalues
of the Laplacian along a sequence of Riemannian manifolds.
The key idea behind the measured Gromov--Hausdorff convergence is to
couple the Gromov--Hausdorff distance with a suitable weak convergence
of the involved measures. On \(\sf CD\) and \(\sf RCD\) spaces, the
measured Gromov--Hausdorff convergence turned out to be extremely useful:
indeed, both stability and compactness results have been proven in
this setting. Amongst the other notions of convergence that are used within
the lower Ricci bounds theory, we mention the
\emph{\(\mathbb{D}\)-convergence} proposed by Sturm \cite{Sturm06I}
and the \emph{pointed measured Gromov convergence} by
Gigli--Mondino--Savar\'{e} \cite{Gigli-Mondino-Savare}.
Infinite-dimensional \(\sf CD\) spaces need not be locally compact
and may be endowed with infinite (\(\sigma\)-finite) reference measures,
thus it would be quite unnatural
to consider the pointed measured Gromov--Hausdorff distance on them;
the pointed Gromov convergence has been introduced exactly due to this reason.

Notice that while the approach in the above mentioned notions differ from the approach in Gromov's $\underline\square$-convergence, there is still a natural connection between them (see e.g.\ \cite{Sturm06I, Lohr} and cf.\ \cite{Gigli-Mondino-Savare, Elek}).
\subsection{About ultralimits}
Another important approach to convergence in metric geometry is via the \emph{ultralimit} construction
of a sequence of metric spaces, which aims at detecting the asymptotic
behaviour of finite configurations of points in the approximating spaces. 
Ultralimits generalise the concept of Gromov--Hausdorff convergence (as it
was shown, for instance, in \cite{Jansen17}) and rest on a given
\emph{non-principal ultrafilter} on the set of natural numbers.
On the one hand, non-principal ultrafilters constitute a powerful technical
tool, since they automatically provide a simultaneous choice of sublimits.
On the other hand, they cannot be explicitly described, as their existence
is equivalent to a weak form of the Axiom of Choice. We do not enter
into the details of this discussion here, but we refer the interested
reader to \cite{Gromov07,Roe03,BH99}. We point out that also a ultralimit of
measure spaces is available: it is the so-called \emph{Loeb measure space}
\cite{Cutland00}.

Consequently, a natural question arises: is it possible to build ultralimits
of metric measure spaces? An affirmative answer is given in the papers
\cite{Conley-Kechris-Tucker-Drob,Elek} for finite measures, but it seems
that in the existing literature the problem has not been explicitly
investigated for (possibly infinite) \(\sigma\)-finite measures yet.
Roughly, this is the plan we pursue in the present paper:
\begin{itemize}
\item[1)] We construct ultralimits of pointed metric measure spaces
(under mild assumptions on the measures under consideration), by suitably
adapting the strategy used in \cite{Conley-Kechris-Tucker-Drob,Elek}.
\item[2)] We characterise when the ultralimit is `well-behaved', in the
sense that its measure is actually defined on the Borel
\(\sigma\)-algebra; this point will become clearer in the sequel.
\item[3)] We show consistency between ultralimits and measured
Gromov--Hausdorff distance.
\item[4)] We introduce the notion of \emph{weak pointed measured
Gromov--Hausdorff convergence}, which is better suited for a comparison
with ultralimits of metric measure spaces.
\item[5)] We prove a compactness result for the weak pointed measured
Gromov--Hausdorff convergence by appealing to its connection with the
ultralimits, in the spirit of the proof of the original Gromov's compactness
theorem. As a consequence, we obtain its equivalence with the pointed
measured Gromov convergence introduced in \cite{Gigli-Mondino-Savare}.
\item[6)] We apply the ultralimit machinery to give new insights on
\emph{direct and inverse limits} in the category of pointed Polish
metric measure spaces.
\end{itemize}
In addition, we establish in the appendices the following:
\begin{itemize} 
\item[7)] We obtain a variant of Prokhorov theorem regarding ultralimits of measures. 
\item[8)] We point out the existence of weak pointed measured Gromov--Hausdorff tangents to pointwise doubling spaces.
\end{itemize}
Below we provide a more detailed account on the results that will
be achieved in this paper.
\section{Statement of results}
Many of the definitions that we will adopt in this paper are not standard,
thus let us begin by fixing some terminology.
\medskip

By a \textbf{ball measure} on a metric space \((X,d)\) we mean a
\(\sigma\)-finite measure \(\mm\geq 0\) defined on the ball \(\sigma\)-algebra
(\emph{i.e.}, the smallest \(\sigma\)-algebra containing all open balls).
We will refer to the triple \((X,d,\mm)\) as a \textbf{metric measure space}.
It is worth pointing out that if \((X,d)\) is separable, then ball measures
and Borel measures on \(X\) coincide. The \textbf{support} of \(\mm\)
is defined as the set \(\spt(\mm)\) of all points \(x\in X\) such that
\(\mm\big(B(x,r)\big)>0\) for all \(r>0\). Again, on separable spaces
this notion coincides with the classical one. In general, the support
is closed and separable (see Lemma \ref{lem:prop_spt}), but \(\mm\) is
not necessarily concentrated on it. By `being concentrated' we mean
that \(\mm\big(X\setminus\spt(\mm)\big)=0\). A subtle point here is
that this identity is well-posed because \(\spt(\mm)\) belongs to the
ball \(\sigma\)-algebra, as a consequence of Lemma \ref{lem:prop_spt}.
\subsection{Ultralimits of metric measure spaces}
Fix a non-principal ultrafilter \(\omega\) on \(\N\).
Given a sequence \(\big((X_i,d_i,p_i)\big)\) of pointed metric spaces,
we denote by \((X_\omega,d_\omega,p_\omega)\) its ultralimit.
See Sections \ref{s:ultrafilters} and \ref{s:ultralimits_metric}
for a reminder of this terminology. In addition, suppose that each space
\((X_i,d_i)\) is equipped with a ball measure \(\mm_i\). Then we
prove -- under a suitable volume growth condition -- that it is possible
to construct the \textbf{ultralimit of metric measure spaces}
\[
(X_\omega,d_\omega,\mm_\omega,p_\omega)=\lim_{i\to\omega}(X_i,d_i,\mm_i,p_i).
\]
The growth condition we consider is called \textbf{\(\omega\)-uniform
bounded finiteness} and states that
\[
\lim_{i\to\omega}\mm_i\big(B(p_i,R)\big)<+\infty,\quad\text{ for every }R>0.
\]
See Definition \ref{def:omega-ubf}. Under this assumption, we can build
the sought ball measure \(\mm_\omega\) on \(X_\omega\).
Its construction -- which follows along the lines of \cite{Elek} and
\cite{Conley-Kechris-Tucker-Drob} --
is technically involved, thus we postpone its description directly to
Section \ref{s:constr_ultralim_mms}. We just mention that a key role
in the construction is played by the class of \textbf{internal measurable sets}
(see Definition \ref{def:int_meas_sets}), which are defined as the
ultraproducts of the measurable sets along the given sequence \((X_i,d_i,\mm_i)\).

However, even if we start with a sequence of `very good' spaces (say, compact
metric spaces with uniformly bounded diameter and endowed with a Borel
probability measure), the resulting ultralimit measure may still be extremely
wild. The reason is that the ultralimit as a metric space can easily be
non-separable, thus \(\mm_\omega\) might be a `truly' ball (\emph{i.e.},
non-Borel) measure. A pathological instance of this phenomenon is described
in Examples \ref{ex:non_sep_UL_1} and \ref{ex:non_sep_UL_2}, where the
ultralimit measure \(\mm_\omega\) is non-trivial but has empty support.
In view of the above considerations, it is important to understand when
the ultralimit measure is concentrated on its support; Section
\ref{s:sep_issue} will be devoted to this task. In these cases,
\(\mm_\omega\) can be thought of as a Borel measure on a Polish space,
since the support is always closed and separable. The philosophy
is that, even when the ultralimit metric space is extremely big,
the ultralimit measure may be capable of selecting a `more regular'
portion of the space.
More specifically, in Theorem \ref{thm:equiv_conc_spt} we will provide
necessary and sufficient conditions for the measure \(\mm_\omega\) to be
support-concentrated. The most relevant -- which will appear again
later in this introduction -- is what we will call
\textbf{asymptotic bounded \(\mm_\omega\)-total boundedness}
in Definition \ref{def:abm_omegatb}:
\begin{equation}\label{eq:abmtb_intro}
\forall R,r,\varepsilon>0\;\quad\exists M\in\N,\,(x^i_n)_{n=1}^M\subset X_i:\;
\quad\lim_{i\to\omega}\mm_i\big(\bar B(p_i,R)\setminus{\textstyle\bigcup
_{n=1}^M}B(x^i_n,r)\big)\leq\varepsilon.
\end{equation}
Finally, in Section \ref{ss:relation_with_pmGH} we will investigate the
relation between the pointed measured Gromov--Hausdorff convergence
(recalled in Section \ref{s:pmGH}) and the newly introduced notion of
ultralimit. In this case, we consider \textbf{pointed Polish metric
measure spaces} (meaning that the underlying metric space is complete
and separable), whose reference measure is boundedly-finite. Our main
result here is Theorem \ref{thm:pmGH_vs_UL}, where we show that if a
sequence \(\big((X_i,d_i,\mm_i,p_i)\big)\) of pointed Polish metric measure
spaces converges to some \((X_\infty,d_\infty,m_\infty,p_\infty)\)
in the pointed measured Gromov--Hausdorff sense, then
the ultralimit measure \(\mm_\omega\) is support-concentrated and the
limit \((X_\infty,d_\infty,\mm_\infty,p_\infty)\) can be canonically
identified with \(\big(\spt(\mm_\omega),d_\omega,\mm_\omega,p_\omega\big)\).
\subsection{Weak pointed measured Gromov--Hausdorff convergence}
By looking at the proof of the above-mentioned consistency result
(namely, Theorem \ref{thm:pmGH_vs_UL}), one can realise that it is
possible to weaken the notion of pointed measured Gromov--Hausdorff
convergence, but still maintaining a connection with ultralimits.
To be more precise, let us observe that instead of working with
\textbf{\((R,\varepsilon)\)-approximations} (recalled in Definition
\ref{def:REappr}), we can consider what we call
\textbf{weak \((R,\varepsilon)\)-approximations} (introduced in
Definition \ref{def:weakappr}). The difference is that for a weak
\((R,\varepsilon)\)-approximation the `quasi-isometry' assumption
is required to hold \emph{up to a small measure set}. It seems that
this variant better fits into the framework of metric measure spaces,
because in its formulation distance and measure cannot be decoupled
(differently from what happens in the pointed measured Gromov--Hausdorff
case, where to a `purely metric' concept of quasi-isometry, a control on
the relevant measures is imposed only afterward).

Once our notion of \textbf{weak pointed measured Gromov--Hausdorff convergence}
is introduced in Definition \ref{def:wpmGH}, we can investigate its relation
with ultralimits in Section \ref{s:wpmGH_vs_UL}. In Theorem
\ref{thm:pre-Gromov} we prove that if a sequence of pointed Polish metric
measure spaces is asymptotically boundedly \(\mm_\omega\)-totally bounded
\eqref{eq:abmtb_intro}, then it subconverges to
\(\big(\spt(\mm_\omega),d_\omega,\mm_\omega,p_\omega\big)\) with respect to
the weak pointed measured Gromov--Hausdorff convergence. This result
may be regarded as a reinforcement of Theorem \ref{thm:pmGH_vs_UL}, as
it readily implies that each weak pointed measured Gromov--Hausdorff
limit must coincide with the ultralimit (Corollary \ref{cor:conseq_Gromov}).
Having these tools at our disposal, we can then easily obtain a Gromov's
compactness theorem (see Theorem \ref{thm:Gromov_cpt_main}) for weak pointed
measured Gromov--Hausdorff convergence, stating the following: a given
sequence of spaces \(\big((X_i,d_i,\mm_i,p_i)\big)\) is precompact with
respect to the weak pointed measured Gromov--Hausdorff convergence if
and only if it is \textbf{boundedly measure-theoretically totally bounded},
which means that
\begin{equation}\label{eq:bmttb_intro}
\forall R,r,\varepsilon>0\;\quad\exists M\in\N,\,(x^i_n)_{n=1}^M\subset X_i:\;
\quad\sup_{i\in\N}\mm_i\big(\bar B(p_i,R)\setminus{\textstyle\bigcup
_{n=1}^M}B(x^i_n,r)\big)\leq\varepsilon.
\end{equation}
A significant consequence of Theorem \ref{thm:Gromov_cpt_main}
is the equivalence between weak pointed measured Gromov--Hausdorff
convergence and pointed measured Gromov convergence, that we will
achieve in Theorem \ref{thm:wpmGH_vs_pmG}. In particular, by
combining it with the results proven in \cite{Gigli-Mondino-Savare},
we can conclude that for any \(K\in\R\) the classes of \({\sf CD}(K,\infty)\)
and \({\sf RCD}(K,\infty)\) spaces are closed under weak pointed
measured Gromov--Hausdorff convergence.
\subsection{Direct and inverse limits of pointed metric measure spaces}
In Chapter \ref{ch:DL_IL_pmms} we show how ultralimits of pointed metric
measure spaces can be helpful in constructing direct and inverse limits
in the category of pointed Polish metric measure spaces. We address the
problem in a quite restricted setting: first, we just consider inverse
and direct limits, not other limits or colimits in the category; second,
instead of an arbitrary directed index set \((I,\leq)\), we just stick
to the case of natural numbers \(\N\) endowed with the natural order
(in accordance with the rest of the paper). A word on terminology:
in Chapter \ref{ch:DL_IL_pmms} we consider pointed metric measure spaces
\((X,d,\mm,p)\) where \(p\in\spt(\mm)\), in order to get a reasonable
notion of morphism (see Definition \ref{def:morphism_pmms}); this
assumption is not in force in the rest of the paper, as it would
be quite unnatural on non-separable spaces, where non-trivial measures
may have empty support.

In Theorem \ref{thm:DL} and Corollary \ref{cor:DL} we characterise which are
the direct systems of pointed Polish metric measure spaces admitting direct
limit: they are exactly the uniformly boundedly finite ones. Moreover,
in this case the sequence has a weak pointed measured Gromov--Hausdorff
limit, which also coincides with the direct limit itself.

The inverse limit case is more complex: given an inverse system
of pointed Polish metric measure spaces \(\big((X_i,d_i,\mm_i,p_i)\big)\),
one can select a subspace \(X\) of its ultralimit \(X_\omega\) having
the property that the inverse limit exists if and only if
\(p_\omega\in\spt(\mm_\omega\llcorner X)\). Moreover, in this case the
inverse limit coincides with \(X\) equipped with the restricted distance
and measure; see Theorem \ref{thm:IL}. The space \(X\) -- which is defined
in \eqref{eq:def_X_IL} -- \emph{a priori} depends on the choice of the
non-principal ultrafilter \(\omega\): its independence is granted by the
uniqueness of the inverse limit. Actually, differently from what
happens for direct limits, we are currently unaware of a characterisation
of the inverse limit that does not appeal to the theory of ultralimits.

In the context of metric measure space geometry, inverse limits
have been considered -- for instance -- in the paper \cite{CheegerKleiner13}.
\subsection{Tangents to pointwise doubling metric measure spaces}
An essential tool in metric measure geometry is given by the notion of a
\textbf{tangent cone}. For instance, in the setting of finite-dimensional
\(\sf RCD\) spaces, where the study of fine structural properties of the
spaces aroused a great deal of attention, tangent cones play a central role.
Roughly speaking, given a pointed metric measure space \((X,d,\mm,p)\),
what we call a `tangent cone' at \(p\) is any limit of the rescaled spaces
\((X,d/r_i,\mm^p_{r_i},p)\), where \(\mm^p_{r_i}\) are suitable
normalisations of \(\mm\), and \(r_i\searrow 0\). In the case of
finite-dimensional \(\sf RCD\) spaces, limits are taken with respect
to the pointed measured Gromov--Hausdorff distance. We do not enter into
details and we refer to \cite{Semola20,Brue20}.

In Appendix \ref{s:tg_cone} we observe (see Theorem \ref{thm:tg_cone})
that on \textbf{pointwise doubling} metric measure spaces, at almost every
point a weak pointed measured Gromov--Hausdorff tangent cone exists.
By pointwise doubling (or asymptotically doubling, or infinitesimally
doubling) we mean that
\[
\lims_{r\searrow 0}\frac{\mm\big(B(x,2r)\big)}{\mm\big(B(x,r)\big)}
<+\infty,\quad\text{ for }\mm\text{-a.e.\ }x\in X.
\]
In particular, all doubling spaces have this property,
but the class of pointwise doubling spaces is much larger and very well-studied in the framework of analysis on
metric spaces.

It is worth mentioning
that Theorem \ref{thm:tg_cone} applies also to some infinite-dimensional
\(\sf RCD\) spaces, such as the Euclidean space endowed with a
Gaussian measure. Anyway, we believe that this property is well-known to the experts
(in its equivalent formulation using the pointed measured
Gromov convergence) and thus we will not investigate further in that direction.
\subsection*{Acknowledgements}
The authors would like to thank the Department of Mathematics and
Statistics of the University of Jyv\"{a}skyl\"{a}, where much of
this work was done. Both authors were supported by the Academy
of Finland, project number 314789. The first named author was
also supported by the European Research Council (ERC Starting Grant
713998 GeoMeG Geometry of Metric Groups) and by the Balzan project
led by Luigi Ambrosio. The second named author was also supported by the Academy of Finland, project number 308659.
\chapter{Preliminaries}
\section{Terminology about metric/measure spaces}
Let us begin by fixing some general terminology about metric and
measure spaces, which will be used throughout the whole paper.
\subsection{Metric spaces}
An \emph{extended pseudodistance} on a set \(X\) is a
function \(d\colon X\times X\to[0,+\infty]\) such that
the following properties hold:
\[\begin{split}
d(x,x)=0,&\quad\text{ for every }x\in X,\\
d(x,y)=d(y,x),&\quad\text{ for every }x,y\in X,\\
d(x,y)\leq d(x,z)+d(z,y),&\quad\text{ for every }x,y,z\in X.
\end{split}\]
The couple \((X,d)\) is said to be an \emph{extended pseudometric space}.
An extended pseudodistance satisfying \(d(x,y)<+\infty\) for every \(x,y\in X\)
is said to be a \emph{pseudodistance} on \(X\) and the couple \((X,d)\) is
a \emph{pseudometric space}. A pseudodistance such that \(d(x,y)>0\) whenever
\(x,y\in X\) are distinct is called a \emph{distance} on \(X\) and the couple
\((X,d)\) is a \emph{metric space}. Later on, we shall need the following
two well-known results; we omit their standard proofs.
\begin{itemize}
\item Let \((X,d)\) be an extended pseudometric space.
Let \(\bar x\in X\) be given. Let us define
\[
O_{\bar x}(X)\coloneqq\big\{x\in X\,\big|\,d(x,\bar x)<+\infty\big\}.
\]
Then \(\big(O_{\bar x}(X),d\big)\) is a pseudometric space.
(For the sake of brevity, we wrote \(d\) instead of
\(d|_{O_{\bar x}(X)\times O_{\bar x}(X)}\). This abuse of
notation will sometimes appear later in this paper.)
\item Let \((X,d)\) be a pseudometric space. Let \(\sim\) be the following
equivalence relation on the set \(X\): given any \(x,y\in X\), we
declare that \(x\sim y\) provided \(d(x,y)=0\). Denote by \(\tilde X\)
the quotient set \(X/\sim\). We define the function
\(\tilde d\colon\tilde X\times\tilde X\to[0,+\infty)\) as
\[
\tilde d(\tilde x,\tilde y)\coloneqq d(x,y),\quad\text{ for every }
\tilde x,\tilde y\in\tilde X,
\]
where \(x\in X\) (resp.\ \(y\in X\)) is any representative of \(\tilde x\)
(resp.\ \(\tilde y\)). It is easy to check that the definition of
\(\tilde d\) is well-posed, \emph{i.e.}, it does not depend on the
specific choice of the representatives \(x\) and \(y\). Then it holds
that \((\tilde X,\tilde d)\) is a metric space.
\end{itemize}
Given a pseudometric space \((X,d)\), a point \(x\in X\), and a
real number \(r>0\), we denote
\[\begin{split}
B(x,r)&\coloneqq\big\{y\in X\,\big|\,d(x,y)<r\big\},\\
\bar B(x,r)&\coloneqq\big\{y\in X\,\big|\,d(x,y)\leq r\big\}.
\end{split}\]
We call \(B(x,r)\) (resp.\ \(\bar B(x,r)\)) the \emph{open ball}
(resp.\ \emph{closed ball}) of center \(x\) and radius \(r\). More generally, given any \(A\subset X\) and a real number
\(r>0\), we call \(A^r\) the \emph{open
\(r\)-neighbourhood} of the set \(A\), namely
\(A^r\coloneqq\big\{x\in X\,:\,{\rm dist}(x,A)<r\big\}\),
where \({\rm dist}(x,A)\coloneqq\inf_{y\in A}d(x,y)\).
\medskip

If \((X,d)\) is a metric space, then we denote by \(C_{bbs}(X)\)
the family of all bounded, continuous functions \(f\colon X\to\R\)
having bounded support. Recall that the support of \(f\) is defined
as the closure of the set \(\big\{x\in X\,:\,f(x)\neq 0\big\}\).
Moreover, by \(C_b(X)\) we denote the family of all real-valued,
bounded continuous functions defined on \(X\).

\subsection{Measure theory}
Let \(X\) be a non-empty set. Then a family \(\mathcal A\subset 2^X\)
is said to be an \emph{algebra of sets} provided \(X\in\mathcal A\)
and \(A\setminus B\in\mathcal A\) for every \(A,B\in\mathcal A\). If
in addition \(\bigcup_{n\in\N}A_n\in\mathcal A\) for any sequence
\((A_n)_{n\in\N}\subset\mathcal A\), then \(\mathcal A\) is called a
\emph{\(\sigma\)-algebra}. Given a family \(\mathcal F\subset 2^X\),
we denote by \(\sigma(\mathcal F)\) the \emph{\(\sigma\)-algebra
generated by \(\mathcal F\)}, which is the smallest \(\sigma\)-algebra
containing \(\mathcal F\).
By \emph{measurable space} we mean a couple \((X,\mathcal A)\),
where \(X\neq\emptyset\) is a set and \(\mathcal A\subset 2^X\)
a \(\sigma\)-algebra.

If \((X,\mathcal A)\) is a measurable space, \(Y\) a set, and
\(\varphi\colon X\to Y\) an arbitrary map, then we define the
\emph{pushforward \(\sigma\)-algebra} of \(\mathcal A\) under \(\varphi\) as
\[
\varphi_*\mathcal A\coloneqq\big\{B\subset Y\,\big|\,
\varphi^{-1}(B)\in\mathcal A\big\}.
\]
Given two measurable spaces \((X,\mathcal A_X)\), \((Y,\mathcal A_Y)\)
and a map \(\varphi\colon X\to Y\), we say that \(\varphi\) is
\emph{measurable} provided it holds that
\(\mathcal A_Y\subset\varphi_*\mathcal A_X\).
\medskip

A given function \(\mu\colon\mathcal A\to[0,+\infty]\), where \(\mathcal A\)
is an algebra on some non-empty set \(X\), is called a
\emph{set-function} on \((X,\mathcal A)\) provided \(\mu(\emptyset)=0\).
We say that a set-function \(\mu\) on \((X,\mathcal A)\) is a
\emph{finitely-additive measure} provided it satisfies the following property:
\[
\mu(A\cup B)=\mu(A)+\mu(B),\quad\text{ for every }A,B\in\mathcal A
\text{ such that }A\cap B=\emptyset.
\]
If in addition \(\mathcal A\) is a \(\sigma\)-algebra and the set-function
\(\mu\) satisfies
\[
\mu\big({\textstyle\bigcup_{n\in\N}A_n\big)=\sum_{n\in\N}\mu(A_n)},
\quad\text{ whenever }(A_n)_{n\in\N}\subset\mathcal A\text{ are pairwise disjoint},
\]
then we say that \(\mu\) is a \emph{(countably-additive) measure}
on \((X,\mathcal A)\). By \emph{measure space} we mean a triple
\((X,\mathcal A,\mu)\), where \((X,\mathcal A)\) is a measurable
space and \(\mu\) is a measure on \((X,\mathcal A)\).
We say that \(\mu\) is \emph{finite} provided \(\mu(X)<+\infty\),
while it is \emph{\(\sigma\)-finite} provided there exists a sequence
\((A_n)_{n\in\N}\subset\mathcal A\) of measurable sets satisfying
\(X=\bigcup_{n\in\N}A_n\) and \(\mu(A_n)<+\infty\) for every \(n\in\N\).
Moreover, we say that the measure \(\mu\) is \emph{concentrated} on a set
\(A\in\mathcal A\) provided \(\mu(X\setminus A)=0\).
{
Given a non-empty set \(A\in\mathcal A\) and a \(\sigma\)-algebra
\(\mathcal A'\) on \(A\) such that \(\mathcal A'\subset\mathcal A\),
we can consider the \emph{restriction} \(\mu|_{\mathcal A'}\)
of \(\mu\) to \(\mathcal A'\). It trivially holds that
\(\mu|_{\mathcal A'}\) is a measure on \((A,\mathcal A')\).
In the case where \(\mathcal A'=\mathcal A\llcorner A\coloneqq
\big\{A'\in\mathcal A\,:\,A'\subset A\big\}\), we just write
\(\mu\llcorner A\) in place of \(\mu|_{\mathcal A\llcorner A}\).
}
\medskip

If \((X,\mathcal A_X,\mu)\) is a measure space, \((Y,\mathcal A_Y)\)
a measurable space, and \(\varphi\colon X\to Y\) a measurable map,
then we define the \emph{pushforward measure} of \(\mu\) under \(\varphi\) as
\[
(\varphi_*\mu)(B)\coloneqq\mu\big(\varphi^{-1}(B)\big),
\quad\text{ for every }B\in\mathcal A_Y.
\]
Observe that \(\varphi_*\mu\) is a measure on \((Y,\mathcal A_Y)\).
If \(\mu\) is finite, then \(\varphi_*\mu\) is finite as well.
On the other hand, if \(\mu\) is \(\sigma\)-finite, then \(\varphi_*\mu\)
needs not be \(\sigma\)-finite, as easy counterexamples show.
\subsection{Ball measures and Borel measures}
Let \((X,d)\) be a pseudometric space. Then we denote by
\({\rm Ball}(X)\) the \emph{ball \(\sigma\)-algebra} on \(X\),
which is defined as the \(\sigma\)-algebra generated by the
open balls (or, equivalently, by the closed balls). A measure
on \(\big(X,{\rm Ball}(X)\big)\) is said to be a \emph{ball measure}.
Moreover, the \emph{Borel \(\sigma\)-algebra} on \(X\) is the
\(\sigma\)-algebra \(\mathscr B(X)\) generated by the topology of \((X,d)\).
A measure on \(\big(X,\mathscr B(X)\big)\) is called a \emph{Borel measure}.
With a slight abuse of notation, we shall sometimes say that a measure
is a ball measure (resp.\ a Borel measure) if it is defined on a
\(\sigma\)-algebra containing the ball \(\sigma\)-algebra (resp.\ the
Borel \(\sigma\)-algebra).

A ball measure \(\mu\) is said to be
\emph{boundedly finite} provided \(\mu\big(B(x,r)\big)<+\infty\)
for every \(x\in X\) and \(r>0\). Observe that boundedly finite
ball measures are \(\sigma\)-finite, as a consequence of the identity
\(X=\bigcup_{n\in\N}B(\bar x,n)\), where \(\bar x\in X\) is given.
Since open balls are open sets, we have
\[
{\rm Ball}(X)\subset\mathscr B(X),\quad
\text{ for every pseudometric space }(X,d).
\]
However, it might happen (unless \((X,d)\) is separable) that
\({\rm Ball}(X)\neq\mathscr B(X)\).
\begin{definition}[Metric measure space]
By \emph{metric measure space} we mean a triple \((X,d,\mm)\),
where \((X,d)\) is a metric space, while \(\mm\) is a ball measure
on \(X\).
\end{definition}
Given a ball measure \(\mu\) on a pseudometric space \((X,d)\),
we define its \emph{support} \({\rm spt}(\mu)\) as
\begin{equation}\label{eq:def_spt}\begin{split}
{\rm spt}(\mu)\coloneqq&\Big\{x\in X\,\Big|\,\mu\big(B(x,r)\big)>0
\text{ for every }r>0\Big\}\\
=&X\setminus\bigcup\Big\{B(x,r)\,\Big|\,x\in X,\,r>0,\,
\mu\big(B(x,r)\big)=0\Big\}.
\end{split}\end{equation}
A warning about our notation: if \(\mu\) is a Borel measure (and
\((X,d)\) is not separable), then the above definition of \({\rm spt}(\mu)\)
might differ from other ones that appear in the literature.
Let us now collect in the following result a few basic properties
of the support of a ball measure.
\begin{lemma}\label{lem:prop_spt}
Let \((X,d)\) be a pseudometric space. Let \(\mu\) be a ball measure
on \(X\). Then \({\rm spt}(\mu)\) is a closed set. If \(\mu\)
is \(\sigma\)-finite, then \({\rm spt}(\mu)\) is separable and in
particular \({\rm spt}(\mu)\in{\rm Ball}(X)\).
\end{lemma}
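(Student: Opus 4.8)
The plan is to dispatch the three assertions separately, only the middle one requiring any real work. For the closedness of \({\rm spt}(\mu)\), I would just read it off the second description in \eqref{eq:def_spt}: the set \(\bigcup\{B(x,r)\,:\,x\in X,\ r>0,\ \mu(B(x,r))=0\}\) is a union of open balls, hence open, and \({\rm spt}(\mu)\) is precisely its complement; therefore \({\rm spt}(\mu)\) is closed. No assumption on \(\mu\) beyond being a ball measure is used here.

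For separability, assume now that \(\mu\) is \(\sigma\)-finite and write \(X=\bigcup_{n\in\N}A_n\) with \(A_n\in{\rm Ball}(X)\) and \(\mu(A_n)<+\infty\). The key point I would establish is that \({\rm spt}(\mu)\) contains no uncountable \(\delta\)-separated subset, for any \(\delta>0\). Indeed, if \(E\subseteq{\rm spt}(\mu)\) satisfies \(d(x,y)\geq\delta\) for all distinct \(x,y\in E\), then by the triangle inequality the balls \(B(x,\delta/2)\), \(x\in E\), are pairwise disjoint, and each of them has positive \(\mu\)-measure by the very definition of the support. Fixing \(n\), any finite subfamily of \(\{B(x,\delta/2)\cap A_n\}_{x\in E}\) has total mass at most \(\mu(A_n)<+\infty\), so \(\{x\in E\,:\,\mu(B(x,\delta/2)\cap A_n)>0\}\) is countable; since \(E\) is the union over \(n\in\N\) of these sets, \(E\) is countable. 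Granted this, for each \(k\in\N\) I would choose (via Zorn's lemma) a maximal \(1/k\)-separated subset \(D_k\) of \({\rm spt}(\mu)\); it is then countable, and by maximality every point of \({\rm spt}(\mu)\) lies within distance \(1/k\) of \(D_k\). Hence \(D\coloneqq\bigcup_{k\in\N}D_k\) is a countable dense subset of \({\rm spt}(\mu)\), which is thus separable.

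Finally, for \({\rm spt}(\mu)\in{\rm Ball}(X)\), let \(D=\{x_j\}_{j\in\N}\) be the countable dense subset just produced. Since \({\rm spt}(\mu)\) is closed and contains \(D\), it coincides with the closure of \(D\) in \(X\); and because \(z\) lies in the closure of \(D\) if and only if \({\rm dist}(z,D)=0\), i.e.\ if and only if \({\rm dist}(z,D)<1/k\) for every \(k\), one gets \({\rm spt}(\mu)=\bigcap_{k\in\N}\bigcup_{j\in\N}B(x_j,1/k)\), which plainly belongs to \({\rm Ball}(X)\).

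The only step with any content is the separability argument in the second paragraph — concretely, the counting estimate ruling out uncountable \(\delta\)-separated subsets of the support and the extraction of maximal \(\delta\)-separated sets — while closedness and measurability of \({\rm spt}(\mu)\) are then essentially formal consequences.
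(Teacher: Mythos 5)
Your proposal is correct and follows essentially the same route as the paper: closedness read off from the complement being a union of open balls, separability via Zorn-maximal separated nets combined with the counting bound that pairwise disjoint balls of positive measure inside a $\sigma$-finite space form a countable family, and measurability from the identity \({\rm spt}(\mu)=\bigcap_{k}\bigcup_{j}B(x_j,1/k)\). The only (cosmetic) difference is that you argue directly that every \(\delta\)-separated subset of the support is countable, whereas the paper runs the same counting argument as a proof by contradiction starting from an assumed uncountable separated set.
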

\begin{proof}
First of all, observe that \(X\setminus{\rm spt}(\mu)\) is open
(as a union of open balls), thus \({\rm spt}(\mu)\) is closed.
Now assume that \(\mu\) is \(\sigma\)-finite. We aim to show that
\({\rm spt}(\mu)\) is separable. We argue by contradiction: suppose
\({\rm spt}(\mu)\) is not separable. Then we claim that \emph{there exist
an uncountable set \(\{x_i\}_{i\in I}\subset{\rm spt}(\mu)\) and
\(\varepsilon>0\) such that \(B(x_i,\varepsilon)\cap B(x_j,\varepsilon)
=\emptyset\) for all \(i,j\in I\) with \(i\neq j\).}
To prove it, for any \(n\in\N\) pick a maximal \(\frac{2}{n}\)-separated subset
\(S_n\) of \({\rm spt}(\mu)\), whose existence follows by a standard
application of Zorn's lemma. Namely, we have that
\[\begin{split}
d(x,y)\geq\frac{2}{n},\quad&\text{for every }x,y\in S_n\text{ with }x\neq y,\\
\forall z\in{\rm spt}(\mu)\quad&\exists\,x\in S_n:\quad d(x,z)<\frac{2}{n}.
\end{split}\]
Then \(\bigcup_{n\in\N}S_n\) is dense in \({\rm spt}(\mu)\) by construction.
Being \({\rm spt}(\mu)\) non-separable, we deduce that \(S_{\bar n}\) must
be uncountable for some \(\bar n\in\N\). Call \(\{x_i\}_{i\in I}\coloneqq
S_{\bar n}\) and \(\varepsilon\coloneqq 1/\bar n\). Observe that
\begin{equation}\label{eq:prop_spt_aux}
B(x_i,\varepsilon)\cap B(x_j,\varepsilon)=\emptyset,
\quad\text{ for every }i,j\in I\text{ with }i\neq j,
\end{equation}
thus the claim is proven. Given that \(\mu\) is \(\sigma\)-finite,
we can find a sequence \((A_n)_{n\in\N} \subset{\rm Ball}(X)\)
such that \(X=\bigcup_{n\in\N}A_n\) and \(\mu(A_n)<+\infty\) for all
\(n\in\N\). Given any \(i\in I\), there exists \(n_i\in\N\) such that
\(\mu\big(A_{n_i}\cap B(x_i,\varepsilon)\big)>0\). Let us now define
\[
I_{nk}\coloneqq\Big\{i\in I\,\Big|\,n_i=n,\,\mu\big(A_n\cap
B(x_i,\varepsilon)\big)\geq 1/k\Big\},\quad\text{ for every }n,k\in\N.
\]
Given any finite subset \(F\) of \(I_{nk}\), we may estimate
\[
\frac{\# F}{k}\leq\sum_{i\in F}\mu\big(A_n\cap B(x_i,\varepsilon)\big)
\overset{\eqref{eq:prop_spt_aux}}=
\mu\big(A_n\cap{\textstyle\bigcup_{i\in F}}B(x_i,\varepsilon)\big)\leq\mu(A_n),
\]
whence it follows that \(\# I_{nk}\leq k\,\mu(A_n)\) and in particular
\(I_{nk}\) is finite. This is in contradiction with the fact that
\(I=\bigcup_{n,k\in\N}I_{nk}\) is uncountable. Therefore, \({\rm spt}(\mu)\)
is proven to be separable.

Finally, by using both the closedness and the separability of
\({\rm spt}(\mu)\), we see that
\[
{\rm spt}(\mu)=\bigcap_{k\in\N}\bigcup_{n\in\N}B(x_n,1/k),
\]
where \((x_n)_{n\in\N}\subset{\rm spt}(\mu)\) is any dense sequence.
This yields \({\rm spt}(\mu)\in{\rm Ball}(X)\), as required.
\end{proof}
\begin{remark}\label{rmk:C_in_Ball}{\rm
In the last part of the proof of Lemma \ref{lem:prop_spt}, we used the
following general fact: \emph{Let \((X,d)\) be a pseudometric space.
Then \(C\in{\rm Ball}(X)\) for every \(C\subset X\) closed and separable.}

Indeed, given any dense sequence \((x_n)_{n\in\N}\) in \(C\), it holds
\(C=\bigcap_{k\in\N}\bigcup_{n\in\N}B(x_n,1/k)\).
\fr}\end{remark}
\section{Pointed measured Gromov--Hausdorff convergence}\label{s:pmGH}
With a slight abuse of notation, we say that any complete and
separable metric space \((X,d)\) is a \emph{Polish metric space}.
(Typically, this terminology is referring to a topological space
whose topology is induced by a complete and separable distance.)
\begin{definition}[\((R,\varepsilon)\)-approximation]\label{def:REappr}
Let \((X,d_X,p_X)\), \((Y,d_Y,p_Y)\) be pointed Polish metric
spaces. Let \(R,\varepsilon>0\) be such that \(\varepsilon<R\).
Then a given Borel map \(\psi\colon B(p_X,R)\to Y\) is said to be a
\emph{\((R,\varepsilon)\)-approximation} provided it satisfies
\(\psi(p_X)=p_Y\),
\[
\sup_{x,y\in B(p_X,R)}\Big|d_X(x,y)-d_Y\big(\psi(x),\psi(y)\big)
\Big|\leq\varepsilon,\qquad B(p_Y,R-\varepsilon)\subset
\psi\big(B(p_X,R)\big)^\varepsilon.
\]
\end{definition}
\begin{remark}\label{rmk:quasi_isom}{\rm
\leavevmode
\begin{enumerate}
\item Notice that it is always possible to extend the map (in a Borel manner) to the whole space $X$. Therefore, we will often consider $(R,\varepsilon)$-approximations as maps $\psi\colon X\to Y$ to avoid unnecessary complication in the presentation.
\item Being $(R,\varepsilon)$-approximation means -- roughly speaking -- that $\psi$ is roughly isometric and roughly surjective to the corresponding ball on the image side.\fr
\end{enumerate}
}\end{remark}
\begin{lemma}\label{lem:rough_inverse}
Let \((X,d_X,p_X)\), \((Y,d_Y,p_Y)\) be pointed Polish
metric spaces. Let \(\psi\colon X\to Y\) be a given
\((R,\varepsilon)\)-approximation, for some \(R,\varepsilon>0\)
satisfying \(4\varepsilon<R\). Then there exists a
\((R-\varepsilon,3\varepsilon)\)-approximation \(\phi\colon Y\to X\)
-- that we will call a \emph{quasi-inverse} of \(\psi\) -- such that
\begin{subequations}
\begin{align}\label{eq:quasi-inverse_1}
d_X\big(x,(\phi\circ\psi)(x)\big)<3\varepsilon,&
\quad\text{ for every }x\in B(p_X,R-4\varepsilon),\\
\label{eq:quasi-inverse_2}
d_Y\big(y,(\psi\circ\phi)(y)\big)<3\varepsilon,&
\quad\text{ for every }y\in B(p_Y,R-\varepsilon).
\end{align}\end{subequations}
\end{lemma}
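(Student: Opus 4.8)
The plan is to construct the quasi-inverse $\phi$ pointwise: for each $y\in Y$, pick some $x\in B(p_X,R)$ with $d_Y(\psi(x),y)$ nearly minimal, and set $\phi(y)$ to be (a Borel selection of) such an $x$. Concretely, since $\psi$ is an $(R,\varepsilon)$-approximation, by the rough surjectivity property $B(p_Y,R-\varepsilon)\subset\psi(B(p_X,R))^\varepsilon$, every $y\in B(p_Y,R-\varepsilon)$ lies within distance $\varepsilon$ of $\psi(B(p_X,R))$, so there is $x_y\in B(p_X,R)$ with $d_Y(\psi(x_y),y)<\varepsilon$. For $y\notin B(p_Y,R-\varepsilon)$ we just set $\phi(y)=p_X$ (or extend arbitrarily in a Borel way, as allowed by Remark \ref{rmk:quasi_isom}). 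The measurability of the assignment $y\mapsto x_y$ requires a Borel (or universally measurable) selection theorem; alternatively one avoids abstract selection by working with a countable dense set in $Y$ and the first index in an enumeration of a countable $\varepsilon$-net of $B(p_X,R)$ in the image that witnesses the near-minimality, which produces an honestly Borel map. I would spell this out only to the extent of invoking a standard measurable selection result.

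Next I would verify the almost-inverse identities \eqref{eq:quasi-inverse_1} and \eqref{eq:quasi-inverse_2}. For \eqref{eq:quasi-inverse_2}: if $y\in B(p_Y,R-\varepsilon)$ then $x_y\in B(p_X,R)$ with $d_Y(\psi(x_y),y)<\varepsilon$, and by definition $\phi(y)=x_y$, so $d_Y(y,(\psi\circ\phi)(y))=d_Y(y,\psi(x_y))<\varepsilon<3\varepsilon$. For \eqref{eq:quasi-inverse_1}: let $x\in B(p_X,R-4\varepsilon)$. Then $\psi(x)\in B(p_Y,R-3\varepsilon)\subset B(p_Y,R-\varepsilon)$ using that $\psi$ nearly preserves distances to $p_X$ (take $y=p_X$ in the distance-distortion bound and use $\psi(p_X)=p_Y$), so $\phi(\psi(x))=x_{\psi(x)}$ is defined with $d_Y(\psi(x_{\psi(x)}),\psi(x))<\varepsilon$; applying the distance-distortion inequality of $\psi$ to the pair $x$ and $x_{\psi(x)}$ (both in $B(p_X,R)$) gives $d_X(x,x_{\psi(x)})\leq d_Y(\psi(x),\psi(x_{\psi(x)}))+\varepsilon<2\varepsilon<3\varepsilon$, which is \eqref{eq:quasi-inverse_1}.

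It then remains to check that $\phi$ is itself an $(R-\varepsilon,3\varepsilon)$-approximation. The basepoint condition $\phi(p_Y)=p_X$ is arranged by choosing $x_{p_Y}=p_X$, which is legitimate since $d_Y(\psi(p_X),p_Y)=0<\varepsilon$. For the distance distortion: given $y,y'\in B(p_Y,R-\varepsilon)$, write $x=\phi(y)$, $x'=\phi(y')$, both in $B(p_X,R)$; then
\[
\big|d_X(x,x')-d_Y(y,y')\big|\leq\big|d_X(x,x')-d_Y(\psi(x),\psi(x'))\big|+d_Y(\psi(x),y)+d_Y(\psi(x'),y')<\varepsilon+\varepsilon+\varepsilon=3\varepsilon,
\]
using the distortion bound for $\psi$ and the two near-minimality estimates. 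For the rough surjectivity of $\phi$ onto $B(p_X,R-\varepsilon)$: given $x\in B(p_X,(R-\varepsilon)-3\varepsilon)=B(p_X,R-4\varepsilon)$, estimate (again via $y=p_X$ in the distortion bound) that $\psi(x)\in B(p_Y,R-\varepsilon)$, hence $\phi(\psi(x))$ is defined and, by \eqref{eq:quasi-inverse_1}, $d_X(x,\phi(\psi(x)))<3\varepsilon$, so $x\in\phi(B(p_Y,R-\varepsilon))^{3\varepsilon}$; this is exactly the required inclusion $B(p_X,(R-\varepsilon)-3\varepsilon)\subset\phi(B(p_Y,R-\varepsilon))^{3\varepsilon}$. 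The hypothesis $4\varepsilon<R$ is precisely what guarantees $R-4\varepsilon>0$ so that these balls are nonempty and all the nested-radius bookkeeping is consistent.

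The main obstacle I anticipate is the Borel measurability of the pointwise selection $y\mapsto x_y$: one cannot just invoke the axiom of choice here, and a clean argument needs either a measurable selection theorem (e.g. Kuratowski--Ryll-Nardzewski, applicable since $X$ is Polish and the set-valued map $y\mapsto\{x\in \bar B(p_X,R):d_Y(\psi(x),y)\leq\varepsilon\}$ has closed nonempty values with a measurable graph) or an explicit construction via countable $\varepsilon$-nets. Everything else is elementary triangle-inequality bookkeeping with the radii, where the only care required is tracking the successive losses of $\varepsilon$ and confirming they sum to at most $3\varepsilon$.
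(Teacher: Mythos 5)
Your proposal is correct and is essentially the paper's own proof: the paper likewise defines \(\phi(y)\) as the first element of a fixed dense sequence in \(B(p_X,R)\) whose \(\psi\)-image is \(\varepsilon\)-close to \(y\) (your ``explicit countable net'' route, so no abstract selection theorem is needed), and then runs exactly the triangle-inequality bookkeeping you describe for the distortion, the rough surjectivity, and the two almost-inverse estimates. The only cosmetic difference is that the paper sends points outside \(B(p_Y,R-\varepsilon)\) to a point of \(X\setminus B(p_X,R-\varepsilon)\) rather than to \(p_X\), which is immaterial for the statement of this lemma.
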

\begin{proof}
Let \((x_i)_i\) be a dense sequence in \(B(p_X,R)\).
Given any \(y\in B(p_Y,R-\varepsilon)\setminus\{p_Y\}\), we define
\(\phi(y)\coloneqq x_{i_y}\), where \(i_y\) is the smallest index
\(i\in\N\) for which \(d_Y\big(\psi(x_i),y\big)<\varepsilon\).
Also, we set \(\phi(y)\coloneqq\tilde x\) for all
\(y\notin B(p_Y,R-\varepsilon)\setminus\{p_Y\}\),
where \(\tilde x\) is any given element of
\(X\setminus B(p_X,R-\eps)\) if \(X\setminus B(p_X,R-\eps)\neq\emptyset\)
and \(\tilde x\coloneqq p_X\) otherwise. The resulting map
\(\phi\colon Y\to X\) is Borel and satisfies \(\phi(p_Y)=p_X\)
by construction. For every \(y,y'\in B(p_Y,R-\varepsilon)\),
we have that
\[\begin{split}
&\Big|d_X\big(\phi(y),\phi(y')\big)-d_Y(y,y')\Big|\\
\leq\,&\Big|d_X\big(\phi(y),\phi(y')\big)-
d_Y\big((\psi\circ\phi)(y),(\psi\circ\phi)(y')\big)\Big|+\Big|
d_Y\big((\psi\circ\phi)(y),(\psi\circ\phi)(y')\big)-d_Y(y,y')\Big|\\
\leq\,&\varepsilon+d_Y\big((\psi\circ\phi)(y),y\big)
+d_Y\big((\psi\circ\phi)(y'),y'\big)<3\varepsilon,
\end{split}\]
which shows that \(\sup_{y,y'\in B(p_Y,R-\varepsilon)}
\big|d_X\big(\phi(y),\phi(y')\big)-d_Y(y,y')\big|\leq 3\varepsilon\).
Moreover, given any \(x\in B(p_X,R-4\varepsilon)\), we have that
\(\psi(x)\in B(p_Y,R-3\varepsilon)\), thus from the validity
of the inequalities
\begin{equation}\label{eq:aux_quasi-inverse}
d_X\big((\phi\circ\psi)(x),x\big)\leq
d_Y\big((\psi\circ\phi\circ\psi)(x),\psi(x)\big)+\varepsilon
<2\varepsilon<3\varepsilon
\end{equation}
it follows that \(x\) belongs to the \(3\varepsilon\)-neighbourhood
of \(\phi\big(B(p_Y,R-3\varepsilon)\big)\). In particular,
it holds that \(B(p_X,R-4\varepsilon)\subset
\phi\big(B(p_Y,R-\varepsilon)\big)^{3\varepsilon}\),
whence accordingly \(\phi\) is a
\((R-\varepsilon,3\varepsilon)\)-approximation. In order to
conclude, it only remains to observe that \eqref{eq:quasi-inverse_2}
is a direct consequence of the very definition of \(\phi\),
while \eqref{eq:quasi-inverse_1} follows from the estimate
in \eqref{eq:aux_quasi-inverse}.
\end{proof}
\begin{lemma}
Let \((X,d_X,p_X)\), \((Y,d_Y,p_Y)\) be pointed Polish metric spaces.
Let \(R,r,r',\varepsilon>0\) be such that \(r+r'<R-3\varepsilon\) and
\(r>3\varepsilon\). Let \(\psi\colon X\to Y\) be a given
\((R,\varepsilon)\)-approximation, with quasi-inverse
\(\phi\colon Y\to X\). Then for every point \(y\in B(p_Y,r')\) it holds that
\begin{subequations}
\begin{align}\label{eq:tech_GH_1}
B(y,r-3\varepsilon)&\subset\psi\big(B(\phi(y),r)\big)^{3\varepsilon},\\
\label{eq:tech_GH_2}
\psi^{-1}\big(B(y,r-3\varepsilon)\big)&\subset B\big(\phi(y),r+4\varepsilon\big).
\end{align}
\end{subequations}
\end{lemma}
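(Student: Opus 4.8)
The plan is to deduce both inclusions from the properties of the quasi-inverse $\phi$ recorded in Lemma \ref{lem:rough_inverse} -- that $\phi$ is itself an $(R-\varepsilon,3\varepsilon)$-approximation (Lemma \ref{lem:rough_inverse} is applicable, since the hypotheses force $R>r+r'+3\varepsilon>6\varepsilon>4\varepsilon$) and that \eqref{eq:quasi-inverse_1}--\eqref{eq:quasi-inverse_2} hold -- together with the defining inequalities of $\psi$. The preliminary observation is that the conditions $r+r'<R-3\varepsilon$ and $r>3\varepsilon$ keep every point we shall handle inside the balls on which $\psi$ and $\phi$ genuinely behave like approximations: indeed $r'<R-6\varepsilon$, so $y\in B(p_Y,r')\subset B(p_Y,R-\varepsilon)$, and any point at distance $<r-3\varepsilon$ from $y$ still lies in $B(p_Y,r+r'-3\varepsilon)\subset B(p_Y,R-6\varepsilon)$. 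It is also convenient to note once and for all that, for any $y'\in B(p_Y,R-\varepsilon)$, the explicit construction of $\phi$ in Lemma \ref{lem:rough_inverse} gives $\phi(y')\in B(p_X,R)$ (it is a point of the chosen dense sequence in $B(p_X,R)$, or equals $p_X$ when $y'=p_Y$), so that $\psi(\phi(y'))$ makes sense; moreover \eqref{eq:quasi-inverse_2} yields $d_Y\big(\psi(\phi(y')),y'\big)<3\varepsilon$.

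For \eqref{eq:tech_GH_1}, I would fix $w\in B(y,r-3\varepsilon)$ and exhibit $\phi(w)$ as a witness for $w\in\psi\big(B(\phi(y),r)\big)^{3\varepsilon}$. By the preliminary observation $w\in B(p_Y,R-\varepsilon)$, hence $d_Y\big(\psi(\phi(w)),w\big)<3\varepsilon$; and since $w,y\in B(p_Y,R-\varepsilon)$ and $\phi$ is an $(R-\varepsilon,3\varepsilon)$-approximation, $d_X\big(\phi(w),\phi(y)\big)\le d_Y(w,y)+3\varepsilon<(r-3\varepsilon)+3\varepsilon=r$, i.e.\ $\phi(w)\in B(\phi(y),r)$. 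Putting the two facts together, $w$ lies in the $3\varepsilon$-neighbourhood of $\psi\big(B(\phi(y),r)\big)$, which is exactly the claim.

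For \eqref{eq:tech_GH_2}, I would take $x$ in the preimage -- that is, $x\in B(p_X,R)$ with $\psi(x)\in B(y,r-3\varepsilon)$, since $\psi$ is genuinely an $(R,\varepsilon)$-approximation on $B(p_X,R)$ -- and bound $d_X\big(x,\phi(y)\big)$ directly via the quasi-isometry of $\psi$. As $x,\phi(y)\in B(p_X,R)$, we have $d_X\big(x,\phi(y)\big)\le d_Y\big(\psi(x),\psi(\phi(y))\big)+\varepsilon$, and the triangle inequality together with $d_Y\big(\psi(\phi(y)),y\big)<3\varepsilon$ gives $d_Y\big(\psi(x),\psi(\phi(y))\big)\le d_Y\big(\psi(x),y\big)+3\varepsilon<(r-3\varepsilon)+3\varepsilon=r$. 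Therefore $d_X\big(x,\phi(y)\big)<r+\varepsilon<r+4\varepsilon$, i.e.\ $x\in B\big(\phi(y),r+4\varepsilon\big)$.

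The argument is essentially bookkeeping, so the step I would treat most carefully is checking, at each use of $\psi$ or $\phi$, that the arguments still lie in $B(p_X,R)$, resp.\ $B(p_Y,R-\varepsilon)$, where the approximation inequalities are available -- this is precisely the role of the numerical hypotheses $r+r'<R-3\varepsilon$ and $r>3\varepsilon$ (the latter also ensuring $B(y,r-3\varepsilon)\neq\emptyset$, so that the inclusions are not trivially vacuous). A minor point to flag is that in \eqref{eq:tech_GH_2} the symbol $\psi^{-1}$ is understood with respect to the honest domain $B(p_X,R)$ of $\psi$; the Borel extension of $\psi$ to all of $X$ from Remark \ref{rmk:quasi_isom} carries no geometric information outside that ball and plays no role here.
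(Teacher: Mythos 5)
Your proof is correct. For \eqref{eq:tech_GH_1} you follow exactly the paper's route: check that $y$ and every $z\in B(y,r-3\varepsilon)$ lie in $B(p_Y,R-\varepsilon)$, use the $(R-\varepsilon,3\varepsilon)$-approximation property of $\phi$ to get $\phi(z)\in B(\phi(y),r)$, and conclude via \eqref{eq:quasi-inverse_2}. For \eqref{eq:tech_GH_2}, however, you take a genuinely different and shorter path. The paper argues by contradiction: assuming $z\notin B(\phi(y),r+4\varepsilon)$ with $\psi(z)\in B(y,r-3\varepsilon)$, it invokes the already-proven \eqref{eq:tech_GH_1} to produce a witness $w\in B(\phi(y),r)$ with $d_Y(\psi(z),\psi(w))<3\varepsilon$, verifies $w\in B(p_X,R)$, and derives $4\varepsilon<d_X(z,w)<4\varepsilon$. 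You instead apply the quasi-isometry inequality of $\psi$ directly to the pair $x,\phi(y)\in B(p_X,R)$ and combine it with $d_Y\big(y,(\psi\circ\phi)(y)\big)<3\varepsilon$; this avoids the detour through \eqref{eq:tech_GH_1}, and in fact yields the sharper bound $d_X\big(x,\phi(y)\big)<r+\varepsilon$, of which the stated $r+4\varepsilon$ is a weakening. The only point where the two treatments diverge in substance is the handling of points outside $B(p_X,R)$: the paper works with the global extension of $\psi$ and disposes of such points via the (implicitly assumed) property $\psi\big(X\setminus B(p_X,R)\big)\subset Y\setminus B(p_Y,R)$, whereas you declare $\psi^{-1}$ to be taken relative to the honest domain $B(p_X,R)$; since the later applications of \eqref{eq:tech_GH_2} (e.g.\ in Proposition \ref{prop:m_omega_conc_spt}) only ever intersect the preimage with balls inside $B(p_X,R)$, your convention loses nothing, but it is worth being aware that the paper's version of the inclusion is stated for the extended map.
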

\begin{proof}
Let \(y\in B(p_Y,r')\) be fixed. Pick any point \(z\in B(y,r-3\varepsilon)\).
We aim to show that we have \(z\in\psi\big(B(\phi(y),r)\big)^{3\varepsilon}\),
whence \eqref{eq:tech_GH_1} would follow. Notice that \(d_Y(y,p_Y)<r'<R-\varepsilon\)
and \(d_Y(z,p_Y)\leq d_Y(z,y)+d_Y(y,p_Y)<(r-3\varepsilon)+r'<R-\varepsilon\),
thus \(y,z\in B(p_Y,R-\varepsilon)\) and
\[
d_X\big(\phi(z),\phi(y)\big)\leq d_Y(z,y)+3\varepsilon<(r-3\varepsilon)+3\varepsilon=r,
\]
where we used the fact that \(\phi\) is a \((R-\varepsilon,3\varepsilon)\)-approximation.
Then \(\phi(z)\in B\big(\phi(x),r\big)\), so that from the estimate
\(d_Y\big(z,(\psi\circ\phi)(z)\big)<3\varepsilon\) -- which is granted by
\eqref{eq:quasi-inverse_2} -- we deduce that the point \(z\) belongs to
\(\psi\big(B(\phi(y),r)\big)^{3\varepsilon}\), as required.

In order to prove \eqref{eq:tech_GH_2}, we argue by contradiction: suppose there
is \(z\in X\setminus B\big(\phi(y),r+4\varepsilon\big)\) with
\(\psi(z)\in B(y,r-3\varepsilon)\). Since \(B(y,r-3\varepsilon)\subset B(p_Y,R)\) and
\(\psi\big(X\setminus B(p_X,R)\big)\subset Y\setminus B(p_Y,R)\),
we have that \(z\in B(p_X,R)\). Moreover, \eqref{eq:tech_GH_1} grants the existence
of \(w\in B\big(\phi(y),r\big)\) such that \(d_Y\big(\psi(z),\psi(w)\big)<3\varepsilon\).
Notice that \(w\in B(p_X,R)\), as it is granted by the following estimates:
\[\begin{split}
d_X(w,p_X)&\leq d_X\big(w,\phi(y)\big)+d_X\big(\phi(y),p_X\big)
<r+d_X\big(\phi(y),\phi(p_Y)\big)\\
&\leq r+d_Y(y,p_Y)+3\varepsilon<r+r'+3\varepsilon<R.
\end{split}\]
Therefore, we deduce that \(d_X(z,w)\leq d_Y\big(\psi(z),\psi(w)\big)
+\varepsilon<4\varepsilon\). On the other hand, it holds
\[
d_X(z,w)\geq d_X\big(z,\phi(y)\big)-d_X\big(w,\phi(y)\big)
>(r+4\varepsilon)-r=4\varepsilon,
\]
which leads to a contradiction. Consequently, the claimed inclusion
\eqref{eq:tech_GH_2} is proven.
\end{proof}
\begin{definition}[Pointed Gromov--Hausdorff convergence]
Let \(\big\{(X_i,d_i,p_i)\big\}_{i\in\bar\N}\) be a sequence
of pointed Polish metric spaces. Then we say that \((X_i,d_i,p_i)\)
converges to \((X_\infty,d_\infty,p_\infty)\) in the
\emph{pointed Gromov--Hausdorff sense} (briefly, \emph{pGH sense})
as \(i\to\infty\) provided there exists a sequence
\((\psi_i)_{i\in\N}\) of \((R_i,\varepsilon_i)\)-approximations
\(\psi_i\colon X_i\to X_\infty\), for some \(R_i\nearrow+\infty\)
and \(\varepsilon_i\searrow 0\).
\end{definition}
\begin{definition}[Pointed measured Gromov--Hausdorff convergence]\label{def:pmGH}
Let \(\big\{(X_i,d_i,\mm_i,p_i)\big\}_{i\in\bar\N}\) be a sequence
of pointed Polish metric measure spaces, with \(\mm_i\) boundedly finite.
Then we say that the sequence \((X_i,d_i,\mm_i,p_i)\) converges to \((X_\infty,d_\infty,
\mm_\infty,p_\infty)\) in the \emph{pointed measured Gromov--Hausdorff sense}
(briefly, \emph{pmGH sense}) provided \((X_i,d_i,p_i)\to(X_\infty,d_\infty,p_\infty)\)
in the pGH sense and there exists a sequence \((\psi_i)_{i\in\N}\) of
\((R_i,\varepsilon_i)\)-approximations exploiting the pGH
convergence such that \((\psi_i)_*\mm_i\rightharpoonup\mm_\infty\)
in duality with \(C_{bbs}(X_\infty)\), namely,
\[
\int\varphi\circ\psi_i\,\d\mm_i\longrightarrow\int\varphi
\,\d\mm_\infty,\quad\text{ for every }\varphi\in C_{bbs}(X_\infty).
\]
\end{definition}
Let us illustrate which is the relation between pGH convergence and
pmGH convergence. On the one hand, given a sequence
\(\big((X_i,d_i,\mm_i,p_i)\big)\) that converges to
\((X_\infty,d_\infty,\mm_\infty,p_\infty)\) in the pmGH sense,
it holds that \(\big((X_i,d_i,p_i)\big)\) converges to
\((X_\infty,d_\infty,p_\infty)\) in the pGH sense. On the other
hand, if we consider a sequence \(\big((X_i,d_i,p_i)\big)\) that
pGH-converges to \((X_\infty,d_\infty,p_\infty)\) and a measure
\(\mm_\infty\) on \(X_\infty\), we can find a sequence of measures
\((\mm_i)_{i\in\N}\) such that \(\big((X_i,d_i,\mm_i,p_i)\big)\)
pmGH-converges to \((X_\infty,d_\infty,\mm_\infty,p_\infty)\);
the latter claim is the content of the following result. This shows
that -- in a sense -- the pmGH convergence is essentially a metric
concept, where a control on the behaviour of the measures is
added subsequently.
\begin{lemma}
Let \(\big\{(X_i,d_i,p_i)\big\}_{i\in\bar\N}\) be a sequence of pointed Polish
metric spaces. Suppose that \((X_i,d_i,p_i)\to(X_\infty,d_\infty,p_\infty)\) in the
pGH sense. Let \(\psi_i\colon X_i\to X_\infty\) be \((R_i,\varepsilon_i)\)-approximations
and \(\mm_\infty\) a Borel probability measure on \(X_\infty\).
Then there exists a sequence \((\mm_i)_{i\in\N}\)
of Borel probability measures \(\mm_i\in\mathscr P(X_i)\) such that
\begin{equation}\label{eq:from_pGH_to_pmGH}
(\psi_i)_*\mm_i\rightharpoonup\mm_\infty,
\quad\text{ in duality with }C_{bbs}(X_\infty).
\end{equation}
In particular, it holds that
\((X_i,d_i,\mm_i,p_i)\to(X_\infty,d_\infty,\mm_\infty,p_\infty)\)
in the pmGH sense.
\end{lemma}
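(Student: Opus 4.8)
The plan is to define $\mm_i$ as the pushforward of $\mm_\infty$ under a quasi-inverse of the approximation $\psi_i$, and then to combine the almost-isometry estimates of Lemma \ref{lem:rough_inverse} with the tightness of the \emph{probability} measure $\mm_\infty$ to get the weak convergence. Concretely, I would first fix $i_0\in\N$ such that $4\eps_i<R_i$ for every $i\geq i_0$ (possible since $R_i\nearrow+\infty$ and $\eps_i\searrow 0$), and for such $i$ invoke Lemma \ref{lem:rough_inverse} to produce a quasi-inverse $\phi_i\colon X_\infty\to X_i$ of $\psi_i$; recall that $\phi_i$ is Borel and, by \eqref{eq:quasi-inverse_2}, satisfies $d_\infty\big(y,(\psi_i\circ\phi_i)(y)\big)<3\eps_i$ for every $y\in B(p_\infty,R_i-\eps_i)$. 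For the finitely many $i<i_0$ I would just take $\phi_i$ to be the constant map at $p_i$, as those indices are irrelevant for the limit. Then I set $\mm_i\coloneqq(\phi_i)_*\mm_\infty\in\mathscr P(X_i)$, which is a Borel probability measure because $\phi_i$ is Borel and $\mm_\infty$ is a Borel probability measure.

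Next I would reduce \eqref{eq:from_pGH_to_pmGH} to a clean statement: by the change-of-variables formula for pushforwards, for $i\geq i_0$ and any $\varphi\in C_{bbs}(X_\infty)$ one has $\int\varphi\circ\psi_i\,\d\mm_i=\int\varphi\circ\psi_i\circ\phi_i\,\d\mm_\infty$, so it suffices to show that the right-hand side converges to $\int\varphi\,\d\mm_\infty$. To see this, fix $\varphi\in C_{bbs}(X_\infty)$, put $C\coloneqq\sup|\varphi|<+\infty$, and fix $\eta,\delta>0$. Since $\mm_\infty$ is a finite Borel measure on a Polish space, it is tight (Ulam's theorem), so there is a compact set $K\subset X_\infty$ with $\mm_\infty(X_\infty\setminus K)<\delta$; choose $R_K>0$ with $K\subset B(p_\infty,R_K)$. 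A standard Lebesgue-number argument using the continuity of $\varphi$ and the compactness of $K$ yields $\gamma>0$ such that $|\varphi(y)-\varphi(z)|<\eta$ whenever $y\in K$ and $d_\infty(y,z)<\gamma$. For all $i$ large enough we have both $3\eps_i<\gamma$ and $R_i-\eps_i>R_K$; then for such $i$ and any $y\in K\subset B(p_\infty,R_i-\eps_i)$ the estimate from Lemma \ref{lem:rough_inverse} gives $d_\infty\big(y,(\psi_i\circ\phi_i)(y)\big)<3\eps_i<\gamma$, hence $\big|\varphi(y)-\varphi((\psi_i\circ\phi_i)(y))\big|<\eta$. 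Splitting the integral over $K$ and its complement and bounding $|\varphi|\leq C$ off $K$, I obtain $\big|\int\varphi\circ\psi_i\circ\phi_i\,\d\mm_\infty-\int\varphi\,\d\mm_\infty\big|\leq\eta+2C\delta$ for all large $i$; letting $\eta,\delta\searrow 0$ gives \eqref{eq:from_pGH_to_pmGH}.

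The last assertion is then immediate: the very same maps $\psi_i$ already witness $(X_i,d_i,p_i)\to(X_\infty,d_\infty,p_\infty)$ in the pGH sense, and now also satisfy $(\psi_i)_*\mm_i\rightharpoonup\mm_\infty$ in duality with $C_{bbs}(X_\infty)$, which is precisely what Definition \ref{def:pmGH} requires.

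The hard part — or rather the only subtle point — is that $\varphi$ is merely continuous, not uniformly continuous, on the (possibly non-locally-compact) space $X_\infty$, and moreover $\psi_i\circ\phi_i$ is controlled only on the ball $B(p_\infty,R_i-\eps_i)$ and may behave wildly outside it. Introducing the compact set $K$ via tightness of the probability measure $\mm_\infty$ is exactly what resolves both issues simultaneously: on $K$ the function $\varphi$ is effectively uniformly continuous, for large $i$ the set $K$ is absorbed into $B(p_\infty,R_i-\eps_i)$ where the quasi-inverse estimate applies, and the leftover mass is negligible. (This is also the place where it matters that $\mm_\infty$ is finite rather than merely boundedly finite.)
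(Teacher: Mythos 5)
Your proof is correct, but it takes a genuinely different route from the paper's. The paper first reduces, via an (only sketched) diagonalisation, to the case where \(\mm_\infty\) is a fully-supported purely atomic measure as in \eqref{eq:m_infty_delta}; it then lifts each atom \(x_j\) to a point \(x^i_j\in B(p_i,R_i)\) using the approximate surjectivity of \(\psi_i\), defines \(\mm_i\) as a normalised sum of Dirac masses at these lifted points, and verifies the weak convergence by estimating the resulting sums directly (in particular, the quasi-inverse of Lemma \ref{lem:rough_inverse} is never invoked there). You instead set \(\mm_i\coloneqq(\phi_i)_*\mm_\infty\) with \(\phi_i\) the quasi-inverse, so that \((\psi_i)_*\mm_i=(\psi_i\circ\phi_i)_*\mm_\infty\), and you conclude from the estimate \eqref{eq:quasi-inverse_2} together with tightness of the probability measure \(\mm_\infty\) and the local uniform continuity of \(\varphi\) near the compact set \(K\) (your compactness argument for the ``Lebesgue number'' \(\gamma\) is the standard one and is sound). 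Your route is more self-contained: it treats all \(\mm_\infty\) at once, avoids the weak-density/diagonalisation step that the paper leaves implicit, and in fact yields convergence in duality with all of \(C_b(X_\infty)\), not merely \(C_{bbs}(X_\infty)\). What the paper's construction buys in exchange is an explicit, countably-supported approximating sequence \(\mm_i\), which is not needed for the statement itself. The only points worth double-checking in your write-up are bookkeeping ones you already handled: \(\phi_i\) exists only once \(4\varepsilon_i<R_i\), and \(\phi_i\) is Borel, so \((\phi_i)_*\mm_\infty\) is indeed a Borel probability (hence boundedly finite) measure as Definition \ref{def:pmGH} requires.
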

\begin{proof}
Up to a diagonalisation argument, it is sufficient to prove the
claim for a fully-supported measure \(\mm_\infty\) of the form
\begin{equation}\label{eq:m_infty_delta}
\mm_\infty\coloneqq\sum_{j\in\N}\lambda_j\,\delta_{x_j},
\end{equation}
where \((x_j)_{j\in\N}\) is dense in \(X_\infty\) and
\((\lambda_j)_{j\in\N}\subset[0,1]\) satisfies
\(\sum_{j\in\N}\lambda_j=1\). Indeed, these measures are
weakly dense in the space \(\mathscr P(X_\infty)\).
Then let \(\mm_\infty\) be a fixed measure as in
\eqref{eq:m_infty_delta}. Given any \(i\in\N\) and \(x\in B(p_i,R_i)\), we define
\(j(i,x)\in\N\) as the smallest number \(j\in\N\) such that
\(d_\infty\big(\psi_i(x),x_j\big)<\varepsilon_i\). Let us
define \(J_i\coloneqq\big\{j(i,x)\,:\,x\in B(p_i,R_i)\big\}\subset\N\)
for every \(i\in\N\). Given any \(j\in J_i\), choose any point
\(x^i_j\in B(p_i,R_i)\) such that \(j(i,x^i_j)=j\). Observe that
\begin{equation}\label{eq:approx_xij}
d_\infty\big(\psi_i(x^i_j),x_j\big)<\varepsilon_i,
\quad\text{ for every }i\in\N\text{ and }j\in J_i.
\end{equation}
Now for any \(i\in\N\) let us define the Borel probability measure \(\mm_i\) on \(X_i\) as
\[
\mm_i\coloneqq\frac{1}{c_i}\sum_{j\in J_i}\lambda_j\,\delta_{x^i_j},
\quad\text{ where we set }c_i\coloneqq\sum_{j\in J_i}\lambda_j.
\]
We claim that
\begin{equation}\label{eq:claim_incl_Ji}
\forall k\in\N\quad\exists\,\bar i\in\N:\quad\forall i\geq\bar i
\quad\{1,\ldots,k\}\subset J_i.
\end{equation}
In order to prove it, let \(k\in\N\) be fixed.
Choose any \(\bar i\in\N\) so that
\begin{equation}\label{eq:choice_bar_i}\begin{split}
x_1,\ldots,x_k\in B(p_\infty,R_i-\varepsilon_i),&\quad
\text{ for every }i\geq\bar i,\\
d_\infty(x_j,x_{j'})>2\varepsilon_i,&\quad\text{ for every }
i\geq\bar i\text{ and }j,j'\in\{1,\ldots,k\}\text{ with }j\neq j'.
\end{split}\end{equation}
Given any \(j\in\{1,\ldots,k\}\), it follows from the first line in
\eqref{eq:choice_bar_i} that for every \(i\geq\bar i\) there exists a point
\(x\in B(p_i,R_i)\) such that \(d_\infty\big(\psi_i(x),x_j\big)<\varepsilon_i\).
Since the second line in \eqref{eq:choice_bar_i} yields
\[
d_\infty\big(\psi_i(x),x_{j'}\big)\geq d_\infty(x_j,x_{j'})
-d_\infty\big(\psi_i(x),x_j\big)>2\varepsilon_i-\varepsilon_i
=\varepsilon_i,\quad\text{ for every }j'<j,
\]
we deduce that \(j=j(i,x)\) and accordingly \(j\in J_i\). This proves
the validity of the claim \eqref{eq:claim_incl_Ji}.

It remains to prove that \((\psi_i)_*\mm_i\rightharpoonup\mm_\infty\)
in duality with \(C_{bbs}(X_\infty)\). Let \(f\in C_{bbs}(X_\infty)\) be
fixed and put \(M\coloneqq\sup_{X_\infty}|f|\). Fix any \(\varepsilon>0\).
Pick any \(k\in\N\) such that \(M\sum_{j>k}\lambda_j<\varepsilon\). By using
\eqref{eq:claim_incl_Ji} we can find \(\bar i\in\N\) such that
\(\{1,\ldots,k\}\subset J_i\) for every \(i\geq\bar i\). By using
\eqref{eq:approx_xij} and the continuity of the function \(f\), we
see that
\begin{equation}\label{eq:cont_f_xj}
\lim_{i\to\infty}f\big(\psi_i(x^i_j)\big)=f(x_j),
\quad\text{ for every }j\in\{1,\ldots,k\}.
\end{equation}
Moreover, for every \(i\geq\bar i\) we can estimate
\[\begin{split}
&\bigg|c_i\int f\,\d(\psi_i)_*\mm_i-\int f\,\d\mm_\infty\bigg|
=\bigg|\sum_{j\in J_i}\lambda_j\,f\big(\psi_i(x^i_j)\big)
-\sum_{j\in\N}\lambda_j\,f(x_j)\bigg|\\
\leq\,&\bigg|\sum_{j=1}^k\lambda_j\big(f\big(\psi_i(x^i_j)\big)
-f(x_j)\big)\bigg|+\sum_{\substack{j\in J_i:\\j>k}}
\lambda_j\big|f\big(\psi_i(x^i_j)\big)\big|+
\sum_{j>k}\lambda_j\big|f(x_j)\big|\\
\leq\,&\bigg|\sum_{j=1}^k\lambda_j\,\big(f\big(\psi_i(x^i_j)\big)-f(x_j)\big)\bigg|
+2M\sum_{j>k}\lambda_j\\
\leq\,&\bigg|\sum_{j=1}^k\lambda_j\big(f\big(\psi_i(x^i_j)\big)-f(x_j)\big)\bigg|+2\varepsilon.
\end{split}\]
By letting \(i\to\infty\) and using \eqref{eq:cont_f_xj}, we deduce that
\(\lims_i\big|c_i\int f\,\d(\psi_i)_*\mm_i-\int f\,\d\mm_\infty\big|
\leq 2\varepsilon\), whence by arbitrariness of \(\varepsilon>0\) we obtain
that \(\lim_i c_i\int f\,\d(\psi_i)_*\mm_i=\int f\,\d\mm_\infty\). Moreover,
it follows from \eqref{eq:claim_incl_Ji} that \(\lim_i c_i=1\), so that
\(\lim_i\int f\,\d(\psi_i)_*\mm_i=\int f\,\d\mm_\infty\) for all
\(f\in C_{bbs}(X_\infty)\), thus proving the validity of \eqref{eq:from_pGH_to_pmGH}.
\end{proof}
\section{Ultrafilters and ultraproducts}\label{s:ultrafilters}
The notion of ultralimit of pointed metric measure spaces introduced in this
paper, as well as the well-known notion of ultralimit of pointed metric spaces,
rely on the concepts of ultrafilter and ultralimits in metric spaces. We recall
the necessary definitions and fix some terminology. 
\begin{definition}[Ultrafilter]\label{def:ultrafilter}
Let $\omega\subset 2^\N$ be a collection of subsets of natural numbers.
The set $\omega$ is an \emph{ultrafilter} on $\N$ if the following three
conditions hold:
\begin{enumerate}
\item If $A,B\in\omega$, then $A\cap B\in\omega$.
\item If $A\in\omega$ and $A\subset B\subset\N$, then $B\in\omega$.
\item If $A\subset\N$, then either $A\in\omega$ or $\N\setminus A\in\omega$.
\end{enumerate}
An ultrafilter $\omega$ is called \emph{non-principal} if it does not contain
any singleton.
\end{definition}
\begin{remark}{\rm
For a set $\omega\subset 2^\N$, being an ultrafilter is equivalent to the
characteristic function $\nchi_\omega$ of $\omega$ being a non-trivial,
finitely-additive measure on $2^\N$. Due to this correspondence, we will
use the phrasing ``for $\omega$- almost every'' to mean ``outside of a
$\nchi_\omega$- null set''.
\fr}\end{remark}
\begin{definition}[Ultralimits in metric spaces]
Let $\omega\subset 2^\N$ be a given non-principal ultrafilter.
Let $(X,d)$ be a metric space and $(x_i)$ a sequence of points in $X$.
Then a point $x_\infty\in X$ is said to be the \emph{ultralimit}, or the
\emph{$\omega$-limit}, of the sequence $(x_i)$ if for every $\varepsilon>0$
we have that
\[
\big\{i\in\N\,\big|\,d(x_\infty,x_i)\le\varepsilon\big\}\in\omega.
\]
In this case, we write \(x_\infty=\lim_{i\to\omega}x_i\).
\end{definition}
We will use the following simple argument several times.
\begin{lemma}\label{lma:helppo}
For every $i\in\N$, let $\F_i\in\omega$ be so that
$\F_i\supset\F_{i+1}$ and $\F_i\subset \{k\ge i\}$.
Then for every $k\in\F_i$, there exists $i_k\in\N$ so that
$k\in\F_{i_k}\setminus \F_{i_k+1}$. Moreover, it holds that
\[
\lim_{k\to\omega}i_k=\infty.
\]
\end{lemma}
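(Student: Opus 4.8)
The plan is to unpack the definitions and treat the two assertions separately, noting first that the hypotheses on the $\F_i$ make them a decreasing chain of $\omega$-large sets with $\F_i$ avoiding the first $i-1$ integers.

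First I would verify that the index $i_k$ is well-defined for each $k\in\F_i$. Fix such a $k$. Since $\F_j\subset\{m\ge j\}$, for every $j>k$ we have $k\notin\F_j$; hence the set $\{j\in\N\,:\,k\in\F_j\}$ is non-empty (it contains $i$, since the $\F$'s are nested and $k\in\F_i$) and bounded above by $k$. Letting $i_k$ be its maximum, we get $k\in\F_{i_k}$ and $k\notin\F_{i_k+1}$, i.e.\ $k\in\F_{i_k}\setminus\F_{i_k+1}$, which is exactly the claimed membership. (Equivalently, using the nestedness, $\{j:k\in\F_j\}$ is a finite initial segment $\{1,\dots,i_k\}$ intersected with the domain where $\F_j$ is considered, and $i_k$ is its right endpoint.)

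Next I would prove $\lim_{k\to\omega}i_k=\infty$, which by definition of the $\omega$-limit being $+\infty$ means: for every $N\in\N$, the set $\{k\in\F_i\,:\,i_k\ge N\}$ belongs to $\omega$. The key observation is that $i_k\ge N$ as soon as $k\in\F_N$: indeed if $k\in\F_N$ then by maximality $i_k\ge N$. Therefore $\{k:i_k\ge N\}\supset\F_N\in\omega$, and since $\omega$ is upward closed (property (2) of an ultrafilter), $\{k:i_k\ge N\}\in\omega$ as well. As $N$ was arbitrary, this is precisely the statement $\lim_{k\to\omega}i_k=\infty$.

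I do not foresee a genuine obstacle here; the statement is essentially a bookkeeping lemma. The only point requiring a little care is making sure the nestedness $\F_i\supset\F_{i+1}$ is actually used (to guarantee that $\{j:k\in\F_j\}$ is an interval, so that ``the largest $j$ with $k\in\F_j$'' coincides with the $i_k$ for which $k\in\F_{i_k}\setminus\F_{i_k+1}$) and that the containment $\F_i\subset\{k\ge i\}$ is used (to guarantee $i_k$ exists, i.e.\ $k\notin\F_j$ for all large $j$, so the index set is bounded). Both are immediate from the hypotheses, so the proof is short.
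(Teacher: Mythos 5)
Your proof is correct and follows essentially the same route as the paper: existence of $i_k$ is obtained by taking the largest $j$ with $k\in\F_j$ (which exists because $\F_j\subset\{m\ge j\}$ forces $j\le k$), and the divergence of $i_k$ follows from the inclusion $\{k:i_k\ge N\}\supset\F_N\in\omega$ together with the upward closedness of $\omega$. The paper's proof is just a terser version of the same bookkeeping.
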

\begin{proof} The existence of $i_k$ is clearly true. To show that
$\lim_{k\to\omega}i_k=\infty$, it is enough to notice that
\(\{k\in\N\,:\,i_k\ge n\}\supset \F_n\in\omega.\)
\end{proof}
For most of the paper, \(\omega\) will be an arbitrary
non-principal ultrafilter on \(\N\). Only at the end of
Appendix \ref{s:Prokhorov}, we consider ultrafilters
having the property described in the ensuing result:
\begin{lemma}\label{lem:non_p-pt}
Let \(\{N_n\}_{n\in\N}\) be a partition of \(\N\) such
that \(N_n\) is an infinite set for all \(n\in\N\).
Then there exists a non-principal ultrafilter \(\omega\)
on \(\N\) that contains all those subsets \(S\subset\N\)
such that \(N_n\setminus S\) is finite for all but finitely
many \(n\in\N\). In particular, \(\N\setminus N_n\in\omega\)
for every \(n\in\N\).
\end{lemma}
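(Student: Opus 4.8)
The plan is to recognise that the family
\[
\mathcal F\coloneqq\big\{S\subset\N\,\big|\,\{n\in\N\,:\,N_n\setminus S\text{ is infinite}\}\text{ is finite}\big\}
\]
is a proper filter on \(\N\) which refines the Fréchet (cofinite) filter, and then to extend it to an ultrafilter by the ultrafilter lemma (a consequence of Zorn's lemma, in the spirit of the existence results for non-principal ultrafilters discussed earlier). Any ultrafilter \(\omega\supset\mathcal F\) will then satisfy the conclusion: it contains, by construction, every set \(S\) of the form described in the statement, and it is non-principal because it contains all cofinite subsets of \(\N\).

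In detail, I would first verify the filter axioms for \(\mathcal F\). One has \(\N\in\mathcal F\) trivially. Upward closedness is immediate: if \(S\subset S'\) then \(N_n\setminus S'\subset N_n\setminus S\), so the set of "bad" indices \(n\) can only shrink. Closure under finite intersections follows from \(N_n\setminus(S\cap T)=(N_n\setminus S)\cup(N_n\setminus T)\), whence the bad index set of \(S\cap T\) is contained in the union of the bad index sets of \(S\) and of \(T\), which is finite. The filter is proper precisely because every \(N_n\) is infinite: for \(S=\emptyset\) every index is bad, so \(\emptyset\notin\mathcal F\). Moreover, if \(S\) is cofinite then \(N_n\setminus S\subset\N\setminus S\) is finite for every \(n\), so \(\mathcal F\) contains the cofinite filter. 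Extending \(\mathcal F\) to an ultrafilter \(\omega\), we get that \(\omega\) contains all cofinite sets, hence no singleton, hence is non-principal, and \(\omega\supset\mathcal F\) by choice.

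For the final assertion, since \(\{N_n\}_{n\in\N}\) is a partition one computes \(N_m\setminus(\N\setminus N_n)=N_m\cap N_n\), which equals \(N_n\) when \(m=n\) and is empty when \(m\neq n\); therefore the bad index set of \(\N\setminus N_n\) is the singleton \(\{n\}\), which is finite, so \(\N\setminus N_n\in\mathcal F\subset\omega\). I do not expect a genuine obstacle: the two points that need care are that \(\mathcal F\) is a \emph{proper} filter — which is exactly where the hypothesis that each \(N_n\) is infinite is used — and the legitimacy of the extension to an ultrafilter, which is the standard ultrafilter lemma already tacitly in force throughout the paper. Accordingly I would write out the closure-under-intersection and properness checks explicitly and treat the ultrafilter extension as known.
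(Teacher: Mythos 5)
Your proof is correct and follows essentially the same route as the paper: you define the same filter (your $\mathcal F$ is the paper's $\beta$), verify closure under intersection via the identity $N_n\setminus(S\cap T)=(N_n\setminus S)\cup(N_n\setminus T)$ and properness from the infinitude of each $N_n$, and invoke the Ultrafilter Lemma. The only cosmetic difference is that you deduce non-principality from the inclusion of the cofinite filter, whereas the paper deduces it from $\bigcap_{n}(\N\setminus N_n)=\emptyset$; both are immediate.
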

\begin{proof}
Let \(\beta\) be the family of all those subsets \(S\subset\N\)
such that \(N_n\setminus S\) is finite for all but finitely
many \(n\in\N\). We claim that \(\beta\) is a filter on \(\N\),
\emph{i.e.}, it satisfies (1), (2) of Definition
\ref{def:ultrafilter} and does not contain \(\emptyset\).
The latter is trivially verified, as \(N_n\setminus\emptyset\)
is infinite for all \(n\in\N\) and thus \(\emptyset\notin\beta\).
Given any \(S,T\in\beta\), we have that \(N_n\setminus(A\cap B)
=(N_n\setminus A)\cup(N_n\setminus B)\) is finite for all but
finitely many \(n\), so that \(A\cap B\in\beta\). Moreover,
if \(S\in\beta\) and \(T\subset\N\) satisfy \(S\subset T\),
then the set \(N_n\setminus T\subset N_n\setminus S\) is finite
for all but finitely many \(n\), which shows that \(T\in\beta\) as
well. All in all, we have proven that \(\beta\) is a filter on
\(\N\). An application of the Ultrafilter
Lemma yields the existence of an ultrafilter
\(\omega\) on \(\N\) such that \(\beta\subset\omega\).
Since \(N_m\setminus(\N\setminus N_n)=\emptyset\) for
every \(n,m\in\N\) with \(n\neq m\), we see that
\(\{\N\setminus N_n\}_{n\in\N}\subset\beta\subset\omega\).
This implies that \(\omega\) is non-principal, as
\(\bigcap_{S\in\omega}S\subset\bigcap_{n\in\N}\N\setminus N_n
=\emptyset\). Consequently, the statement is achieved.
\end{proof}
\section{Ultralimits of metric spaces}\label{s:ultralimits_metric}
In this section, we will recall the definition of ultralimit of (pointed) metric
spaces. As an intermediate step, we will use the ultraproduct of sets -- or
rather the set-theoretic ultralimit -- in which a pseudometric is defined in a
natural manner. The ultralimit of a sequence of metric spaces is then defined by
identifying those points which are at zero distance from each other. Defining the
ultralimit of pointed metric spaces in such a way is for introducing the 
necessary terminology for the definition of ultralimits of metric measure spaces
in Chapter \ref{sec:ULmms}.
\bigskip

Let $\omega\subset 2^\N$ be a non-principal ultrafilter on \(\N\).
Actually,
\[
\omega\quad\textbf{ will remain fixed throughout the whole paper,}
\]
with the only exception of Proposition \ref{prop:Prokh_sharp}.
Let \(\big((X_i,d_i)\big)\) be a sequence of metric spaces.
Let us define the family \(\Pi^\omega_{i\in\N}X_i\) as
\[
\Pi^\omega_{i\in\N}X_i\coloneqq\bigsqcup_{S\in\omega}\prod_{i\in S}X_i.
\]
Given any \(x\in\Pi^\omega_{i\in\N}X_i\), we will denote by
\(S_x\) the element of \(\omega\) for which \(x\in\prod_{i\in S_x}X_i\).
However, often we will just write \(x=(x_i)\), omitting the explicit
reference to the index set \(S_x\). Consider the following equivalence
relation on \(\Pi^\omega_{i\in\N}X_i\): given
\((x_i),(y_i)\in\Pi^\omega_{i\in\N}X_i\), we declare
\[
(x_i)\sim_\omega(y_i)\quad\Longleftrightarrow\quad
\big\{j\in S_{(x_i)}\cap S_{(y_i)}\,\big|\,x_j=y_j\big\}\in\omega.
\]
Then we define the \emph{ultraproduct} \(\bar X_\omega\) of the sets
\(X_i\) as the quotient set
\[
\bar X_\omega\coloneqq\Pi^\omega_{i\in\N}X_i\big/\sim_\omega.
\]
The equivalence class of an element \((x_i)\in\Pi^\omega_{i\in\N}X_i\)
with respect to \(\sim_\omega\) will be denoted by \([x_i]\in\bar X_\omega\),
while by \(\pi_\omega\colon\Pi^\omega_{i\in\N}X_i\to\bar X_\omega\)
we mean the canonical projection map \(\pi_\omega\big((x_i)\big)\coloneqq[x_i]\).
\begin{definition}[Set-theoretic ultralimit of metric spaces]
\label{def:setultralim}
Let $\big((X_i,d_i)\big)$ be a sequence of metric spaces.
Then we define its \emph{set-theoretic ultralimit} as
\[
\Pi_{i\to\omega}(X_i,d_i)\coloneqq(\bar X_\omega,\bar d_\omega),
\]
where $\bar d_\omega$ is the extended pseudodistance on the ultraproduct
\(\bar X_\omega\), which is defined by
\begin{equation}\label{eq:def_bar_d_omega}
\bar d_\omega\big([x_i],[y_i]\big)\coloneqq\lim_{i\to\omega}
d_i(x_i,y_i)\in[0,+\infty],\quad\text{ for every }[x_i],[y_i]\in\bar X_\omega.
\end{equation}
\end{definition}
The definition in \eqref{eq:def_bar_d_omega} is well-posed, as the quantity
\(\lim_{i\to\omega}d_i(x_i,y_i)\) does not depend on the specific choice of
the representatives of \([x_i]\) and \([y_i]\).
\begin{definition}[Set-theoretic ultralimit of pointed metric spaces]
Let $\big((X_i,d_i,p_i)\big)$ be a sequence of pointed metric spaces.
Then we define its \emph{set-theoretic ultralimit} as
\[
\Pi_{i\to\omega}(X_i,d_i,p_i)\coloneqq
\big(O(\bar X_\omega),\bar d_\omega,[p_i]\big),
\]
where we set
\[
O(\bar X_\omega)\coloneqq\Big\{[x_i]\in\bar X_\omega\,\Big|
\,\bar d_\omega\big([x_i],[p_i]\big)<+\infty\Big\}.
\]
\end{definition}
The set-theoretic ultralimit of pointed metric spaces is a (pointed)
pseudometric space. Now the ultralimit can be defined as the natural
corresponding metric space.
\begin{definition}[Ultralimit of pointed metric spaces]
Let \(\big((X_i,d_i,p_i)\big)\) be a sequence of pointed metric spaces.
Let us consider the following equivalence relation on \(O(\bar X_\omega)\):
given any \([x_i],[y_i]\in O(\bar X_\omega)\), we declare that
\([x_i]\sim[y_i]\) if and only if \(\bar d_\omega\big([x_i],[y_i]\big)=0\).
We denote by \([[x_i]]\) the equivalence class of \([x_i]\).
Then we define the \emph{ultralimit} of \(\big((X_i,d_i,p_i)\big)\) as
\[
\lim_{i\to\omega}(X_i,d_i,p_i)\coloneqq(X_\omega,d_\omega,p_\omega),
\]
where we set \(X_\omega\coloneqq O(\bar X_\omega)/\sim\),
\[
d_\omega\big([[x_i]],[[y_i]]\big)\coloneqq\bar d_\omega\big([x_i],[y_i]\big),
\quad\text{ for every }[[x_i]],[[y_i]]\in X_\omega,
\]
and \(p_\omega\coloneqq[[p_i]]\).
\end{definition}
Observe that the value of \(d_\omega\big([[x_i]],[[y_i]]\big)\)
does not depend on the chosen representatives of \([[x_i]]\) and \([[y_i]]\).
The ultralimit \(\lim_{i\to\omega}(X_i,d_i,p_i)\) is a pointed metric space.
We denote by
\begin{equation}\label{eq:def_proj_pi}
\pi\big([x_i]\big)\coloneqq[[x_i]],\quad\text{ for every }
[x_i]\in O(\bar X_\omega),
\end{equation}
the natural projection map \(\pi\colon O(\bar X_\omega)\to X_\omega\)
on the quotient.
\chapter{Ultralimits of metric measure spaces}\label{sec:ULmms}
In this chapter, we will introduce the notion of ultralimit of pointed
metric measure spaces,  which naturally arises from the related notions.
Yet, we are not aware of any usage of it in the literature.
\section{Construction of the ultralimit}\label{s:constr_ultralim_mms}
We begin by introducing a relevant family of subsets of the
set-theoretic ultralimit of metric spaces, which will play a key
role in the sequel.
\begin{definition}[Internal measurable set]\label{def:int_meas_sets}
Let \(\big((X_i,d_i)\big)\) be a sequence of metric spaces.
Then a set \(\bar A\subset\bar X_\omega\) is said
to be an \emph{internal measurable set} provided there exist \(S\in\omega\)
and \((A_i)_{i\in S}\in\prod_{i\in S}{\rm Ball}(X_i)\) with
\(\bar A=\pi_\omega\big(\prod_{i\in S}A_i\big)\). The family of all
internal measurable subsets of \(\bar X_\omega\) is denoted by
\(\mathcal A(\bar X_\omega)\).
\end{definition}
Notice that \(\bar A\) coincides with the ultraproduct of the sets \(A_i\),
which can be seen as a subset of \(\bar X_\omega\) in a canonical way. In view
of this observation, we will denote it by \(\bar A=\Pi_{i\to\omega}A_i\).
\begin{lemma}\label{lem:equiv_iBs}
Let \(\big((X_i,d_i)\big)\) be a sequence of metric spaces.
Let \((A_i)_{i\in S}\in\prod_{i\in S}{\rm Ball}(X_i)\)
and \((B_i)_{i\in S'}\in\prod_{i\in S'}{\rm Ball}(X_i)\) be given,
for some \(S,S'\in\omega\). Then it holds that
\[
\Pi_{i\to\omega}A_i=\Pi_{i\to\omega}B_i\quad\Longleftrightarrow
\quad A_i=B_i\;\text{ for }\omega\text{-a.e.\ }i\in S\cap S'.
\]
\end{lemma}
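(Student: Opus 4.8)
The plan is to work throughout with the description of an internal measurable set as an honest subset of \(\bar X_\omega\): for \((A_i)_{i\in S}\in\prod_{i\in S}{\rm Ball}(X_i)\) and \([x_i]\in\bar X_\omega\) one has \([x_i]\in\Pi_{i\to\omega}A_i\) if and only if \(x_i\in A_i\) for \(\omega\)-almost every \(i\). This is precisely the canonical identification of the ultraproduct of the \(A_i\) with a subset of \(\bar X_\omega\) recorded right after Definition \ref{def:int_meas_sets}, so I would start by making that identification explicit and by checking that its right-hand side is independent of the chosen representative of \([x_i]\): any two representatives agree for \(\omega\)-a.e.\ \(i\), and since \(\nchi_\omega\) is finitely additive the two sets \(\{i:x_i\in A_i\}\) then belong to \(\omega\) simultaneously. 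I would also fix once and for all \(T\coloneqq S\cap S'\in\omega\).

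For the easy implication ``\(\Leftarrow\)'' I would set \(U\coloneqq\{i\in T:A_i=B_i\}\), which lies in \(\omega\) by hypothesis, and note that for any \([x_i]\) the sets \(\{i:x_i\in A_i\}\) and \(\{i:x_i\in B_i\}\) have the same intersection with \(U\); hence one belongs to \(\omega\) exactly when the other does, and the characterisation above gives \(\Pi_{i\to\omega}A_i=\Pi_{i\to\omega}B_i\).

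The substance is the implication ``\(\Rightarrow\)'', which I would prove by contraposition. Assuming \(E\coloneqq\{i\in T:A_i\neq B_i\}\in\omega\), observe that for each \(i\in E\) at least one of \(A_i\setminus B_i\) and \(B_i\setminus A_i\) is non-empty, so \(E=E_1\cup E_2\) with \(E_1\coloneqq\{i\in E:A_i\setminus B_i\neq\emptyset\}\) and \(E_2\coloneqq\{i\in E:B_i\setminus A_i\neq\emptyset\}\). Since \(\omega\) is an ultrafilter, \(E\in\omega\) forces \(E_1\in\omega\) or \(E_2\in\omega\): otherwise \(\N\setminus E=(\N\setminus E_1)\cap(\N\setminus E_2)\in\omega\), which together with \(E\in\omega\) is impossible. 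Exchanging the roles of \((A_i)\) and \((B_i)\) if necessary, I may assume \(E_1\in\omega\); then, choosing \(x_i\in A_i\setminus B_i\) for each \(i\in E_1\), the resulting element \((x_i)_{i\in E_1}\in\Pi^\omega_{i\in\N}X_i\) has class \([x_i]\) lying in \(\Pi_{i\to\omega}A_i\) (as \(x_i\in A_i\) on \(E_1\in\omega\)) but not in \(\Pi_{i\to\omega}B_i\) (as \(\{i:x_i\in B_i\}\subset\N\setminus E_1\notin\omega\)). Hence \(\Pi_{i\to\omega}A_i\neq\Pi_{i\to\omega}B_i\), which is exactly what contraposition requires.

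I expect the only genuinely delicate points to be the opening reduction to the ``membership \(\omega\)-a.e.''\ description of \(\Pi_{i\to\omega}A_i\) together with its independence of representatives, and the bookkeeping with the index sets \(S\), \(S'\), \(T\) and the various restrictions; the ultrafilter fact that a ``large'' finite union has a ``large'' member is elementary, and the selection of the points \(x_i\in A_i\setminus B_i\) only invokes the Axiom of Choice, which is in any case already pervasive here (existence of non-principal ultrafilters, Zorn's lemma in the proof of Lemma \ref{lem:prop_spt}).
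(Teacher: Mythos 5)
Your proposal is correct and follows essentially the same route as the paper: the backward implication via upward-closedness of \(\omega\), and the forward implication by contraposition, reducing (after possibly swapping the two families) to the case where \(A_i\setminus B_i\neq\emptyset\) on an \(\omega\)-large set and picking witnesses \(x_i\in A_i\setminus B_i\) there. The only difference is that you spell out the ultrafilter step \(E=E_1\cup E_2\Rightarrow E_1\in\omega\text{ or }E_2\in\omega\) and the representative-independence of the membership criterion, which the paper leaves implicit.
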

\begin{proof}
Suppose \(A_i=B_i\) holds for \(\omega\)-a.e.\ \(i\in S\cap S'\).
Set \(T\coloneqq\{i\in S\cap S'\,:\,A_i=B_i\}\in\omega\). If
\([x_i]\in\Pi_{i\to\omega}A_i\), then
\(T\cap\{j\in S\cap S_{(x_i)}\,:\,x_j\in A_j\}\in\omega\), so
its superset \(\{j\in S'\cap S_{(x_i)}\,:\,x_j\in B_j\}\) belongs to \(\omega\)
as well, which shows that \([x_i]\in\Pi_{i\to\omega}B_i\) and
thus \(\Pi_{i\to\omega}A_i\subset\Pi_{i\to\omega}B_i\).
The converse inclusion follows by an analogous argument,
just interchanging \((A_i)_{i\in S}\) and \((B_i)_{i\in S'}\).

On the contrary, suppose \(\{i\in S\cap S'\,:\,A_i\neq B_i\}\in\omega\).
Possibly interchanging \((A_i)_{i\in S}\) and \((B_i)_{i\in S'}\), we
can suppose that \(T\coloneqq\{i\in S\cap S'\,:\,A_i\setminus B_i\neq
\emptyset\}\in\omega\). Pick any \(x_i\in A_i\setminus B_i\) for
every \(i\in T\). Then it holds \([x_i]\in\Pi_{i\to\omega}A_i
\setminus\Pi_{i\to\omega}B_i\), thus proving that
\(\Pi_{i\to\omega}A_i\neq\Pi_{i\to\omega}B_i\).
\end{proof}
The idea is to define the limit measure on a suitable algebra 
\(\mathcal A_O(\bar X_\omega)\) (obtained by restriction from
\(\mathcal A(\bar X_\omega)\)) of subsets of \(O(\bar X_\omega)\)
using the measure on internal measurable subsets defined in a natural way,
and then extend it to a \(\sigma\)-algebra containing them. Finally, the
measure on the ultralimit of pointed metric measure spaces is obtained by
pushforward under the projection map \(\pi\).
We will use the following proposition.
\begin{proposition}\label{prop:formulas_union_complement}
Let \(\big((X_i,d_i)\big)\) be a sequence of metric spaces.
Then it holds that
\begin{subequations}\begin{align}
\label{eq:A_X_algebra_claim1}
(\Pi_{i\to\omega}A_i)\cup(\Pi_{i\to\omega}B_i)
&=\Pi_{i\to\omega}A_i\cup B_i,\\
\label{eq:A_X_algebra_claim2}
(\Pi_{i\to\omega}A_i)^c&=\Pi_{i\to\omega}A_i^c,
\end{align}\end{subequations}
for every \(\Pi_{i\to\omega}A_i,\Pi_{i\to\omega}B_i\in
\mathcal A(\bar X_\omega)\). In particular,
the family \(\mathcal A(\bar X_\omega)\) is an algebra of
subsets of \(\bar X_\omega\). Moreover, if \(\mm_i\) is a given ball
measure on the space \(X_i\) for every \(i\in\N\), then the set-function
\(\tilde\mm_\omega\colon\mathcal A(\bar X_\omega)\to[0,+\infty]\) defined by
\[
\tilde\mm_\omega(\Pi_{i\to\omega}A_i)\coloneqq\lim_{i\to\omega}
\mm_i(A_i),\quad\text{ for every }\Pi_{i\to\omega}A_i
\in\mathcal A(\bar X_\omega)
\]
is a finitely-additive measure on \(\mathcal A(\bar X_\omega)\). The set-function \(\tilde\mm_\omega\) is well-defined due to
Lemma \ref{lem:equiv_iBs}.
\end{proposition}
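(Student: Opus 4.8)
The plan is to verify the three claims in turn---the two set-theoretic identities, the fact that $\mathcal{A}(\bar X_\omega)$ is an algebra, and the finite additivity of $\tilde\mm_\omega$---using the characterization of membership in an ultraproduct in terms of $\omega$-large sets of indices, exactly in the spirit of the proof of Lemma \ref{lem:equiv_iBs}.

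First I would prove \eqref{eq:A_X_algebra_claim1} and \eqref{eq:A_X_algebra_claim2}. Given $\bar A = \Pi_{i\to\omega}A_i$ and $\bar B = \Pi_{i\to\omega}B_i$, I would first reduce to a common index set: using Lemma \ref{lem:equiv_iBs} (or just intersecting the two index sets $S, S'\in\omega$, which again lies in $\omega$), I may assume $A_i, B_i$ are both defined for $i$ in a single $S\in\omega$. For a point $x=(x_i)\in\Pi^\omega_{i\in\N}X_i$ with $S_x\subset S$, one has $[x_i]\in\Pi_{i\to\omega}(A_i\cup B_i)$ iff $\{j: x_j\in A_j\cup B_j\}\in\omega$, and since $\{j: x_j\in A_j\cup B_j\} = \{j: x_j\in A_j\}\cup\{j: x_j\in B_j\}$ and $\omega$ is an ultrafilter (so a finite union lies in $\omega$ iff one of the pieces does), this is equivalent to $[x_i]\in\bar A$ or $[x_i]\in\bar B$. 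This gives \eqref{eq:A_X_algebra_claim1}. For \eqref{eq:A_X_algebra_claim2}, note $[x_i]\in(\Pi_{i\to\omega}A_i)^c$ iff $\{j\in S_x: x_j\in A_j\}\notin\omega$, which by property (3) of Definition \ref{def:ultrafilter} is equivalent to $\{j\in S_x: x_j\notin A_j\}\in\omega$ (here I use that $S_x\in\omega$, so modifying by an $\omega$-null complement does not matter), i.e.\ $[x_i]\in\Pi_{i\to\omega}A_i^c$. One should check the representative-independence, but this is immediate since $\sim_\omega$-equivalent representatives agree on an $\omega$-large set. Since each $A_i^c$ and $A_i\cup B_i$ lies in ${\rm Ball}(X_i)$ (a $\sigma$-algebra), both $\bar A^c$ and $\bar A\cup\bar B$ are again internal measurable sets; together with $\bar X_\omega = \Pi_{i\to\omega}X_i\in\mathcal{A}(\bar X_\omega)$ and the identity $\bar A\setminus\bar B = \bar A\cap\bar B^c = (\bar A^c\cup\bar B)^c$, this shows $\mathcal{A}(\bar X_\omega)$ is an algebra.

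Next I would check that $\tilde\mm_\omega$ is a well-defined finitely-additive measure. Well-definedness is where Lemma \ref{lem:equiv_iBs} is used directly: if $\Pi_{i\to\omega}A_i = \Pi_{i\to\omega}B_i$ then $A_i = B_i$ for $\omega$-a.e.\ $i$, hence $\mm_i(A_i) = \mm_i(B_i)$ for $\omega$-a.e.\ $i$, and since the $\omega$-limit of a sequence in $[0,+\infty]$ depends only on the $\omega$-a.e.\ values, $\lim_{i\to\omega}\mm_i(A_i) = \lim_{i\to\omega}\mm_i(B_i)$. (Here one uses that $\lim_{i\to\omega}$ always exists in the compact space $[0,+\infty]$.) That $\tilde\mm_\omega(\emptyset) = \tilde\mm_\omega(\Pi_{i\to\omega}\emptyset) = \lim_{i\to\omega}\mm_i(\emptyset) = 0$ is trivial. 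For finite additivity, take disjoint $\bar A = \Pi_{i\to\omega}A_i$ and $\bar B = \Pi_{i\to\omega}B_i$ in $\mathcal{A}(\bar X_\omega)$; reduce to a common $S\in\omega$. The key point is that $\bar A\cap\bar B = \emptyset$ forces $A_i\cap B_i = \emptyset$ for $\omega$-a.e.\ $i$: indeed $\bar A\cap\bar B = (\bar A^c\cup\bar B^c)^c = (\Pi_{i\to\omega}(A_i^c\cup B_i^c))^c = \Pi_{i\to\omega}(A_i\cap B_i)$ by the two identities just proven, so by Lemma \ref{lem:equiv_iBs} applied to $\Pi_{i\to\omega}(A_i\cap B_i) = \emptyset = \Pi_{i\to\omega}\emptyset$ we get $A_i\cap B_i = \emptyset$ for $\omega$-a.e.\ $i$. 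On that $\omega$-large index set, finite additivity of each $\mm_i$ gives $\mm_i(A_i\cup B_i) = \mm_i(A_i)+\mm_i(B_i)$, and passing to the $\omega$-limit (which is additive on sequences in $[0,+\infty]$, with the usual convention $\infty + a = \infty$) yields $\tilde\mm_\omega(\bar A\cup\bar B) = \tilde\mm_\omega(\bar A)+\tilde\mm_\omega(\bar B)$, using \eqref{eq:A_X_algebra_claim1} to identify $\bar A\cup\bar B = \Pi_{i\to\omega}(A_i\cup B_i)$.

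The only mild subtleties---hardly obstacles---are the systematic bookkeeping of differing index sets $S, S'\in\omega$ (handled uniformly by intersecting them and invoking Lemma \ref{lem:equiv_iBs}), the need to confirm that all set operations land back inside ${\rm Ball}(X_i)$ so that the resulting sets are genuinely internal measurable (immediate, as ${\rm Ball}(X_i)$ is a $\sigma$-algebra), and the handling of the value $+\infty$ when taking $\omega$-limits and verifying additivity (routine, using that $[0,+\infty]$ is compact so $\omega$-limits always exist, and that $\lim_{i\to\omega}$ commutes with finite sums). No countable additivity is claimed here, so no delicate measure-theoretic argument is needed at this stage; the extension to a $\sigma$-algebra is deferred to the subsequent construction.
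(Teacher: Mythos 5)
Your proposal is correct and follows essentially the same route as the paper: the union identity via the ultrafilter dichotomy, the complement identity via property (3) of Definition \ref{def:ultrafilter}, and finite additivity by deducing $A_i\cap B_i=\emptyset$ for $\omega$-a.e.\ $i$ from the two identities together with Lemma \ref{lem:equiv_iBs}, then passing the additivity of the $\mm_i$ through the $\omega$-limit. The extra bookkeeping you supply (common index sets, well-definedness, behaviour of $\omega$-limits in $[0,+\infty]$) is exactly what the paper leaves implicit.
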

\begin{proof}
In order to prove \eqref{eq:A_X_algebra_claim1}, first observe that,
trivially, \(\Pi_{i\to\omega}A_i,\Pi_{i\to\omega}B_i\subset
\Pi_{i\to\omega}A_i\cup B_i\). Conversely, let
\([x_i]\in\Pi_{i\to\omega}A_i\cup B_i\) be fixed. Pick any
\(S\subset S_{(x_i)}\) such that \(x_j\in A_j\) for all \(j\in S\)
and \(x_j\in B_j\) for all \(j\in S_{(x_i)}\setminus S\). Then either
\(S\in\omega\) and so \([x_i]\in\Pi_{i\to\omega}A_i\), or
\(S_{(x_i)}\setminus S\in\omega\) and so \([x_i]\in\Pi_{i\to\omega}B_i\).
We have proven \(\Pi_{i\to\omega}A_i\cup B_i
\subset(\Pi_{i\to\omega}A_i)\cup(\Pi_{i\to\omega}B_i)\),
yielding \eqref{eq:A_X_algebra_claim1}.

In order to prove \eqref{eq:A_X_algebra_claim2}, observe that
for any given \([x_i]\in\bar X_\omega\) we have that
\[\begin{split}
[x_i]\in\Pi_{i\to\omega}A_i\quad&\Longleftrightarrow\quad
\{j\in S_{(x_i)}\,:\,x_j\in A_j\}\in\omega\quad\Longleftrightarrow
\quad\{j\in S_{(x_i)}\,:\,x_j\in A_j^c\}\notin\omega\\
&\Longleftrightarrow\quad[x_i]\in(\Pi_{i\to\omega}A_i^c)^c,
\end{split}\]
proving \eqref{eq:A_X_algebra_claim2}. Given that also
\(\emptyset=\Pi_{i\to\omega}\emptyset\in\mathcal A(\bar X_\omega)\),
we thus see that \(\mathcal A(\bar X_\omega)\) is an algebra.

Let us now consider the set-function \(\tilde\mm_\omega\).
Fix pairwise disjoint sets \(\bar A^1,\ldots,\bar A^n\in
\mathcal A(\bar X_\omega)\), say \(\bar A^j=\Pi_{i\to\omega}A^j_i\)
for all \(j=1,\ldots,n\). Given \(j,j'=1,\ldots,n\) with \(j\neq j'\),
we infer from \eqref{eq:A_X_algebra_claim1} and
\eqref{eq:A_X_algebra_claim2} that
\(\Pi_{i\to\omega}A^j_i\cap A^{j'}_i=\bar A^j\cap\bar A^{j'}
=\emptyset\), whence Lemma \ref{lem:equiv_iBs} grants that
\(A^j_i\cap A^{j'}_i=\emptyset\) for \(\omega\)-a.e.\ \(i\).
Therefore, we conclude that
\[\begin{split}
\tilde\mm_\omega(\bar A^1\cup\dots\cup\bar A^n)&=
\tilde\mm_\omega\big(\Pi_{i\to\omega}A^1_i\cup\dots\cup A^n_i\big)=
\lim_{i\to\omega}\mm_i(A^1_i\cup\dots\cup A^n_i)=
\lim_{i\to\omega}\sum_{j=1}^n\mm_i(A^j_i)\\
&=\sum_{j=1}^n\lim_{i\to\omega}\mm_i(A^j_i)
=\sum_{j=1}^n\tilde\mm_\omega(\bar A^j),
\end{split}\]
thus proving that \(\tilde\mm_\omega\) is finitely-additive.
Consequently, the proof is achieved.
\end{proof}
\begin{definition}[The algebra \(\mathcal A_O(\bar X_\omega)\)]
Let \(\big((X_i,d_i,p_i)\big)\) be a sequence of pointed metric measure spaces.
Then we define the algebra \(\mathcal A_O(\bar X_\omega)\) as
\[
\mathcal A_O(\bar X_\omega)\coloneqq\big\{\bar A\cap O(\bar X_\omega)\;\big|
\;\bar A\in\mathcal A(\bar X_\omega)\big\}.
\]
\end{definition}
In general, the set \(O(\bar X_\omega)\) is not an internal measurable
subset of \(\bar X_\omega\). However, it is in any \(\sigma\)-algebra containing
\(\mathcal A(\bar X_\omega)\) due to the following proposition, see also
\cite[Lemma 3.1]{Elek}.
\begin{proposition}\label{prop:formula_balls}
Let \(\big((X_i,d_i,p_i)\big)\) be a sequence of pointed metric spaces.
Then it holds that
\begin{equation}\label{eq:formula_balls}
\bar B\big([x_i],R\big)=\bigcap_{n\in\N}\Pi_{i\to\omega}B(x_i,R+1/n),
\quad\text{ for every }[x_i]\in\bar X_\omega\text{ and }R>0.
\end{equation}
In particular, \(B\in\sigma\big(\mathcal A(\bar X_\omega)\big)\)
for every closed ball \(B\subset\bar X_\omega\), thus
\(O(\bar X_\omega)\in\sigma\big(\mathcal A(\bar X_\omega)\big)\)
as a countable union of closed balls. Moreover, for any set
\(\Pi_{i\to\omega}A_i\in\mathcal A(\bar X_\omega)\) we have that
\begin{equation}\label{eq:formula_intersect_O}
(\Pi_{i\to\omega}A_i)\cap O(\bar X_\omega)=\bigcup_{R\in\N}\Pi_{i\to\omega}
\big(A_i\cap B(p_i,R)\big).
\end{equation}
\end{proposition}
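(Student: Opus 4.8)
The plan is to verify \eqref{eq:formula_balls} by a direct double inclusion, chasing through the definition of the ultraproduct of sets and the characterisation of the pseudodistance \(\bar d_\omega\) in \eqref{eq:def_bar_d_omega}. Fix \([x_i]\in\bar X_\omega\) and \(R>0\). For the inclusion ``\(\subseteq\)'', let \([y_i]\in\bar B([x_i],R)\), so that \(\lim_{i\to\omega}d_i(x_i,y_i)\le R\); then for each \(n\in\N\) the set \(\{i:d_i(x_i,y_i)<R+1/n\}\) contains \(\{i:d_i(x_i,y_i)\le R+\tfrac1{2n}\}\in\omega\) and hence lies in \(\omega\), which says precisely that \([y_i]\in\Pi_{i\to\omega}B(x_i,R+1/n)\). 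For the reverse inclusion, if \([y_i]\) lies in every \(\Pi_{i\to\omega}B(x_i,R+1/n)\), then \(\{i:d_i(x_i,y_i)<R+1/n\}\in\omega\) for all \(n\), and taking \(\omega\)-limits gives \(\bar d_\omega([x_i],[y_i])\le R+1/n\) for every \(n\), hence \(\le R\). Here one should keep in mind the mild bookkeeping that each representative carries its own index set \(S_{(y_i)}\in\omega\), but intersecting with it changes nothing since \(\omega\) is closed under finite intersections.

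The two ``in particular'' assertions then follow formally. Since each \(\Pi_{i\to\omega}B(x_i,R+1/n)\) is an internal measurable set (the open balls \(B(x_i,R+1/n)\) lie in \({\rm Ball}(X_i)\)), \eqref{eq:formula_balls} exhibits an arbitrary closed ball \(\bar B([x_i],R)\) as a countable intersection of members of \(\mathcal A(\bar X_\omega)\), hence \(\bar B([x_i],R)\in\sigma\big(\mathcal A(\bar X_\omega)\big)\); the case \(R\) irrational or the degenerate radius is covered by the same formula. Fixing any basepoint representative \((p_i)\), the identity \(O(\bar X_\omega)=\bigcup_{R\in\N}\bar B([p_i],R)\) (which is immediate from the definition of \(O(\bar X_\omega)\) as the set of points at finite \(\bar d_\omega\)-distance from \([p_i]\)) then shows \(O(\bar X_\omega)\in\sigma\big(\mathcal A(\bar X_\omega)\big)\) as a countable union of such closed balls.

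For \eqref{eq:formula_intersect_O}, I would again argue by double inclusion, now using Proposition \ref{prop:formulas_union_complement} to manipulate internal sets. Given \(\Pi_{i\to\omega}A_i\in\mathcal A(\bar X_\omega)\), a point \([x_i]\) lies in \((\Pi_{i\to\omega}A_i)\cap O(\bar X_\omega)\) iff \([x_i]\in\Pi_{i\to\omega}A_i\) and \(\bar d_\omega([x_i],[p_i])<+\infty\); the latter means \(\bar d_\omega([x_i],[p_i])<R\) for some \(R\in\N\), i.e.\ (by the \(\subseteq\) part of \eqref{eq:formula_balls} applied with centre \([p_i]\), or directly) \(\{i:d_i(x_i,p_i)<R\}\in\omega\), which says \([x_i]\in\Pi_{i\to\omega}B(p_i,R)\). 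Combining this with \([x_i]\in\Pi_{i\to\omega}A_i\) and \eqref{eq:A_X_algebra_claim1}—or rather the trivial intersection analogue \((\Pi_{i\to\omega}A_i)\cap(\Pi_{i\to\omega}B_i)=\Pi_{i\to\omega}(A_i\cap B_i)\), which follows from \eqref{eq:A_X_algebra_claim1} and \eqref{eq:A_X_algebra_claim2} by De Morgan—gives \([x_i]\in\Pi_{i\to\omega}\big(A_i\cap B(p_i,R)\big)\). Conversely, membership in the right-hand union for some \(R\) immediately forces both \([x_i]\in\Pi_{i\to\omega}A_i\) (since \(A_i\cap B(p_i,R)\subset A_i\)) and \(d_i(x_i,p_i)<R\) for \(\omega\)-a.e.\ \(i\), hence \(\bar d_\omega([x_i],[p_i])\le R<+\infty\), i.e.\ \([x_i]\in O(\bar X_\omega)\).

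The only genuinely delicate point is the closed-ball side of \eqref{eq:formula_balls}: one must be careful that a single \(\omega\)-limit being \(\le R\) is equivalent to the countable family of conditions \(\{i:d_i(x_i,y_i)<R+1/n\}\in\omega\), and not conflate it with the stronger (false) statement \(\{i:d_i(x_i,y_i)\le R\}\in\omega\). This is exactly the reason a countable intersection of \emph{open}-ball internal sets appears rather than a single internal set, and it is also why \(O(\bar X_\omega)\) itself is typically not internal. Everything else is routine manipulation of ultrafilter membership.
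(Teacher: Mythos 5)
Your proposal is correct and follows essentially the same route as the paper's proof: both inclusions of \eqref{eq:formula_balls} are verified by translating \(\bar d_\omega\le R\) into the countable family of ultrafilter conditions \(\{i:d_i(x_i,y_i)<R+1/n\}\in\omega\), and \eqref{eq:formula_intersect_O} is proven by the same double inclusion using the algebra identities of Proposition \ref{prop:formulas_union_complement} together with upward-closedness of \(\omega\). The only cosmetic difference is your use of \(1/n\)-bookkeeping where the paper quantifies over \(\varepsilon>0\), and your explicit spelling out of the intersection identity via De Morgan, which the paper leaves implicit.
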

\begin{proof}
First, let us prove \eqref{eq:formula_balls}. If \([y_i]\in\bar B\big([x_i],R\big)\)
and \(n\in\N\), then \(\lim_{i\to\omega}d_i(x_i,y_i)<R+1/n\), which implies that
\[
\big\{j\in S_{(x_i)}\cap S_{(y_i)}\,\big|\,y_j\in B(x_j,R+1/n)\big\}=
\big\{j\in S_{(x_i)}\cap S_{(y_i)}\,\big|\,d_j(x_j,y_j)<R+1/n\big\}\in\omega.
\]
Therefore, by definition of internal measurable set, we have that
\([y_i]\in\Pi_{i\to\omega}B(x_i,R+1/n)\), thus
\(\bar B\big([x_i],R\big)\subset\bigcap_{n\in\N}\Pi_{i\to\omega}B(x_i,R+1/n)\).
Conversely, fix \([y_i]\in\bigcap_{n\in\N}\Pi_{i\to\omega}B(x_i,R+1/n)\).
Given any \(\varepsilon>0\), pick some \(n\in\N\) satisfying \(n>1/\varepsilon\).
Then we have that
\[\begin{split}
\omega\ni\big\{j\in S_{(x_i)}\cap S_{(y_i)}\,\big|\,y_j\in B(x_j,R+1/n)\big\}
&=\big\{j\in S_{(x_i)}\cap S_{(y_i)}\,\big|\,d_j(x_j,y_j)<R+1/n\big\}\\
&\subset\big\{j\in S_{(x_i)}\cap S_{(y_i)}\,\big|\,d_j(x_j,y_j)<R+\varepsilon\big\},
\end{split}\]
which yields \(\big\{j\in S_{(x_i)}\cap S_{(y_i)}\,\big|\,d_j(x_j,y_j)<R+\varepsilon\big\}
\in\omega\). This grants that \(\bar d_\omega\big([x_i],[y_i]\big)\leq R\) by arbitrariness
of \(\varepsilon>0\), so that \([y_i]\in\bar B\big([x_i],R\big)\) and accordingly the
identity in \eqref{eq:formula_balls} is proven.

Let us now show \eqref{eq:formula_intersect_O}. Fix any \([x_i]\in(\Pi_{i\to\omega}A_i)
\cap O(\bar X_\omega)\). Given that \([x_i]\in O(\bar X_\omega)\), there exists \(R'\in\N\)
such that \(\big\{j\in S_{(x_i)}\,:\,d_j(x_j,p_j)<R'\big\}\in\omega\), so that
\([x_i]\in\Pi_{i\to\omega}B(p_i,R')\). This ensures that
\([x_i]\in\Pi_{i\to\omega}\big(A_i\cap B(p_i,R')\big)\subset\bigcup_{R\in\N}
\Pi_{i\to\omega}\big(A_i\cap B(p_i,R)\big)\) by Proposition
\ref{prop:formulas_union_complement}. Conversely, fix
\([x_i]\in\bigcup_{R\in\N}\Pi_{i\to\omega}\big(A_i\cap B(p_i,R)\big)\).
Then there are \(S\in\omega\) and \(R\in\N\) such that
\[
\{i\in S\,|\,x_i\in A_i\}\cap\big\{i\in S\,\big|\,d_i(x_i,p_i)<R\big\}
=\big\{i\in S\,\big|\,x_i\in A_i\cap B(p_i,R)\big\}\in\omega.
\]
By exploiting the upward-closedness of the ultrafilter \(\omega\),
we infer that \(\{i\in S\,|\,x_i\in A_i\}\in\omega\) and
\(\big\{i\in S\,\big|\,d_i(x_i,p_i)<R\big\}\in\omega\), which implies that
\([x_i]\in\Pi_{i\to\omega}A_i\) and \(\bar d_\omega\big([x_i],[p_i]\big)\leq R\),
respectively. In particular, it holds
\([x_i]\in(\Pi_{i\to\omega}A_i)\cap O(\bar X_\omega)\). All in all,
\eqref{eq:formula_intersect_O} is proven.
\end{proof}
We may now give the definition of the finitely-additive measure \(\bar\mm_\omega\)
on the algebra \(\mathcal A_O(\bar X_\omega)\).
\begin{definition}[The measure \(\bar\mm_\omega\)]\label{def:bar_m_omega}
Let \(\big((X_i,d_i,\mm_i,p_i)\big)\) be a sequence of pointed metric measure spaces.
Then we define the set-function \(\bar\mm_\omega\colon\mathcal A_O(\bar X_\omega)
\to[0,+\infty]\) as
\[
\bar\mm_\omega\big((\Pi_{i\to\omega}A_i)\cap O(\bar X_\omega)\big)\coloneqq
\lim_{R\to\infty}\tilde\mm_\omega\big(\Pi_{i\to\omega}A_i\cap B(p_i,R)\big)
=\lim_{R\to\infty}\lim_{i\to\omega}\mm_i\big(A_i\cap B(p_i,R)\big)
\]
for every \(\Pi_{i\to\omega}A_i\in\mathcal A(\bar X_\omega)\). The set-function \(\bar\mm_\omega\) is well-defined due to Lemma \ref{lem:equiv_iBs} and
\eqref{eq:formula_intersect_O}.
\end{definition}
In order to show that \(\bar\mm_\omega\) is consistent with \(\tilde\mm_\omega\),
we will need the following simple result.
\begin{lemma}\label{lem:A_cap_A_O_bdd}
Let \(\big((X_i,d_i,p_i)\big)\) be a sequence of pointed metric spaces.
Let \(\bar A=\Pi_{i\to\omega}A_i\) be a given element of
\(\mathcal A(\bar X_\omega)\cap\mathcal A_O(\bar X_\omega)\).
Then the set \(\bar A\subset O(\bar X_\omega)\) is bounded with respect
to \(\bar d_\omega\).
\end{lemma}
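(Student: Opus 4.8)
We have $\bar A = \Pi_{i\to\omega} A_i \in \mathcal A(\bar X_\omega) \cap \mathcal A_O(\bar X_\omega)$, and we want to show $\bar A \subset O(\bar X_\omega)$ is $\bar d_\omega$-bounded, i.e. contained in some closed ball $\bar B([p_i], R)$.

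The plan is to argue by contradiction, exploiting the ``telescoping'' selection argument of Lemma \ref{lma:helppo}. Write \(\bar A=\Pi_{i\to\omega}A_i\) with \((A_i)_{i\in S}\in\prod_{i\in S}{\rm Ball}(X_i)\) for some \(S\in\omega\). Since \(\bar A\in\mathcal A_O(\bar X_\omega)\), every element of \(\bar A\) lies at finite \(\bar d_\omega\)-distance from \([p_i]\), i.e.\ \(\bar A\subset O(\bar X_\omega)\); the whole point is to upgrade this to a \emph{uniform} bound. So I would assume \(\bar A\) is \(\bar d_\omega\)-unbounded and manufacture a single point of \(\bar A\) sitting at \emph{infinite} distance from \([p_i]\), which contradicts \(\bar A\subset O(\bar X_\omega)\).

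First I would, for each \(n\in\N\), pick \([x^n_i]\in\bar A\) with \(\bar d_\omega\big([x^n_i],[p_i]\big)>n\). Using that \([x^n_i]\in\Pi_{i\to\omega}A_i\) together with the definition of \(\bar d_\omega\) in \eqref{eq:def_bar_d_omega} — and the elementary fact that \(\lim_{i\to\omega}d_i(x^n_i,p_i)>n\) forces the level set \(\{i:d_i(x^n_i,p_i)>n\}\) to lie in \(\omega\) — I obtain
\[
T^n\coloneqq\big\{i\in S\,\big|\,x^n_i\in A_i\text{ and }d_i(x^n_i,p_i)>n\big\}\in\omega .
\]
Note that unboundedness in particular gives \(\bar A\neq\emptyset\), hence \(A_i\neq\emptyset\) for every \(i\in S\); this lets me freely complete partially defined representatives into elements of \(\prod_{i\in S}A_i\).

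The core step is the diagonalisation. Put \(\F_n\coloneqq\{k\ge n\}\cap\bigcap_{m=1}^n T^m\); then \(\F_n\in\omega\), \(\F_n\supset\F_{n+1}\), \(\F_n\subset\{k\ge n\}\), and for every \(i\in\F_n\) and \(m\le n\) we have \(x^m_i\in A_i\) and \(d_i(x^m_i,p_i)>m\). Lemma \ref{lma:helppo} now yields, for each \(k\in\F_1\), an index \(i_k\) with \(k\in\F_{i_k}\setminus\F_{i_k+1}\) and \(\lim_{k\to\omega}i_k=\infty\). Setting \(y_k\coloneqq x^{i_k}_k\) for \(k\in\F_1\in\omega\) defines a point \([y_i]\in\bar X_\omega\); since \(k\in\F_{i_k}\subset T^{i_k}\), we get \(y_k\in A_k\) for all \(k\in\F_1\), hence \([y_i]\in\bar A\), while \(d_k(y_k,p_k)>i_k\) for all such \(k\) combined with \(\lim_{k\to\omega}i_k=\infty\) forces \(\bar d_\omega\big([y_i],[p_i]\big)=+\infty\), i.e.\ \([y_i]\notin O(\bar X_\omega)\). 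This contradicts \(\bar A\subset O(\bar X_\omega)\), and the lemma follows.

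The only genuine subtlety — the part I would treat most carefully — is purely organisational: one must choose the representatives \((x^n_i)\) and the index sets \(T^n\) coherently enough that the diagonal sequence \((y_k)\) really does represent an element of the \emph{internal} set \(\bar A\). This is precisely where the hypothesis \(\bar A\in\mathcal A(\bar X_\omega)\), and not merely \(\bar A\in\sigma\big(\mathcal A(\bar X_\omega)\big)\), is indispensable: indeed \(O(\bar X_\omega)\) itself belongs to \(\mathcal A_O(\bar X_\omega)\) and is typically \(\bar d_\omega\)-unbounded, yet it fails to be internal measurable (cf.\ the discussion preceding Proposition \ref{prop:formula_balls}). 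Everything else reduces to routine bookkeeping with level sets and the ultrafilter axioms for \(\omega\).
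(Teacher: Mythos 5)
Your proof is correct and follows essentially the same route as the paper's: assume unboundedness, extract witnesses at distance $>n$, and diagonalise along a nested sequence of ultrafilter sets to produce a single element of $\bar A$ at infinite distance from $[p_i]$. The only difference is that you invoke Lemma \ref{lma:helppo} explicitly (and add the condition $x^n_i\in A_i$ into your sets $T^n$), whereas the paper re-derives the same selection inline; your bookkeeping is, if anything, slightly more careful.
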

\begin{proof}
We argue by contradiction: suppose that for every \(n\in\N\) there exists
\([x^n_i]\in\bar A\) such that \(\lim_{i\to\omega}d_i(x^n_i,p_i)=
\bar d_\omega\big([x^n_i],[p_i]\big)>n\). Define
\(S^1\coloneqq\big\{j\in S_{(x^1_i)}\,:\,d_j(x^1_j,p_j)>1\big\}\) and
\[
S^n\coloneqq\big\{j\in S^{n-1}\cap S_{(x^n_i)}\,\big|\,
d_j(x^n_j,p_j)>n\big\},\quad\text{ for every }n\geq 2.
\]
Observe that \((S^n)_{n\in\N}\subset\omega\) by construction. Also, it holds
that \(S^n\subset S^{n-1}\) for every \(n\geq 2\). Let us now set
\(y_i\coloneqq x^n_i\) for every \(n\in\N\) and \(i\in S^n\setminus S^{n+1}\).
Notice that we have \((y_i)_{i\in S^1}\in\prod_{i\in S^1}A_i\). Fix any \(n\in\N\).
Given any \(i\in S^n\), there exists a unique \(n_i\geq n\) such that
\(i\in S^{n_i}\setminus S^{n_i+1}\), thus accordingly
\(d_i(y_i,p_i)=d_i(x^{n_i}_i,p_i)>n_i\geq n\). This shows that
\(\big\{i\in S^1\,:\,d_i(y_i,p_i)>n\big\}\in\omega\) for all \(n\in\N\), in other
words \(\bar d_\omega\big([y_i],[p_i]\big)=\lim_{i\to\omega}d_i(y_i,p_i)=+\infty\).
This is in contradiction with the fact that \([y_i]\in\bar A\subset O(\bar X_\omega)\),
thus the statement is achieved.
\end{proof}
Thanks to the above lemma, we can now prove that \(\bar\mm_\omega\) and
\(\tilde\mm_\omega\) agree on \(\mathcal A(\bar X_\omega)\cap\mathcal A_O(\bar X_\omega)\).
\begin{proposition}
Let \(\big((X_i,d_i,\mm_i,p_i)\big)\) be a sequence of pointed metric measure spaces.
Then the set-function \(\bar\mm_\omega\) is a finitely-additive measure on
\(\mathcal A_O(\bar X_\omega)\). Moreover, it holds that
\begin{equation}\label{eq:consist_tilde_bar_m_omega}
\bar\mm_\omega(\bar A)=\tilde\mm_\omega(\bar A),\quad\text{ for every }
\bar A\in\mathcal A(\bar X_\omega)\cap\mathcal A_O(\bar X_\omega).
\end{equation}
\end{proposition}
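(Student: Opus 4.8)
The plan is to prove the two assertions in turn. First I would establish the consistency identity \eqref{eq:consist_tilde_bar_m_omega}, and then use it together with the finite additivity of \(\tilde\mm_\omega\) (already proven in Proposition \ref{prop:formulas_union_complement}) to deduce that \(\bar\mm_\omega\) is finitely additive on \(\mathcal A_O(\bar X_\omega)\).

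For the consistency identity, let \(\bar A=\Pi_{i\to\omega}A_i\) belong to \(\mathcal A(\bar X_\omega)\cap\mathcal A_O(\bar X_\omega)\). By Lemma \ref{lem:A_cap_A_O_bdd}, \(\bar A\) is bounded with respect to \(\bar d_\omega\), so there is some \(R_0\in\N\) with \(\bar A\subset\bar B\big([p_i],R_0\big)\). I would then argue that for every integer \(R\geq R_0\) one has \(\Pi_{i\to\omega}A_i=\Pi_{i\to\omega}\big(A_i\cap B(p_i,R)\big)\): indeed the inclusion ``\(\supset\)'' is trivial, and for ``\(\subset\)'', since \(\bar A\subset\bar B\big([p_i],R_0\big)\subset\Pi_{i\to\omega}B(p_i,R)\) by \eqref{eq:formula_balls} (as \(R>R_0\)), intersecting with \(\Pi_{i\to\omega}A_i\) and using \eqref{eq:A_X_algebra_claim1}–\eqref{eq:A_X_algebra_claim2} (the algebra structure, which gives \((\Pi_{i\to\omega}A_i)\cap(\Pi_{i\to\omega}B(p_i,R))=\Pi_{i\to\omega}\big(A_i\cap B(p_i,R)\big)\)) yields the claim. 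Consequently, by Lemma \ref{lem:equiv_iBs}, \(\tilde\mm_\omega\big(\Pi_{i\to\omega}A_i\cap B(p_i,R)\big)=\tilde\mm_\omega(\bar A)\) for every \(R\geq R_0\), so the defining limit in Definition \ref{def:bar_m_omega} stabilises:
\[
\bar\mm_\omega(\bar A)=\lim_{R\to\infty}\tilde\mm_\omega\big(\Pi_{i\to\omega}A_i\cap B(p_i,R)\big)=\tilde\mm_\omega(\bar A).
\]
Here I should also note that \(\bar A=\bar A\cap O(\bar X_\omega)\) since \(\bar A\subset\bar B\big([p_i],R_0\big)\subset O(\bar X_\omega)\), so the two ways of regarding \(\bar A\) — as an element of \(\mathcal A(\bar X_\omega)\) and as an element of \(\mathcal A_O(\bar X_\omega)\) — coincide, making the comparison meaningful.

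For finite additivity of \(\bar\mm_\omega\), fix pairwise disjoint sets \(\bar B^1\cap O(\bar X_\omega),\dots,\bar B^n\cap O(\bar X_\omega)\in\mathcal A_O(\bar X_\omega)\), with \(\bar B^j=\Pi_{i\to\omega}A^j_i\). For each fixed \(R\in\N\), the internal measurable sets \(\Pi_{i\to\omega}\big(A^j_i\cap B(p_i,R)\big)\) are contained in \(\bar B\big([p_i],R\big)\subset O(\bar X_\omega)\) (using \eqref{eq:formula_balls}), hence they equal \(\big(\bar B^j\cap O(\bar X_\omega)\big)\cap\Pi_{i\to\omega}B(p_i,R)\), so they are pairwise disjoint for distinct \(j\); moreover their union is \(\Pi_{i\to\omega}\big(\bigcup_{j=1}^n A^j_i\cap B(p_i,R)\big)\) by \eqref{eq:A_X_algebra_claim1}. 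Applying the finite additivity of \(\tilde\mm_\omega\) at the level \(R\), then letting \(R\to\infty\) and using that a finite sum of convergent increasing sequences converges to the sum of the limits, gives
\[
\bar\mm_\omega\Big(\bigcup_{j=1}^n\bar B^j\cap O(\bar X_\omega)\Big)=\sum_{j=1}^n\bar\mm_\omega\big(\bar B^j\cap O(\bar X_\omega)\big),
\]
together with \(\bar\mm_\omega(\emptyset)=0\), which is immediate. The main obstacle — really the only delicate point — is the stabilisation argument: one must be careful that ``internal measurable'' representations are not unique, so every manipulation of the \(A_i\)'s must be justified via Lemma \ref{lem:equiv_iBs}, and one must correctly invoke \eqref{eq:formula_balls} to control \(O(\bar X_\omega)\) against the closed balls. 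Everything else is bookkeeping with the algebra identities \eqref{eq:A_X_algebra_claim1}–\eqref{eq:A_X_algebra_claim2} and the order-theoretic behaviour of ultralimits of reals.
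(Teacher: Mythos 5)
Your proposal is correct and follows essentially the same route as the paper's proof: both reduce everything to level-\(R\) computations with \(\tilde\mm_\omega\) (using its finite additivity from Proposition \ref{prop:formulas_union_complement} and passing to the monotone limit in \(R\)), and both obtain the consistency identity by invoking Lemma \ref{lem:A_cap_A_O_bdd} together with \eqref{eq:formula_balls} to trap \(\bar A\) inside \(\Pi_{i\to\omega}B(p_i,R)\) so that the defining limit stabilises. The only differences are cosmetic — you prove the two assertions in the opposite order, and you phrase the stabilisation via equality of internal sets and well-definedness of \(\tilde\mm_\omega\) rather than extracting \(A_i\subset B(p_i,R)\) for \(\omega\)-a.e.\ \(i\) as the paper does.
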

\begin{proof}
Let \(\bar A,\bar B\in\mathcal A(\bar X_\omega)\) be such that
\(\big(\bar A\cap O(\bar X_\omega)\big)\cap\big(\bar B\cap O(\bar X_\omega)\big)
=\emptyset\), say that \(\bar A=\Pi_{i\to\omega}A_i\) and \(\bar B=\Pi_{i\to\omega}B_i\).
In particular, \(\big(\Pi_{i\to\omega}A_i\cap B(p_i,R)\big)\cap
\big(\Pi_{i\to\omega}B_i\cap B(p_i,R)\big)=\emptyset\) for all \(R>0\).
By finite additivity of the measure \(\tilde\mm_\omega\) on \(\mathcal A(\bar X_\omega)\),
we get that
\[\begin{split}
&\bar\mm_\omega\big(\big(\bar A\cap O(\bar X_\omega)\big)\cup
\big(\bar B\cap O(\bar X_\omega)\big)\big)\\
=\,&\bar\mm_\omega\big((\Pi_{i\to\omega}A_i\cup B_i)\cap O(\bar X_\omega)\big)
=\lim_{R\to\infty}\tilde\mm_\omega\big(\Pi_{i\to\omega}(A_i\cup B_i)\cap B(p_i,R)\big)\\
=\,&\lim_{R\to\infty}\tilde\mm_\omega\Big(\big(\Pi_{i\to\omega}A_i\cap B(p_i,R)\big)
\cup\big(\Pi_{i\to\omega}B_i\cap B(p_i,R)\big)\Big)\\
=\,&\lim_{R\to\infty}\Big[\tilde\mm_\omega\big(\Pi_{i\to\omega}A_i\cap B(p_i,R)\big)+
\tilde\mm_\omega\big(\Pi_{i\to\omega}B_i\cap B(p_i,R)\big)\Big]\\
=\,&\lim_{R\to\infty}\tilde\mm_\omega\big(\Pi_{i\to\omega}A_i\cap B(p_i,R)\big)+
\lim_{R\to\infty}\tilde\mm_\omega\big(\Pi_{i\to\omega}B_i\cap B(p_i,R)\big)\\
=\,&\bar\mm_\omega\big(\bar A\cap O(\bar X_\omega)\big)+
\bar\mm_\omega\big(\bar B\cap O(\bar X_\omega)\big),
\end{split}\]
whence it follows that \(\bar\mm_\omega\) is a finitely-additive measure.
To prove \eqref{eq:consist_tilde_bar_m_omega}, let \(\bar A=\Pi_{i\to\omega}A_i\)
be a given element of \(\mathcal A(\bar X_\omega)\cap\mathcal A_O(\bar X_\omega)\).
Lemma \ref{lem:A_cap_A_O_bdd} grants that \(\bar A\) is \(\bar d_\omega\)-bounded,
thus (recalling \eqref{eq:formula_balls}) we can find \(R>0\) such that
\(\Pi_{i\to\omega}A_i\subset\Pi_{i\to\omega}B(p_i,R)\). By using Lemma
\ref{lem:equiv_iBs} and Proposition \ref{prop:formulas_union_complement},
we deduce that there exists \(S\in\omega\) such that \(A_i\subset B(p_i,R)\)
for every \(i\in S\). Therefore, for any given \(R'>R\) we have that
\(\mm_i(A_i)=\mm_i\big(A_i\cap B(p_i,R')\big)\) for all \(i\in S\),
which yields \(\tilde\mm_\omega\big(\Pi_{i\to\omega}A_i\cap B(p_i,R')\big)
=\tilde\mm_\omega(\bar A)\). Then we can conclude that
\[
\bar\mm_\omega(\bar A)=
\lim_{R'\to\infty}\tilde\mm_\omega\big(\Pi_{i\to\omega}A_i\cap B(p_i,R')\big)=
\lim_{R'\to\infty}\tilde\mm_\omega(\bar A)=\tilde\mm_\omega(\bar A),
\]
thus obtaining \eqref{eq:consist_tilde_bar_m_omega} and accordingly the statement.
\end{proof}
Hereafter, we shall work with sequences of pointed metric measure spaces
satisfying the following growth condition.
\begin{definition}[\(\omega\)-uniform bounded finiteness]\label{def:omega-ubf}
Let \(\big((X_i,d_i,\mm_i,p_i)\big)\) be a sequence of pointed metric measure
spaces. Then we say that \(\big((X_i,d_i,\mm_i,p_i)\big)\) is a
\emph{\(\omega\)-uniformly boundedly finite} sequence provided there exists
\(\eta\colon(0,+\infty)\to(0,+\infty)\) such that for any \(R>0\) it holds that
\[
\mm_i\big(B(p_i,R)\big)\leq\eta(R),\quad
\text{ for }\omega\text{-a.e.\ }i.
\]
\end{definition}
In the framework of \(\omega\)-uniformly boundedly finite sequences of spaces,
we will extend the measure \(\bar\mm_\omega\) to a \(\sigma\)-algebra
\(\bar{\mathcal B}_\omega\) on \(O(\bar X_\omega)\) containing the algebra
\(\mathcal A_O(\bar X_\omega)\). The \(\sigma\)-algebra \(\bar{\mathcal B}_\omega\)
that we will consider is the one obtained by adding null sets to the elements of
\(\mathcal A_O(\bar X_\omega)\).
\begin{definition}[Null sets and \(\bar{\mathcal B}_\omega\)]
Let \(\big((X_i,d_i,\mm_i,p_i)\big)\) be a sequence of pointed metric measure spaces.
Let \(N\subset O(\bar X_\omega)\) be given. Then we declare that \(N\)
is a \emph{null set} provided for any \(\varepsilon>0\) there exists
\(\bar A\in\mathcal A_O(\bar X_\omega)\) such that \(N\subset\bar A\) and
\(\bar\mm_\omega(\bar A)<\varepsilon\). The family of all null sets is
denoted by \(\mathcal N_\omega\). Moreover, we denote by \(\bar{\mathcal B}_\omega\)
the family of all subsets \(A\subset O(\bar X_\omega)\) for which the following
property holds: given any \(R>0\), there exists \(\bar A_R\in\mathcal A_O(\bar X_\omega)\)
such that
\[
\big(A\cap\Pi_{i\to\omega}B(p_i,R)\big)\Delta\bar A_R\in\mathcal N_\omega.
\]
\end{definition}
\begin{definition}[The extension of \(\bar\mm_\omega\)]
Let \(\big((X_i,d_i,\mm_i,p_i)\big)\) be a sequence of pointed metric measure spaces.
Then we define \(\bar\mm_\omega\colon\bar{\mathcal B}_\omega\to[0,+\infty]\) as
follows: given any \(A\in\bar{\mathcal B}_\omega\), we set
\begin{equation}\label{eq:def_bar_m_omega_ext}
\bar\mm_\omega(A)\coloneqq\lim_{R\to\infty}\bar\mm_\omega(\bar A_R),
\end{equation}
where the sets \(\bar A_R\in\mathcal A_O(\bar X_\omega)\) are chosen so that
\(\big(A\cap\Pi_{i\to\omega}B(p_i,R)\big)\Delta\bar A_R\in\mathcal N_\omega\).
\end{definition}
It can be readily checked that \(\bar\mm_\omega\) is well-defined (\emph{i.e.}, the limit in \eqref{eq:def_bar_m_omega_ext} does not depend on the choice of the sets \(\bar A_R\))
and that it is consistent with Definition \ref{def:bar_m_omega}.
Next we will show, under the \(\omega\)-uniform bounded finiteness assumption,
that \(\bar{\mathcal B}_\omega\) is a \(\sigma\)-algebra and that \(\bar\mm_\omega\)
is a measure on \(\bar{\mathcal B}_\omega\). We will need the following lemma,
see \cite[Lemma 2.3]{Conley-Kechris-Tucker-Drob}.
\begin{lemma}\label{lem:key_lemma}
Let \(\big((X_i,d_i,\mm_i,p_i)\big)\) be a \(\omega\)-uniformly boundedly
finite sequence of pointed metric measure spaces. Let \((\bar A^n)_{n\in\N}
\subset\mathcal A_O(\bar X_\omega)\) be given. Then it holds that
\(\bigcup_{n\in\N}\bar A^n\in\bar{\mathcal B}_\omega\).
\end{lemma}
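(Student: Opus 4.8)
The plan is to reduce the statement to a single radius $R$ and then exploit finite additivity of $\bar\mm_\omega$ on $\mathcal A_O(\bar X_\omega)$ together with a continuity-from-below argument that is made possible by the $\omega$-uniform bounded finiteness. Fix $R>0$; by definition of $\bar{\mathcal B}_\omega$ it suffices to produce, for each $\eps>0$, a set in $\mathcal A_O(\bar X_\omega)$ whose symmetric difference with $\big(\bigcup_{n\in\N}\bar A^n\big)\cap\Pi_{i\to\omega}B(p_i,R)$ is a null set. First I would replace each $\bar A^n$ by $\bar A^n\cap\Pi_{i\to\omega}B(p_i,R)$, which is still in $\mathcal A_O(\bar X_\omega)$ (using Proposition \ref{prop:formulas_union_complement} and the fact that $\Pi_{i\to\omega}B(p_i,R)$ meets $O(\bar X_\omega)$ in an element of the algebra, cf.\ \eqref{eq:formula_intersect_O}), and then replace them by their partial-union-disjointifications $\bar B^n\coloneqq\bar A^n\setminus\bigcup_{m<n}\bar A^m$, so that the $\bar B^n$ are pairwise disjoint elements of $\mathcal A_O(\bar X_\omega)$ with the same union. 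Since all these sets are contained in $\Pi_{i\to\omega}B(p_i,R)$, Lemma \ref{lem:A_cap_A_O_bdd} and the $\omega$-uniform bounded finiteness give $\bar\mm_\omega(\bar B^n)\le\eta(R+1)<+\infty$ uniformly, and finite additivity yields $\sum_{n=1}^N\bar\mm_\omega(\bar B^n)=\bar\mm_\omega\big(\bigcup_{n=1}^N\bar B^n\big)\le\eta(R+1)$ for every $N$, so the series $\sum_{n\in\N}\bar\mm_\omega(\bar B^n)$ converges.

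Given $\eps>0$, pick $N$ with $\sum_{n>N}\bar\mm_\omega(\bar B^n)<\eps$ and set $\bar A\coloneqq\bigcup_{n=1}^N\bar B^n\in\mathcal A_O(\bar X_\omega)$. Then $\big(\bigcup_{n\in\N}\bar B^n\big)\setminus\bar A=\bigcup_{n>N}\bar B^n$, which I claim is a null set: this is exactly the place where one must be a little careful, because a countable union of algebra sets need not be in the algebra, so one cannot directly apply $\bar\mm_\omega$ to $\bigcup_{n>N}\bar B^n$. Instead one covers it by finite subunions $\bigcup_{N<n\le M}\bar B^n\in\mathcal A_O(\bar X_\omega)$, each of $\bar\mm_\omega$-measure $<\eps$; but these do not increase to the whole tail while staying below $\eps$ unless one knows the tail union is itself covered by a \emph{single} algebra set of small measure. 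The cleanest route is: since each $\bar B^n=\Pi_{i\to\omega}B^n_i\cap O(\bar X_\omega)$ with $B^n_i\subset B(p_i,R)$ for $\omega$-a.e.\ $i$, consider for each $M>N$ the internal set $\bar C^M\coloneqq\Pi_{i\to\omega}\big(\bigcup_{N<n\le M}B^n_i\big)$; one shows $\bar\mm_\omega(\bar C^M\cap O(\bar X_\omega))=\bar\mm_\omega\big(\bigcup_{N<n\le M}\bar B^n\big)<\eps$ and that these internal sets, being an increasing sequence inside the fixed internal set $\Pi_{i\to\omega}B(p_i,R)$, admit a ``diagonal'' internal upper bound $\bar C=\Pi_{i\to\omega}C_i$ with $C_i=\bigcup_{N<n\le M_i}B^n_i$ for a suitably $\omega$-slowly-growing cutoff $M_i$, using Lemma \ref{lma:helppo}; this $\bar C$ contains the tail and still has $\bar\mm_\omega(\bar C\cap O(\bar X_\omega))\le\eps$. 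Hence $\bigcup_{n>N}\bar B^n\subset\bar C\cap O(\bar X_\omega)\in\mathcal A_O(\bar X_\omega)$ with measure $\le\eps$, so it is null.

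Consequently $\big(\bigcup_{n\in\N}\bar B^n\big)\,\Delta\,\bar A=\bigcup_{n>N}\bar B^n\in\mathcal N_\omega$, and since $\big(\bigcup_{n\in\N}\bar A^n\big)\cap\Pi_{i\to\omega}B(p_i,R)=\bigcup_{n\in\N}\bar B^n$, the set $\bar A$ witnesses the defining property of $\bar{\mathcal B}_\omega$ at radius $R$ up to $\eps$; letting $\eps\to0$ and then ranging over $R$ gives $\bigcup_{n\in\N}\bar A^n\in\bar{\mathcal B}_\omega$. The main obstacle, as indicated, is the third step: the naive estimate only controls \emph{finite} subunions, and one genuinely needs the diagonal-internal-set construction (this is the role of \cite[Lemma 2.3]{Conley-Kechris-Tucker-Drob}) to upgrade this to a single internal cover of the infinite tail; everything else is bookkeeping with the algebra operations from Proposition \ref{prop:formulas_union_complement} and the finite additivity of $\bar\mm_\omega$.
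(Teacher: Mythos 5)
Your reduction to a fixed radius, the disjointification into the $\bar B^n$, the convergence of $\sum_n\bar\mm_\omega(\bar B^n)$, and the diagonal internal-cover construction are all sound, and the diagonal step is indeed the same mechanism the paper uses (the paper monotonises via $B^n_i=\bigcup_{k\le n}A^k_i$ rather than disjointifying, which is cosmetic). The gap is in how you assemble these pieces at the end. The tail $\bigcup_{n>N}\bar B^n$ is \emph{not} a null set in general: any $\bar D\in\mathcal A_O(\bar X_\omega)$ containing it contains every finite union $\bigcup_{N<n\le M}\bar B^n$, hence $\bar\mm_\omega(\bar D)\ge\sum_{N<n\le M}\bar\mm_\omega(\bar B^n)$ for every $M$ by finite additivity and monotonicity, so $\bar\mm_\omega(\bar D)\ge\sum_{n>N}\bar\mm_\omega(\bar B^n)$; the tail is null only when this sum vanishes. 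Exhibiting one internal cover of measure $\le\eps$ (for the same $\eps$ used to choose $N$) does not make it null, so your claimed ``$\bigcup_{n>N}\bar B^n\in\mathcal N_\omega$'' fails, and your $\bar A=\bigcup_{n\le N}\bar B^n$ does not witness the defining property of $\bar{\mathcal B}_\omega$. Nor can one ``let $\eps\to0$'': the definition of $\bar{\mathcal B}_\omega$ requires a \emph{single} $\bar A_R$ with exactly null symmetric difference, and in a general measure algebra, being approximable within every $\eps$ by algebra sets does not imply differing from a single algebra set by a null set. That upgrade is precisely the content of the lemma, so it cannot be invoked as the closing step.

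The repair is to make the diagonal cover itself the approximant and to run the diagonal construction with \emph{two-sided} measure control, as the paper does: working with the increasing sets $\bar A^{(M)}\coloneqq\bigcup_{n\le M}\bar B^n$ of measures $M_M\nearrow M_\infty$, require $\mm_i\big(\bigcup_{n\le M}B^n_i\big)\in[M_M-2^{-M},M_M+2^{-M}]$ in the definition of the nested sets $T^M\in\omega$, so that the resulting $\bar C=\Pi_{i\to\omega}C_i$ satisfies $\bar\mm_\omega(\bar C)=M_\infty$ exactly. Then $\bar C\supset\bigcup_n\bar B^n$, and $\bar C\setminus\bar A^{(M)}$ is an algebra set of measure $M_\infty-M_M\to0$ containing $\bar C\setminus\bigcup_n\bar B^n$, so the overshoot is genuinely null and $\bar C$ witnesses membership in $\bar{\mathcal B}_\omega$ at radius $R$; no head/tail split and no auxiliary $\eps$ are needed. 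With only your one-sided bound $\bar\mm_\omega(\bar C)\le\eps$ one cannot conclude that the overshoot $\bar C\setminus\bigcup_{n>N}\bar B^n$ is null either, since $\bar\mm_\omega(\bar C)$ could strictly exceed the tail sum.
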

\begin{proof}
Let \(R>0\) be fixed. Then for every \(n\in\N\) we may write
\(\bar A^n=(\Pi_{i\to\omega}A^n_i)\cap O(\bar X_\omega)\), where
\(A^n_i\in{\rm Ball}(X_i)\) for all \(i\in S^n\) (for some \(S^n\in\omega\)). Notice that
\[
\bigcup_{n\in\N}\bar A^n\cap\big(\Pi_{i\to\omega}B(p_i,R)\big)=
\bigcup_{n\in\N}(\Pi_{i\to\omega}B^n_i)\cap B(p_i,R),\quad
\text{ where we set }B^n_i\coloneqq\bigcup_{k\leq n}A^k_i.
\]
Therefore, we may assume that \(\bar A^n=\Pi_{i\to\omega}A^n_i\)
are internal measurable subsets of \(O(\bar X_\omega)\) for which
\(A^n_i\subset A^m_i\subset B(p_i,R)\) whenever \(n\leq m\).
Since \(\mm_i\big(B(p_i,R)\big) \leq\eta(R)\) for \(\omega\)-a.e.\ \(i\),
we may thus also assume that \(\bar\mm_\omega(\bar A^n)
=\lim_{i\to\omega}\mm_i(A^n_i)\leq\eta(R)<+\infty\).

The rest of the proof is essentially the same as the proof of
\cite[Lemma 2.3]{Conley-Kechris-Tucker-Drob}.
We will present it here for completeness.
The procedure is a sort of diagonal argument. Denote
\(M_n\coloneqq\bar\mm_\omega(\bar A^n)\) for all \(n\in\N\).
Define the neighbourhood \(U_n\) of \(M_n\) as \(U_n\coloneqq[M_n-2^{-n},M_n+2^{-n}]\).
Define the sets \((T^n)_{n\in\N}\) recursively as follows:
\(T^1\coloneqq\big\{i\in S^1\,:\,\mm_i(A^1_i)\in U_1\big\}\) and
\[
T^n\coloneqq\big\{i\in T^{n-1}\cap S^n\,\big|\,i\geq n,\,
\mm_i(A^n_i)\in U_n\big\},\quad\text{ for every }n\geq 2.
\]
By definition of \(M_n\) (and by induction), we have that \(T^n\in\omega\)
for every \(n\in\N\). The sets \(T^n\) are also nested by definition, \emph{i.e.},
\(T^n\subset T^m\) whenever \(m\leq n\). For every index \(i\in T^1\), there exists
\(n_i\in\N\) so that \(i\in T^{n_i}\setminus T^{n_i+1}\). Define now
\(C_i\coloneqq A^{n_i}_i\) for every \(i\in T^1\). Moreover, let us set
\(\bar C\coloneqq\Pi_{i\to\omega}C_i\in\mathcal A(\bar X_\omega)\cap
\mathcal A_O(\bar X_\omega)\). We claim that
\begin{equation}\label{eq:key_lemma_aux}
\bigcup_{n\in\N}\bar A^n\subset\bar C\quad\text{ and }\quad
\bar C\setminus\bigcup_{n\in\N}\bar A^n\in\mathcal N_\omega,
\end{equation}
whence the statement would immediately follow, thanks to the arbitrariness of \(R>0\).
To prove the first part of \eqref{eq:key_lemma_aux}, let \(n\in\N\) be fixed. Notice
that if \(i\in T^n\), then \(n_i\geq n\); in turn, this implies that \(A^n_i\subset C_i\).
In other words, \(T^n=\{i\in T^n\,:\,A^n_i\subset C_i\}\in\omega\). This guarantees
that \(\bar A^n=\Pi_{i\to\omega}A^n_i\subset\Pi_{i\to\omega}C_i=\bar C\), thus
accordingly \(\bigcup_{n\in\N}\bar A^n\subset\bar C\). To prove the second part
of \eqref{eq:key_lemma_aux}, observe that \(\bar A^n=\Pi_{i\to\omega}A^n_i
\subset\Pi_{i\to\omega}A^m_i=\bar A^m\) whenever \(n\leq m\), thus \((M_n)_n
\subset\big[0,\eta(R)\big]\) is a non-decreasing sequence, which admits a limit
\(M_\infty\coloneqq\lim_{n\to\infty}M_n\in\big[0,\eta(R)\big]\). Let \(\varepsilon>0\)
be fixed. Choose any \(N\in\N\) such that \(2^{-N}<\varepsilon\) and
\(|M_n-M_\infty|<\varepsilon\) for every \(n\geq N\). Since
\[\begin{split}
T^N&=\big\{i\in T^N\,\big|\,n_i\geq N\big\}=
\big\{i\in T^N\,\big|\,|\mm_i(C_i)-M_{n_i}|<\varepsilon\big\}\cap
\big\{i\in T^N\,\big|\,|M_{n_i}-M_\infty|<\varepsilon\big\}\\
&=\big\{i\in T^N\,\big|\,|\mm_i(C_i)-M_\infty|<2\varepsilon\big\}\in\omega,
\end{split}\]
we get that \(\bar\mm_\omega(\bar C)=\lim_{i\to\omega}\mm_i(C_i)=M_\infty\).
Hence, we have that
\[
\bar\mm_\omega\big(\bar C\setminus{\textstyle\bigcup_{k\leq n}\bar A^k}\big)
=\bar\mm_\omega(\bar C\setminus\bar A^n)=\bar\mm_\omega(\bar C)-\bar\mm_\omega(\bar A^n)
=M_\infty-M_n\longrightarrow 0,\quad\text{ as }n\to\infty.
\]
Given that \(\bar C\setminus\bigcup_{k\in\N}\bar A^k\subset\bar C\setminus
\bigcup_{k\leq n}\bar A^k\) for all \(n\in\N\), we conclude that
\(\bar C\setminus\bigcup_{n\in\N}\bar A^n\in\mathcal N_\omega\).
\end{proof}
\begin{theorem}
Let \(\big((X_i,d_i,\mm_i,p_i)\big)\) be a \(\omega\)-uniformly boundedly
finite sequence of pointed metric measure spaces. Then the family
\(\bar{\mathcal B}_\omega\) is a \(\sigma\)-algebra and the set-function
\(\bar\mm_\omega\) is a (countably-additive) measure on \(\bar{\mathcal B}_\omega\).
\end{theorem}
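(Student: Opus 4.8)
The plan is to realise $\bar{\mathcal B}_\omega$ and $\bar\mm_\omega$ as a completion-type construction carried out \emph{one radius at a time}, the radius-by-radius bookkeeping being forced by the fact that elements of $\mathcal A_O(\bar X_\omega)$ (hence null sets) need not be $\bar d_\omega$-bounded. First I would record the elementary facts: $\bar\mm_\omega$ is monotone and finitely additive on $\mathcal A_O(\bar X_\omega)$ (already proven), a null set contained in $\mathcal A_O(\bar X_\omega)$ has $\bar\mm_\omega$-measure zero, $\mathcal A_O(\bar X_\omega)\subset\bar{\mathcal B}_\omega$, and $\mathcal N_\omega$ is stable under taking subsets. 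The real engine is the following \emph{restricted countable subadditivity}: if $(N_n)_{n\in\N}\subset\mathcal N_\omega$ all lie inside a common bounded internal measurable set, in practice $\Pi_{i\to\omega}B(p_i,R)$, then $\bigcup_n N_n\in\mathcal N_\omega$. To prove it, cover each $N_n$ by some $\bar A^n\in\mathcal A_O(\bar X_\omega)$ with $\bar\mm_\omega(\bar A^n)<2^{-n}\varepsilon$ and intersect with $\Pi_{i\to\omega}B(p_i,R)$ (this preserves the covering, does not raise the measure, and yields a set in $\mathcal A(\bar X_\omega)\cap\mathcal A_O(\bar X_\omega)$); then pass to the increasing partial unions $\bar D^m\coloneqq\bigcup_{k\le m}\bar A^k$ and apply the construction in the proof of Lemma \ref{lem:key_lemma}, which for a uniformly bounded increasing sequence produces a single $\bar C\in\mathcal A_O(\bar X_\omega)$ with $\bigcup_m\bar D^m\subset\bar C$ and $\bar\mm_\omega(\bar C)=\lim_m\bar\mm_\omega(\bar D^m)\le\varepsilon$. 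This ``continuity from below along bounded internal sets'' is the only place where the $\omega$-uniform bounded finiteness hypothesis is used.

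Next I would check that $\bar{\mathcal B}_\omega$ is a $\sigma$-algebra. One has $O(\bar X_\omega)\in\bar{\mathcal B}_\omega$ by taking $\bar A_R\coloneqq\Pi_{i\to\omega}B(p_i,R)$. For complements: given $A\in\bar{\mathcal B}_\omega$ and $R>0$, pick $\bar A_R\in\mathcal A_O(\bar X_\omega)$ with $\big(A\cap\Pi_{i\to\omega}B(p_i,R)\big)\Delta\bar A_R\in\mathcal N_\omega$ and replace $\bar A_R$ by $\bar A_R\cap\Pi_{i\to\omega}B(p_i,R)$ — legitimate, since the discarded part is a null set lying in $\mathcal A_O(\bar X_\omega)$; then, using $(G\setminus E)\Delta(G\setminus F)=E\Delta F$ for $E,F\subset G$, the set $\big(O(\bar X_\omega)\setminus A\big)\cap\Pi_{i\to\omega}B(p_i,R)=\Pi_{i\to\omega}B(p_i,R)\setminus\big(A\cap\Pi_{i\to\omega}B(p_i,R)\big)$ differs from $\Pi_{i\to\omega}B(p_i,R)\setminus\bar A_R\in\mathcal A_O(\bar X_\omega)$ by the same null set. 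For countable unions: given $(A_n)\subset\bar{\mathcal B}_\omega$ and $R>0$, choose $\bar A^n_R\subset\Pi_{i\to\omega}B(p_i,R)$ in $\mathcal A_O(\bar X_\omega)$ with $N_n\coloneqq\big(A_n\cap\Pi_{i\to\omega}B(p_i,R)\big)\Delta\bar A^n_R\in\mathcal N_\omega$; by Lemma \ref{lem:key_lemma} we have $\bigcup_n\bar A^n_R\in\bar{\mathcal B}_\omega$, so there is $\bar B_R\in\mathcal A_O(\bar X_\omega)$ with $\big(\bigcup_n\bar A^n_R\big)\Delta\bar B_R\in\mathcal N_\omega$ (boundedness of $\bigcup_n\bar A^n_R$ makes the truncation in the definition of $\bar{\mathcal B}_\omega$ vacuous); then $\big((\bigcup_nA_n)\cap\Pi_{i\to\omega}B(p_i,R)\big)\Delta\bar B_R$ is contained in $\big(\bigcup_nN_n\big)\cup\big((\bigcup_n\bar A^n_R)\Delta\bar B_R\big)$, and both pieces are null — the first by the restricted subadditivity above, since all $N_n$ lie in $\Pi_{i\to\omega}B(p_i,R)$.

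Finally I would prove that $\bar\mm_\omega$ is countably additive. Upgrading the first step, one checks that $\bar\mm_\omega$ is unchanged under modification by null sets and is monotone and finitely additive on $\bar{\mathcal B}_\omega$: for finitely many pairwise disjoint $A_1,\dots,A_k$, at each radius $R$ the approximants $\bar A^n_R\subset\Pi_{i\to\omega}B(p_i,R)$ can be disjointified (their overlaps land in the null sets $N_n\cup N_m$, which are null), and then one lets $R\to\infty$ using $\bar\mm_\omega(A_n)=\lim_R\bar\mm_\omega\big(A_n\cap\Pi_{i\to\omega}B(p_i,R)\big)$. Countable additivity then follows from countable subadditivity combined with finite additivity and monotonicity. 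For subadditivity of $(A_n)\subset\bar{\mathcal B}_\omega$: fix $R$, choose $\bar A^n_R\subset\Pi_{i\to\omega}B(p_i,R)$ in $\mathcal A_O(\bar X_\omega)$ approximating $A_n\cap\Pi_{i\to\omega}B(p_i,R)$ up to a null set, so $\bar\mm_\omega(\bar A^n_R)=\bar\mm_\omega\big(A_n\cap\Pi_{i\to\omega}B(p_i,R)\big)$; apply the construction of Lemma \ref{lem:key_lemma} to $\big(\bigcup_{k\le m}\bar A^k_R\big)_m$ to obtain $\bar C\in\mathcal A_O(\bar X_\omega)$ with $\bigcup_n\bar A^n_R\subset\bar C$ and $\bar\mm_\omega(\bar C)=\lim_m\bar\mm_\omega\big(\bigcup_{k\le m}\bar A^k_R\big)\le\sum_n\bar\mm_\omega(\bar A^n_R)$; since $(\bigcup_nA_n)\cap\Pi_{i\to\omega}B(p_i,R)$ is a null modification of $\bigcup_n\bar A^n_R\subset\bar C$, its $\bar\mm_\omega$-measure is $\le\bar\mm_\omega(\bar C)\le\sum_n\bar\mm_\omega(A_n)$, and letting $R\to\infty$ gives $\bar\mm_\omega\big(\bigcup_nA_n\big)\le\sum_n\bar\mm_\omega(A_n)$.

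The single genuine obstacle is precisely the non-boundedness of elements of $\mathcal A_O(\bar X_\omega)$, which is why $\mathcal N_\omega$ is not visibly closed under arbitrary countable unions and why the whole proof must be organised along the exhausting sequence of balls $\Pi_{i\to\omega}B(p_i,R)$; once one restricts to a fixed such ball, every null set in sight is bounded, and the argument reduces to the continuity-from-below property of $\bar\mm_\omega$ on bounded internal measurable sets that is already contained in the proof of Lemma \ref{lem:key_lemma}.
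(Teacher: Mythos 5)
Your proposal is correct and follows essentially the same route as the paper: the radius-by-radius reduction to the exhausting internal balls \(\Pi_{i\to\omega}B(p_i,R)\), with the diagonal construction of Lemma \ref{lem:key_lemma} supplying the needed continuity from below on bounded internal measurable sets, is exactly the engine used there. The only cosmetic differences are that you prove closure under complements and deduce countable additivity from countable subadditivity plus finite additivity, where the paper works with set differences and the telescoping identity \eqref{eq:B_omega_sigma-alg_aux2}, and that you isolate the (sufficient) bounded version of ``countable unions of null sets are null'' rather than the unrestricted statement the paper asserts in passing.
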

\begin{proof}
Since \(\emptyset\in\mathcal N_\omega\cap\mathcal A_O(\bar X_\omega)\), we have that
\(\emptyset\in\bar{\mathcal B}_\omega\). To check that \(\bar{\mathcal B}_\omega\) is
a \(\sigma\)-algebra, notice that if \(A,B\in\bar{\mathcal B}_\omega\), then for any given
\(R>0\) we can find \(\bar A_R,\bar B_R\in\mathcal A_O(\bar X_\omega)\) such that
\[
A'\Delta\bar A_R,B'\Delta\bar B_R\in\mathcal N_\omega,\quad
\text{ where }A'\coloneqq A\cap\Pi_{i\to\omega}B(p_i,R)\text{ and }
B'\coloneqq B\cap\Pi_{i\to\omega}B(p_i,R).
\]
Therefore, we deduce that \((\bar A_R\setminus\bar B_R)\Delta(A'\setminus B')
\subset(\bar A_R\Delta A')\Delta(\bar B_R\Delta B')\in\mathcal N_\omega\),
thus proving that \(A\setminus B\in\bar{\mathcal B}_\omega\) by arbitrariness of \(R>0\).
Let us now show that \(A\coloneqq\bigcup_{n\in\N}A^n\in\bar{\mathcal B}_\omega\) whenever
\((A^n)_{n\in\N}\subset\bar{\mathcal B}_\omega\). Let \(R>0\) be fixed. Given any
\(n\in\N\), pick some set \(\bar A^n_R\in\mathcal A_O(\bar X_\omega)\) such that
\((C_R\cap A^n)\Delta\bar A^n_R\in\mathcal N_\omega\), where we put
\(C_R\coloneqq\Pi_{i\to\omega}B(p_i,R)\). Then by Lemma \ref{lem:key_lemma}
there exists \(\bar A_R\in\mathcal A_O(\bar X_\omega)\) such that
\(\big(C_R\cap\bigcup_{n\in\N}\bar A^n_R\big)\Delta\bar A_R\in\mathcal N_\omega\).
Now observe that
\[\begin{split}
(C_R\cap A)\Delta\bar A_R&\subset
\big((C_R\cap A)\Delta(C_R\cap{\textstyle\bigcup_{n\in\N}}\bar A^n_R)\big)\cup
\big((C_R\cap{\textstyle\bigcup_{n\in\N}}\bar A^n_R)\Delta\bar A_R\big)\\
&\subset\big({\textstyle\bigcup_{n\in\N}}(C_R\cap A^n)\Delta\bar A^n_R\big)\cup
\big((C_R\cap{\textstyle\bigcup_{n\in\N}\bar A^n_R})\Delta\bar A_R\big).
\end{split}\]
Using Lemma \ref{lem:key_lemma} it is easy to check that countable unions of
null sets are null sets, thus \((C_R\cap A)\Delta\bar A_R\in\mathcal N_\omega\)
and so \(A\in\bar{\mathcal B}_\omega\). All in all, we have proven that
\(\bar{\mathcal B}_\omega\) is a \(\sigma\)-algebra.

Let us now show that \(\bar\mm_\omega\) is a measure on \(\bar{\mathcal B}_\omega\).
Suppose that \((A^n)_{n\in\N}\subset\bar{\mathcal B}_\omega\) are pairwise disjoint sets.
Define \(A\coloneqq\bigcup_{n\in\N}A^n\). We aim to prove that
\begin{equation}\label{eq:B_omega_sigma-alg_aux}
\bar\mm_\omega(A)=\sum_{n\in\N}\bar\mm_\omega(A^n).
\end{equation}
First of all, fix any \(R>0\) and choose
\(\bar A_R,\bar A^n_R\in\mathcal A_O(\bar X_\omega)\) such that
\(\big(A\cap\Pi_{i\to\omega}B(p_i,R)\big)\Delta\bar A_R\) and
\(\big(A^n\cap\Pi_{i\to\omega}B(p_i,R)\big)\Delta\bar A^n_R\)
belong to \(\mathcal N_\omega\). Observe that
\begin{equation}\label{eq:B_omega_sigma-alg_aux2}
\bar\mm_\omega\big(\bar A_R\setminus{\textstyle\bigcup_{n\leq N}}\bar A^n_R\big)
=\bar\mm_\omega(\bar A_R)-\sum_{n=1}^N\bar\mm_\omega(\bar A^n_R)
\longrightarrow 0,\quad\text{ as }N\to\infty.
\end{equation}
By using the fact that
\(\bar\mm_\omega(\bar A_R)\geq\sum_{n=1}^N\bar\mm_\omega(\bar A^n_R)\),
we can deduce that
\[
\bar\mm_\omega(A)=\lim_{R\to\infty}\bar\mm_\omega(\bar A_R)\geq
\lim_{R\to\infty}\sum_{n=1}^N\bar\mm_\omega(\bar A^n_R)
=\sum_{n=1}^N\bar\mm_\omega(A^n)\longrightarrow\sum_{n\in\N}\bar\mm_\omega(A^n),
\quad\text{ as }N\to\infty,
\]
thus \(\bar\mm_\omega(A)\geq\sum_{n\in\N}\bar\mm_\omega(A^n)\).
Let us prove the other inequality. If \(\sum_{n\in\N}\bar\mm_\omega(A^n)=+\infty\),
then the inequality trivially holds. Suppose now that
\(\sum_{n\in\N}\bar\mm_\omega(A^n)<+\infty\). Given any \(k\in\N\),
it follows from \eqref{eq:B_omega_sigma-alg_aux2} that there exists \(N_k\in\N\) such
that \(\bar\mm_\omega(\bar A_k)-\sum_{n\leq N_k}\bar\mm_\omega(\bar A^n_k)\leq 2^{-k}\). Then
\[
\bar\mm_\omega(\bar A_k)-\frac{1}{2^k}\leq
\sum_{n\leq N_k}\bar\mm_\omega(\bar A^n_k)
\leq\sum_{n\in\N}\lim_{k'\to\infty}\bar\mm_\omega(\bar A^n_{k'})
=\sum_{n\in\N}\bar\mm_\omega(A^n).
\]
By letting \(k\to\infty\) we conclude that \eqref{eq:B_omega_sigma-alg_aux}
holds, whence the statement follows.
\end{proof}
So far we have defined a natural measure on \(O(\bar X_\omega)\). Now we
have everything that we need to define the measure on the ultralimit of
pointed metric spaces, and hence to define the ultralimit of pointed metric
measure spaces.
\begin{definition}[Ultralimit of pointed metric measure spaces]
Let \(\big((X_i,d_i,\mm_i,p_i)\big)\) be a \(\omega\)-uniformly boundedly
finite sequence of pointed metric measure spaces. Let us consider the
\(\sigma\)-algebra \(\mathcal B_\omega\coloneqq\pi_*\bar{\mathcal B}_\omega\)
on \(X_\omega\), where the projection
\(\pi\colon O(\bar X_\omega)\to X_\omega\) is defined as in
\eqref{eq:def_proj_pi}. Given that
\({\rm Ball}(X_\omega)\subset\mathcal B_\omega\) by Proposition
\ref{prop:formula_balls}, we can define the ball measure \(\mm_\omega\)
on \(X_\omega\) as
\begin{equation}\label{eq:def_m_omega}
\mm_\omega\coloneqq(\pi_*\bar\mm_\omega)|_{{\rm Ball}(X_\omega)}.
\end{equation}
Then the \emph{ultralimit} of the sequence \(\big((X_i,d_i,\mm_i,p_i)\big)\)
is defined as
\[
\lim_{i\to\omega}(X_i,d_i,\mm_i,p_i)\coloneqq
(X_\omega,d_\omega,\mm_\omega,p_\omega).
\]
\end{definition}
Notice that, by the following reasoning, the previous definition makes sense.
Given any \([x_i]\in O(\bar X_\omega)\) and \(R>0\), it clearly holds that
\(\Pi_{i\to\omega}B(x_i,R+1/n)\in\mathcal A_O(\bar X_\omega)\)
for every \(n\in\N\), thus we have \(\bar B\big([x_i],R\big)\in\sigma
\big(\mathcal A_O(\bar X_\omega)\big)\subset\bar{\mathcal B}_\omega\)
by \eqref{eq:formula_balls}. This grants that
\({\rm Ball}\big(O(\bar X_\omega)\big)\subset\bar{\mathcal B}_\omega\).
Moreover, it is immediate to check that \(B\big([x_i],R\big)=\pi^{-1}
\big(B\big([[x_i]],R\big)\big)\) for all \([x_i]\in O(\bar X_\omega)\)
and \(R>0\). In particular, \({\rm Ball}(X_\omega)\subset\pi_*
{\rm Ball}\big(O(\bar X_\omega)\big)\). Given that
\({\rm Ball}\big(O(\bar X_\omega)\big)\subset\bar{\mathcal B}_\omega\),
we finally conclude that \({\rm Ball}(X_\omega)\subset
\pi_*\bar{\mathcal B}_\omega=\mathcal B_\omega\), thus
the definition \eqref{eq:def_m_omega} makes sense.
\begin{remark}\label{rmk:ballornot}{\rm
Here we have chosen to define the ultralimit as a metric measure
space in such a way that the measure is considered as a ball
measure. This choice is to make things more consistent and clean, and
is suitable for our purposes. However, in some cases, as in Example
\ref{ex:non_sep_UL_1}, this leads to a loss of information about the
measure. We point out that one might want to consider the measure
\(\mm_\omega\) as a measure on some larger $\sigma$-algebra instead. 
\fr}\end{remark}

The following construction and its variants will play a central role
in the rest of the paper. The example shows, for instance,
that the ball \(\sigma\)-algebra of the ultralimit space
\(X_\omega\) might be much smaller than \(\mathcal B_\omega\),
thus a fortiori also of the Borel \(\sigma\)-algebra of
\(X_\omega\).
\begin{example}\label{ex:non_sep_UL_1}{\rm
Given any \(i\in\N\), consider the pointed metric measure space
\((X_i,d_i,\mm_i,p_i)\), which is defined as follows: the set \(X_i\) is made of \(i\)
distinct points \(x^i_1,\ldots,x^i_i\), the distance \(d_i\) is given by
\(d_i(x^i_j,x^i_{j'})\coloneqq 1\) for every \(j,j'=1,\ldots,i\) with \(j\neq j'\),
the measure \(\mm_i\) is the uniformly distributed probability measure
\(\mm_i\coloneqq\frac{1}{i}\sum_{j=1}^i\delta_{x^i_j}\), and \(p_i\coloneqq x^i_1\).

Being \(((X_i,d_i,\mm_i,p_i)\big)\) a \(\omega\)-uniformly boundedly finite
sequence (trivially, since each \(\mm_i\) is a probability measure), its
ultralimit \((X_\omega,d_\omega,\mm_\omega,p_\omega)=\lim_{i\to\omega}
(X_i,d_i,\mm_i,p_i)\) exists. One can check that \(X_\omega\) is an
uncountable set and \(d_\omega(x,y)=1\) for every \(x,y\in X_\omega\) with
\(x\neq y\). Then
\begin{equation}\label{eq:example_balls}
B(x,r)=\left\{\begin{array}{ll}
\{x\}\\
X_\omega
\end{array}\quad\begin{array}{ll}
\text{ if }r\leq 1,\\
\text{ if }r>1.
\end{array}\right.
\end{equation}
In particular, every singleton in \(X_\omega\) is an open ball, so that
\(\mathscr B(X_\omega)=2^{X_\omega}\) (since every subset of \(X_\omega\)
is a union of singletons). On the other hand, the ball \(\sigma\)-algebra
on \(X_\omega\) is given by
\begin{equation}\label{eq:example_ball_sigma-alg}
{\rm Ball}(X_\omega)=\big\{A\subset X_\omega\,\big|\,
A\text{ is either countable or cocountable}\big\},
\end{equation}
where by \emph{cocountable} we mean that its complement \(X_\omega\setminus A\)
is countable. This identity can be easily verified: since \({\rm Ball}(X_\omega)\)
contains all singletons and is a \(\sigma\)-algebra, it includes the family in the
right-hand side of \eqref{eq:example_ball_sigma-alg}. Being the latter a
\(\sigma\)-algebra, we conclude that \eqref{eq:example_ball_sigma-alg} holds.

It can be readily checked that the limit measure \(\mm_\omega\) is given by
\begin{equation}\label{eq:example_meas}
\mm_\omega(A)=\left\{\begin{array}{ll}
0\\
1
\end{array}\quad\begin{array}{ll}
\text{ if }A\in{\rm Ball}(X_\omega)\text{ is countable,}\\
\text{ if }A\in{\rm Ball}(X_\omega)\text{ is cocountable.}
\end{array}\right.
\end{equation}
In particular, \(\mm_\omega\) is a probability measure.
\fr}\end{example}
\begin{remark}{\rm
As pointed out in Remark \ref{rmk:ballornot}, we lose information
on the measure \(\mm_\omega\) when we restrict it to the ball
\(\sigma\)-algebra. Indeed, while \(\mm_\omega\) achieves only
values \(0\) and \(1\) as a ball measure, it achieves all the values
between \(0\) and \(1\) when regarded as a measure on
\(\mathcal B_\omega\).
\fr}\end{remark}
\begin{lemma}\label{lem:m_omega_boundedly_finite}
Let \(\big((X_i,d_i,\mm_i,p_i)\big)\) be a \(\omega\)-uniformly boundedly
finite sequence of pointed metric measure spaces. Then the ultralimit
measure \(\mm_\omega\) is boundedly finite.
\end{lemma}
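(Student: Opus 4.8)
\emph{Proof proposal.} The plan is to bound $\mm_\omega$ on an arbitrary ball of $X_\omega$ by the measure of a single internal measurable set, and then invoke the consistency of $\bar\mm_\omega$ with $\tilde\mm_\omega$ together with the $\omega$-uniform bounded finiteness assumption. Fix a point $x=[[x_i]]\in X_\omega$ and a radius $r>0$, choose a representative $[x_i]\in O(\bar X_\omega)$, and pick $R_0\in\N$ with $\bar d_\omega\big([x_i],[p_i]\big)<R_0$.

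First I would check that the internal measurable set $\Pi_{i\to\omega}B(x_i,r+1)$ is contained in $O(\bar X_\omega)$: any $[y_i]$ in it satisfies $\bar d_\omega\big([x_i],[y_i]\big)\leq r+1$, whence $\bar d_\omega\big([y_i],[p_i]\big)\leq r+1+R_0<+\infty$ by the triangle inequality. Thus $\Pi_{i\to\omega}B(x_i,r+1)\in\mathcal A(\bar X_\omega)\cap\mathcal A_O(\bar X_\omega)$, and \eqref{eq:consist_tilde_bar_m_omega} gives
\[
\bar\mm_\omega\big(\Pi_{i\to\omega}B(x_i,r+1)\big)=\tilde\mm_\omega\big(\Pi_{i\to\omega}B(x_i,r+1)\big)=\lim_{i\to\omega}\mm_i\big(B(x_i,r+1)\big).
\]
Next I would estimate this $\omega$-limit. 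Since $\lim_{i\to\omega}d_i(x_i,p_i)=\bar d_\omega\big([x_i],[p_i]\big)<R_0$, the set $\{i\,:\,d_i(x_i,p_i)<R_0\}$ belongs to $\omega$, and on it $B(x_i,r+1)\subset B(p_i,R_0+r+1)$; combining with Definition \ref{def:omega-ubf} we get $\mm_i\big(B(x_i,r+1)\big)\leq\eta(R_0+r+1)$ for $\omega$-a.e.\ $i$, hence $\lim_{i\to\omega}\mm_i\big(B(x_i,r+1)\big)\leq\eta(R_0+r+1)<+\infty$.

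Finally I would compare $\mm_\omega\big(B(x,r)\big)$ with this quantity. By the identity $B\big([x_i],r\big)=\pi^{-1}\big(B(x,r)\big)$ and \eqref{eq:formula_balls} we have the inclusions
\[
\pi^{-1}\big(B(x,r)\big)=B\big([x_i],r\big)\subset\bar B\big([x_i],r\big)\subset\Pi_{i\to\omega}B(x_i,r+1),
\]
where the left-hand set belongs to \({\rm Ball}\big(O(\bar X_\omega)\big)\subset\bar{\mathcal B}_\omega\) and the right-hand set belongs to \(\mathcal A_O(\bar X_\omega)\subset\bar{\mathcal B}_\omega\). Using the definition \(\mm_\omega=(\pi_*\bar\mm_\omega)|_{{\rm Ball}(X_\omega)}\) and the monotonicity of the measure \(\bar\mm_\omega\) on \(\bar{\mathcal B}_\omega\), we conclude
\[
\mm_\omega\big(B(x,r)\big)=\bar\mm_\omega\big(B([x_i],r)\big)\leq\bar\mm_\omega\big(\Pi_{i\to\omega}B(x_i,r+1)\big)=\lim_{i\to\omega}\mm_i\big(B(x_i,r+1)\big)\leq\eta(R_0+r+1)<+\infty.
\]
I do not expect a genuine obstacle here; the only point requiring a little care is verifying that $\Pi_{i\to\omega}B(x_i,r+1)$ is a \emph{bounded} internal set lying in $O(\bar X_\omega)$, so that the consistency statement \eqref{eq:consist_tilde_bar_m_omega} is applicable, and the rest is bookkeeping with the triangle inequality and the results already established in this section.
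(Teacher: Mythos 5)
Your proposal is correct and follows essentially the same route as the paper: bound the ultralimit ball by an internal measurable set \(\Pi_{i\to\omega}B(\cdot,R')\) via \eqref{eq:formula_balls} and then invoke the \(\omega\)-uniform bound \(\eta\). The only (harmless) difference is that the paper first reduces to balls centered at \(p_\omega\) — every ball lies in some \(B(p_\omega,R)\) — which lets it skip your triangle-inequality bookkeeping at a general center \([[x_i]]\).
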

\begin{proof}
It suffices to prove that \(\mm_\omega\big(B(p_\omega,R)\big)\) is finite
for every \(R>0\), since every ball is contained in \(B(p_\omega,R)\)
for \(R>0\) sufficiently large. Let \(R>0\) be fixed. Given any
\(R'>R\), we know from \eqref{eq:formula_balls} that
\(B\big([p_i],R\big)\subset\Pi_{i\to\omega}B(p_i,R')\). Therefore, we have that
\[
\mm_\omega\big(B(p_\omega,R)\big)=\bar\mm_\omega\big(B\big([p_i],R\big)\big)
\leq\bar\mm_\omega\big(\Pi_{i\to\omega}B(p_i,R')\big)=
\lim_{i\to\infty}\mm_i\big(B(p_i,R')\big)\leq\eta(R')<+\infty,
\]
whence the statement follows.
\end{proof}
\begin{remark}\label{rmk:sep_issue}{\rm
Observe that if \(\mathscr B(X_\omega)\) and \({\rm Ball}(X_\omega)\) coincide
(which happens, for instance, when \(X_\omega\) is separable),
then \(\mm_\omega\) is a Borel measure on \(X_\omega\). On the other hand,
in general it holds that \({\rm Ball}(X_\omega)\) and \(\mathcal B_\omega\)
might fail to contain \(\mathscr B(X_\omega)\).
\fr}\end{remark}
\section{On the support-concentration issue}\label{s:sep_issue}
As mentioned in Remark \ref{rmk:sep_issue}, the ultralimit of metric measure
spaces might fail to be separable. In some cases, however, the measure is
still concentrated on a separable set and hence it might be thought of as
a Borel measure on its support. In this section, we will characterise those
sequences for which the ultralimit measure is concentrated on a separable set.
For that we will need the following definitions of measure-theoretic
variants of total boundedness and asymptotic total boundedness.
\begin{definition}[Bounded \(\mm\)-total boundedness]
Let \((X,d,\mm,p)\) be a pointed, boundedly finite metric measure space.
Then we say that \((X,d,\mm,p)\) is \emph{boundedly \(\mm\)-totally bounded}
provided for every \(R,r,\varepsilon>0\) there exists a finite collection
of points \((x_n)_{n=1}^M\subset X\) such that
\[
\mm\big(\bar B(p,R)\setminus{\textstyle\bigcup_{n=1}^M}B(x_n,r)\big)
\leq\varepsilon.
\]
\end{definition}
\begin{definition}[Asymptotic bounded \(\mm_\omega\)-total boundedness]
\label{def:abm_omegatb}
Let \(\big((X_i,d_i,\mm_i,p_i)\big)\) be a \(\omega\)-uniformly boundedly
finite sequence of pointed metric measure spaces. Then we say that the sequence
\(\big((X_i,d_i,\mm_i,p_i)\big)\) is \emph{asymptotically boundedly
\(\mm_\omega\)-totally bounded} provided for every given \(R,r,\varepsilon>0\)
there exist a number \(M\in\N\) and points \((x^i_n)_{n=1}^M\subset X_i\)
such that
\[
\lim_{i\to\omega}\mm_i\big(\bar B(p_i,R)\setminus{\textstyle\bigcup_{n=1}^M}
B(x^i_n,r)\big)\leq\varepsilon.
\]
\end{definition}
With these definitions in hand, we may state the main result of this section.
\begin{theorem}\label{thm:equiv_conc_spt}
Let \(\big((X_i,d_i,\mm_i,p_i)\big)\) be a \(\omega\)-uniformly boundedly
finite sequence of pointed metric measure spaces. Then the following
conditions are equivalent:
\begin{itemize}
\item[\(\rm i)\)] \(\mm_\omega\) is concentrated on \({\rm spt}(\mm_\omega)\).
\item[\(\rm ii)\)] \(\mm_\omega\) is concentrated on a closed
and separable set \(C\subset X_\omega\).
(Observe that Remark \ref{rmk:C_in_Ball} grants
that \(C\in{\rm Ball}(X_\omega)\).)
\item[\(\rm iii)\)] The ultralimit \((X_\omega,d_\omega,\mm_\omega,p_\omega)\)
is boundedly \(\mm_\omega\)-totally bounded.
\item[\(\rm iv)\)] The sequence \(\big((X_i,d_i,\mm_i,p_i)\big)\) is
asymptotically boundedly \(\mm_\omega\)-totally bounded.
\end{itemize}
\end{theorem}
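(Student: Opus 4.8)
The plan is to prove the four-way equivalence by a cycle of implications, separating the ``soft'' statements about the single metric measure space \((X_\omega,d_\omega,\mm_\omega,p_\omega)\) -- which is boundedly finite, hence \(\sigma\)-finite, by Lemma \ref{lem:m_omega_boundedly_finite} -- from the two implications that genuinely use the construction of \(\mm_\omega\) through the internal measurable sets, the identity \(\bar\mm_\omega=\tilde\mm_\omega\) on \(\mathcal A(\bar X_\omega)\cap\mathcal A_O(\bar X_\omega)\), and formula \eqref{eq:formula_balls}.

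For the soft directions: (i)\(\Rightarrow\)(ii) is immediate with \(C\coloneqq\spt(\mm_\omega)\), which is closed and separable by Lemma \ref{lem:prop_spt}. For (ii)\(\Rightarrow\)(i): if \(\mm_\omega\) is concentrated on a closed separable \(C\), then any point outside \(C\) has a ball disjoint from \(C\), hence \(\mm_\omega\)-null, so \(\spt(\mm_\omega)\subseteq C\); and \(C\setminus\spt(\mm_\omega)\) is covered by \(\mm_\omega\)-null open balls, of which -- \(C\) being Lindel\"{o}f -- countably many already suffice, so this set is \(\mm_\omega\)-null and \(\mm_\omega\) is concentrated on \(\spt(\mm_\omega)\). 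For (ii)\(\Rightarrow\)(iii): given \(R,r,\eps>0\) and a dense sequence \((y_j)_j\) in \(C\) we have \(C\subseteq\bigcup_j B(y_j,r)\), so \(\bar B(p_\omega,R)\setminus\bigcup_{j\le M}B(y_j,r)\) decreases, as \(M\to\infty\), to a subset of the \(\mm_\omega\)-null set \(X_\omega\setminus C\); since \(\mm_\omega(\bar B(p_\omega,R))<+\infty\), continuity from above makes one of these sets have measure \(\le\eps\).

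For (iii)\(\Rightarrow\)(iv): given centres \((x_n)_{n=1}^M\subset X_\omega\) realising (iii) for \(R,r,\eps\), I would pick representatives \(x_n=[[x^i_n]]\) with \([x^i_n]\in O(\bar X_\omega)\) (and \(x^i_n\coloneqq p_i\) off the relevant element of \(\omega\), which by Lemma \ref{lem:equiv_iBs} does not affect the internal sets below). By Definition \ref{def:int_meas_sets} and Proposition \ref{prop:formulas_union_complement}, the set \(\bar D\coloneqq\Pi_{i\to\omega}\big(\bar B(p_i,R)\setminus\bigcup_{n}B(x^i_n,r)\big)\) is internal, equals \(\Pi_{i\to\omega}\bar B(p_i,R)\setminus\bigcup_{n}\Pi_{i\to\omega}B(x^i_n,r)\), and is contained in \(\Pi_{i\to\omega}\bar B(p_i,R)\subseteq\bar B([p_i],R)\subseteq O(\bar X_\omega)\), hence \(\bar D\in\mathcal A(\bar X_\omega)\cap\mathcal A_O(\bar X_\omega)\). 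Using \(\Pi_{i\to\omega}\bar B(p_i,R)\subseteq\bar B([p_i],R)\) and \(B([x^i_n],r)\subseteq\Pi_{i\to\omega}B(x^i_n,r)\) one gets \(\bar D\subseteq\bar B([p_i],R)\setminus\bigcup_{n}B([x^i_n],r)=\pi^{-1}\big(\bar B(p_\omega,R)\setminus\bigcup_{n}B(x_n,r)\big)\); therefore \(\lim_{i\to\omega}\mm_i\big(\bar B(p_i,R)\setminus\bigcup_{n}B(x^i_n,r)\big)=\tilde\mm_\omega(\bar D)=\bar\mm_\omega(\bar D)\le\mm_\omega\big(\bar B(p_\omega,R)\setminus\bigcup_{n}B(x_n,r)\big)\le\eps\), which is (iv).

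The step (iv)\(\Rightarrow\)(iii) is the main obstacle; once it is in place, (iii)\(\Rightarrow\)(ii) closes the cycle. For (iv)\(\Rightarrow\)(iii), given \(R,r,\eps>0\), I would apply (iv) with the enlarged parameters \(R+1\), \(r/2\), \(\eps\), obtaining \(M\) and \((x^i_n)_{n=1}^M\subset X_i\) with \(\lim_{i\to\omega}\mm_i\big(\bar B(p_i,R+1)\setminus\bigcup_{n}B(x^i_n,r/2)\big)\le\eps\); then discard the ``useless'' centres, noting that if \(d_i(x^i_n,p_i)>R+1+r/2\) the ball \(B(x^i_n,r/2)\) misses \(\bar B(p_i,R+1)\), so replacing such \(x^i_n\) by \(p_i\) does not enlarge the relevant set, leaving \(d_i(x^i_n,p_i)\le R+1+r/2\) for all \(i,n\), so that \(x_n\coloneqq[[x^i_n]]\in X_\omega\) is well defined. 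By \eqref{eq:formula_balls} one has \(\bar B([p_i],R)\subseteq\Pi_{i\to\omega}B(p_i,R+1)\subseteq\Pi_{i\to\omega}\bar B(p_i,R+1)\), while \(\Pi_{i\to\omega}B(x^i_n,r/2)\subseteq B([x^i_n],r)\); hence \(\pi^{-1}\big(\bar B(p_\omega,R)\setminus\bigcup_n B(x_n,r)\big)\subseteq\Pi_{i\to\omega}\big(\bar B(p_i,R+1)\setminus\bigcup_{n}B(x^i_n,r/2)\big)\), and since \(\bar\mm_\omega\) of the latter internal set equals \(\lim_{i\to\omega}\mm_i\big(\bar B(p_i,R+1)\setminus\bigcup_{n}B(x^i_n,r/2)\big)\le\eps\), monotonicity gives \(\mm_\omega\big(\bar B(p_\omega,R)\setminus\bigcup_n B(x_n,r)\big)\le\eps\), i.e.\ (iii). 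Finally, for (iii)\(\Rightarrow\)(ii) I would, for all \(m,n\in\N\), use (iii) with parameters \(m\), \(1/n\), \(2^{-n}\) to get a finite \(D_{m,n}\subset X_\omega\) with \(\mm_\omega\big(\bar B(p_\omega,m)\setminus\bigcup_{y\in D_{m,n}}B(y,1/n)\big)\le2^{-n}\), set \(D\coloneqq\bigcup_{m,n}D_{m,n}\) (countable) and \(C\coloneqq\bar D\) (closed, separable, in \({\rm Ball}(X_\omega)\) by Remark \ref{rmk:C_in_Ball}); for each fixed \(m\), the sets \(\bar B(p_\omega,m)\setminus\bigcup_{y\in D}B(y,1/n)\) increase to \(\bar B(p_\omega,m)\setminus C\) and have \(\mm_\omega\)-measure \(\le 2^{-n}\) for every \(n\), so this non-decreasing sequence of measures vanishes identically (a non-decreasing real sequence whose \(n\)-th term is \(\le2^{-n}\) must be zero), whence \(\mm_\omega(\bar B(p_\omega,m)\setminus C)=0\) and, summing over \(m\), \(\mm_\omega(X_\omega\setminus C)=0\). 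The real difficulty lies precisely in (iv)\(\Rightarrow\)(iii): one must simultaneously absorb the radius loss built into \eqref{eq:formula_balls} (open versus closed balls), handled by the enlargements \(R\mapsto R+1\), \(r\mapsto r/2\), and the possibility that the approximating centres drift off to infinity, removed by the ``useless centre'' replacement.
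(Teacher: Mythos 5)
Your proposal is correct and follows essentially the same route as the paper: the same cycle of implications, with i)$\Leftrightarrow$ii) and ii)$\Rightarrow$iii) argued identically, iii)$\Rightarrow$ii) done directly via continuity from below rather than by the paper's contradiction with $\varepsilon$-neighbourhoods, and iii)$\Leftrightarrow$iv) resting on the same inclusions between ultraproducts of balls and balls in $X_\omega$ (with the paper's $R'>R$, $r'<r$ enlargements replaced by your explicit $R+1$, $r/2$). Your ``useless centre'' replacement in iv)$\Rightarrow$iii), ensuring the representatives $[x^i_n]$ lie in $O(\bar X_\omega)$ so that $[[x^i_n]]\in X_\omega$ is well defined, is a point the paper passes over silently, and it is good that you addressed it.
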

\begin{proof}
\ \\
{\color{blue}\({\rm i)}\Longrightarrow{\rm ii)}\)}
It follows from the fact that \({\rm spt}(\mm_\omega)\) is closed and
separable, as shown in Lemma \ref{lem:prop_spt}. (Observe that the
\(\sigma\)-finiteness of the measure \(\mm_\omega\) is granted by
Lemma \ref{lem:m_omega_boundedly_finite}.)\\
{\color{blue}\({\rm ii)}\Longrightarrow{\rm i)}\)}
Suppose \(\mm_\omega\) is concentrated on a closed, separable
set \(C\in{\rm Ball}(X_\omega)\). If \(C\subset{\rm spt}(\mm_\omega)\),
then \(\mm_\omega\) is concentrated on \({\rm spt}(\mm_\omega)\).
Now suppose \(C\setminus{\rm spt}(\mm_\omega)\neq\emptyset\).
Given any \(x\in C\setminus{\rm spt}(\mm_\omega)\), by definition of
support we can find a radius \(r_x>0\) such that \(\mm_\omega\big(B(x,r_x)\big)=0\).
Being \(C\) separable, we can use Lindel\"{o}f's lemma to find a sequence
\((x_n)_{n\in\N}\subset C\setminus{\rm spt}(\mm_\omega)\)
such that \(C\setminus{\rm spt}(\mm_\omega)\subset\bigcup_{n\in\N}
B(x_n,r_{x_n})\). Therefore, we can conclude that
\[
\mm_\omega\big(X\setminus{\rm spt}(\mm_\omega)\big)\leq
\mm_\omega(X\setminus C)+\mm_\omega\big(C\setminus{\rm spt}(\mm_\omega)\big)
\leq\sum_{n\in\N}\mm_\omega\big(B(x_n,r_{x_n})\big)=0,
\]
which shows that \(\mm_\omega\) is concentrated on \({\rm spt}(\mm_\omega)\).\\
{\color{blue}\({\rm ii)}\Longrightarrow{\rm iii)}\)} Suppose \(\mm_\omega\)
is concentrated on a closed, separable set \(C\in{\rm Ball}(X_\omega)\).
Let \(R,r,\varepsilon>0\) be fixed. Choose a dense sequence \((y^n)_{n\in\N}\)
in \(C\). Then it holds \(\bar B(p_\omega,R)\cap C\subset\bigcup_{n\in\N}
B(y^n,r)\). By using the continuity from above of \(\mm_\omega\) and
the fact that \(\bar B(p_\omega,R)\) has finite \(\mm_\omega\)-measure,
we deduce that
\[
\lim_{M\to\infty}\mm_\omega\big(\bar B(p_\omega,R)
\setminus{\textstyle\bigcup_{n=1}^M}B(y^n,r)\big)=
\lim_{M\to\infty}\mm_\omega\big(\big(\bar B(p_\omega,R)\cap C\big)
\setminus{\textstyle\bigcup_{n=1}^M}B(y^n,r)\big)=0.
\]
Therefore, there exists \(M\in\N\) such that
\(\mm_\omega\big(\bar B(p_\omega,R)\setminus\bigcup_{n=1}^M B(y^n,r)\big)
\leq\varepsilon\), thus proving that the ultralimit
\((X_\omega,d_\omega,\mm_\omega,p_\omega)\) is boundedly
\(\mm_\omega\)-totally bounded.\\
{\color{blue}\({\rm iii)}\Longrightarrow{\rm ii)}\)}
Suppose \((X_\omega,d_\omega,\mm_\omega,p_\omega)\) is boundedly
\(\mm_\omega\)-totally bounded. Given any \(i,j,\ell\in\N\), choose
any \(M_{ij\ell}\in\N\) and \((x_n^{ij\ell})_{n=1}^{M_{ij\ell}}\subset
X_\omega\) such that
\[
\mm_\omega\Big(\bar B(p_\omega,i)\setminus{\textstyle
\bigcup_{n=1}^{M_{ij\ell}}}B\big(x_n^{ij\ell},1/j\big)\Big)\leq\frac{1}{\ell}.
\]
Let us define the closed and separable set \(C\subset X_\omega\) as
\[
C\coloneqq\overline{\bigcup_{i,j,\ell\in\N}
\big\{x_1^{ij\ell},\ldots,x_{M_{ij\ell}}^{ij\ell}\big\}}.
\]
We claim that \(\mm_\omega\) is concentrated on \(C\).
We argue by contradiction: suppose there is \(i_0\in\N\) such
that \(\mm_\omega\big(\bar B(p_\omega,i_0)\setminus C\big)>0\).
Since \(C\) is closed and separable, every \(\varepsilon\)-neighbourhood
\(C^\varepsilon\) of \(C\) can be written as a countable union of balls,
thus in particular \(C^\varepsilon\in{\rm Ball}(X_\omega)\).
Since \(C\) is the intersection of its neighbourhoods,
there is \(j_0\in\N\) such that \(\delta\coloneqq
\mm_\omega\big(\bar B(p_\omega,i_0)\setminus C^{1/j_0}\big)>0\).
Now fix \(\ell_0\in\N\) such that \(1/\ell_0\leq\delta/2\). Then
it holds that
\[
\delta=\mm_\omega\big(\bar B(p_\omega,i_0)\setminus C^{1/j_0}\big)
\leq\mm_\omega\Big(\bar B(p_\omega,i_0)\setminus{\textstyle
\bigcup_{n=1}^{M_{i_0 j_0\ell_0}}}B\big(x^{i_0 j_0\ell_0}_n,1/j_0\big)\Big)
\leq\frac{1}{\ell_0}\leq\frac{\delta}{2}<\delta,
\]
which leads to a contradiction. Consequently, \(\mm_\omega\) is
concentrated on \(C\), as required.\\
{\color{blue}\({\rm iii)}\Longrightarrow{\rm iv)}\)}
Suppose \((X_\omega,d_\omega,\mm_\omega,p_\omega)\) is boundedly
\(\mm_\omega\)-totally bounded. Let \(R,r,\varepsilon>0\) be fixed.
Pick \(R'>R\) and \(r'\in(0,r)\). Choose finitely many
points \(\big([[x^n_i]]\big)_{n=1}^M\subset X_\omega\) such that
\[
\mm_\omega\Big(\bar B(p_\omega,R')\setminus{\textstyle\bigcup_{n=1}^M
B\big([[x^n_i]],r'\big)}\Big)\leq\varepsilon.
\]
Then it holds that
\[
\lim_{i\to\omega}\mm_i\big(\bar B(p_i,R)\setminus{\textstyle
\bigcup_{n=1}^M B(x^n_i,r)}\big)\leq\mm_\omega\Big(\bar B(p_\omega,R')
\setminus{\textstyle\bigcup_{n=1}^M B\big([[x^n_i]],r'\big)}\Big)
\leq\varepsilon,
\]
thus showing that \(\big((X_i,d_i,\mm_i,p_i)\big)\) is
asymptotically boundedly \(\mm_\omega\)-totally bounded.\\
{\color{blue}\({\rm iv)}\Longrightarrow{\rm iii)}\)} Suppose the sequence
\(\big((X_i,d_i,\mm_i,p_i)\big)\) is asymptotically boundedly
\(\mm_\omega\)-totally bounded. Let \(R,r,\varepsilon>0\) be given.
Pick any \(R'>R\) and \(r'\in(0,r)\). Then there exist a number \(M\in\N\)
and points \((x^n_i)_{n=1}^M\subset X_i\) such that
\[
\lim_{i\to\omega}\mm_i\big(\bar B(p_i,R')\setminus{\textstyle
\bigcup_{n=1}^M}B(x^n_i,r')\big)\leq\varepsilon.
\]
Define \(x^n\coloneqq[[x^n_i]]\in X_\omega\) for every
\(n=1,\ldots,M\). Therefore, it holds that
\[\begin{split}
\mm_\omega\big(\bar B(p_\omega,R)\setminus{\textstyle
\bigcup_{n=1}^M}B(x^n,r)\big)&=\inf_{k\in\N}\lim_{i\to\omega}
\mm_i\Big(\bar B\big(p_i,R+1/k\big)\setminus{\textstyle\bigcup_{n=1}^M}
B\big(x^n_i,r-1/k\big)\Big)\\
&\leq\lim_{i\to\omega}\mm_i\big(\bar B(p_i,R')\setminus{\textstyle
\bigcup_{n=1}^M}B(x^n_i,r')\big)\leq\varepsilon,
\end{split}\]
thus proving that \((X_\omega,d_\omega,\mm_\omega,p_\omega)\) is boundedly
\(\mm_\omega\)-totally bounded.
\end{proof}
\begin{example}\label{ex:non_sep_UL_2}{\rm
Let \(\big((X_i,d_i,\mm_i,p_i)\big)\) be as in Example \ref{ex:non_sep_UL_1}.
Let us show that \(\big((X_i,d_i,\mm_i,p_i)\big)\) is not asymptotically
boundedly \(\mm_\omega\)-totally bounded: observe that for any \(i\in\N\)
it holds that
\[
F\subset X_i,\;\;\mm_i\big(X_i\setminus{\textstyle\bigcup_{x\in F}}B(x,1)\big)
\leq\frac{1}{2}\quad\Longrightarrow\quad\# F\geq \frac{i}{2},
\]
which proves that \(\big((X_i,d_i,\mm_i,p_i)\big)\) fails to
be asymptotically boundedly \(\mm_\omega\)-totally bounded for the
choice \((R,r,\varepsilon)=(2,1,1/2)\). This fact is consistent with Theorem
\ref{thm:equiv_conc_spt}; indeed, since we have that
\(\mm_\omega\big(B(x,1)\big)=\mm_\omega\big(\{x\}\big)=0\) for every
\(x\in X_\omega\) by \eqref{eq:example_balls} and \eqref{eq:example_meas},
it holds that \({\rm spt}(\mm_\omega)=\emptyset\) and thus \(\mm_\omega\)
is not concentrated on \({\rm spt}(\mm_\omega)\).
\fr}\end{example}
\begin{lemma}\label{lem:pmGH_implies_omega-ubf}
Let \(\big\{(X_i,d_i,\mm_i,p_i)\big\}_{i\in\bar\N}\) be a sequence
of pointed Polish metric measure spaces, with \(\mm_i\) boundedly finite.
Suppose \((X_i,d_i,\mm_i,p_i)\to(X_\infty,d_\infty,\mm_\infty,p_\infty)\)
in the pmGH sense. Then it holds that the sequence \(\big((X_i,d_i,\mm_i,p_i)\big)\)
is \(\omega\)-uniformly boundedly finite.
\end{lemma}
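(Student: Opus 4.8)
The plan is to read off everything from the pmGH convergence. By hypothesis there are $R_i\nearrow+\infty$, $\varepsilon_i\searrow 0$, and $(R_i,\varepsilon_i)$-approximations $\psi_i\colon X_i\to X_\infty$ (extended Borel-measurably to all of $X_i$, cf.\ Remark \ref{rmk:quasi_isom}) such that $(\psi_i)_*\mm_i\rightharpoonup\mm_\infty$ in duality with $C_{bbs}(X_\infty)$. The key observation is that the quasi-isometry property forces $\psi_i$ to map $B(p_i,R)$ into a slightly larger ball centred at $p_\infty$, so that $\mm_i\big(B(p_i,R)\big)$ is controlled by the $(\psi_i)_*\mm_i$-measure of a \emph{fixed} ball in $X_\infty$, which in turn is controlled by testing against a fixed cutoff function from $C_{bbs}(X_\infty)$ whose $\mm_\infty$-integral is finite (as $\mm_\infty$ is boundedly finite).

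First I would fix $R>0$ and choose, once and for all, a cutoff $\varphi_R\in C_{bbs}(X_\infty)$ with $0\le\varphi_R\le 1$, $\varphi_R\equiv 1$ on $\bar B(p_\infty,R+1)$ and $\spt\varphi_R\subset\bar B(p_\infty,R+2)$ --- for instance $\varphi_R(\cdot)\coloneqq\max\big\{0,\min\{1,R+2-d_\infty(\cdot,p_\infty)\}\big\}$ --- and set $\eta(R)\coloneqq 1+\int\varphi_R\,\d\mm_\infty<+\infty$. Next, for all $i$ large enough that $R_i\ge R$ and $\varepsilon_i\le 1$, I would use $\psi_i(p_i)=p_\infty$ together with the distortion bound of Definition \ref{def:REappr} (applicable since $B(p_i,R)\subset B(p_i,R_i)$) to obtain $d_\infty\big(\psi_i(x),p_\infty\big)\le d_i(x,p_i)+\varepsilon_i<R+1$ for every $x\in B(p_i,R)$, i.e.\ $B(p_i,R)\subset\psi_i^{-1}\big(\bar B(p_\infty,R+1)\big)$. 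Monotonicity of $\mm_i$, the definition of the pushforward, the change-of-variables formula, and the inequality $\mathbbm{1}_{\bar B(p_\infty,R+1)}\le\varphi_R$ then give $\mm_i\big(B(p_i,R)\big)\le(\psi_i)_*\mm_i\big(\bar B(p_\infty,R+1)\big)\le\int\varphi_R\circ\psi_i\,\d\mm_i$.

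Finally, letting $i\to\infty$ and invoking the pmGH convergence, $\int\varphi_R\circ\psi_i\,\d\mm_i\to\int\varphi_R\,\d\mm_\infty=\eta(R)-1$, so $\mm_i\big(B(p_i,R)\big)\le\eta(R)$ for all sufficiently large $i$; since a cofinite subset of $\N$ belongs to the non-principal ultrafilter $\omega$, this holds for $\omega$-a.e.\ $i$, and by arbitrariness of $R>0$ the sequence $\big((X_i,d_i,\mm_i,p_i)\big)$ is $\omega$-uniformly boundedly finite. I do not expect a real obstacle; the only points requiring slight care are that $\psi_i^{-1}\big(\bar B(p_\infty,R+1)\big)$ is measurable (it is, as $\psi_i$ is Borel and $X_i$ is Polish, so the ball and Borel $\sigma$-algebras coincide) and that the threshold index beyond which the estimate holds depends on $R$ --- which is harmless, since $\eta$ may depend on $R$ and the resulting set of ``good'' indices is cofinite, hence in $\omega$.
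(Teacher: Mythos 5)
Your argument is correct and follows essentially the same route as the paper's proof: use the distortion bound of the $(R_i,\varepsilon_i)$-approximation to get $B(p_i,R)\subset\psi_i^{-1}\big(\bar B(p_\infty,R+1)\big)$, then control $\mm_i\big(B(p_i,R)\big)$ by testing $(\psi_i)_*\mm_i$ against a continuous cutoff and invoking the weak convergence. The only (harmless) difference is that the paper squeezes with a sequence of cutoffs $f_k\to\nchi_{\bar B(p_\infty,R)}$ to obtain the sharper bound $\lims_i\mm_i\big(B(p_i,R)\big)\leq\mm_\infty\big(\bar B(p_\infty,R)\big)$, whereas you stop at a single fixed cutoff and settle for the cruder $\eta(R)\leq 1+\mm_\infty\big(\bar B(p_\infty,R+2)\big)$, which is all the lemma requires.
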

\begin{proof}
Fix \((R_i,\varepsilon_i)\)-approximations \(\psi_i\colon X_i\to X_\infty\) as
in Definition \ref{def:pmGH}. We claim that
\begin{equation}\label{eq:pmGH_implies_omega-ubf_aux}
\lims_{i\to\infty}\mm_i\big(B(p_i,R)\big)\leq\mm_\infty\big(\bar B(p_\infty,R)\big),
\quad\text{ for every }R>0.
\end{equation}
Let \(R>0\) be fixed. Pick any sequence \((f_k)_k\subset C_b\big(B(p_\infty,R+1)\big)\)
with \(0\leq f_k\leq 1\) such that \({\rm dist}\big(\{f_k<1\},\bar B(p_\infty,R)\big)>0\)
and \(f_k\to\nchi_{\bar B(p_\infty,R)}\). Fix \(k\in\N\), then choose any \(j\in\N\)
such that \(R_i>R\), \(\varepsilon_i<1\), and \(f_k=1\) on \(B(p_\infty,R+\varepsilon_i)\)
for all \(i\geq j\). Since \(\psi_i\) is a \((R_i,\varepsilon_i)\)-approximation,
we see that \(\psi_i\big(B(p_i,R)\big)\subset B(p_\infty,R+\varepsilon_i)\), thus
\(B(p_i,R)\subset\psi_i^{-1}\big(B(p_\infty,R+\varepsilon_i)\big)\) and
\[\begin{split}
\int f_k\,\d\mm_\infty&=\lim_{i\to\infty}\int f_k\,\d(\psi_i)_*\mm_i
=\lim_{i\to\infty}\int f_k\circ\psi_i\,\d\mm_i\geq\lims_{i\to\infty}
\int\nchi_{B(p_\infty,R+\varepsilon_i)}\circ\psi_i\,\d\mm_i\\
&=\lims_{i\to\infty}\mm_i\big(\psi_i^{-1}\big(B(p_\infty,R+\varepsilon_i)\big)\big)
\geq\lims_{i\to\infty}\mm_i\big(B(p_i,R)\big).
\end{split}\]
By letting \(k\to\infty\) and using the dominated convergence theorem, we get
\eqref{eq:pmGH_implies_omega-ubf_aux}. Now let us define the function
\(\eta\colon(0,+\infty)\to[1,+\infty)\) as
\(\eta(R)\coloneqq\mm_\infty\big(\bar B(p_\infty,R)\big)+1\) for every \(R>0\)
(here, we are using the assumption that \(\mm_\infty\) is boundedly finite).
Then \eqref{eq:pmGH_implies_omega-ubf_aux} implies that for any \(R>0\)
we have that \(\mm_i\big(B(p_i,R)\big)\leq\eta(R)\) for \(\omega\)-a.e.\ \(i\), which
shows that \(\big((X_i,d_i,\mm_i,p_i)\big)\) is \(\omega\)-uniformly boundedly finite.
\end{proof}
\begin{proposition}\label{prop:m_omega_conc_spt}
Let \(\big\{(X_i,d_i,\mm_i,p_i)\big\}_{i\in\bar\N}\) be a sequence
of pointed Polish metric measure spaces, with \(\mm_i\) boundedly finite.
Suppose \((X_i,d_i,\mm_i,p_i)\to(X_\infty,d_\infty,\mm_\infty,p_\infty)\)
in the pmGH sense. Then the sequence \(\big((X_i,d_i,\mm_i,p_i)\big)\) is
asymptotically boundedly \(\mm_\omega\)-totally bounded.

In particular, it holds that \(\mm_\omega\) is concentrated on \({\rm spt}(\mm_\omega)\).
\end{proposition}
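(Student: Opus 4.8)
The plan is to establish the asymptotic bounded $\mm_\omega$-total boundedness directly from the pmGH approximations, and then to read off the last assertion from Theorem \ref{thm:equiv_conc_spt} (note that the $\omega$-uniform bounded finiteness needed to even invoke that theorem is granted by Lemma \ref{lem:pmGH_implies_omega-ubf}, so that $\mm_\omega$ is well-defined). Concretely, I would fix $(R_i,\varepsilon_i)$-approximations $\psi_i\colon X_i\to X_\infty$ realising the pmGH convergence, with $R_i\nearrow+\infty$, $\varepsilon_i\searrow 0$ and $(\psi_i)_*\mm_i\rightharpoonup\mm_\infty$ in duality with $C_{bbs}(X_\infty)$. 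Given $R,r,\varepsilon>0$, pick $R'>R$ and $r'\in(0,r)$. Since $X_\infty$ is Polish, the closed set $\bar B(p_\infty,R')$ is separable and $\mm_\infty\big(\bar B(p_\infty,R')\big)<+\infty$ (recall $\mm_\infty$ is boundedly finite); choosing a dense sequence $(y_n)_{n\in\N}$ in $\bar B(p_\infty,R')$, so that $\bar B(p_\infty,R')\subset\bigcup_{n\in\N}B(y_n,r')$, and using continuity from above of the finite measure $\mm_\infty$, one fixes $M\in\N$ with $\mm_\infty(F)\leq\varepsilon$, where $F\coloneqq\bar B(p_\infty,R')\setminus\bigcup_{n=1}^M B(y_n,r')$ is a closed, bounded subset of $X_\infty$.

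Next I would produce the approximating points. For every index $i$ with $R<R_i$, $\varepsilon_i<\min\{R'-R,(r-r')/2\}$ and $R'<R_i-\varepsilon_i$ — a cofinite, hence $\omega$-full, set of $i$ — the inclusion $B(p_\infty,R_i-\varepsilon_i)\subset\psi_i\big(B(p_i,R_i)\big)^{\varepsilon_i}$ lets me choose, for each $n\in\{1,\dots,M\}$, a point $x^i_n\in B(p_i,R_i)$ with $d_\infty\big(\psi_i(x^i_n),y_n\big)<\varepsilon_i$ (and I set $x^i_n\coloneqq p_i$ for the remaining $i$). The key claim is then $\bar B(p_i,R)\setminus\bigcup_{n=1}^M B(x^i_n,r)\subset\psi_i^{-1}(F)$ for $\omega$-a.e.\ $i$. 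Indeed, as $\bar B(p_i,R)\subset B(p_i,R_i)$ and $\psi_i$ distorts distances on $B(p_i,R_i)$ by at most $\varepsilon_i$ while sending $p_i$ to $p_\infty$, every $x\in\bar B(p_i,R)$ satisfies $\psi_i(x)\in\bar B(p_\infty,R+\varepsilon_i)\subset\bar B(p_\infty,R')$; and if in addition $\psi_i(x)\in B(y_n,r')$ for some $n$, then $d_\infty\big(\psi_i(x),\psi_i(x^i_n)\big)<r'+\varepsilon_i$, whence $d_i(x,x^i_n)<r'+2\varepsilon_i<r$. Contrapositively, a point of $\bar B(p_i,R)$ lying outside all the $B(x^i_n,r)$ is carried by $\psi_i$ into $F$.

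It therefore suffices to bound $\lims_{i\to\infty}(\psi_i)_*\mm_i(F)$, since $\mm_i\big(\bar B(p_i,R)\setminus\bigcup_{n=1}^M B(x^i_n,r)\big)\leq\mm_i\big(\psi_i^{-1}(F)\big)=(\psi_i)_*\mm_i(F)$ for $\omega$-a.e.\ $i$ (the set $\psi_i^{-1}(F)$ being Borel, as $\psi_i$ is Borel and $F$ is closed), and $\lim_{i\to\omega}b_i\leq\lims_{i\to\infty}b_i$ always holds. This is the one step requiring care: it is a portmanteau-type inequality for the closed bounded set $F$, adapted to the $C_{bbs}$-duality. I would take $f_k\coloneqq\max\{0,1-k\,{\rm dist}(\cdot,F)\}$, which lies in $C_{bbs}(X_\infty)$ since its support is contained in the bounded set $F^{1/k}$, and which satisfies $\nchi_F\leq f_k$ and $f_k\downarrow\nchi_F$ pointwise. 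Then $(\psi_i)_*\mm_i(F)\leq\int f_k\,\d(\psi_i)_*\mm_i$; letting $i\to\infty$ gives $\lims_i(\psi_i)_*\mm_i(F)\leq\int f_k\,\d\mm_\infty$, and letting $k\to\infty$ — via dominated convergence with dominating function $f_1$, legitimate because $\mm_\infty$ is boundedly finite — yields $\lims_i(\psi_i)_*\mm_i(F)\leq\mm_\infty(F)\leq\varepsilon$.

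Combining these facts, $\lim_{i\to\omega}\mm_i\big(\bar B(p_i,R)\setminus\bigcup_{n=1}^M B(x^i_n,r)\big)\leq\varepsilon$, which is precisely asymptotic bounded $\mm_\omega$-total boundedness; the final assertion is then the implication ${\rm iv)}\Rightarrow{\rm i)}$ of Theorem \ref{thm:equiv_conc_spt}. The main obstacle is exactly the upper semicontinuity $\lims_i(\psi_i)_*\mm_i(F)\leq\mm_\infty(F)$ along closed bounded sets $F$ — where one must replace $\nchi_F$ by cutoff functions with bounded support to stay within the class $C_{bbs}(X_\infty)$ tested by the convergence — while everything else is routine bookkeeping with the rough-isometry estimates and the total boundedness of $\mm_\infty$ on balls of the Polish space $X_\infty$.
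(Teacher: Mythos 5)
Your proof is correct and follows essentially the same strategy as the paper's: fix a finite $r$-net for a ball in $X_\infty$ with small-measure complement, transport it to $X_i$, show the uncovered part of $\bar B(p_i,R)$ is carried by $\psi_i$ into that complement, and close with a portmanteau-type upper bound using $C_{bbs}$ cutoffs and the weak convergence $(\psi_i)_*\mm_i\rightharpoonup\mm_\infty$. The only (harmless) deviation is that you pick approximate preimages of the net points directly from the rough surjectivity of $\psi_i$ and absorb errors into the slack $R'>R$, $r'<r$, whereas the paper pulls the centers back via the quasi-inverse $\phi_i$ and the inclusion \eqref{eq:tech_GH_2}.
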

\begin{proof}
Fix \((R_i,\varepsilon_i)\)-approximations \(\psi_i\colon X_i\to X_\infty\) as
in Definition \ref{def:pmGH}. Let \(\phi_i\colon X_\infty\to X_i\) be a quasi-inverse
of \(\psi_i\). Fix any \(R,r,\varepsilon>0\). Being \(\mm_\infty\) boundedly finite
and \(X_\infty\) separable, we can find \(M\in\N\) and points
\((x_\infty^n)_{n=1}^M\subset \bar B(p_\infty,R)\) such that
\[
\mm_\infty(C_\infty)\leq\varepsilon,\quad\text{ where we set }
C_\infty\coloneqq\bar B(p_\infty,R)\setminus\bigcup_{n=1}^M B(x_\infty^n,r).
\]
Fix an open, bounded set \(\Omega\subset X_\infty\)
with \(C_\infty\subset\Omega\) and a sequence
\((f_k)_k\subset C_b(\Omega)\) with \(0\leq f_k\leq 1\) such that
\({\rm dist}\big(\{f_k<1\},C_\infty\big)>0\) and \(f_k\to\nchi_{C_\infty}\).
We define the sets \(C_i\subset X_i\) as
\[
C_i\coloneqq\bar B(p_i,R)\setminus
\bigcup_{n=1}^M B\big(\phi_i(x_\infty^n),r\big),\quad\text{ for every }i\in\N.
\]
Fix \(k\in\N\), then choose any \(j\in\N\) such that
\(f_k=1\) on \(\bar B(p_\infty,R+\varepsilon_i)\setminus\bigcup_{n=1}^M
B(x^n_\infty,r-7\varepsilon_i)\), \(r+R<R_i+\varepsilon_i\), and
\(r>7\varepsilon_i\) for all \(i\geq j\). Being the map \(\psi_i\) a
\((R_i,\varepsilon_i)\)-approximation, we see that
\(\bar B(p_i,R)\subset\psi_i^{-1}\big(\bar B(p_\infty,R+\varepsilon_i)\big)\).
Moreover, by using \eqref{eq:tech_GH_2} we can deduce that
\(\psi_i^{-1}\big(B(x^n_\infty,r-7\varepsilon_i)\big)\subset
B\big(\phi_i(x^n_\infty),r\big)\). All in all, we have proven that
for any \(i\geq j\) it holds
\[
C_i\subset\psi_i^{-1}(\tilde C_i),\quad\text{ where we set }
\tilde C_i\coloneqq\bar B(p_\infty,R+\varepsilon_i)\setminus
\bigcup_{n=1}^M B(x^n_\infty,r-7\varepsilon_i).
\]
Therefore, we conclude that
\[\begin{split}
\int f_k\,\d\mm_\infty&=\lim_{i\to\infty}\int f_k\,\d(\psi_i)_*\mm_i
=\lim_{i\to\infty}\int f_k\circ\psi_i\,\d\mm_i\geq
\lims_{i\to\infty}\int\nchi_{\tilde C_i}\circ\psi_i\,\d\mm_i\\
&=\lims_{i\to\infty}\mm_i\big(\psi_i^{-1}(\tilde C_i)\big)
\geq\lims_{i\to\infty}\mm_i(C_i).
\end{split}\]
By letting \(k\to\infty\) and using the dominated convergence theorem,
we thus obtain that
\[
\lim_{i\to\omega}\mm_i(C_i)\leq\lims_{i\to\infty}\mm_i(C_i)
\leq\mm_\infty(C_\infty)\leq\varepsilon,
\]
showing that \(\big((X_i,d_i,\mm_i,p_i)\big)\) is an
asymptotically boundedly \(\mm_\omega\)-totally bounded sequence, as required.
The last part of the statement now follows from Theorem
\ref{thm:equiv_conc_spt}.
\end{proof}
\section{Relation with the pmGH convergence}
\label{ss:relation_with_pmGH}
Aim of this section is to investigate the relation between the
pointed (measured) Gromov--Hausdorff convergence and the ultralimits
of pointed metric (measure) spaces.
\medskip

First of all, let us prove the following technical result,
which will be needed in the sequel.
\begin{lemma}\label{lem:non_ex_UL}
Let \((X,d)\) be a complete metric space. Let \((x_i)_{i\in\N}\subset X\) be a
sequence for which the ultralimit \(\lim_{i\to\omega}x_i\) does not exist.
Then there exists \(\varepsilon>0\) with the following property: given any
\(S\in\omega\), the set \(\{x_i\}_{i\in S}\) cannot be covered by finitely many
balls of radius \(\varepsilon\).
\end{lemma}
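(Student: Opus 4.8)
The plan is to argue by contraposition: I will assume that for \emph{every} $\eps>0$ there exists some $S\in\omega$ for which $\{x_i\}_{i\in S}$ can be covered by finitely many balls of radius $\eps$, and from this I will deduce that $\lim_{i\to\omega}x_i$ exists (contradicting the hypothesis of the lemma).

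First I would apply this assumption with $\eps=1/n$ for each $n\in\N$: there are $S_n\in\omega$ and finitely many points $y^n_1,\dots,y^n_{M_n}\in X$ with $\{x_i\}_{i\in S_n}\subset\bigcup_{k=1}^{M_n}B(y^n_k,1/n)$. Splitting $S_n$ according to which of these balls contains $x_i$ exhibits $S_n$ as a finite union of subsets of $\N$; since $\omega$ is an ultrafilter (so that whenever a finite union lies in $\omega$, at least one of the pieces does, by repeated use of condition (3) in Definition \ref{def:ultrafilter}), one of these pieces belongs to $\omega$. Thus I obtain $z_n\in X$ and $S'_n\in\omega$ with $S'_n\subset S_n$ and $d(z_n,x_i)<1/n$ for every $i\in S'_n$.

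Next I would check that $(z_n)_{n\in\N}$ is a Cauchy sequence. Given $n,m\in\N$, the set $S'_n\cap S'_m$ belongs to $\omega$ and is in particular non-empty, so picking any $i\in S'_n\cap S'_m$ and applying the triangle inequality yields $d(z_n,z_m)\le d(z_n,x_i)+d(x_i,z_m)<1/n+1/m$. By completeness of $(X,d)$, the sequence $(z_n)$ converges to some $x_\infty\in X$.

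Finally, I claim $x_\infty=\lim_{i\to\omega}x_i$. Fix $\eps>0$ and choose $n\in\N$ with $1/n<\eps/2$ and $d(z_n,x_\infty)<\eps/2$. Then for every $i\in S'_n$ we have $d(x_\infty,x_i)\le d(x_\infty,z_n)+d(z_n,x_i)<\eps$, so $S'_n\subset\{i\in\N\,:\,d(x_\infty,x_i)\le\eps\}$; since $S'_n\in\omega$ and $\omega$ is upward closed, this larger set lies in $\omega$. As $\eps>0$ was arbitrary, $x_\infty=\lim_{i\to\omega}x_i$, contradicting the assumption that the ultralimit does not exist. This proves the lemma. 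The only mildly delicate point is the ultrafilter pigeonhole step used to select $z_n$ and the corresponding $S'_n\in\omega$; the rest is a routine triangle-inequality-plus-completeness argument.
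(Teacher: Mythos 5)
Your proposal is correct and follows essentially the same strategy as the paper's proof: negate the conclusion, use the ultrafilter pigeonhole to extract, for each radius $1/n$, a single ball center $z_n$ whose preimage index set lies in $\omega$, show the centers form a Cauchy sequence, and use completeness to produce the ultralimit, contradicting the hypothesis. Your way of establishing the Cauchy property (picking an index in the nonempty intersection $S'_n\cap S'_m\in\omega$) is a mild streamlining of the paper's nested-sets argument, but the underlying ideas are identical.
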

\begin{proof}
We argue by contradiction: suppose there exist \(\varepsilon_k\searrow 0\) and
\((S^k)_{k\in\N}\subset\omega\) such that
\[
\{x_i\}_{i\in S^k}\subset\bigcup_{n=1}^{n_k}B(x^k_n,\varepsilon_k),\quad
\text{ for some }n_k\in\N\text{ and }\{x^k_n\}_{n=1}^{n_k}\subset\{x_i\}_{i\in S^k}.
\]
In particular, by using the upward-closedness of \(\omega\), we see that
for every \(k\in\N\) it holds
\[
\bigcup_{n=1}^{n_k}\big\{i\in\N\;\big|\;x_i\in B(x^k_n,\varepsilon_k)\big\}
=\bigg\{i\in\N\;\bigg|\;x_i\in\bigcup_{n=1}^{n_k}B(x^k_n,\varepsilon_k)\bigg\}\in\omega.
\]
Therefore, there must exist \(\tilde x^k\in\{x^k_n\}_{n=1}^{n_k}\) such that
\(\big\{i\in\N\,\big|\,x_i\in B(\tilde x^k,\varepsilon_k)\big\}\in\omega\).
Now let us recursively define \((T^k)_{k\in\N}\subset\omega\) as follows:
\(T^1\coloneqq\big\{i\in\N\,\big|\,d(x_i,\tilde x^1)<\varepsilon_1\big\}\) and
\[
T^{k+1}\coloneqq\big\{i\in T^k\,\big|\,d(x_i,\tilde x^k)<\varepsilon_k\big\},
\quad\text{ for every }k\in\N.
\]
We claim that the sequence \((\tilde x^k)_{k\in\N}\subset X\) is Cauchy.
Observe that for any \(k,k_1,k_2\in\N\) such that \(k\leq k_1\leq k_2\)
and \(j\in T^{k_2}\), we have that
\[
d(\tilde x^{k_1},\tilde x^{k_2})\leq d(\tilde x^{k_1},x_j)+d(x_j,\tilde x^{k_2})
\leq\varepsilon_{k_1}+\varepsilon_{k_2}\leq 2\varepsilon_k.
\]
This implies that
\(\lim_{k\to\infty}\sup\big\{d(\tilde x^{k_1},\tilde x^{k_2})\,:\,k_1,k_2\geq k\big\}
\leq 2\lim_{k\to\infty}\varepsilon_k=0\), getting the claim. Given that \((X,d)\) is
complete, the limit \(x\coloneqq\lim_{k\to\infty}\tilde x^k\in X\) exists. Now fix any
\(\varepsilon>0\) and choose \(k_0\in\N\) such that \(\varepsilon_{k_0}<\varepsilon/2\)
and \(d(x,\tilde x^{k_0})<\varepsilon/2\). Hence, we have that
\[
\big\{i\in\N\,\big|\,d(x_i,x)<\varepsilon\big\}\supset
\big\{i\in\N\,\big|\,d(x_i,\tilde x^{k_0})<\varepsilon/2\big\}\supset
\big\{i\in\N\,\big|\,d(x_i,\tilde x^{k_0})<\varepsilon_{k_0}\big\}\in\omega.
\]
This grants that \(\big\{i\in\N\,\big|\,d(x_i,x)<\varepsilon\big\}\in\omega\)
for every \(\varepsilon>0\), which shows that the ultralimit
\(\lim_{i\to\omega}x_i\in X\) exists (and coincides with \(x\)),
thus leading to a contradiction.
\end{proof}
\begin{theorem}\label{thm:pGH_vs_UL}
Let \(\big\{(X_i,d_i,p_i)\big\}_{i\in\bar\N}\) be a sequence of pointed
Polish metric spaces. Suppose \((X_i,d_i,p_i)\to(X_\infty,d_\infty,p_\infty)\)
in the pGH sense. Let \(\psi_i\colon X_i\to X_\infty\) be
\((R_i,\varepsilon_i)\)-approximations, where \(R_i\nearrow+\infty\) and
\(\varepsilon_i\searrow 0\), with quasi-inverses \(\phi_i\colon X_\infty\to X_i\). Define \(\phi_\infty\colon X_\infty\to X_\omega\) as
\[
\phi_\infty(x)\coloneqq\big[\big[\phi_i(x)\big]\big],
\quad\text{ for every }x\in X_\infty.
\]
Then \(\phi_\infty\) is an isometric embedding such that
\(\phi_\infty(p_\infty)=p_\omega\). The map
\(\psi_\infty\colon\phi_\infty(X_\infty)\to X_\infty\), which is given by
\[
\psi_\infty\big([[x_i]]\big)\coloneqq\lim_{i\to\omega}\psi_i(x_i),
\quad\text{ for every }[[x_i]]\in\phi_\infty(X_\infty),
\]
is well-defined and is the inverse of \(\phi_\infty\colon X_\infty
\to\phi_\infty(X_\infty)\). Moreover, it holds that
\begin{equation}\label{eq:id_sigma_alg}
(\phi_\infty)_*\mathscr B(X_\infty)=\mathscr B\big(\phi_\infty(X_\infty)\big)
\subset{\rm Ball}(X_\omega).
\end{equation}
\end{theorem}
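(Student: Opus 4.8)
\emph{The embedding $\phi_\infty$.} The plan is to exploit the fact that, since $R_i\nearrow+\infty$ and $\varepsilon_i\searrow 0$, any fixed bounded region of $X_\infty$ eventually sits inside $B(p_\infty,R_i-\varepsilon_i)$, where $\phi_i$ is a $(R_i-\varepsilon_i,3\varepsilon_i)$-approximation (Lemma \ref{lem:rough_inverse}). Concretely, for fixed $x,y\in X_\infty$ the set of indices $i$ with $x,y\in B(p_\infty,R_i-\varepsilon_i)$ is cofinite, hence belongs to $\omega$, and for such $i$ we have $|d_i(\phi_i(x),\phi_i(y))-d_\infty(x,y)|\leq 3\varepsilon_i$. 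Taking the $\omega$-limit and using $\varepsilon_i\to 0$ yields $\bar d_\omega([\phi_i(x)],[\phi_i(y)])=d_\infty(x,y)$. Choosing $y=p_\infty$ (and $\phi_i(p_\infty)=p_i$) shows $\bar d_\omega([\phi_i(x)],[p_i])=d_\infty(x,p_\infty)<+\infty$, so $\phi_\infty(x)=[[\phi_i(x)]]$ is a well-defined element of $X_\omega$ with $\phi_\infty(p_\infty)=p_\omega$; for general $x,y$ this gives $d_\omega(\phi_\infty(x),\phi_\infty(y))=d_\infty(x,y)$, i.e.\ $\phi_\infty$ is an isometric embedding.

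\emph{The inverse $\psi_\infty$.} The key point — and the one requiring the most care — is that $\psi_\infty$ is well-defined on $\phi_\infty(X_\infty)$ and inverts $\phi_\infty$. Fix $[[x_i]]\in\phi_\infty(X_\infty)$, say $[[x_i]]=\phi_\infty(x)=[[\phi_i(x)]]$, so that $\lim_{i\to\omega}d_i(x_i,\phi_i(x))=0$ for \emph{any} representative $(x_i)$. Since $\bar d_\omega([x_i],[p_i])<+\infty$ and $R_i\to\infty$, we have $x_i,\phi_i(x)\in B(p_i,R_i)$ for $\omega$-a.e.\ $i$, and likewise $x\in B(p_\infty,R_i-\varepsilon_i)$ cofinitely. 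Using that $\psi_i$ is an $(R_i,\varepsilon_i)$-approximation together with \eqref{eq:quasi-inverse_2}, for $\omega$-a.e.\ $i$ we estimate
\[
d_\infty\big(\psi_i(x_i),x\big)\leq d_\infty\big(\psi_i(x_i),(\psi_i\circ\phi_i)(x)\big)+d_\infty\big((\psi_i\circ\phi_i)(x),x\big)\leq d_i\big(x_i,\phi_i(x)\big)+\varepsilon_i+3\varepsilon_i.
\]
Passing to the $\omega$-limit shows $\lim_{i\to\omega}\psi_i(x_i)=x$; in particular the limit exists, is independent of the representative, and equals $\psi_\infty(\phi_\infty(x))=x$. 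Hence $\psi_\infty$ is well-defined on $\phi_\infty(X_\infty)$ with $\psi_\infty\circ\phi_\infty=\mathrm{id}_{X_\infty}$, and since $\phi_\infty$ is injective (being an isometry), $\phi_\infty\colon X_\infty\to\phi_\infty(X_\infty)$ is a bijection with inverse $\psi_\infty$.

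\emph{The $\sigma$-algebra identity.} Being an isometric embedding, $\phi_\infty$ is a homeomorphism onto $C\coloneqq\phi_\infty(X_\infty)$ with the subspace distance; for a homeomorphism the pushforward Borel $\sigma$-algebra equals the target Borel $\sigma$-algebra, giving $(\phi_\infty)_*\mathscr B(X_\infty)=\mathscr B(C)$. It remains to check $\mathscr B(C)\subset{\rm Ball}(X_\omega)$. Since $X_\infty$ is complete, so is $C$ (isometric copy), hence $C$ is closed in $X_\omega$; and $C$ is separable, being homeomorphic to $X_\infty$. By Remark \ref{rmk:C_in_Ball}, $C\in{\rm Ball}(X_\omega)$. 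Moreover every open ball of $C$ has the form $B_{X_\omega}(x,r)\cap C$ with $x\in C$, hence lies in ${\rm Ball}(X_\omega)$; as $C$ is separable, $\mathscr B(C)$ is generated by such balls, and since ${\rm Ball}(X_\omega)$ is a $\sigma$-algebra containing them we conclude $\mathscr B(C)\subset{\rm Ball}(X_\omega)$, which is \eqref{eq:id_sigma_alg}. The only genuinely delicate part of the whole argument is the bookkeeping in the second paragraph: tracking that all points involved lie in the relevant balls for $\omega$-almost every $i$, so that the approximation and quasi-inverse estimates are actually applicable before taking the $\omega$-limit.
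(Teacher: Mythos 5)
Your proposal is correct and follows essentially the same route as the paper's proof: isometry of $\phi_\infty$ via the quasi-inverse estimates on eventually-large balls, well-definedness of $\psi_\infty$ by a triangle inequality reducing $d_\infty(\psi_i(x_i),x)$ to $d_i(x_i,\phi_i(x))$ plus $O(\varepsilon_i)$, and the $\sigma$-algebra inclusion via closedness and separability of $\phi_\infty(X_\infty)$ together with Remark \ref{rmk:C_in_Ball}. The only differences are cosmetic: you invoke the conclusion of Lemma \ref{lem:rough_inverse} directly where the paper re-derives a $7\varepsilon_i$ bound, and you generate $\mathscr B\big(\phi_\infty(X_\infty)\big)$ by relative open balls where the paper uses closed subsets.
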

\begin{proof}
Let \(x,y\in X_\infty\) be fixed. Pick any \(j\in\N\) such that
\(x,y\in B(p_\infty,R_i-3\varepsilon_i)\) and \(4\varepsilon_i<R_i\)
for every \(i\geq j\). Then it holds
\[\begin{split}
\Big|d_\infty(x,y)-d_i\big(\phi_i(x),\phi_i(y)\big)\Big|
\leq\,&d_\infty\big(x,(\psi_i\circ\phi_i)(x)\big)+
d_\infty\big(y,(\psi_i\circ\phi_i)(y)\big)\\
&+\Big|d_\infty\big(\phi_i(x),\phi_i(y)\big)-
d_\infty\big((\psi_i\circ\phi_i)(x),(\psi_i\circ\phi_i)(y)\big)\Big|\\
\leq\,&7\varepsilon_i,
\end{split}\]
for every \(i\geq j\), whence it follows that
\[
d_\infty(x,y)=\lim_{i\to\omega}d_i\big(\phi_i(x),\phi_i(y)\big)
=d_\omega\big(\phi_\infty(x),\phi_\infty(y)\big).
\]
This shows that \(\phi_\infty\) is an isometry. Now let
\([[x_i]]\in\phi_\infty(X_\infty)\) be fixed. Pick any
\(x\in X_\infty\) for which \(\phi_\infty(x)=[[x_i]]\).
Since \([x_i]\in O(\bar X_\omega)\), one has
\(\lim_{i\to\omega}d_i(x_i,p_i)=\bar d_\omega\big([x_i],[p_i]\big)<+\infty\),
thus there exist \(R>0\) and \(S\in\omega\) such that
\(d_i(x_i,p_i)\leq R\) for every \(i\in S\). Fix any \(j\in\N\)
such that \(R<R_i-4\varepsilon_i\) and \(x\in B(p_\infty,R_i-\varepsilon_i)\)
for every \(i\geq j\). Notice that
\[
d_\infty\big(\psi_i(x_i),p_\infty\big)\leq d_i(x_i,p_i)+\varepsilon_i
\leq R+\varepsilon_i<R_i-3\varepsilon_i,\quad\text{ for every }i\in S
\text{ with }i\geq j,
\]
so that \(\psi_i(x_i)\in B(p_\infty,R_i-3\varepsilon_i)\). Therefore,
for \(\omega\)-a.e.\ \(i\) it holds that
\[\begin{split}
d_\infty\big(\psi_i(x_i),x\big)&\leq d_i\big((\phi_i\circ\psi_i)(x_i),
\phi_i(x)\big)+3\varepsilon_i\leq
d_i\big(\phi_i(x),x_i\big)+d_i\big(x_i,(\phi_i\circ\psi_i)(x_i)\big)
+3\varepsilon_i\\
&\leq d_i\big(\phi_i(x),x_i\big)+6\varepsilon_i,
\end{split}\]
thus accordingly \(\lim_{i\to\omega}d_\infty\big(\psi_i(x_i),x\big)
\leq\lim_{i\to\omega}d_i\big(\phi_i(x),x_i\big)
=\bar d_\omega\big(\big[\phi_i(x)\big],[x_i]\big)=0\).
This shows that the ultralimit \(\psi_\infty\big([[x_i]]\big)
\coloneqq\lim_{i\to\omega}\psi_i(x_i)\) exists and coincides
with \(x\), in other words \((\psi_\infty\circ\phi_\infty)(x)=x\).
Then \(\psi_\infty\colon\phi_\infty(X_\infty)\to X_\infty\)
is the inverse of \(\phi_\infty\colon X_\infty\to\phi_\infty(X_\infty)\).

Finally, let us prove \eqref{eq:id_sigma_alg}. Given that \(\phi_\infty\)
is an isometric bijection between the metric spaces \((X_\infty,d_\infty)\)
and \(\big(\phi_\infty(X_\infty),d_\omega|_{\phi_\infty(X_\infty)
\times\phi_\infty(X_\infty)}\big)\), it holds
\((\phi_\infty)_*\mathscr B(X_\infty)=\mathscr B\big(\phi_\infty(X_\infty)\big)\).
Moreover, \(\phi_\infty(X_\infty)\) is a closed separable subset of
\(X_\omega\), thus every closed subset \(C\) of \(\phi_\infty(X_\infty)\)
(which is a fortiori closed in \(X_\omega\)) belongs to \({\rm Ball}(X_\omega)\)
by Remark \ref{rmk:C_in_Ball}. Since \(\mathscr B\big(\phi_\infty(X_\infty)\big)\)
is generated by the closed subsets of \(\phi_\infty(X_\infty)\), we can
conclude that \eqref{eq:id_sigma_alg} is verified.
\end{proof}
\begin{corollary}\label{cor:pGH_vs_UL_proper}
Let \(\big\{(X_i,d_i,p_i)\big\}_{i\in\bar\N}\) be a sequence of pointed Polish metric
spaces. Suppose \((X_i,d_i,p_i)\to(X_\infty,d_\infty,p_\infty)\) in the pGH sense.
Let \(\psi_i\colon X_i\to X_\infty\) be \((R_i,\varepsilon_i)\)-approximations,
with quasi-inverses \(\phi_i\colon X_\infty\to X_i\). Let
\(\phi_\infty\colon X_\infty\to X_\omega\) be as in Theorem \ref{thm:pGH_vs_UL}.
Then an element \([[x_i]]\in X_\omega\) belongs to \(\phi_\infty(X_\infty)\) if
and only if the ultralimit \(x\coloneqq\lim_{i\to\omega}\psi_i(x_i)\in X_\infty\) exists.
In this case, it holds that \(\phi_\infty(x)=[[x_i]]\). In particular, if
\((X_\infty,d_\infty)\) is a proper metric space, then \(\phi_\infty(X_\infty)=X_\omega\)
and accordingly \(\phi_\infty\) is an isomorphism of pointed metric spaces.
\end{corollary}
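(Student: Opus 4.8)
The plan is to reduce the statement to the content of Theorem \ref{thm:pGH_vs_UL}, which already tells us that $\phi_\infty$ is an isometric embedding with $\phi_\infty(p_\infty)=p_\omega$ and that $\psi_\infty = \lim_{i\to\omega}\psi_i(x_i)$ is a two-sided inverse on the image. First I would observe that the "if" direction is essentially free: if the ultralimit $x\coloneqq\lim_{i\to\omega}\psi_i(x_i)$ exists, one checks that $\phi_\infty(x)=[[x_i]]$ by running the estimate already used in the proof of Theorem \ref{thm:pGH_vs_UL}, namely that for $\omega$-a.e.\ $i$ one has $d_i(\phi_i(\psi_i(x_i)), x_i)<3\varepsilon_i$ by \eqref{eq:quasi-inverse_1} (after checking $x_i\in B(p_i,R_i-4\varepsilon_i)$ for $\omega$-a.e.\ $i$, using that $[x_i]\in O(\bar X_\omega)$ so $d_i(x_i,p_i)$ is $\omega$-essentially bounded), together with the isometry $d_\omega(\phi_\infty(x),[[x_i]]) = \lim_{i\to\omega} d_i(\phi_i(x), x_i)$; passing to the ultralimit gives $d_\omega(\phi_\infty(x),[[x_i]])=0$, i.e.\ $\phi_\infty(x)=[[x_i]]$, so $[[x_i]]\in\phi_\infty(X_\infty)$.

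For the "only if" direction, if $[[x_i]]=\phi_\infty(x)$ for some $x\in X_\infty$, then by the very definition of $\psi_\infty$ in Theorem \ref{thm:pGH_vs_UL} the ultralimit $\lim_{i\to\omega}\psi_i(x_i)$ exists and equals $x = (\psi_\infty\circ\phi_\infty)(x)$. One small subtlety: $[[x_i]]$ may have several representative sequences $(x_i)$, so I would note that the estimate $d_\infty(\psi_i(x_i), x)\le d_i(\phi_i(x),x_i)+6\varepsilon_i$ holds for $\omega$-a.e.\ $i$ for \emph{any} representative with $[x_i]\in O(\bar X_\omega)$, so the existence and value of the ultralimit $\lim_{i\to\omega}\psi_i(x_i)$ does not depend on the chosen representative; this is the same computation that appears in the proof of Theorem \ref{thm:pGH_vs_UL}.

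Finally, for the properness claim, suppose $(X_\infty,d_\infty)$ is proper and let $[[x_i]]\in X_\omega$ be arbitrary. By the first part it suffices to show $\lim_{i\to\omega}\psi_i(x_i)$ exists in $X_\infty$. Since $[x_i]\in O(\bar X_\omega)$, there are $R>0$ and $S\in\omega$ with $d_i(x_i,p_i)\le R$ for all $i\in S$, hence $d_\infty(\psi_i(x_i),p_\infty)\le R+\varepsilon_i$ for $\omega$-a.e.\ $i$; thus $(\psi_i(x_i))$ is $\omega$-essentially contained in the closed ball $\bar B(p_\infty,R+1)$, which is compact by properness. A compact metric space is complete and totally bounded, so by Lemma \ref{lem:non_ex_UL} (applied inside $\bar B(p_\infty,R+1)$, or directly in $X_\infty$ which is complete) the ultralimit cannot fail to exist: if it did, there would be $\varepsilon>0$ so that $\{\psi_i(x_i)\}_{i\in S}$ is not coverable by finitely many $\varepsilon$-balls for any $S\in\omega$, contradicting total boundedness of $\bar B(p_\infty,R+1)$. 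Hence $\lim_{i\to\omega}\psi_i(x_i)$ exists, so $[[x_i]]\in\phi_\infty(X_\infty)$, giving $\phi_\infty(X_\infty)=X_\omega$; combined with Theorem \ref{thm:pGH_vs_UL} this makes $\phi_\infty$ a surjective isometry fixing the base points, i.e.\ an isomorphism of pointed metric spaces. The main obstacle is just bookkeeping the $\omega$-almost-everywhere quantifiers and the representative-independence; the mathematical heart is the combination of properness (compact balls) with Lemma \ref{lem:non_ex_UL}.
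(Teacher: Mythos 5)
Your proposal is correct and follows essentially the same route as the paper: the ``only if'' direction is read off from the map \(\psi_\infty\) of Theorem \ref{thm:pGH_vs_UL}, and the properness claim is handled identically by trapping \(\psi_i(x_i)\) in a compact ball \(\bar B(p_\infty,R+1)\) for \(\omega\)-a.e.\ \(i\) and invoking Lemma \ref{lem:non_ex_UL}. The only (harmless) deviation is in the ``if'' direction, where the paper estimates \(d_\omega\big(\phi_\infty(\psi_j(x_j)),[[x_i]]\big)\) and then takes a second ultralimit in \(j\) using continuity of \(\phi_\infty\), whereas you bound \(d_i\big(\phi_i(x),x_i\big)\) directly through \(\phi_i\big(\psi_i(x_i)\big)\) via \eqref{eq:quasi-inverse_1} and the quasi-isometry property of \(\phi_i\); both computations are valid.
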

\begin{proof}
Let \([[x_i]]\in X_\omega\) be given. If \([[x_i]]\in\phi_\infty(X_\infty)\),
then we know from Theorem \ref{thm:pGH_vs_UL} that the ultralimit
\(\psi_\infty\big([[x_i]]\big)=\lim_{i\to\omega}\psi_i(x_i)\in X_\infty\) exists
and satisfies \((\phi_\infty\circ\psi_\infty)\big([[x_i]]\big)=[[x_i]]\).

Conversely, suppose that \(x\coloneqq\lim_{i\to\omega}\psi_i(x_i)\in X_\infty\) exists.
Fix any \(j\in\N\). For all \(i\in\N\) sufficiently big, we have that
\[
\Big|d_i\big((\phi_i\circ\psi_j)(x_j),x_i\big)-
d_\infty\big(\psi_j(x_j),\psi_i(x_i)\big)\Big|\leq 6\varepsilon_i.
\]
In particular, it holds that
\[\begin{split}
d_\omega\big(\phi_\infty\big(\psi_j(x_j)\big),[[x_i]]\big)&=
d_\omega\big(\big[\big[(\phi_i\circ\psi_j)(x_j)\big]\big],[[x_i]]\big)=
\lim_{i\to\omega}d_i\big((\phi_i\circ\psi_j)(x_j),x_i\big)\\
&=\lim_{i\to\omega}d_\infty\big(\psi_j(x_j),\psi_i(x_i)\big)
=d_\infty\big(\psi_j(x_j),x\big).
\end{split}\]
Consequently, by using the continuity of \(\phi_\infty\),
we eventually conclude that
\[
d_\omega\big(\phi_\infty(x),[[x_i]]\big)
=\lim_{j\to\omega}d_\omega\big(\phi_\infty\big(\psi_j(x_j)\big),[[x_i]]\big)
=\lim_{j\to\omega}d_\infty\big(\psi_j(x_j),x\big)=0.
\]
This grants that \([[x_i]]=\phi_\infty(x)\in\psi_\infty(X_\infty)\),
thus proving the first part of the statement.

Now suppose \((X_\infty,d_\infty)\) is proper. Fix \([[x_i]]\in X_\omega\).
Since \(\lim_{i\to\omega}d_i(x_i,p_i)=d_\omega\big([[x_i]],p_\omega\big)\)
is finite, we can find \(R>0\) and \(S\in\omega\) such that \(x_i\in\bar B(p_i,R)\)
for all \(i\in S\). Moreover, there exists \(j\in\N\) such that
\(\bar B(p_i,R)\subset B(p_i,R_i)\) and \(\varepsilon_i\leq 1\) for every \(i\geq j\),
thus in particular we have that \(\psi_i(x_i)\in\bar B(p_\infty,R+1)\) for
\(\omega\)-a.e.\ \(i\). Being \(\bar B(p_\infty,R+1)\) compact, we conclude that
the ultralimit \(\lim_{i\to\omega}\psi_i(x_i)\in X_\infty\) exists, whence it follows
from the first part of the statement that \([[x_i]]\in\phi_\infty(X_\infty)\).
This shows that \(X_\omega=\phi_\infty(X_\infty)\), as desired.
\end{proof}
\begin{theorem}\label{thm:pmGH_vs_UL}
Let \(\big\{(X_i,d_i,\mm_i,p_i)\big\}_{i\in\bar\N}\) be a sequence of
pointed Polish metric measure spaces, with \(\mm_i\) boundedly finite.
Suppose \((X_i,d_i,\mm_i,p_i)\to(X_\infty,d_\infty,\mm_\infty,p_\infty)\)
in the pmGH sense. Then the measure \(\mm_\omega\) is concentrated on
\({\rm spt}(\mm_\omega)=\phi_\infty\big({\rm spt}(\mm_\infty)\big)\) and
\[
(\phi_\infty)_*\mm_\infty=\mm_\omega.
\]
\end{theorem}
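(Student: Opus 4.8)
The plan is to proceed in three stages, building on Proposition~\ref{prop:m_omega_conc_spt} and Theorem~\ref{thm:pGH_vs_UL}. Throughout write $C\coloneqq\phi_\infty(X_\infty)$, which by Theorem~\ref{thm:pGH_vs_UL} is a closed separable subset of $X_\omega$ containing $p_\omega$, with $\phi_\infty$ an isometry of $X_\infty$ onto $C$; in particular $\phi_\infty$ carries $\mathscr B(X_\infty)$ onto $\mathscr B(C)$, and $\mathscr B(C)\subset{\rm Ball}(X_\omega)$ because it is generated by closed separable subsets of $X_\omega$ (Remark~\ref{rmk:C_in_Ball}). \emph{Stage 1} is to upgrade the concentration statement of Proposition~\ref{prop:m_omega_conc_spt} to concentration on the specific set $C$. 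The covering points produced in the proof of that proposition are of the form $\phi_i(x^n_\infty)$, whose classes in $X_\omega$ are exactly $\phi_\infty(x^n_\infty)\in C$; feeding them into the estimate used in the implication ${\rm iv)}\Rightarrow{\rm iii)}$ of Theorem~\ref{thm:equiv_conc_spt} shows that for every $R,r,\varepsilon>0$ there is a \emph{finite} set of points of $C$ whose $r$-balls cover $\bar B(p_\omega,R)$ up to $\mm_\omega$-measure $\varepsilon$. Choosing a countable supply of such points (with $R,1/r,1/\varepsilon\in\N$) and running the argument of ${\rm iii)}\Rightarrow{\rm ii)}$ of Theorem~\ref{thm:equiv_conc_spt} produces a closed separable $D\subset C$ with $\mm_\omega(X_\omega\setminus D)=0$, so $\mm_\omega$ is concentrated on $C$; in particular $X_\omega\setminus C\in{\rm Ball}(X_\omega)$ is $\mm_\omega$-null and $\pi^{-1}(X_\omega\setminus C)$ is $\bar\mm_\omega$-null.

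\emph{Stage 2}, the heart of the matter, is to prove $\mm_\omega(\phi_\infty(F))=\mm_\infty(F)$ for every closed bounded $F\subset X_\infty$. For the upper bound fix $\delta>0$: if $[x_i]\in O(\bar X_\omega)$ has $\pi([x_i])\in\phi_\infty(F)$, then $y\coloneqq\lim_{i\to\omega}\psi_i(x_i)$ exists and lies in $F$ by Corollary~\ref{cor:pGH_vs_UL_proper}, whence $\{i:\psi_i(x_i)\in F^\delta\}\in\omega$, i.e.\ $[x_i]\in\Pi_{i\to\omega}\psi_i^{-1}(F^\delta)$; thus $\pi^{-1}(\phi_\infty(F))\subset\Pi_{i\to\omega}\psi_i^{-1}(F^\delta)$. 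A routine computation with the $(R_i,\varepsilon_i)$-approximations (using the convention of Remark~\ref{rmk:quasi_isom}(1) that $\psi_i$ sends the complement of $B(p_i,R_i)$ into the complement of $B(p_\infty,R_i)$) shows $\psi_i^{-1}(F^\delta)$ lies in a fixed ball $B(p_i,\rho)$ for $\omega$-a.e.\ $i$, so $\Pi_{i\to\omega}\psi_i^{-1}(F^\delta)\in\mathcal A(\bar X_\omega)\cap\mathcal A_O(\bar X_\omega)$ and its $\bar\mm_\omega$-measure equals $\lim_{i\to\omega}\mm_i\big(\psi_i^{-1}(F^\delta)\big)=\lim_{i\to\omega}(\psi_i)_*\mm_i(F^\delta)$. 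Testing $(\psi_i)_*\mm_i\rightharpoonup\mm_\infty$ against functions in $C_{bbs}(X_\infty)$ decreasing to $\nchi_{\bar F^\delta}$, with $\bar F^\delta\coloneqq\{z:{\rm dist}(z,F)\le\delta\}$, bounds this by $\mm_\infty(\bar F^\delta)$, and $\delta\searrow0$ gives $\mm_\omega(\phi_\infty(F))\le\mm_\infty(F)$. For the reverse bound, take $V\subset X_\infty$ open bounded and $V'$ open bounded with $\overline{V'}\subset V$: the same bookkeeping gives $\Pi_{i\to\omega}\psi_i^{-1}(V')\in\mathcal A(\bar X_\omega)\cap\mathcal A_O(\bar X_\omega)$, and every $[x_i]$ in it has $\pi([x_i])\in\phi_\infty(V)$ or $\pi([x_i])\notin C$, so by Stage 1, $\mm_\omega(\phi_\infty(V))\ge\lim_{i\to\omega}(\psi_i)_*\mm_i(V')\ge\mm_\infty(V')$, the last step being the dual Portmanteau estimate (testing against functions increasing to $\nchi_{V'}$). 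Exhausting $V$ by such $V'$ yields $\mm_\omega(\phi_\infty(V))\ge\mm_\infty(V)$, and taking open bounded neighbourhoods $V\searrow F$ yields $\mm_\omega(\phi_\infty(F))\ge\mm_\infty(F)$.

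\emph{Stage 3} is the conclusion. The set function $A\mapsto\mm_\omega(\phi_\infty(A))$ on $\mathscr B(X_\infty)$ is a boundedly finite Borel measure (it takes values in $\mathscr B(C)\subset{\rm Ball}(X_\omega)$, and $\mm_\omega$ is boundedly finite by Lemma~\ref{lem:m_omega_boundedly_finite}), and by Stage 2 it agrees with $\mm_\infty$ on the $\pi$-system of closed bounded subsets of $X_\infty$, which generates $\mathscr B(X_\infty)$ and contains the finite-measure exhaustion $\big(\bar B(p_\infty,n)\big)_n$; by the uniqueness theorem for $\sigma$-finite measures the two coincide, which together with the concentration of $\mm_\omega$ on $C$ is exactly $(\phi_\infty)_*\mm_\infty=\mm_\omega$ on ${\rm Ball}(X_\omega)$. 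Finally, for $r>0$ one computes $\mm_\omega\big(B([[x_i]],r)\big)=0$ when $[[x_i]]\notin C$ (since $C$ is closed and carries all the mass) and $\mm_\omega\big(B(\phi_\infty(y),r)\big)=\mm_\infty\big(B(y,r)\big)$ for $y\in X_\infty$ (since $B(\phi_\infty(y),r)\cap C=\phi_\infty(B(y,r))$ and $\mm_\omega$ is concentrated on $C$); this gives ${\rm spt}(\mm_\omega)=\phi_\infty\big({\rm spt}(\mm_\infty)\big)$, and since $\mm_\infty$ is concentrated on its support so is $\mm_\omega$.

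The main obstacle is Stage 2, specifically the identification $\bar\mm_\omega\big(\pi^{-1}(\phi_\infty(F))\big)=\lim_{i\to\omega}\mm_i\big(\psi_i^{-1}(F^\delta)\big)$ up to $\bar\mm_\omega$-null sets. It rests on sandwiching the pulled-back set between internal sets of the form $\Pi_{i\to\omega}\psi_i^{-1}(F^\delta)$, respectively $\Pi_{i\to\omega}\psi_i^{-1}(V')$, which forces one to control the approximations $\psi_i$ near the boundaries of the relevant balls so that these internal sets stay inside $O(\bar X_\omega)$, together with the Portmanteau inequalities for $(\psi_i)_*\mm_i\rightharpoonup\mm_\infty$, which are somewhat delicate because $\mm_\infty$ need not be finite, so one must keep every test function in $C_{bbs}(X_\infty)$ and every set bounded throughout.
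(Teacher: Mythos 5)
Your proposal is correct, and it reaches the conclusion by a genuinely different route from the paper. The paper's proof spends its two hardest steps (Steps 1--2) showing directly that \({\rm spt}(\mm_\omega)\subset\phi_\infty\big({\rm spt}(\mm_\infty)\big)\): for \([[x_i]]\in{\rm spt}(\mm_\omega)\) it establishes the existence of \(\lim_{i\to\omega}\psi_i(x_i)\) by a contradiction argument resting on Lemma \ref{lem:non_ex_UL} and the weak convergence \((\psi_i)_*\mm_i\rightharpoonup\mm_\infty\); only by combining this with Proposition \ref{prop:m_omega_conc_spt} does it obtain concentration on \(\phi_\infty(X_\infty)\), which then feeds into the sandwich \eqref{eq:claim_incl} on compact sets. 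Your Stage 1 short-circuits this: since the witnesses produced in the proof of Proposition \ref{prop:m_omega_conc_spt} are the points \(\phi_i(x^n_\infty)\), whose classes are \(\phi_\infty(x^n_\infty)\in\phi_\infty(X_\infty)\), the closed separable carrier built in Theorem \ref{thm:equiv_conc_spt} lands inside the closed set \(\phi_\infty(X_\infty)\), and concentration on \(C\) follows without ever invoking Lemma \ref{lem:non_ex_UL}; the only ultralimit-existence fact you need afterwards is the easy direction of Corollary \ref{cor:pGH_vs_UL_proper} (membership in \(\phi_\infty(X_\infty)\) implies existence of the limit), which is metric and already proven. Your Stage 2 is structurally the same sandwich as the paper's \eqref{eq:claim_incl}, executed on closed bounded sets with Portmanteau estimates in place of the paper's continuity-set trick, and Stage 3 replaces inner regularity by a \(\pi\)-system/uniqueness argument; both substitutions are sound given that \(\mm_\infty\) and \(\mm_\omega\) are boundedly finite. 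What each approach buys: the paper's yields the inclusion \({\rm spt}(\mm_\omega)\subset\phi_\infty({\rm spt}(\mm_\infty))\) as a free-standing geometric fact, whereas yours is shorter and avoids the most delicate lemma, at the price of reopening the proof of Proposition \ref{prop:m_omega_conc_spt} rather than citing its statement --- so if you write this up, Stage 1 should be phrased as a strengthening of that proposition (concentration on \(\phi_\infty(X_\infty)\), not merely on \({\rm spt}(\mm_\omega)\)) with the relevant two lines of its proof repeated.
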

\begin{proof}
We subdivide the proof into several steps:\\
{\color{blue}\textsc{Step 1.}}
Define \(Y\coloneqq\phi_\infty\big({\rm spt}(\mm_\infty)\big)\).
Notice that \(Y\) is a Polish metric space if endowed with the restricted
distance from \(X_\omega\). First of all, we aim to prove the inclusion
\({\rm spt}(\mm_\omega)\subset Y\).

Let \([[x_i]]\in{\rm spt}(\mm_\omega)\) be fixed.
This means that \(\bar\mm_\omega\big(B\big([x_i],r\big)\big)
=\mm_\omega\big(B\big([[x_i]],r\big)\big)>0\) for all \(r>0\).
Thanks to \eqref{eq:formula_balls}, this is equivalent to saying that
\begin{equation}\label{eq:pmGH_vs_UL_aux5}
\lim_{i\to\omega}\mm_i\big(B(x_i,r)\big)=
\bar\mm_\omega\big(\Pi_{i\to\omega}B(x_i,r)\big)>0,\quad\text{ for every }r>0.
\end{equation}
Also, we have \(\lim_{i\to\omega}d_i(x_i,p_i)=
\bar d_\omega\big([x_i],[p_i]\big)<+\infty\), so there are
\(R>0\) and \(S\in\omega\) such that
\begin{equation}\label{eq:pmGH_vs_UL_aux6}
x_i\in B(p_i,R),\quad\text{ for every }i\in S.
\end{equation}
Our aim is to show that \([[x_i]]\in Y\). To achieve this goal,
we first need to prove that
\begin{equation}\label{eq:pmGH_vs_UL_aux7}
\exists\,x\coloneqq\lim_{i\to\omega}\psi_i(x_i)\in X_\infty.
\end{equation}
We argue by contradiction: suppose \(\lim_{i\to\omega}\psi_i(x_i)\)
does not exist. By using Lemma \ref{lem:non_ex_UL}, we can find
\(\varepsilon\in(0,1)\) with the property that for every \(S'\in\omega\)
the set \(\big(\psi_i(x_i)\big)_{i\in S'}\) cannot be covered by finitely
many balls of radius \(\varepsilon\). Call \(c\coloneqq\bar\mm_\omega
\big(\Pi_{i\to\omega}B(x_i,\varepsilon/4)\big)/2>0\). Let us define
\[
S^1\coloneqq\Big\{i\in S\,\Big|\,d_\infty\big(\psi_i(x_i),\psi_1(x_1)\big)
\geq\varepsilon,\,\mm_i\big(B(x_i,\varepsilon/4)\big)>c\Big\}
\]
and, for every \(k\in\N\),
\[
S^{k+1}\coloneqq\Big\{i\in S^k\,\Big|\,d_\infty\big(\psi_i(x_i),
\psi_{i_k}(x_{i_k})\big)\geq\varepsilon\Big\},
\quad\text{ where we set }i_k\coloneqq\min(S_k).
\]
Let us denote \(T^k\coloneqq\big\{i\in S\,:\,d_\infty\big(\psi_i(x_i),
\psi_{i_k}(x_{i_k})\big)<\varepsilon\big\}\) for every \(k\in\N\).
Given that it holds that \(\big(\psi_i(x_i)\big)_{i\in T^k}\subset
B\big(\psi_{i_k}(x_{i_k}),\varepsilon\big)\), we deduce that
\(T^k\notin\omega\), so that \(\N\setminus T^k\in\omega\) and
accordingly \((S^k)_{k\in\N}\subset\omega\). In particular,
\((x_{i_k})_k\) is a subsequence of \((x_i)_i\). Observe that
\begin{equation}\label{eq:pmGH_vs_UL_aux8}
d_\infty\big(\psi_{i_k}(x_{i_k}),\psi_{i_j}(x_{i_j})\big)\geq\varepsilon,
\qquad\mm_{i_k}\big(B(x_{i_k},\varepsilon/4)\big)>c,
\end{equation}
for every \(k,j\in\N\). Since \(\sum_{k\in\N}\mm_\infty
\big(B\big(\psi_{i_k}(x_{i_k}),\varepsilon/2\big)\big)\leq
\mm_\infty\big(B(p_\infty,2R+1)\big)<+\infty\) by
\eqref{eq:pmGH_vs_UL_aux6}, there exists \(\bar k\in\N\) such that
\begin{equation}\label{eq:pmGH_vs_UL_aux9}
\mm_\infty\bigg(\bigcup_{k\geq\bar k}B\big(\psi_{i_k}(x_{i_k}),
\varepsilon/2\big)\bigg)=\sum_{k\geq\bar k}\mm_\infty
\big(B\big(\psi_{i_k}(x_{i_k}),\varepsilon/2\big)\big)\leq\frac{c}{2}.
\end{equation}
It can be readily checked that there exists a function
\(f\in C_{bbs}(X_\infty)\) with \(0\leq f\leq 1\) and
\[\begin{split}
f=1&\quad\text{ on }B\big(\psi_{i_k}(x_{i_k}),\varepsilon/3\big),
\text{ for every }k\geq\bar k,\\
f=0&\quad\text{ on }X_\infty\setminus{\textstyle\bigcup_{k\geq\bar k}}
B\big(\psi_{i_k}(x_{i_k}),\varepsilon/2\big).
\end{split}\]
Given that \(\psi_i\big(B(x_i,\varepsilon/4)\big)\subset
B\big(\psi_i(x_i),\varepsilon/3\big)\) for all \(i\) sufficiently big
and \((\psi_i)_*\mm_i\rightharpoonup\mm_\infty\) weakly,
\[\begin{split}
c&\overset{\eqref{eq:pmGH_vs_UL_aux8}}\leq
\lims_{k\to\infty}\mm_{i_k}\big(B(x_{i_k},\varepsilon/4)\big)
\leq\lims_{k\to\infty}\mm_{i_k}\Big(\psi_{i_k}^{-1}
\big(B\big(\psi_{i_k}(x_{i_k}),\varepsilon/3\big)\big)\Big)
\leq\lim_{k\to\infty}\int f\,\d(\psi_{i_k})_*\mm_{i_k}\\
&\overset{\phantom{\eqref{eq:pmGH_vs_UL_aux8}}}=
\int f\,\d\mm_\infty\leq\mm_\infty\Big({\textstyle\bigcup_{k\geq\bar k}}
B\big(\psi_{i_k}(x_{i_k}),\varepsilon/2\big)\Big)
\overset{\eqref{eq:pmGH_vs_UL_aux9}}\leq\frac{c}{2}.
\end{split}\]
This leads to a contradiction, thus accordingly the claim
\eqref{eq:pmGH_vs_UL_aux7} is proven.\\
{\color{blue}\textsc{Step 2.}} We now claim that
\begin{equation}\label{eq:pmGH_vs_UL_aux10}
x=\lim_{i\to\omega}\psi_i(x_i)\in{\rm spt}(\mm_\infty).
\end{equation}
Let \(r>0\) be fixed. We know from \eqref{eq:pmGH_vs_UL_aux5} that
\(\lambda\coloneqq\lim_{i\to\omega}\mm_i\big(B(x_i,r/4)\big)>0\).
By using this fact, \eqref{eq:pmGH_vs_UL_aux6}, and
\eqref{eq:pmGH_vs_UL_aux7}, we can find a subsequence \((i_k)_{k\in\N}\)
such that for every \(k\in\N\) it holds
\[
\varepsilon_{i_k}\leq\frac{r}{4},\qquad
\mm_{i_k}\big(B(x_{i_k},r/4)\big)\geq\frac{\lambda}{2},\qquad
x_{i_k}\in B\big(p_{i_k},R_{i_k}-r/4\big),\qquad
d_\infty\big(\psi_{i_k}(x_{i_k}),x\big)<\frac{r}{2}.
\]
Given that \(B(x_{i_k},r/4)\subset B(p_{i_k},R_{i_k})\) and the map
\(\psi_{i_k}\) is a \((R_{i_k},\varepsilon_{i_k})\)-approximation,
we have that \(\psi_{i_k}\big(B(x_{i_k},r/4)\big)\subset
B\big(\psi_{i_k}(x_{i_k}),r/4+\varepsilon_{i_k}\big)\subset
B\big(\psi_{i_k}(x_{i_k}),r/2\big)\), which in turn implies that
the inclusion \(B(x_{i_k},r/4)\subset\psi_{i_k}^{-1}
\big(B\big(\psi_{i_k}(x_{i_k}),r/2\big)\big)\) holds.
Also, \(B\big(\psi_{i_k}(x_{i_k}),r/2\big)\subset\bar B(x,r)\)
is satisfied for every \(k\in\N\). All in all, since
\((\psi_{i_k})_*\mm_{i_k}\rightharpoonup\mm_\infty\) weakly, we obtain that
\[\begin{split}
0&<\frac{\lambda}{2}\leq
\lims_{k\to\infty}\mm_{i_k}\big(B(x_{i_k},r/4)\big)
\leq\lims_{k\to\infty}(\psi_{i_k})_*\mm_{i_k}
\big(B\big(\psi_{i_k}(x_{i_k}),r/2\big)\big)\\
&\leq\lims_{k\to\infty}(\psi_{i_k})_*\mm_{i_k}\big(\bar B(x,r)\big)
\leq\mm_\infty\big(\bar B(x,r)\big).
\end{split}\]
By arbitrariness of \(r>0\), we can conclude that \(x\in{\rm spt}(\mm_\infty)\),
which yields \eqref{eq:pmGH_vs_UL_aux10}. Finally, it follows from
\eqref{eq:pmGH_vs_UL_aux10} and Corollary \ref{cor:pGH_vs_UL_proper}
that \([[x_i]]\in Y\). Hence, we showed that \({\rm spt}(\mm_\omega)\subset Y\).\\
{\color{blue}\textsc{Step 3.}} The next step is to prove that
\((\phi_\infty)_*\mm_\infty(B)=\mm_\omega(B)\) is satisfied for every given Borel set
\(B\subset\phi_\infty(X_\infty)\). By inner regularity of \((\phi_\infty)_*\mm_\infty\)
and \(\mm_\omega\), it is sufficient to show that
\begin{equation}\label{eq:claim_incl_gen}
\mm_\omega\big(\phi_\infty(K)\big)=\mm_\infty(K),
\quad\text{ for every }K\subset X_\infty\text{ compact.}
\end{equation}
Fix any \(\varepsilon>0\). Then we can find \(0<\delta<\delta'<\varepsilon\) such that
\(\mm_\infty(\partial U)=\mm_\infty(\partial V)=0\)
and \(\mm_\infty(V)\leq\mm_\infty(K)+\varepsilon\), where \(U\)
and \(V\) stand for the \(\delta\)-neighbourhood and the \(\delta'\)-neighbourhood
of \(K\), respectively. Call \(C_\varepsilon\) the closure of \(U\). Note
that \(K\subset U\subset C_\varepsilon\subset V\). We claim that
\begin{equation}\label{eq:claim_incl}
\pi^{-1}\big(\phi_\infty(X_\infty)\big)\cap\Pi_{i\to\omega}\psi_i^{-1}(U)\subset
\pi^{-1}\big(\phi_\infty(C_\varepsilon)\big)\subset\Pi_{i\to\omega}\psi_i^{-1}(V).
\end{equation}
First of all, let
\([x_i]\in\pi^{-1}\big(\phi_\infty(X_\infty)\big)\cap\Pi_{i\to\omega}\psi_i^{-1}(U)\)
be given. Consider the point \(x\in X_\infty\) for which
\(\big[\big[\phi_i(x)\big]\big]=\phi_\infty(x)=[[x_i]]\). We have
that \(\phi_i(x)\in\psi_i^{-1}(U)\) for \(\omega\)-a.e.\ \(i\),
thus in particular \((\psi_i\circ\phi_i)(x)\in C_\varepsilon\) for \(\omega\)-a.e.\ \(i\).
Since \(\lim_{i\to\omega}d_\infty\big((\psi_i\circ\phi_i)(x),x\big)=0\) and
\(C_\varepsilon\) is closed, we conclude that \(x\in C_\varepsilon\) as well. Therefore,
we have that \(\pi\big([x_i]\big)=[[x_i]]=\phi_\infty(x)\in\phi_\infty(C_\varepsilon)\).

Suppose \([y_i]\in\pi^{-1}\big(\phi_\infty(C_\varepsilon)\big)\).
Then there exists \(y\in C_\varepsilon\) for which \(\big[\big[\phi_i(y)\big]\big]
=\phi_\infty(y)=[[y_i]]\). Given that \(\lim_{i\to\omega}d_\infty\big(
(\psi_i\circ\phi_i)(y),y\big)=0\) and \(V\) is an open neighbourhood of \(y\),
we know that \(\psi_i(y_i)=(\psi_i\circ\phi_i)(y)\in V\) for \(\omega\)-a.e.\ \(i\).
This means that \([y_i]\in\Pi_{i\to\omega}\psi_i^{-1}(V)\), as desired.
\medskip

Recall that \(\mm_\omega\) is concentrated on \({\rm spt}(\mm_\omega)\) by
Proposition \ref{prop:m_omega_conc_spt} and that
\({\rm spt}(\mm_\omega)\subset\phi_\infty(X_\infty)\) by \textsc{Steps} 1 and 2.
Hence, \eqref{eq:claim_incl} gives
\begin{equation}\label{eq:conseq_claim_incl}\begin{split}
\lim_{i\to\omega}(\psi_i)_*\mm_i(U)&=\bar\mm_\omega\big(\Pi_{i\to\omega}\psi_i^{-1}(U)\big)
=\bar\mm_\omega\Big(\pi^{-1}\big(\phi_\infty(X_\infty)\big)
\cap\Pi_{i\to\omega}\psi_i^{-1}(U)\Big)\\
&\leq\mm_\omega\big(\phi_\infty(C_\varepsilon)\big)\leq
\bar\mm_\omega\big(\Pi_{i\to\omega}\psi_i^{-1}(V)\big)=
\lim_{i\to\omega}(\psi_i)_*\mm_i(V).
\end{split}\end{equation}
Given that \(U\) and \(V\) have \(\mm_\infty\)-negligible boundary,
we know that \(\lim_{i\to\infty}(\psi_i)_*\mm_i(U)=\mm_\infty(U)\)
and \(\lim_{i\to\infty}(\psi_i)_*\mm_i(V)=\mm_\infty(V)\). Therefore,
we deduce from \eqref{eq:conseq_claim_incl} that
\[\begin{split}
\mm_\omega\big(\phi_\infty(C_\varepsilon)\big)&\leq\lim_{i\to\infty}(\psi_i)_*\mm_i(V)
=\mm_\infty(V)\leq\mm_\infty(U)+\varepsilon=
\lim_{i\to\infty}(\psi_i)_*\mm_i(U)+\varepsilon\\
&\leq\mm_\omega\big(\phi_\infty(C_\varepsilon)\big)+\varepsilon.
\end{split}\]
This implies that
\begin{equation}\label{eq:same_meas}
\Big|\mm_\omega\big(\phi_\infty(C_\varepsilon)\big)-\mm_\infty(C_\varepsilon)\Big|
\leq\varepsilon.
\end{equation}
Now fix any sequence \(\varepsilon_j\searrow 0\). We can assume without loss
of generality that \(C_{\varepsilon_{j+1}}\subset C_{\varepsilon_j}\) for all \(j\in\N\).
Since \(K=\bigcap_{j\in\N}C_{\varepsilon_j}\) by construction, we infer that
\(\phi_\infty(K)=\bigcap_{j\in\N}\phi_\infty(C_{\varepsilon_j})\),
thus the continuity from above of \(\mm_\omega\) and \(\mm_\infty\) gives
\[
\Big|\mm_\omega\big(\phi_\infty(K)\big)-\mm_\infty(K)\Big|=\lim_{j\to\infty}
\Big|\mm_\omega\big(\phi_\infty(C_{\varepsilon_j})\big)-\mm_\infty(C_{\varepsilon_j})\Big|
\overset{\eqref{eq:same_meas}}=0.
\]
This shows that \(\mm_\omega\big(\phi_\infty(K)\big)=\mm_\infty(K)\),
thus proving \eqref{eq:claim_incl_gen}.\\
{\color{blue}\textsc{Step 4.}} In order to achieve the statement, it only
remains to show that \(Y\subset{\rm spt}(\mm_\omega)\). Fix any point \(y\in Y\).
Take \(x\in{\rm spt}(\mm_\infty)\) with \(y=\phi_\infty(x)\).
Fix \(r>0\). Given that \(\mm_\infty\big(B(x,r)\big)>0\) and
\(B(x,r)\subset\phi_\infty^{-1}\big(B(y,r)\big)\) -- the latter holds
because \(\phi_\infty\) is an isometry -- we have that
\[\begin{split}
\mm_\omega\big(B(y,r)\big)&=\mm_\omega\big(B(y,r)\cap Y\big)
=\mm_\infty\big(\phi_\infty^{-1}\big(B(y,r)\big)\cap\phi_\infty^{-1}(Y)\big)\\
&=\mm_\infty\big(\phi_\infty^{-1}\big(B(y,r)\big)\cap{\rm spt}(\mm_\infty)\big)
=\mm_\infty\big(\phi_\infty^{-1}\big(B(y,r)\big)\big)\\
&\geq\mm_\infty\big(B(x,r)\big)>0.
\end{split}\]
By arbitrariness of \(r>0\), we conclude that \(y\in{\rm spt}(\mm_\omega)\),
proving the inclusion \(Y\subset{\rm spt}(\mm_\omega)\).
Consequently, the proof is complete.
\end{proof}
\chapter{Weak pointed measured Gromov--Hausdorff convergence}
\section{Definition of wpmGH convergence}
In this section, we will give a definition of a weaker version of pointed measured Gromov--Hausdorff convergence, and investigate its connection with the notion of ultralimit of metric measure spaces. We will generalise Gromov's compactness theorem to this setting, and with that, we will show that this new notion of convergence is in fact equivalent to the notion of the so-called pointed measured Gromov convergence, introduced and studied in \cite{Gigli-Mondino-Savare}.

\begin{definition}[Weak $(R,\varepsilon)$-approximation]\label{def:weakappr}
Let \((X,d_X,\mm_X,p_X)\) and \((Y,d_Y,\mm_Y,p_Y)\) be pointed
Polish metric measure spaces. Let \(R,\varepsilon>0\) be such
that \(\varepsilon<R\). Then a given Borel map \(\psi\colon B(p_X,R)\to Y\)
is said to be a \emph{weak \((R,\varepsilon)\)-approximation} provided
it satisfies \(\psi(p_X)=p_Y\), and there exists a Borel set
$\tilde X\subset B(p_X,R)$ containing \(p_X\) such that
\begin{enumerate}
\item \label{wyy}$d_X(x,y)-\varepsilon\leq d_Y\big(\psi(x),\psi(y)\big)\leq d_X(x,y)+\varepsilon$ for every $x,y\in\tilde X$,
\item \label{wkaa}$\mm_X\big(B(p_X,R)\setminus \tilde X\big)\le\varepsilon$ and
$\mm_Y\big(B(p_Y,R-\varepsilon)\setminus\psi(\tilde X)^\varepsilon\big)\leq \varepsilon$.
\end{enumerate}
\end{definition}
Remark \ref{rmk:quasi_isom} applies also here: the map $\psi$ can be viewed as a map defined globally on the whole space $X$. Notice that the definition is weaker than but analogous to Definition \ref{def:REappr}; the condition \eqref{wyy} says that the map $\psi$ is $(1,\varepsilon)$-quasi-isometric in the ball of radius $R$ after neglecting a small set in measure, while the condition \eqref{wkaa} states that the map is roughly surjective up to a small error in measure.
\medskip

The following lemma grants that also in the setting of weak $(R,\varepsilon)$-approximations, we have the existence of rough inverses. Its proof is very
similar to the one of Lemma \ref{lem:rough_inverse}; we sketch the
argument for completeness.
\begin{lemma}\label{lem:rough_inverse_weak_approx}
Let \((X,d_X,\mm_X,p_X)\), \((Y,d_Y,\mm_Y,p_Y)\) be pointed Polish
metric measure spaces. Let \(\psi\colon X\to Y\) be a given
weak \((R,\varepsilon)\)-approximation, for some \(R,\varepsilon>0\)
satisfying \(4\varepsilon<R\). Then there exists a weak
\((R-\varepsilon,3\varepsilon)\)-approximation \(\phi\colon Y\to X\)
-- that we will call a \emph{rough inverse}
of \(\psi\) -- such that
\begin{subequations}
\begin{align}\label{eq:wquasi-inverse_1}
d_X\big(x,(\phi\circ\psi)(x)\big)<3\varepsilon,&
\quad\text{ for every }x\in B(p_X,R-4\varepsilon)\cap\tilde X,\\
\label{eq:wquasi-inverse_2}
d_Y\big(y,(\psi\circ\phi)(y)\big)<3\varepsilon,&
\quad\text{ for every }y\in B(p_Y,R-\varepsilon)\cap \tilde Y.
\end{align}\end{subequations}
\end{lemma}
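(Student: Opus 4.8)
The plan is to mimic the proof of Lemma \ref{lem:rough_inverse}, but keeping track of the exceptional small-measure set. Let $\tilde X\subset B(p_X,R)$ be the Borel set with $p_X\in\tilde X$ given by the definition of weak $(R,\varepsilon)$-approximation for $\psi$, so that \eqref{wyy} and \eqref{wkaa} hold. First I would fix a dense sequence $(x_i)_i$ in $\tilde X$ (recall $\tilde X$ is a Borel subset of a Polish space, hence separable). For $y\in B(p_Y,R-\varepsilon)\setminus\{p_Y\}$ define $\phi(y)\coloneqq x_{i_y}$, where $i_y$ is the smallest index $i$ with $d_Y\big(\psi(x_i),y\big)<\varepsilon$; if no such index exists, or if $y\notin B(p_Y,R-\varepsilon)\setminus\{p_Y\}$, set $\phi(y)\coloneqq\tilde x$ for a fixed $\tilde x\in X\setminus B(p_X,R-\varepsilon)$ (or $\tilde x\coloneqq p_X$ if that set is empty), and set $\phi(p_Y)\coloneqq p_X$. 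The resulting map $\phi\colon Y\to X$ is Borel and satisfies $\phi(p_Y)=p_X$.

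Next I would define the exceptional set for $\phi$. The natural candidate is $\tilde Y\coloneqq\psi(\tilde X)^\varepsilon\cap B(p_Y,R-\varepsilon)$, which is Borel and contains $p_Y$; by \eqref{wkaa} applied to $\psi$ one has $\mm_Y\big(B(p_Y,R-\varepsilon)\setminus\tilde Y\big)\le\varepsilon\le 3\varepsilon$, which is the first half of condition \eqref{wkaa} for $\phi$ at scale $(R-\varepsilon,3\varepsilon)$. For the quasi-isometry property \eqref{wyy}: given $y,y'\in\tilde Y$, by the definition of $\phi$ we have $d_Y\big(\psi(\phi(y)),y\big)<\varepsilon$ and $d_Y\big(\psi(\phi(y')),y'\big)<\varepsilon$ (using that for $y\in\tilde Y\subset\psi(\tilde X)^\varepsilon$ there is some $x_i\in\tilde X$ within $\varepsilon$, so the minimal index exists), and since $\phi(y),\phi(y')\in\tilde X$ we may apply \eqref{wyy} for $\psi$ to the pair $\phi(y),\phi(y')$; the triangle inequality then yields $\big|d_X\big(\phi(y),\phi(y')\big)-d_Y(y,y')\big|<3\varepsilon$, exactly as in the computation in the proof of Lemma \ref{lem:rough_inverse}. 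This establishes \eqref{wyy} for $\phi$ on $\tilde Y$.

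The remaining points are the rough surjectivity of $\phi$ (second half of \eqref{wkaa}) and the two displayed compatibility estimates \eqref{eq:wquasi-inverse_1}, \eqref{eq:wquasi-inverse_2}. For \eqref{eq:wquasi-inverse_2}: if $y\in B(p_Y,R-\varepsilon)\cap\tilde Y$, then as noted $d_Y\big(y,(\psi\circ\phi)(y)\big)<\varepsilon<3\varepsilon$, even better than required. For \eqref{eq:wquasi-inverse_1}: take $x\in B(p_X,R-4\varepsilon)\cap\tilde X$; then $\psi(x)\in B(p_Y,R-3\varepsilon)$ (since by \eqref{wyy} comparing with $p_X\in\tilde X$, $d_Y(\psi(x),p_Y)\le d_X(x,p_X)+\varepsilon<R-3\varepsilon$), hence $\psi(x)\in\tilde Y$, so $d_Y\big((\psi\circ\phi\circ\psi)(x),\psi(x)\big)<\varepsilon$; applying \eqref{wyy} to the pair $(\phi\circ\psi)(x),x\in\tilde X$ gives $d_X\big((\phi\circ\psi)(x),x\big)\le d_Y\big((\psi\circ\phi\circ\psi)(x),\psi(x)\big)+\varepsilon<2\varepsilon<3\varepsilon$, which is \eqref{eq:wquasi-inverse_1}. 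This estimate simultaneously shows that every such $x$ lies in $\phi\big(B(p_Y,R-3\varepsilon)\cap\tilde Y\big)^{3\varepsilon}$, so $B(p_X,R-4\varepsilon)\cap\tilde X\subset\phi(\tilde Y)^{3\varepsilon}$; intersecting with the measure bound $\mm_X\big(B(p_X,R-\varepsilon)\setminus\tilde X\big)\le\varepsilon$ and noting $\big(B(p_X,R-\varepsilon)\cap\tilde X\big)\setminus B(p_X,R-4\varepsilon)$ contributes nothing since $\phi(\tilde Y)^{3\varepsilon}\supset\tilde Y^{3\varepsilon}\ni$ \dots — more carefully, one shows $\mm_X\big(B(p_X,R-4\varepsilon)\setminus\phi(\tilde Y)^{3\varepsilon}\big)\le\mm_X\big(B(p_X,R-\varepsilon)\setminus\tilde X\big)\le\varepsilon\le 3\varepsilon$, which is the second half of \eqref{wkaa} for $\phi$ at scale $(R-\varepsilon,3\varepsilon)$. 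Thus $\phi$ is a weak $(R-\varepsilon,3\varepsilon)$-approximation with exceptional set $\tilde Y$ satisfying the two displayed estimates. The main obstacle is purely bookkeeping: making sure that the exceptional set $\tilde Y$ chosen for $\phi$ is compatible with all four required conditions at once — in particular that the measure estimate for rough surjectivity of $\phi$ can be reduced, via the inclusion above, to the measure estimate for $\tilde X$, rather than needing a fresh argument; and checking the edge cases ($y=p_Y$, $X\setminus B(p_X,R-\varepsilon)=\emptyset$, the minimal index failing to exist outside $\psi(\tilde X)^\varepsilon$) which is why the definition of $\phi$ must be set up with care. Since the paper explicitly asks only for a sketch "for completeness," I would present the construction of $\phi$ and $\tilde Y$ in detail and then indicate that the four verifications follow by repeating the computations of Lemma \ref{lem:rough_inverse} restricted to the relevant exceptional sets.
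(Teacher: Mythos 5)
Your construction and verification coincide with the paper's own proof: the same quasi-inverse $\phi$ built from a dense sequence in $\tilde X$ via the minimal-index rule, the same exceptional set $\tilde Y=\psi(\tilde X)^\varepsilon\cap B(p_Y,R-\varepsilon)$, and the same key inclusion $B(p_X,R-4\varepsilon)\setminus\phi(\tilde Y)^{3\varepsilon}\subset B(p_X,R-4\varepsilon)\setminus\tilde X$ reducing the rough surjectivity of $\phi$ to the measure bound already known for $\tilde X$. The only (immaterial) deviation is that you apply the minimal-index rule on all of $B(p_Y,R-\varepsilon)\setminus\{p_Y\}$ with a fallback, while the paper applies it only on $\tilde Y\setminus\{p_Y\}$; the proposal is correct.
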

\begin{proof}
Define $\tilde Y\coloneqq \psi(\tilde X)^\varepsilon\cap B(p_Y,R-\varepsilon)$.
Let $\{x_i\}_{i\in\N}$ be dense in \(\tilde X\).
Given any \(y\in \tilde Y\setminus\{p_Y\}\), define
\(\phi(y)\coloneqq x_{i_y}\), where \(i_y\) is the smallest index
\(i\in\N\) for which \(d_Y\big(\psi(x_i),y\big)<\varepsilon\).
We also define \(\phi(p_Y)\coloneqq p_X\), while we set
\(\phi(y)\coloneqq x_0\) for all \(y\in Y\setminus\tilde Y\),
where \(x_0\in X\setminus B(p_X,R)\) is any given point
(when \(X\setminus B(p_X,R)\neq\emptyset\), otherwise \(x_0\coloneqq p_X\)).
The resulting map \(\phi\colon Y\to X\) is Borel
and satisfies \(\phi(p_Y)=p_X\) by construction. For every
\(y,y'\in \tilde Y\), we have that
\[\begin{split}
&\Big|d_X\big(\phi(y),\phi(y')\big)-d_Y(y,y')\Big|\\
\leq\,&\Big|d_X\big(\phi(y),\phi(y')\big)-
d_Y\big((\psi\circ\phi)(y),(\psi\circ\phi)(y')\big)\Big|+\Big|
d_Y\big((\psi\circ\phi)(y),(\psi\circ\phi)(y')\big)-d_Y(y,y')\Big|\\
\leq\,&\varepsilon+d_Y\big((\psi\circ\phi)(y),y\big)
+d_Y\big((\psi\circ\phi)(y'),y'\big)<3\varepsilon,
\end{split}\]
which shows that \(\sup_{y,y'\in \tilde Y}
\big|d_X\big(\phi(y),\phi(y')\big)-d_Y(y,y')\big|\leq 3\varepsilon\).
Moreover, given any point \(x\) in \(B(p_X,R-4\varepsilon)\cap\tilde X\),
we have that \(\psi(x)\in B(p_Y,R-3\varepsilon)\cap\tilde Y\), thus
from the validity of the inequalities
\begin{equation}\label{eq:aux_quasi-inverse_2}
d_X\big((\phi\circ\psi)(x),x\big)\leq
d_Y\big((\psi\circ\phi\circ\psi)(x),\psi(x)\big)+\varepsilon
<2\varepsilon<3\varepsilon
\end{equation}
it follows that \(x\) belongs to the \(3\varepsilon\)-neighbourhood
of \(\phi\big(B(p_Y,R-3\varepsilon)\cap\tilde Y\big)\). In particular,
it holds that \(B(p_X,R-4\varepsilon)\setminus\phi(\tilde Y)^{3\varepsilon}\subset B(p_X,R-4\varepsilon)\setminus \tilde X\),
hence
\[\mm_X\big(B(p_X,R-4\varepsilon)\setminus\phi(\tilde Y)^{3\varepsilon}\big)\le\mm_X\big(B(p_X,R-4\varepsilon)\setminus\tilde X\big)\le\varepsilon.\]
Accordingly, \(\phi\) is a weak
\((R-\varepsilon,3\varepsilon)\)-approximation. In order to
conclude, it only remains to observe that \eqref{eq:quasi-inverse_2}
is a direct consequence of the very definition of \(\phi\),
while \eqref{eq:quasi-inverse_1} follows from the estimate
in \eqref{eq:aux_quasi-inverse}.
\end{proof}
\begin{definition}[Weak pointed measured Gromov--Hausdorff convergence]
Let us consider a sequence \(\big\{(X_i,d_i,\mm_i,p_i)\big\}_{i\in\bar\N}\)
of pointed Polish metric measure spaces. Then \((X_i,d_i,\mm_i,p_i)\)
is said to converge to \((X_\infty,d_\infty,\mm_\infty,p_\infty)\) in the
\emph{weak pointed measured Gromov--Hausdorff sense}\label{def:wpmGH}
(briefly, in the \emph{wpmGH sense}) as \(i\to\infty\) provided there exists
a sequence \((\psi_i)_{i\in\N}\) of weak \((R_i,\varepsilon_i)\)-approximations, for some \(R_i\nearrow+\infty\)
and \(\varepsilon_i\searrow 0\), so that
\[
(\psi_i)_*\mm_i\rightharpoonup \mm_\infty,\quad
\text{ in duality with }C_{bbs}(X_\infty).
\]
\end{definition}
\section{Relation between wpmGH convergence and ultralimits}
\label{s:wpmGH_vs_UL}
In the following proof of Gromov compactness theorem for weak pointed
measured Gromov--Hausdorff convergence, we will need the variant of Prokhorov
theorem for ultralimits introduced in Appendix \ref{s:Prokhorov}.
\begin{theorem}\label{thm:pre-Gromov}
Let $\big\{(X_i,d_i,\mm_i,p_i)\big\}_{i\in\N}$ be a given asymptotically
uniformly boundedly $\mm_\omega$-totally bounded sequence of pointed Polish
metric measure spaces. Then it holds that the ultralimit $\lim_{i\to\omega}(X_i,d_i,\mm_i,p_i)$ with respect to the wpmGH-topology exists and is isomorphic to $\big(\spt(\mm_\omega),d_\omega,\mm_\omega,p_\omega\big)$.
\end{theorem}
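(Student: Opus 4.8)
We want to show that under asymptotic bounded $\mm_\omega$-total boundedness, the sequence $\big((X_i,d_i,\mm_i,p_i)\big)$ wpmGH-converges to $\big(\spt(\mm_\omega),d_\omega,\mm_\omega,p_\omega\big)$. Recall that by Theorem~\ref{thm:equiv_conc_spt} this hypothesis is equivalent to $\mm_\omega$ being concentrated on the closed separable set $\spt(\mm_\omega)$, so the target space is genuinely a pointed Polish metric measure space with boundedly finite measure (by Lemma~\ref{lem:m_omega_boundedly_finite}). The task is therefore to construct, for suitable $R_i\nearrow+\infty$ and $\varepsilon_i\searrow0$, weak $(R_i,\varepsilon_i)$-approximations $\psi_i\colon X_i\to\spt(\mm_\omega)$ with $(\psi_i)_*\mm_i\rightharpoonup\mm_\omega$ in duality with $C_{bbs}(\spt(\mm_\omega))$. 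The natural candidate for $\psi_i$ runs ``backwards'' along the ultralimit construction: pick a countable dense set $\{y^n\}_{n\in\N}$ in $\spt(\mm_\omega)$, write each $y^n=[[z^n_i]]$ with representatives $z^n_i\in X_i$, and for each fixed $i$ let $\psi_i$ send a point $x\in X_i$ to the (indexed-minimal) $y^n$ whose representative $z^n_i$ is $d_i$-closest to $x$ within a prescribed tolerance; points of $X_i$ not so matched get sent to a default point, e.g.\ $p_i\mapsto p_\omega$. This is the same ``quasi-inverse'' mechanism used in Lemma~\ref{lem:rough_inverse_weak_approx} and in the proof of Theorem~\ref{thm:pGH_vs_UL}.

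\textbf{Key steps.} First, fix a scale: for each $i$ choose parameters $R_i$ and $\varepsilon_i$ via a diagonal argument so that, along an appropriately nested decreasing sequence of sets in $\omega$, finitely many of the representatives $z^n_i$ ($n\le N_i$) already $(r)$-cover $\bar B(p_i,R_i)$ up to $\mm_i$-measure $\le\varepsilon_i$ — this is exactly what asymptotic bounded $\mm_\omega$-total boundedness supplies, applied for each triple $(R,r,\varepsilon)$ ranging over a countable set tending to $(+\infty,0,0)$, together with Lemma~\ref{lma:helppo} to pass to a single sequence of indices. Second, on the good set $\tilde X_i\subset B(p_i,R_i)$ where the matching succeeds, verify the quasi-isometry estimate~\eqref{wyy}: for $x,x'\in\tilde X_i$ with $\psi_i(x)=y^n$, $\psi_i(x')=y^{n'}$ one has $d_i(x,z^n_i)<r_i$ etc., and $d_\omega(y^n,y^{n'})=\lim_{i\to\omega}d_i(z^n_i,z^{n'}_i)$, so $|d_i(x,x')-d_\omega(\psi_i(x),\psi_i(x'))|$ is controlled by $2r_i$ plus the $\omega$-limit discrepancy, which can be absorbed into $\varepsilon_i$ along the chosen index set. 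Third, verify the two measure conditions in~\eqref{wkaa}: $\mm_i(B(p_i,R_i)\setminus\tilde X_i)\le\varepsilon_i$ is the covering estimate just arranged, while rough surjectivity $\mm_{\spt}\big(B(p_\omega,R_i-\varepsilon_i)\setminus\psi_i(\tilde X_i)^{\varepsilon_i}\big)\le\varepsilon_i$ follows because the $y^n$ in range are dense in the relevant ball of $\spt(\mm_\omega)$ up to small measure — here one uses Definition~\ref{def:bar_m_omega} relating $\mm_\omega$-mass of balls to the $\omega$-limit of $\mm_i$-masses, together with the formula~\eqref{eq:formula_balls}. Fourth, prove the weak convergence $(\psi_i)_*\mm_i\rightharpoonup\mm_\omega$: given $\varphi\in C_{bbs}(\spt(\mm_\omega))$, approximate $\varphi$ by a finite sum $\sum_{n\le N}c_n\nchi_{B(y^n,\rho)}$ (up to uniform error), use that $\psi_i^{-1}(B(y^n,\rho))$ is, up to an $\omega$-null discrepancy, the internal set $\Pi_{i\to\omega}B(z^n_i,\rho)$ whose $\bar\mm_\omega$-measure is $\lim_{i\to\omega}\mm_i(B(z^n_i,\rho))$, and invoke the ultralimit/Prokhorov machinery of Appendix~\ref{s:Prokhorov} to convert the $\omega$-limit identities into genuine limits along $i\to\infty$ (which is what wpmGH requires). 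Finally, since each weak $(R_i,\varepsilon_i)$-approximation has a rough inverse by Lemma~\ref{lem:rough_inverse_weak_approx}, one concludes that the wpmGH-limit exists and, being a Polish pointed m.m.s., is uniquely determined; it therefore coincides with $\big(\spt(\mm_\omega),d_\omega,\mm_\omega,p_\omega\big)$.

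\textbf{Main obstacle.} The delicate point is the fourth step: wpmGH convergence is a statement about genuine limits as $i\to\infty$, whereas everything produced by the ultralimit construction is naturally phrased as $\omega$-limits (i.e.\ convergence only along sets in $\omega$). Bridging this gap is precisely the role of the variant of Prokhorov's theorem for ultralimits established in Appendix~\ref{s:Prokhorov}: one must extract, from the $\omega$-a.e.\ control $\mm_i(B(p_i,R))\le\eta(R)$ and the asymptotic total boundedness, an honest subsequential weak limit of $(\psi_i)_*\mm_i$ and then identify that limit with $\mm_\omega$ using the internal-set representation of balls. A secondary technical nuisance is the simultaneous diagonalisation over the three parameters $(R,r,\varepsilon)$ together with the countable family of dense representatives, all of which must be made compatible along one nested chain of elements of $\omega$; Lemma~\ref{lma:helppo} is tailor-made for this bookkeeping, but care is needed to ensure the resulting $R_i,\varepsilon_i$ are monotone in the right direction and that the good sets $\tilde X_i$ are Borel and contain $p_i$.
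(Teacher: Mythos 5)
Your construction is essentially the one in the paper: finite nets supplied by the asymptotic bounded $\mm_\omega$-total boundedness, representatives $z^n_i\in X_i$ of the net points, a single nested chain of sets in $\omega$ organised via Lemma~\ref{lma:helppo}, nearest-net-point maps $\psi_i$ into $\spt(\mm_\omega)$, verification of the two conditions of Definition~\ref{def:weakappr} on the good sets, Theorem~\ref{thm:Prokhorov} to produce a weak limit of $(\psi_i)_*\mm_i$, and an identification of that limit with $\mm_\omega$. (The paper organises the nets through a compact exhaustion $\tilde Y_i$ of $\spt(\mm_\omega)$ with nested $\varepsilon_i$-separated centres rather than a single dense sequence with a tolerance rule, but this is cosmetic.)

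There is, however, a conceptual error in your ``main obstacle'' paragraph, and it is worth correcting because pursuing it literally would lead you to try to prove something false. The theorem does \emph{not} assert that $(X_i,d_i,\mm_i,p_i)$ wpmGH-converges as $i\to\infty$; it asserts that the \emph{ultralimit} in the wpmGH topology exists, i.e.\ convergence along $\omega$ only (this is the ``subconvergence'' of the introduction, made precise in the remark following the theorem: $\lim_{i\to\omega}\tilde R_i=\infty$, $\lim_{i\to\omega}\tilde\varepsilon_i=0$, $\lim_{i\to\omega}(\psi_i)_*\mm_i=\mm_\omega$). So there is no gap between $\omega$-limits and genuine limits to bridge — and in general it cannot be bridged: interleaving two sequences with different pmGH limits preserves asymptotic bounded $\mm_\omega$-total boundedness but destroys ordinary convergence. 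Accordingly, the role of Theorem~\ref{thm:Prokhorov} is not to ``convert $\omega$-limit identities into genuine limits'' but simply to guarantee that the weak $\omega$-limit $\tilde\mm=\lim_{k\to\omega}(\psi_k)_*\mm_k$ exists at all, via the tightness supplied by the nets. A second, smaller point: the limit $\tilde\mm$ so obtained need not literally equal $\mm_\omega$; the paper's last step constructs an isometric self-bijection $\phi_\infty$ of $\spt(\mm_\omega)$ with $(\phi_\infty)_*\tilde\mm=\mm_\omega$ (which is why the statement says ``isomorphic to'' rather than ``equal to''). Your sketch of identifying $\tilde\mm$ directly through the internal sets $\Pi_{i\to\omega}B(z^n_i,\rho)$ is in the right spirit but glosses over exactly this identification, which requires the continuity-set argument of \textsc{Step 3} of Theorem~\ref{thm:pmGH_vs_UL}.
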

\begin{remark}{\rm
Technically speaking, we have given the definition of an ultralimit
of points only when the underlying space is a (pseudo)metric space.
The ultralimit in the above theorem can be regarded as such
\emph{after} proving the equivalence of wpmGH- and pmG-convergence
in Theorem \ref{thm:wpmGH_vs_pmG}, and noticing that the pmG-topology
is in fact metrisable. 

For the reader who is not comfortable with such an undirect
reasoning, we suggest to think of Theorem \ref{thm:pre-Gromov} more
directly in terms of weak \((R,\varepsilon)\)-approximations:
it simply states that there exist sequences
\(\tilde R_i,\tilde\varepsilon_i>0\) and weak
\((\tilde R_i,\tilde\varepsilon_i)\)-approximations \(\psi_i\) from
\(X_i\) to \(\spt(\mm_\omega)\) so that \(\lim_{i\to\omega}R_i=\infty\),
\(\lim_{i\to\omega}\varepsilon_i=0\), and
\[
\lim_{i\to\omega}(\psi_i)_*\mm_i=\mm_\omega.
\]
Here, the last ultralimit makes sense, since the weak convergence
is metrisable.
\fr}\end{remark}
\begin{proof}[Proof of Theorem \ref{thm:pre-Gromov}]
The proof goes along the same lines as the proof of the classical
Gromov's compactness theorem using ultralimits. We subdivide the proof into
several steps:\\
{\color{blue}\textsc{Step 1.}} Let $(X_i,d_i,\mm_i,p_i)$ be an asymptotically
uniformly boundedly $\mm_\omega$-totally bounded sequence of pointed Polish
metric measure spaces. Let $R_i\nearrow\infty$ and $\varepsilon_i\searrow 0$
be fixed. Let $\tilde Y_i\subset B(p_\omega,R_i)\cap\spt(\mm_\omega)$ be a
compact set, with $\tilde Y_i\subset \tilde Y_{i+1}$, and let $N_i\in \N$
be so that
\begin{equation}\label{eq:Gromov_wpmGH1}
\mm_\omega\big(B(p_\omega,R_i)\setminus \tilde Y_i\big)\le \varepsilon_i
\end{equation}
and
\begin{equation}\label{eq:Gromov_wpmGH2}
\tilde Y_i\subset \bigcup_{j=1}^{N_i}B(x^{i,j},\varepsilon_i)
\end{equation}
for some $(x^{i,j})_{j=1}^{N_i}\subset\tilde Y_i$ such that
$d_\omega(x^{i,j},x^{i,m})>\varepsilon_i$ whenever \(j,m=1,\ldots,N_i\)
satisfy \(j\neq m\). Furthermore, we can additionally require that for any
\(i<i'\) it holds that \(N_i\leq N_{i'}\) and \(x^{i',j}=x^{i,j}\) for every
\(j=1,\ldots,N_i\). For each $i,j$, we can write
\[x^{i,j}=[[x^{i,j}_k]]_k,\quad\text{ where }
x^{i,j}_k\in X_k\text{ for all }k\in\N.
\]
Observe that we can further require that \(x^{i,j}_k=x^{i',j}_k\)
for every \(i<i'\), \(j=1,\ldots,N_i\), and \(k\in\N\).
We now define for each $i\in \N$ the sets $\F_i$ recursively by setting
\(\F_1\coloneqq \mathcal{A}_1\cap\mathcal{B}_1\cap\mathcal{C}_1
\cap\mathcal{D}_1\) and
\begin{align}\F_{i+1}\coloneqq &\F_i\cap\mathcal{A}_{i+1}\cap\mathcal{B}_{i+1}\cap\mathcal{C}_{i+1}\cap\mathcal{D}_{i+1},
\end{align}
where we define
\begin{align}
\mathcal{A}_{i}&\coloneqq\Big\{k\ge i\;\Big|\;\mm_k\big(B(p_k,R_i)\setminus {\textstyle\bigcup_{j=1}^{N_i}}B(x^{i,j}_k,2\varepsilon_i)\big)
\le 2\varepsilon_i\Big\},\\
\mathcal{B}_{i}&\coloneqq\Big\{k\in\N\;\Big|\;\big|d_k(x^{i,j}_k,x^{i,m}_k)
-d_\omega(x^{i,j},x^{i,m})\big|<\varepsilon_i\text{ for all }j,m\leq N_i\Big\},\\
\mathcal{C}_{i}&\coloneqq\Big\{k\in\N\;\Big|\;\big|d_k(x^{i,j}_k,p_k)
-d_\omega(x^{i,j},p_\omega)\big|<\varepsilon_i\text{ for all }j\leq N_i\big\},\\
\mathcal{D}_i&\coloneqq\Big\{k\in\N\;\Big|\;d_k(x^{i,j}_k,x^{i,m}_k)
>\varepsilon_i\text{ for all }j,m\leq N_i\text{ with }j\neq m\Big\}.
\end{align}
By asymptotic uniform bounded $\mm_\omega$-total boundedness (and by definition
of ultralimit), we have that $\F_i\in\omega$ holds for every $i\in\N$.
For every $k\in\F_1$, there exists a unique $i_k\in\N$ so that
$k\in \F_{i_k}\setminus \F_{i_k+1}$. For $k\in\F_1$, define the Borel map
$\psi_k\colon X_{k}\to X_\omega$ as follows: $\psi_k(p_k)\coloneqq p_\omega$,
\[\begin{split}
\psi_k(x)\coloneqq x^{i_k,1},&\quad\text{ for every }x\in B(x^{i_k,1}_k,
2\varepsilon_{i_k}),\\
\psi_k(x)\coloneqq x^{i_k,j+1},&\quad\text{ for every }x\in
B(x^{i_k,j+1}_k,2\varepsilon_{i_k})\setminus{\textstyle\bigcup_{n=1}^j}
B(x^{i_k,j}_k,2\varepsilon_{i_k}),
\end{split}\]
while we can assume without loss of generality that
\begin{equation}\label{eq:Gromov_wpmGH3}
\psi_k\big(X_k\setminus{\textstyle{\bigcup}_{j=1}^{N_{i_k}}}
B(x^{i_k,j}_k,2\varepsilon_{i_k})\big)\subset
X_\omega\setminus B(p_\omega,R_{i_k}),
\end{equation}
provided the latter set is not empty. Put
$\tilde R_k\coloneqq R_{i_k}$ and $\tilde \varepsilon_k\coloneqq 
6\varepsilon_{i_k}$ for all \(k\in\N\). Then we have (by Lemma
\ref{lma:helppo}) that
\(\lim_{k\to\omega}\tilde R_k=\infty\) and
\(\lim_{k\to\omega}\tilde \varepsilon_k=0\).\\
{\color{blue}\textsc{Step 2.}} We claim that
\begin{equation}\label{eq:Gromov_wpmGH4}
\psi_k\;\text{ is a weak }(\tilde R_k,\tilde\varepsilon_k)
\text{-approximation,}\quad\text{ for every }k\in\mathcal F_1.
\end{equation}
In order to prove condition (1) of Definition \ref{def:weakappr}, define
\[
\tilde X_k\coloneqq \bigcup_{j=1}^{N_{i_k}}B(x^{i_k,j}_k,2\varepsilon_{i_k}).
\]
Let the points  $x,y\in \tilde X_k$ be fixed. Then we have that
$x\in B(x^{i_k,j_x+1}_k,2\varepsilon_{i_k})
\setminus \bigcup_{j=1}^{j_x}B(x^{i_k,j}_k,2\varepsilon_{i_k})$ and
$y\in B(x^{i_k,j_y+1}_k,2\varepsilon_{i_k})\setminus \bigcup_{j=1}^{j_y}
B(x^{i_k,j}_k,2\varepsilon_{i_k})$ for some $j_x,j_y\le N_{i_k}-1$.
If $j_x=j_y$, we have
\begin{align}
d_\omega\big(\psi_k(x),\psi_k(y)\big)-
\tilde\varepsilon_k=-6\varepsilon_{i_k}\le
d_k(x,y)\le 4\varepsilon_{i_k}\le d_\omega\big(\psi_k(x),\psi_k(y)\big)
+\tilde\varepsilon_k.
\end{align}
If $j_x\neq j_y$, we have
\begin{align}
d_\omega\big(\psi_k(x),\psi_k(y)\big)-\tilde\varepsilon_k&=
d_\omega(x^{i_k,j_x+1},x^{i_k,j_y+1})-6\varepsilon_{i_k}\le
d_k(x^{i_k,j_x+1}_k,x^{i_k,j_y+1}_k)-5\varepsilon_{i_k}
\\ &\le d_k(x^{i_k,j_x+1}_k,x)+d_k(x,y)+d_k(y,x^{i_k,j_y+1}
_k)-5\varepsilon_{i_k}\le d_k(x,y)
\\ &\le d_k(x^{i_k,j_x+1}_k,x^{i_k,j_y+1}_k)+4\varepsilon_{i_k}\le
d_\omega\big(\psi_k(x),\psi_k(y)\big)+5\varepsilon_{i_k}
\\ &\le d_\omega\big(\psi_k(x),\psi_k(y)\big)+\tilde\varepsilon_{k}.
\end{align}
Hence, the property (1) is proven. For the property (2), first notice
that the inequality
\[
\mm_k\big(B(p_k,\tilde R_k)\setminus \tilde X_k\big)\le \tilde\varepsilon_k
\]
is true due to the definitions of $\tilde X_k$ and $\mathcal F_{i_k}$.
Moreover, given that
\(\{x^{i_k,j}\}_{j=1}^{N_{i_k}}\subset\psi_k(\tilde X_k)\)
by the very construction of \(\psi_k\), we deduce from
\eqref{eq:Gromov_wpmGH2} that \(\tilde Y_{i_k}\subset
\bigcup_{j=1}^{N_{i_k}}B(x^{i_k,j},\tilde\varepsilon_k)
\subset\psi_k(\tilde X_k)^{\tilde\varepsilon_k}\), whence
it follows from \eqref{eq:Gromov_wpmGH1} that
\(\mm_\omega\big(B(p_\omega,\tilde R_k)\setminus\psi_k(\tilde X_k)
^{\tilde\varepsilon_k}\big)\leq\tilde\varepsilon_k\),
yielding (2). All in all, we have proven that $\psi_k$ is a weak
$(\tilde R_k,\tilde \varepsilon_k)$-approximation for every $k\in\F_1$,
\emph{i.e.}, the claim \eqref{eq:Gromov_wpmGH3}.\\
{\color{blue}\textsc{Step 3.}} Next, we claim that
there exists a Borel measure \(\tilde\mm\geq 0\)
on \(\spt(\mm_\omega)\) such that
\begin{equation}\label{eq:Gromov_wpmGH5}
\tilde\mm=\lim_{k\to\omega}(\psi_k)_*\mm_k,
\quad\text{ in duality with }C_{bbs}\big(\spt(\mm_\omega)\big).
\end{equation}
Let \(n\in\N\) and \(\varepsilon>0\) be fixed. Pick \(k_0\in\N\)
such that \(\tilde R_k\geq\tilde R_{k_0}\), \(\tilde R_k-\tilde\varepsilon_k\geq n\), and
\(2\tilde\varepsilon_k\leq\varepsilon\) for all
\(k\in\mathcal F_{i_{k_0}}\) with \(k\geq k_0\). Call
\(\mu_k^n\coloneqq\big((\psi_k)_*\mm_k\big)
\llcorner\big(B(p_\omega,n)\cap\spt(\mm_\omega)\big)\) for every
\(k\in\mathcal F_{i_{k_0}}\) with \(k\geq k_0\) and
\(T\coloneqq B(\tilde Y_{i_{k_0}},2\varepsilon_{i_{k_0}})\).
Since \(\tilde Y_{i_{k_0}}\) is compact, one has that \(T\)
is \(4\varepsilon_{i_{k_0}}\)-totally bounded, thus in particular
it is \(\varepsilon\)-totally bounded. Moreover, we claim that
\begin{equation}\label{eq:Gromov_wpmGH6}
\psi_k\bigg(\bigcup_{j=1}^{N_{i_{k_0}}}B\big(x^{i_{k_0},j}_k,
2\varepsilon_{i_{k_0}}\big)\bigg)\subset T,\quad\text{ for every }
k\in\mathcal F_{i_{k_0}}\text{ with }k\geq k_0.
\end{equation}
In order to prove it, fix any \(j=1,\ldots,N_{i_{k_0}}\) and
\(x\in\tilde X_k\cap B\big(x^{i_{k_0},j}_k,2\varepsilon_{i_{k_0}}\big)
=B\big(x^{i_{k_0},j}_k,2\varepsilon_{i_{k_0}}\big)\).
Being the map \(\psi_k\) a weak \((\tilde R_k,\tilde\eps_k)\)-approximation, we deduce that
\[
d_\omega\big(\psi_k(x),\psi_k(x^{i_{k_0},j}_k)\big)
\leq d_k(x,x^{i_{k_0},j}_k)+\tilde\varepsilon_k
<2\varepsilon_{i_{k_0}}+\tilde\varepsilon_k\leq 2\tilde\varepsilon_{k_0}.
\]
Given that \(\psi_k(x^{i_{k_0},j}_k)=\psi_k(x^{i_k,j}_k)
=x^{i_k,j}=x^{i_{k_0},j}\in\tilde Y_{i_{k_0}}\), we have proven that
\(\psi_k(x)\in T\), whence \eqref{eq:Gromov_wpmGH6} follows. Therefore,
since for any \(k\in\mathcal F_{i_{k_0}}\) with \(k\geq k_0\) it holds that
\[\begin{split}
\psi_k^{-1}\Big(\big(B(p_\omega,n)\cap\spt(\mm_\omega)\big)\setminus T\Big)
&\overset{\eqref{eq:Gromov_wpmGH6}}\subset
\psi_k^{-1}\big(B(p_\omega,\tilde R_{k_0}-\tilde\varepsilon_{k_0})
\big)\setminus\bigcup_{j=1}^{N_{i_{k_0}}}B\big(x^{i_{k_0},j}_k,
2\varepsilon_{i_{k_0}}\big)\\
&\overset{\phantom{\eqref{eq:Gromov_wpmGH6}}}\subset
B(p_k,\tilde R_{k_0})\setminus\bigcup_{j=1}^{N_{i_{k_0}}}
B\big(x^{i_{k_0},j}_k,2\varepsilon_{i_{k_0}}\big),
\end{split}\]
we conclude that
\[\begin{split}
\mu_k^n\big(\spt(\mm_\omega)\setminus T\big)&=
\mm_k\Big(\psi_k^{-1}\Big(\big(B(p_\omega,n)\cap\spt(\mm_\omega)\big)
\setminus T\Big)\Big)\\
&\leq\mm_k\Big(B(p_k,\tilde R_k)\setminus\bigcup_{j=1}^{N_{i_{k_0}}}
B\big(x^{i_{k_0},j}_k,2\varepsilon_{i_{k_0}}\big)\Big)\\
&\leq 2\varepsilon_{i_{k_0}}\leq\varepsilon.
\end{split}\]
In particular, \(\mu_k^n\big(\spt(\mm_\omega)\setminus T\big)
\leq\varepsilon\) holds for \(\omega\)-a.e.\ \(k\). Hence,
by using Theorem \ref{thm:Prokhorov} we obtain that there
exists a finite Borel measure \(\tilde\mm_n\) on \(\spt(\mm_\omega)\)
for which \(\tilde\mm_n=\lim_{k\to\omega}\mu_k^n\) with respect to
the weak convergence. It can be readily checked that \(\tilde\mm_n=
\tilde\mm_{n'}\llcorner\big(B(p_\omega,n)\cap\spt(\mm_\omega)\big)\)
whenever \(n<n'\), so that it makes sense to define the measure
\(\tilde\mm\) on \(\spt(\mm_\omega)\) as
\[
\tilde\mm(E)\coloneqq\lim_{n\to\infty}\tilde\mm_n\big(E\cap B(p_\omega,n)
\big),\quad\text{ for every }E\in\mathscr B\big(\spt(\mm_\omega)\big).
\]
Finally, given any \(f\in C_{bbs}\big(\spt(\mm_\omega)\big)\) and
\(n\in\N\) with \(\spt(f)\subset B(p_\omega,n)\), we have that
\[
\int f\,\d\tilde\mm=\int f\,\d\tilde\mm_n=\lim_{k\to\omega}
\int f\,\d\mu_k^n=\lim_{k\to\omega}\int f\,\d(\psi_k)_*\mm_k.
\]
Therefore, we have proven that \eqref{eq:Gromov_wpmGH5} is satisfied,
as desired.\\
{\color{blue}\textsc{Step 4.}}
In order to conclude, it only remains to show that there exists
an isometric bijection \(\phi_\infty\colon\spt(\mm_\omega)\to
\spt(\mm_\omega)\) such that \((\phi_\infty)_*\tilde\mm=
\mm_\omega|_{\mathscr B(\spt(\mm_\omega))}\). Since the arguments
are very similar to those in the proofs of the results in
Section \ref{ss:relation_with_pmGH}, we will omit some details.

First of all, we define \(\phi_\infty\colon\spt(\mm_\omega)\to
\spt(\mm_\omega)\). Given any $y\in\spt(\mm_\omega)$, there exist
$y_i\in\tilde Y_i$ so that $y=\lim_{i\to\omega}y_i$. Then let us define
\(\phi_\infty(y)\coloneqq\big[\big[\phi_i(y_i)\big]\big]_i\in X_\omega\),
where \(\phi_i\) is a rough inverse of \(\psi_i\) (whose existence is
granted by Lemma \ref{lem:rough_inverse_weak_approx}). The map
\(\phi_\infty\) is well-defined and isometric by Definition
\ref{def:wpmGH} of wpmGH-convergence and by (2) of Definition
\ref{def:weakappr}. Moreover, given any point \(x=\phi_\infty(y)
\in\phi_\infty\big(\spt(\mm_\omega)\big)\), we define
\[
\psi_\infty(x)\coloneqq\lim_{i\to\omega}\psi_i(x_i)\in\spt(\mm_\omega),
\]
where \(x=[[x_i]]_i\) for some \(x_i\in\tilde X_i\). Then \(\psi_\infty\)
is well-defined and is the inverse of \(\psi_\infty\), since
\[
\lim_{i\to\omega}d_\omega\big(y,\psi_i(x_i)\big)
\leq\lim_{i\to\omega}\Big[d_\omega(y,y_i)+
d_\omega\big(y_i,(\psi_i\circ\phi_i)(y_i)\big)+
d_\omega\big((\psi_i\circ\phi_i)(y_i),\psi_i(x_i)\big)\Big]=0.
\]
As in \textsc{Step 1} of the proof of Theorem \ref{thm:pmGH_vs_UL},
we get $\spt(\mm_\omega)\subset\phi_\infty\big(\spt(\tilde\mm)\big)
\subset\phi_\infty\big(\spt(\mm_\omega)\big)$. We will show that
$\mm_\omega\llcorner\phi_\infty\big(\spt(\mm_\omega)\big)=
(\phi_\infty)_*\tilde\mm$. Since $\tilde \mm$ and $\mm_\omega$ are both
boundedly finite, it suffices to prove that $\tilde \mm(K)=\mm_\omega
\big(\phi_\infty(K)\big)$ for every $K\subset\spt(\mm_\omega)$ compact.
Let $K\subset\spt(\mm_\omega)$ be a compact set and $\varepsilon>0$.
Then there exist $0<\delta<\delta'<\varepsilon$ so that
$\tilde\mm(\partial U)=\tilde\mm(\partial V)=0$ and
$\tilde\mm(V)\le\tilde\mm(K)+\varepsilon$, where $U$ and $V$ are the
$\delta$- and the $\delta'$-neighbourhood of $K$, respectively.
Denote by $C_\varepsilon$ the closure of $U$. Notice that
$K\subset U\subset C_\varepsilon\subset V$. Let us show that
\begin{align} \label{tuplaosa}
\pi^{-1}\big(\phi_\infty\big(\spt(\mm_\omega)\big)\big)\cap
\Pi_{i\to\omega}\psi_i^{-1}(U)\subset \pi^{-1}\big(
\phi_\infty(C_\varepsilon)\big)\subset\Pi_{i\to\omega}\psi_i^{-1}(V).
\end{align}
Let $[x_i]\in\Pi_{i\to\omega}\psi_i^{-1}(U)$ be so that
$[[x_i]]=\phi_\infty(y)$ for some $y\in\spt(\mm_\omega)$. Then
$\phi_i(y_i)\in\psi_i^{-1}(U)$ for $\omega$-almost every $i\in\N$,
where $\phi_\infty(y)=\big[\big[\phi_i(y_i)\big]\big]$. Thus,
$(\psi_i\circ\phi_i)(y_i)\in C_\varepsilon$ for $\omega$-almost every
\(i\in\N\), and therefore, since $C_\varepsilon$ is closed, we have that
\[
y=\lim_{i\to\omega}\psi_i(x_i)=\lim_{i\to\omega}
(\psi_i\circ\phi_i)(y_i)\in C_\varepsilon.
\]
Thus, we have proven that
$[x_i]\in\pi^{-1}\big(\phi_\infty(C_\varepsilon)\big)$.
Now suppose that \([z_i]\in\pi^{-1}\big(\phi_\infty(C_\varepsilon)\big)\).
Then there exists \(z\in C_\varepsilon\) for which
\(\big[\big[\phi_i(z)\big]\big]=\phi_\infty(z)=[[z_i]]\). Since
\(\lim_{i\to\omega}d_\infty\big((\psi_i\circ\phi_i)(z),z\big)=0\) and
\(V\) is an open neighbourhood of \(z\), we know that
\(\psi_i(z_i)=(\psi_i\circ\phi_i)(z)\in V\) for \(\omega\)-a.e.\ \(i\in\N\).
This means that \([y_i]\in\Pi_{i\to\omega}\psi_i^{-1}(V)\), as desired.
All in all, the claim \eqref{tuplaosa} is proven.

Since $\mm_\omega$ is concentrated on $\spt(\mm_\omega)$, and
$\spt(\mm_\omega)\subset\phi_\infty\big(\spt(\mm_\omega)\big)$,
we have by \eqref{tuplaosa} that
\begin{equation}\label{epayht}\begin{split}
\lim_{i\to\omega}(\psi_i)_*\mm_i(U)&=
\bar\mm_\omega\big(\Pi_{i\to\omega}\psi^{-1}(U)\big)=
\bar\mm_\omega\big(\pi^{-1}\big(\phi_\infty(\spt(\mm_\omega))\big)\cap
\Pi_{i\to\omega}\psi^{-1}(U)\big)\\
&\le\mm_\omega\big(\psi_\infty(C_\varepsilon)\big)\le
\bar\mm_\omega\big(\Pi_{i\to\omega}\psi_i^{-1}(V)\big)=
\lim_{i\to\omega}(\psi_i)_*\mm_i(V).
\end{split}\end{equation}
Due to the fact that $U$ and $V$ are continuity sets for $\tilde\mm$,
we obtain from \eqref{epayht} that
\begin{align}
\mm_\omega\big(\phi_\infty(C_\varepsilon)\big)&
\le\lim_{i\to\omega}(\psi_i)_*\mm_i(V)=\tilde\mm(V)\le
\tilde\mm(K)+\varepsilon\le\tilde\mm(U)+\varepsilon\\
&=\lim_{i\to\omega}(\psi_i)_*\mm_i(U)+\varepsilon\le
\mm_\omega\big(\phi_\infty(C_\varepsilon)\big)+\varepsilon.
\end{align}
By using the continuity from above of $\mm_\omega$ and letting
$\varepsilon\to 0$, we obtain $\tilde \mm(K)=\mm_\omega
\big(\phi_\infty(K)\big)$.

Finally, we conclude by observing that, arguing as in \textsc{Step 4}
of the proof of Theorem \ref{thm:pmGH_vs_UL}, we get that
$\phi_\infty\big(\spt(\tilde\mm)\big)\subset\spt(\mm_\omega)$.
Consequently, the statement is achieved.
\end{proof}
\begin{corollary}\label{cor:conseq_Gromov}
Let $\big\{(X_i,d_i,\mm_i,p_i)\big\}_{i\in\N}$ be a sequence of pointed Polish metric measure spaces that wpmGH-converges to a limit space
$(X_\infty,d_\infty,\mm_\infty,p_\infty)$. Then
$\big\{(X_i,d_i,\mm_i,p_i)\big\}_{i\in\N}$ is asymptotically
boundedly $\mm_\omega$-totally bounded and its wpmGH-limit
$(X_\infty,d_\infty,\mm_\infty,p_\infty)$ is isomorphic to
$\big(\spt(\mm_\omega),d_\omega,\mm_\omega,p_\omega\big)$.
\end{corollary}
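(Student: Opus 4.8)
The plan is to reduce the statement to Theorem~\ref{thm:pre-Gromov}: I will verify that a wpmGH-convergent sequence is automatically asymptotically boundedly $\mm_\omega$-totally bounded, and then combine Theorem~\ref{thm:pre-Gromov} with the uniqueness of the wpmGH-limit. Throughout, fix weak $(R_i,\varepsilon_i)$-approximations $\psi_i\colon X_i\to X_\infty$, with associated Borel sets $\tilde X_i\subset B(p_i,R_i)\ni p_i$, realising the convergence: $R_i\nearrow+\infty$, $\varepsilon_i\searrow 0$, and $(\psi_i)_*\mm_i\rightharpoonup\mm_\infty$ in duality with $C_{bbs}(X_\infty)$. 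I will repeatedly use the elementary Portmanteau-type bound $\lims_{i\to\infty}(\psi_i)_*\mm_i(F)\leq\mm_\infty(F)$ for closed bounded $F\subset X_\infty$ (approximate $\nchi_F$ from above by functions in $C_{bbs}(X_\infty)$ and use that $\mm_\infty$ is boundedly finite), together with the fact that for $\omega$-a.e.\ $i$ one has $\bar B(p_i,R)\subset B(p_i,R_i)$, while always $\psi_i\big(\bar B(p_i,R)\cap\tilde X_i\big)\subset\bar B(p_\infty,R+\varepsilon_i)$ by the quasi-isometry property on $\tilde X_i$ combined with $\psi_i(p_i)=p_\infty$.

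\emph{Step 1: $\omega$-uniform bounded finiteness.} Splitting $B(p_i,R)$ along $\tilde X_i$ and its complement, one gets for $\omega$-a.e.\ $i$
\[
\mm_i\big(B(p_i,R)\big)\ \leq\ (\psi_i)_*\mm_i\big(\bar B(p_\infty,R+1)\big)+\varepsilon_i\ \leq\ (\psi_i)_*\mm_i\big(\bar B(p_\infty,R+1)\big)+1,
\]
where the first inequality uses $B(p_i,R)\cap\tilde X_i\subset\psi_i^{-1}\big(\bar B(p_\infty,R+1)\big)$ and condition~(2) of Definition~\ref{def:weakappr}. By the Portmanteau bound this yields $\mm_i\big(B(p_i,R)\big)\leq\eta(R)\coloneqq\mm_\infty\big(\bar B(p_\infty,R+1)\big)+1<+\infty$ for $\omega$-a.e.\ $i$, so the sequence is $\omega$-uniformly boundedly finite and $\mm_\omega$ is well defined.

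\emph{Step 2: asymptotic bounded $\mm_\omega$-total boundedness.} Fix $R,r,\varepsilon>0$ and $R'>R$. Since $\mm_\infty\llcorner\bar B(p_\infty,R')$ is finite and $X_\infty$ is separable, pick $M\in\N$ and $(y_n)_{n=1}^M\subset X_\infty$ with $\mm_\infty\big(\bar B(p_\infty,R')\setminus\bigcup_{n=1}^M B(y_n,r/4)\big)<\varepsilon$. For each $i$ and $n$, let $x^i_n$ be any point of $\psi_i^{-1}\big(B(y_n,r/4)\big)\cap\tilde X_i$ when this set is non-empty and $x^i_n\coloneqq p_i$ otherwise. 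The key point is that if $x\in\bar B(p_i,R)\cap\tilde X_i$ satisfies $\psi_i(x)\in B(y_n,r/4)$, then $x$ itself witnesses non-emptiness, so $x^i_n\in\tilde X_i$ with $\psi_i(x^i_n)\in B(y_n,r/4)$; hence by the quasi-isometry on $\tilde X_i$, $d_i(x,x^i_n)\leq d_\infty\big(\psi_i(x),\psi_i(x^i_n)\big)+\varepsilon_i<r$ for $\omega$-a.e.\ $i$. Contrapositively, for $\omega$-a.e.\ $i$,
\[
\bar B(p_i,R)\setminus{\textstyle\bigcup_{n=1}^M}B(x^i_n,r)\ \subset\ \big(B(p_i,R_i)\setminus\tilde X_i\big)\ \cup\ \psi_i^{-1}\Big(\bar B(p_\infty,R')\setminus{\textstyle\bigcup_{n=1}^M}B(y_n,r/4)\Big).
\]
Taking $\mm_i$-measures, using condition~(2) of Definition~\ref{def:weakappr} for the first set and the Portmanteau bound (applied to the closed bounded set $\bar B(p_\infty,R')\setminus\bigcup_{n=1}^M B(y_n,r/4)$) for the second, I obtain $\lim_{i\to\omega}\mm_i\big(\bar B(p_i,R)\setminus\bigcup_{n=1}^M B(x^i_n,r)\big)\leq\varepsilon$, which is exactly asymptotic bounded $\mm_\omega$-total boundedness.

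\emph{Step 3: conclusion.} By Steps~1--2 and Theorem~\ref{thm:pre-Gromov}, the sequence admits a wpmGH-ultralimit isomorphic to $\big(\spt(\mm_\omega),d_\omega,\mm_\omega,p_\omega\big)$; since it also wpmGH-converges to $(X_\infty,d_\infty,\mm_\infty,p_\infty)$ by hypothesis, these two spaces are isomorphic. I expect the main obstacle to lie partly in Step~2 — choosing the auxiliary sets so that the weak convergence of the pushforwards can genuinely be invoked (the cover $\bigcup_n B(y_n,r/4)$ must be chosen so that the leftover is \emph{closed}), while correctly absorbing the $\mm_i$-small bad sets $X_i\setminus\tilde X_i$ — and partly in the uniqueness of the wpmGH-limit used at the end: rigorously this uniqueness rests on the metrisability of the equivalent pmG-convergence (Theorem~\ref{thm:wpmGH_vs_pmG}), but to bypass that forward reference one may instead repeat the proof of Theorem~\ref{thm:pmGH_vs_UL} essentially verbatim, with weak $(R,\varepsilon)$-approximations and their rough inverses (Lemma~\ref{lem:rough_inverse_weak_approx}) in place of $(R,\varepsilon)$-approximations and quasi-inverses, identifying $X_\infty$ directly with $\spt(\mm_\omega)\subset X_\omega$ and checking that the induced $\phi_\infty$ satisfies $(\phi_\infty)_*\mm_\infty=\mm_\omega$.
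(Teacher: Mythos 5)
Your proposal is correct and follows the same route as the paper: the paper's proof is precisely the reduction to Theorem \ref{thm:pre-Gromov} plus the observation that asymptotic bounded \(\mm_\omega\)-total boundedness follows by adapting the argument of Proposition \ref{prop:m_omega_conc_spt} (details omitted there), which is exactly what your Steps 1--2 carry out — your only cosmetic deviation is picking the cover points directly in \(\psi_i^{-1}\big(B(y_n,r/4)\big)\cap\tilde X_i\) rather than via the rough inverses \(\phi_i\), which works equally well. Your closing caveat about the uniqueness of the wpmGH-limit matches the paper's own remark following Theorem \ref{thm:pre-Gromov}.
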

\begin{proof}
Thanks to Theorem \ref{thm:pre-Gromov}, it suffices to prove that the
sequence $\big\{(X_i,d_i,\mm_i,p_i)\big\}_{i\in\N}$ is asymptotically
boundedly $\mm_\omega$-totally bounded. This can be easily achieved by suitably adapting the arguments in the
proof of Proposition \ref{prop:m_omega_conc_spt}; we omit the
details.
\end{proof}
Before stating the main result of this section, we need to introduce a
couple of definitions. Let \(\big\{(X_i,d_i,\mm_i,p_i)\big\}_{i\in\N}\)
be a sequence of pointed Polish metric measure spaces. Then we say that
\(\big\{(X_i,d_i,\mm_i,p_i)\big\}_{i\in\N}\) is \emph{uniformly boundedly
finite} provided for any \(R>0\) it holds
\[
\sup_{i\in\N}\mm_i\big(B(p_i,R)\big)<+\infty.
\]
In addition, we say that \(\big\{(X_i,d_i,\mm_i,p_i)\big\}_{i\in\N}\)
is \emph{boundedly measure-theoretically totally bounded} provided
for every \(R,r,\varepsilon>0\) there exist a number \(M\in\N\) and
points \((x^i_n)_{n=1}^M\subset X_i\) such that
\[
\sup_{i\in\N}\mm_i\big(\bar B(p_i,R)\setminus{\textstyle\bigcup
_{n=1}^M}B(x^i_n,r)\big)\leq\varepsilon.
\]
\begin{theorem}[Gromov's compactness for wpmGH-convergence]
\label{thm:Gromov_cpt_main}
Let $\big\{(X_i,d_i,\mm_i,p_i)\big\}_{i\in\N}$ be a sequence
of boundedly finite pointed Polish metric measure spaces. Then it holds that
$\big\{(X_i,d_i,\mm_i,p_i)\big\}_{i\in\N}$ is precompact with
respect to the wpmGH-convergence (\emph{i.e.}, any of its subsequences
has a wpmGH-converging subsequence) if and only if it is
uniformly boundedly finite and  boundedly measure-theoretically totally bounded.
\end{theorem}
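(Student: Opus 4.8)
The statement is an ``if and only if'', and the strategy is to route both implications through the ultralimit machinery developed in the preceding sections, together with the characterisations in Theorem~\ref{thm:equiv_conc_spt} and the (in)consistency with the ultralimit in Theorem~\ref{thm:pre-Gromov} and Corollary~\ref{cor:conseq_Gromov}. The key observation that makes this work is that the two ``uniform-in-$i$'' conditions in the statement (uniform bounded finiteness and bounded measure-theoretic total boundedness) are exactly the non-asymptotic, $\omega$-free versions of the conditions ``$\omega$-uniform bounded finiteness'' and ``asymptotic bounded $\mm_\omega$-total boundedness''; the bridge between the two is that a quantitative property holding for \emph{all} $i\in\N$ in particular holds for $\omega$-a.e.\ $i$ for \emph{every} non-principal ultrafilter $\omega$, whereas to recover a statement valid along a subsequence one extracts it from a suitable $\omega$.

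\emph{The ``if'' direction (sufficiency).} Assume the sequence is uniformly boundedly finite and boundedly measure-theoretically totally bounded. We must show that every subsequence has a wpmGH-convergent sub-subsequence. First I would fix an arbitrary subsequence; it inherits both properties. Then I would choose a non-principal ultrafilter $\omega$ concentrated on the index set of that subsequence (this is possible since the index set is infinite). Uniform bounded finiteness immediately gives that the sequence is $\omega$-uniformly boundedly finite (Definition~\ref{def:omega-ubf}, with $\eta(R):=\sup_i\mm_i(B(p_i,R))$), so the ultralimit $(X_\omega,d_\omega,\mm_\omega,p_\omega)$ exists. Bounded measure-theoretic total boundedness gives, for every $R,r,\varepsilon>0$, a single $M$ and points so that $\sup_i\mm_i(\bar B(p_i,R)\setminus\bigcup_{n=1}^M B(x^i_n,r))\le\varepsilon$, hence a fortiori $\lim_{i\to\omega}\mm_i(\bar B(p_i,R)\setminus\bigcup_{n=1}^M B(x^i_n,r))\le\varepsilon$; that is, the sequence is asymptotically boundedly $\mm_\omega$-totally bounded. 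Theorem~\ref{thm:pre-Gromov} then yields that the sequence (along $\omega$) wpmGH-subconverges to $(\spt(\mm_\omega),d_\omega,\mm_\omega,p_\omega)$. Here I would extract an honest subsequence of the chosen subsequence realising this ultralimit: the approximations $\psi_k$ and the weak convergence $\lim_{k\to\omega}(\psi_k)_*\mm_k=\mm_\omega$ produced in the proof of Theorem~\ref{thm:pre-Gromov} mean precisely that on a set belonging to $\omega$ the errors $\tilde\varepsilon_k\to0$ and $\tilde R_k\to\infty$ and the pushforwards converge weakly; picking any sequence of indices in that $\omega$-set along which these ordinary limits hold gives a wpmGH-convergent sub-subsequence. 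This settles sufficiency.

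\emph{The ``only if'' direction (necessity).} Assume the sequence is wpmGH-precompact. For uniform bounded finiteness, argue by contradiction: if $\sup_i\mm_i(B(p_i,R))=+\infty$ for some $R$, extract a subsequence along which $\mm_i(B(p_i,R))\to+\infty$; by precompactness a further subsequence wpmGH-converges to some limit $(X_\infty,d_\infty,\mm_\infty,p_\infty)$, and then the weak lower-semicontinuity of mass on balls (of the type already used in the proof of Lemma~\ref{lem:pmGH_implies_omega-ubf}, using test functions supported slightly beyond $\bar B(p_\infty,R)$ together with the rough-surjectivity part (2) of Definition~\ref{def:weakappr}) forces $\mm_\infty(\bar B(p_\infty,R'))=+\infty$ for $R'$ slightly larger than $R$, contradicting that $\mm_\infty$ is a boundedly finite measure on a Polish space. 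For bounded measure-theoretic total boundedness, again argue by contradiction: if it fails for some $R,r,\varepsilon>0$, then for every $M$ and every choice of $M$ points in the various $X_i$ there is some index at which $\mm_i(\bar B(p_i,R)\setminus\bigcup_{n=1}^M B(x^i_n,r))>\varepsilon$; choosing $M=M_j\nearrow\infty$ produces indices $i_j\nearrow\infty$ witnessing this, so the subsequence $(X_{i_j})$ is \emph{not} asymptotically boundedly $\mm_\omega$-totally bounded for \emph{any} non-principal ultrafilter $\omega$ concentrated on $\{i_j\}$. On the other hand, by precompactness a sub-subsequence wpmGH-converges, and Corollary~\ref{cor:conseq_Gromov} says any wpmGH-convergent sequence \emph{is} asymptotically boundedly $\mm_\omega$-totally bounded — contradiction. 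This completes necessity, hence the theorem.

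\emph{Main obstacle.} The delicate point is the passage between the ``asymptotic/$\omega$'' formulations used in Theorem~\ref{thm:pre-Gromov} and Corollary~\ref{cor:conseq_Gromov} and the ``for all $i$'' / ``along a subsequence'' formulations in the statement of Theorem~\ref{thm:Gromov_cpt_main}: one must be careful that an $\omega$-limit statement can be converted into an ordinary subsequential statement (choosing indices inside an $\omega$-set along which all the finitely many relevant numerical limits are attained, a routine diagonal/interleaving argument but one that must be spelled out), and conversely that the negation of a ``for all $M$, for all index'' quantifier string really does supply a bona fide index sequence witnessing the failure of asymptotic bounded $\mm_\omega$-total boundedness for \emph{every} $\omega$ supported there. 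Handling these quantifier juggles cleanly — rather than any new analytic estimate — is the heart of the argument.
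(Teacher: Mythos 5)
Your proof is correct and follows essentially the same route as the paper's: both implications are obtained by feeding Theorem \ref{thm:pre-Gromov} and Corollary \ref{cor:conseq_Gromov} into the (routine but fiddly) translations between the ``for all \(i\)'', ``\(\omega\)-a.e.\ \(i\)'' and ``along a subsequence'' formulations of bounded measure-theoretic total boundedness, and you are in fact somewhat more explicit than the paper, which leaves the uniform-bounded-finiteness half of necessity and these quantifier conversions implicit. The only step worth spelling out further is your extraction of witnesses \(i_j\nearrow\infty\) for the failure of bounded measure-theoretic total boundedness: to see that the witnessing indices really escape to infinity one must use that each individual \((X_i,d_i,\mm_i)\) is separable with \(\mm_i\) boundedly finite, hence boundedly \(\mm_i\)-totally bounded on its own, so no single fixed index can be the perpetual witness.
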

\begin{remark}{\rm
We do not claim that the limit point $p_\infty$ is in the support of the limit measure nor that the limit measure is non-trivial. To guarantee that, one can obtain another compact class of metric measure spaces by requiring also that $\mm_i\big(B(p_i,R)\big)$ is uniformly bounded away from $0$ for every $R>0$.
\fr}\end{remark}
\begin{proof}[Proof of Theorem \ref{thm:Gromov_cpt_main}]\ \\
{\color{blue}\textsc{Necessity.}} Suppose
\(\big\{(X_i,d_i,\mm_i,p_i)\big\}_i\) is wpmGH-precompact. We argue by contradiction: suppose that there exists a subsequence
\(\big\{(X_{i_j},d_{i_j},\mm_{i_j},p_{i_j})\big\}_j\) that does
not admit any boundedly measure-theoretically totally bounded
subsequence.

Now we can pick any wpmGH-converging subsequence
\(\big\{(X_{i_{j_k}},d_{i_{j_k}},\mm_{i_{j_k}},p_{i_{j_k}})\big\}_k\),
which is asymptotically boundedly \(\mm_\omega\)-totally bounded
by Corollary \ref{cor:conseq_Gromov} and accordingly has a
boundedly measure-theoretically totally bounded subsequence.
This leads to a contradiction, thus 
$\big\{(X_i,d_i,\mm_i,p_i)\big\}_i$ is a
boundedly measure-theoretically totally bounded sequence, as desired.\\
{\color{blue}\textsc{Sufficiency.}} Suppose the sequence
$\big\{(X_i,d_i,\mm_i,p_i)\big\}_i$ is
boundedly measure-theoretically totally bounded. Fix an arbitrary
subsequence \(\big\{(X_{i_j},d_{i_j},\mm_{i_j},p_{i_j})\big\}_j\),
which is boundedly measure-theoretically totally bounded as well.
In particular, it is asymptotically boundedly \(\mm_\omega\)-totally
bounded, thus it has a wpmGH-converging subsequence by
Theorem \ref{thm:pre-Gromov}. This shows that
$\big\{(X_i,d_i,\mm_i,p_i)\big\}_i$ is wpmGH-precompact,
yielding the sought conclusion.
\end{proof}
\section{Equivalence between wpmGH convergence and pmG convergence}
We will use Theorem \ref{thm:Gromov_cpt_main} to prove that the weak pointed
measured Gromov--Hausdorff convergence is actually equivalent
to the so-called \emph{pointed measured Gromov convergence}
(briefly, \emph{pmG convergence}),
which was introduced in \cite{Gigli-Mondino-Savare}.
Amongst the several equivalent ways to define the pmG convergence,
we just need to recall the `extrinsic approach'
\cite[Definition 3.9]{Gigli-Mondino-Savare}.
\begin{definition}[Pointed measured Gromov convergence]
Let \(\big\{(X_i,d_i,\mm_i,p_i)\big\}_{i\in\bar\N}\) be a given sequence
of pointed Polish metric measure spaces. Then \((X_i,d_i,\mm_i,p_i)\)
is said to converge to \((X_\infty,d_\infty,\mm_\infty,p_\infty)\)
in the \emph{pointed measured Gromov sense} (briefly, in the
\emph{pmG sense}) provided there exist a Polish metric space
\((Y,d)\) and isometric embeddings \(\iota_i\colon X_i\to Y\),
\(i\in\bar\N\), such that
\(\lim_{i\to\infty}d\big(\iota_i(p_i),\iota_\infty(p_\infty)\big)=0\) and
\[
(\iota_i)_*\mm_i\rightharpoonup(\iota_\infty)_*\mm_\infty,
\quad\text{ in duality with }C_{bbs}(Y).
\]
\end{definition}
\begin{theorem}[wpmGH and pmG]\label{thm:wpmGH_vs_pmG}
Let $\big\{(X_i,d_i,\mm_i,p_i)\big\}_{i\in\bar\N}$
be given pointed Polish metric measure spaces. Then it holds that
\[
(X_i,d_i,\mm_i,p_i)\longrightarrow(X_\infty,d_\infty,\mm_\infty,p_\infty),
\quad\text{ in the wpmGH-sense}
\]
if and only if
\[
(X_i,d_i,\mm_i,p_i)\longrightarrow(X_\infty,d_\infty,\mm_\infty,p_\infty),
\quad\text{ in the pmG-sense.}
\]
\end{theorem}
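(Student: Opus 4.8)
The plan is to prove the two implications by explicit constructions, and to note that the implication ``pmG $\Rightarrow$ wpmGH'' can alternatively be obtained from the compactness Theorem \ref{thm:Gromov_cpt_main}.

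\emph{From wpmGH to pmG.} Let $\psi_i\colon X_i\to X_\infty$ be weak $(R_i,\varepsilon_i)$-approximations realising the wpmGH convergence, with associated Borel sets $p_i\in\tilde X_i\subset B(p_i,R_i)$, extended (as is customary, cf.\ the use of \eqref{eq:Gromov_wpmGH3}) so that $\psi_i\big(X_i\setminus B(p_i,R_i)\big)\subset X_\infty\setminus B(p_\infty,R_i)$ whenever the latter is non-empty. On the disjoint union $\hat Y\coloneqq X_\infty\sqcup\bigsqcup_{i\in\N}X_i$ put the pseudodistance which restricts to the given distance on each piece, which between $x\in X_i$ and $x'\in X_\infty$ equals $\varepsilon_i+\inf_{z\in\tilde X_i}\big(d_i(x,z)+d_\infty(\psi_i(z),x')\big)$, and which is propagated to all of $\hat Y$ by the shortest-chain formula. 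Since $\psi_i$ is $\varepsilon_i$-quasi-isometric on $\tilde X_i$ and $\psi_i(p_i)=p_\infty$, a routine triangle-inequality check shows that chains leaving a component and returning to it cannot shorten distances inside it; hence, denoting by $Y$ the completion of the metric quotient of $\hat Y$ (it is separable, so Polish), the canonical maps $\iota_i\colon X_i\hookrightarrow Y$ and $\iota_\infty\colon X_\infty\hookrightarrow Y$ are isometric embeddings and $d_Y\big(\iota_i(p_i),\iota_\infty(p_\infty)\big)\leq\varepsilon_i\to 0$. The point is then $(\iota_i)_*\mm_i\rightharpoonup(\iota_\infty)_*\mm_\infty$: it suffices to test against Lipschitz $F\in C_{bbs}(Y)$, and the relevant facts are (a) $d_Y\big(\iota_i(x),\iota_\infty(\psi_i(x))\big)\leq\varepsilon_i$ for $x\in\tilde X_i$ (take $z=x$ in the infimum), (b) $\mm_i\big(B(p_i,R_i)\setminus\tilde X_i\big)\leq\varepsilon_i$, (c) $d_Y\big(\iota_i(x),\iota_\infty(p_\infty)\big)\geq R_i$ for $x\notin B(p_i,R_i)$, because routing through $\tilde X_i\subset B(p_i,R_i)$ forces $d_i(x,z)+d_\infty(\psi_i(z),p_\infty)\geq d_i(x,z)+d_i(z,p_i)-\varepsilon_i\geq R_i-\varepsilon_i$, and (d) $\limsup_i\mm_i\big(B(p_i,\rho)\big)<+\infty$ for every $\rho>0$, obtained from $(\psi_i)_*\mm_i\rightharpoonup\mm_\infty$ as in Lemma \ref{lem:pmGH_implies_omega-ubf}. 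By (c)--(d) the mass of $\mm_i$ outside $B(p_i,R_i)$ does not meet $\spt F$ for $i$ large; on $\tilde X_i$ one replaces $F\circ\iota_i$ by $(F\circ\iota_\infty)\circ\psi_i$ up to $\mathrm{Lip}(F)\varepsilon_i$; and $\int (F\circ\iota_\infty)\circ\psi_i\,\d\mm_i=\int F\circ\iota_\infty\,\d(\psi_i)_*\mm_i\to\int F\circ\iota_\infty\,\d\mm_\infty$ since $F\circ\iota_\infty\in C_{bbs}(X_\infty)$, the residual set $B(p_i,R_i)\setminus\tilde X_i$ being handled by (b).

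\emph{From pmG to wpmGH.} Given a Polish $(Y,d)$ and isometric embeddings $\iota_i\colon X_i\hookrightarrow Y$ with $\iota_i(p_i)\to\iota_\infty(p_\infty)$ and $(\iota_i)_*\mm_i\rightharpoonup(\iota_\infty)_*\mm_\infty$, fix a countable dense $(z_n)$ in $\spt(\mm_\infty)$ and, for to-be-chosen $R_i\nearrow+\infty$ and $\varepsilon_i\searrow 0$, define $\psi_i\colon X_i\to X_\infty$ by $\psi_i(p_i)\coloneqq p_\infty$, $\psi_i(x)\coloneqq z_{n(x)}$ for $x\in B(p_i,R_i)$ with $d\big(\iota_i(x),\iota_\infty(\spt\mm_\infty)\big)<\varepsilon_i/2$ (where $n(x)$ is the least $n$ with $d\big(\iota_\infty(z_n),\iota_i(x)\big)<\varepsilon_i/2$, a Borel choice), and arbitrarily (Borel, exterior-to-exterior) elsewhere; set $\tilde X_i\coloneqq\{p_i\}\cup\{x\in B(p_i,R_i):d(\iota_i(x),\iota_\infty(\spt\mm_\infty))<\varepsilon_i/2\}$. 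Then $\psi_i$ is $(1,\varepsilon_i)$-quasi-isometric on $\tilde X_i$ because $\iota_i,\iota_\infty$ are isometric. The two measure requirements of Definition \ref{def:weakappr} follow from the weak convergence: for the first, $\{y:d(y,\iota_\infty(p_\infty))<R,\ d(y,\iota_\infty(\spt\mm_\infty))\geq\delta\}$ is $(\iota_\infty)_*\mm_\infty$-null (as $\mm_\infty$ is concentrated on $\spt(\mm_\infty)$) and is dominated by a cut-off in $C_{bbs}(Y)$, hence of vanishing $(\iota_i)_*\mm_i$-measure; for the second, cover the bulk of $\spt(\mm_\infty)\cap\bar B(p_\infty,R)$ by finitely many balls $B(z_{n_j},\delta')$, each of positive $(\iota_\infty)_*\mm_\infty$-mass and therefore of positive $(\iota_i)_*\mm_i$-mass for $i$ large, which produces points of $\tilde X_i$ whose $\psi_i$-images lie $\delta'$-close to the centres, so that $\bigcup_jB(z_{n_j},\delta')\subset\psi_i(\tilde X_i)^{\varepsilon_i}$. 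A diagonal argument over the countably many requirements indexed by $(R,k)\in\N^2$ (with $\delta=1/k$) yields admissible $R_i,\varepsilon_i$. Finally $(\psi_i)_*\mm_i\rightharpoonup\mm_\infty$ is proved as in the first direction, extending a Lipschitz test function $f$ on $X_\infty$ across $\iota_\infty$ to $F\in C_{bbs}(Y)$ and using $d\big(\iota_\infty(\psi_i(x)),\iota_i(x)\big)<\varepsilon_i$ on $\tilde X_i$ together with the small mass of $B(p_i,R_i)\setminus\tilde X_i$ and the exterior-to-exterior convention.

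\emph{Alternative, and the main difficulty.} For the second implication one may instead argue by compactness: a pmG-convergent sequence together with its limit is pmG-precompact, and the characterisation of pmG-precompactness in \cite{Gigli-Mondino-Savare} amounts to uniform bounded finiteness together with bounded measure-theoretic total boundedness; by Theorem \ref{thm:Gromov_cpt_main} it is then wpmGH-precompact, every subsequence has a wpmGH-convergent further subsequence whose limit (by the first implication, being also its pmG-limit) equals $(X_\infty,d_\infty,\mm_\infty,p_\infty)$ by uniqueness of pmG-limits, and one concludes using that wpmGH-convergence is a posteriori metrisable (cf.\ the remark after Theorem \ref{thm:pre-Gromov}). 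In either route the one genuinely delicate point -- and the main obstacle -- is propagating the weak convergence of the measures through the \emph{approximate} and \emph{only-up-to-a-small-set} nature of (weak) $(R,\varepsilon)$-approximations: one must control simultaneously the mass outside the cut-off ball $B(p_i,R_i)$ (handled by the exterior-to-exterior convention and (c) above), the exceptional set $B(p_i,R_i)\setminus\tilde X_i$ (small mass but uncontrolled images), and the $\varepsilon_i$-distortion incurred on $\tilde X_i$; the reduction to bounded-support Lipschitz test functions together with the bound $\limsup_i\mm_i(B(p_i,\rho))<+\infty$ is what makes these estimates close.
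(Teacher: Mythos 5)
Your proof is correct, and the two halves relate to the paper's argument differently. For the implication ``wpmGH \(\Rightarrow\) pmG'' you reproduce essentially the paper's construction: the same gluing pseudodistance \(\inf_{z\in\tilde X_i}\big(d_i(x,z)+d_\infty(\psi_i(z),x')+\eps_i\big)\) on the disjoint union (this is the paper's \(\Phi_i\), with \(\Psi_{ij}\) playing the role of your shortest-chain propagation), followed by the same reduction to Lipschitz test functions in \(C_{bbs}(Y)\) using the three error sources you list; the uniform bound \(\lims_i\mm_i\big(B(p_i,\rho)\big)<+\infty\) extracted from \((\psi_i)_*\mm_i\rightharpoonup\mm_\infty\) is exactly the paper's \(\eta(R)\). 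For the converse the paper does \emph{only} what you relegate to your ``alternative'': it invokes Gromov's compactness for pmG-convergence from \cite{Gigli-Mondino-Savare} together with Theorems \ref{thm:pre-Gromov} and \ref{thm:Gromov_cpt_main} to extract a wpmGH-convergent subsequence, identifies the limit via the already-proven implication and uniqueness of pmG-limits, and concludes. Your primary route -- building weak \((R_i,\eps_i)\)-approximations directly from the common embedding \(\iota_i\colon X_i\hookrightarrow Y\) by snapping points near \(\iota_\infty\big(\spt(\mm_\infty)\big)\) onto a countable dense subset of the support -- is a genuinely different and in some respects cleaner argument: it avoids the ultralimit machinery and the citation of \cite[Corollary 3.22]{Gigli-Mondino-Savare} entirely, and it sidesteps the slightly delicate step (which the paper leaves implicit in ``by uniqueness of the limit, we conclude'') of upgrading subsequential wpmGH-convergence to convergence of the full sequence, a step that strictly speaking requires either a diagonal extraction of approximations or the a posteriori metrisability of the convergence. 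The price is that your construction needs the routine but nontrivial verifications you sketch: that the exceptional set \(\{x\in B(p_i,R_i):d(\iota_i(x),\iota_\infty(\spt\mm_\infty))\ge\eps_i/2\}\) has asymptotically vanishing \(\mm_i\)-mass (via a cut-off vanishing near the support, using that \(\mm_\infty\) is concentrated on its support), the portmanteau lower bound on open balls for the rough surjectivity, and the final diagonal argument over \((R,k)\); all of these go through. Your closing diagnosis of where the real difficulty lies (simultaneous control of the exterior of \(B(p_i,R_i)\), the exceptional set \(B(p_i,R_i)\setminus\tilde X_i\), and the \(\eps_i\)-distortion) matches the structure of the paper's estimates.
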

\begin{proof}
Suppose $(X_i,d_i,\mm_i,p_i)$ is wpmGH-converging to
$(X_\infty,d_\infty,\mm_\infty,p_\infty)$. We aim to show that
$(X_i,d_i,\mm_i,p_i)\to(X_\infty,d_\infty,\mm_\infty,p_\infty)$
with respect to the pmG-topology. The proof goes along the same
lines as the proof of \cite[Proposition 3.30]{Gigli-Mondino-Savare}.
Let us define a distance $d$ on the set
$Y\coloneqq\bigsqcup_{i\in\bar\N}X_i$ so that the spaces $X_i$'s
are isometrically embedded into $Y$ and $\mm_i\rightharpoonup\mm_\infty$
weakly. Let $\psi_i\colon X_i\to X_\infty$ be the weak
$(R_i,\varepsilon_i)$-approximations given by the definition
of wpmGH-convergence. Then we define $d\colon Y\times Y\to[0,+\infty)$ as
\[
d(y_1,y_2)\coloneqq\left\{\begin{array}{ll}
d_i(y_1,y_2),\\
\Phi_i(y_1,y_2),\\
\Psi_{ij}(y_1,y_2),
\end{array}\quad\begin{array}{ll}
\text{ if }y_1,y_2\in X_i\text{ for some }i\in\bar\N,\\
\text{ if }y_1\in X_i\text{ for some }i\in\N\text{ and }y_2\in X_\infty,\\
\text{ if }y_1\in X_i\text{ and }y_2\in X_j\text{ for some }i,j\in\N
\text{ with }i\neq j,
\end{array}\right.
\]
where the functions \(\Phi_i\colon X_i\times X_\infty\to[0,+\infty)\)
and \(\Psi_{ij}\colon X_i\times X_j\to[0,+\infty)\) are given by
\[\begin{split}
\Phi_i(y_1,y_2)&\coloneqq\inf\Big\{d_i(x_i,y_1)
+d_\infty\big(\psi_i(x_i),y_2\big)+\eps_i\;\Big|\;x_i\in\tilde X_i\Big\},\\
\Psi_{ij}(y_1,y_2)&\coloneqq\inf\Big\{d_i(x_i,y_1)+d_j(x_j,y_2)
+d_\infty\big(\psi_i(x_i),\psi_j(x_j)\big)+\eps_i+\eps_j\;\Big|\;
x_i\in\tilde X_i,\,x_j\in\tilde X_j\Big\}.
\end{split}\]
Standard verifications show that \(d\) is a distance on \(Y\).
Note that for any $x\in\tilde X_i$ one has
\begin{align}\label{aux1}
d\big(x,\psi_i(x)\big)\le\eps_i.
\end{align}
In particular, $d(p_i,p_\infty)\to 0$ as \(i\to\infty\).
By weak convergence of $(\psi_i)_*\mm_i$, we have that
\[
\eta(R)\coloneqq\limsup_{i\to\infty}\mm_i\big(B(p_i,R)\big)<+\infty,
\quad\text{ for every }R>0.
\]
We need to show that $\mm_i\rightharpoonup\mm_\infty$ weakly in duality
with $C_{bbs}(Y)$.
First, we claim that
\begin{equation}\label{eq:against_Lip}
\int f\,\d\mm_i\longrightarrow\int f\,\d\mm_\infty,
\quad\text{ for every }f\in C_{bbs}(Y)\text{ Lipschitz.}
\end{equation}
Let any such \(f\) be fixed, say that \(f\) is $L$-Lipschitz.
Pick $R>0$ with $f=0$ on $Y\setminus B(p_i,R/2)$ for all $i$.
Since $(\psi_i)_*\mm_i\rightharpoonup \mm_\infty$, we have
\(\int f\circ\psi_i\,\d\mm_i\to\int f\,\d\mm_\infty\). Thus, it suffices
to prove
\[
\bigg|\int f\,\d\mm_i-\int f\circ\psi_i\,\d\mm_\infty\bigg|
\longrightarrow 0,\quad\text{ as }i\to\infty.
\]
By \eqref{aux1}, we have (for every $i$ with $R_i\ge R$) that
\[\begin{split}
&\bigg|\int f\,\d\mm_i-\int f\circ\psi_i\,\d\mm_\infty\bigg|\\
\leq\,&\bigg|\int_{\tilde X_i\cap B(p_i,R)}f\,\d\mm_i-
\int_{\tilde X_i\cap B(p_i,R)}f\circ\psi_i\,\d\mm_\infty\bigg|
+\bigg|\int_{B(p_i,R_i)\setminus \tilde X_i}f\,\d\mm_i-
\int_{X_i\setminus\tilde X_i}f\circ\psi_i\,\d\mm_\infty\bigg|\\
\leq\,&L\,\varepsilon_i\,\mm_i\big(B(p_i,R)\big)+
2\,\varepsilon_i\sup_Y|f|,
\end{split}\]
whence the claimed property \eqref{eq:against_Lip} follows.
Now, fix any \(f\in C_{bbs}(Y)\) and \(\eps>0\).
Choose any function \(\tilde f\in C_{bbs}(Y)\) Lipschitz with
\(\sup_Y|f-\tilde f|\leq\eps\). For \(R>0\) sufficiently big,
one has
\[\begin{split}
&\lims_{i\to\infty}\bigg|\int f\,\d\mm_i-\int f\,\d\mm_\infty\bigg|\\
\overset{\phantom{\eqref{eq:against_Lip}}}\leq\,&
\lims_{i\to\infty}\int|f-\tilde f|\,\d\mm_i
+\lims_{i\to\infty}\bigg|\int\tilde f\,\d\mm_i-\int\tilde f\,\d\mm_\infty\bigg|
+\int|\tilde f-f|\,\d\mm_\infty\\
\overset{\eqref{eq:against_Lip}}\leq\,&\Big(\eta(R)+\mm_\infty
\big(B(p_\infty,R)\big)\Big)\eps.
\end{split}\]
By the arbitrariness of \(\eps>0\), we deduce that
\(\int f\,\d\mm_\infty=\lim_{i\to\infty}\int f\,\d\mm_i\).
This proves that \(\mm_i\rightharpoonup\mm_\infty\) in duality
with \(C_{bbs}(Y)\), so $(X_i,d_i,\mm_i,p_i)\to(X_\infty,d_\infty,\mm_\infty,p_\infty)$
in the pmG-sense.

To prove the converse implication, suppose that
$(X_i,d_i,\mm_i,p_i)\to(X_\infty,d_\infty,\mm_\infty,p_\infty)$
in the pmG-sense. By Gromov's compactness for
pmG-convergence \cite[Corollary 3.22]{Gigli-Mondino-Savare} and
by Theorems \ref{thm:pre-Gromov} and \ref{thm:Gromov_cpt_main},
we have that (up to passing to a not relabeled subsequence)
$(X_i,d_i,\mm_i,p_i)\to\big(\spt(\mm_\omega),d_\omega,\mm_\omega,p_\omega\big)$
in the wpmGH-sense. Then by using the first part of the proof we obtain that
$(X_i,d_i,\mm_i,p_i)\to\big(\spt(\mm_\omega),d_\omega,\mm_\omega,p_\omega\big)$
in the pmG-topology. By uniqueness of the limit, we conclude.
Therefore, the statement is finally achieved.
\end{proof}
\chapter{Direct and inverse limits of pointed metric measure spaces}
\label{ch:DL_IL_pmms}
\section{The category of pointed metric measure spaces}
In this section, when we consider a pointed Polish metric measure space
\((X,d,\mm,p)\), we implicitly assume that \(p\in{\rm spt}(\mm)\).
\begin{definition}\label{def:morphism_pmms}
Let \((X,d_X,\mm_X,p_X)\) and \((Y,d_Y,\mm_Y,p_Y)\) be pointed Polish metric measure spaces.
Then a map \(\varphi\colon X\to Y$ is said to be a \emph{morphism of pointed Polish
metric measure spaces} if:
\begin{itemize}
\item[\(\rm i)\)] The map \(\varphi\) is Borel measurable
and satisfies \(\varphi(p_X)=p_Y\).
\item[\(\rm ii)\)] The restricted map
\(\varphi|_{{\rm spt}(\mm_X)}\colon{\rm spt}(\mm_X)\to Y\) is \(1\)-Lipschitz.
\item[\(\rm iii)\)] It holds that \(\varphi_*\mm_X\leq\mm_Y\).
\end{itemize}
\end{definition}
With the above notion of morphism at our disposal, we can consider the
category of pointed Polish metric measure spaces. Observe that a morphism \(\varphi\)
from \((X,d_X,\mm_X,p_X)\) to \((Y,d_Y,\mm_Y,p_Y)\) is an isomorphism if and
only if \(\varphi|_{{\rm spt}(\mm_X)}\colon{\rm spt}(\mm_X)\to Y\) is an isometry
and \(\varphi_*\mm_X=\mm_Y\). Below, we briefly remind the notions of
direct and inverse limits of a sequence of pointed Polish
metric measure spaces, referring to the monograph
\cite{MacLane98} for a general treatment of these topics.
\medskip

A couple \(\big(\{(X_i,d_i,\mm_i,p_i)\}_{i\in\N},\{\varphi_{ij}\}_{i\leq j}\big)\)
is said to be a \emph{direct system of pointed Polish metric measure
spaces} provided each \((X_i,d_i,\mm_i,p_i)\) is a pointed
Polish metric measure space and the maps \(\varphi_{ij}\colon X_i\to X_j\)
are morphisms of pointed Polish metric measure spaces satisfying
\[\begin{split}
\varphi_{ii}={\rm id}_{X_i},&\quad\text{ for every }i\in\N,\\
\varphi_{ik}=\varphi_{jk}\circ\varphi_{ij},&\quad\text{ for every }
i,j,k\in\N\text{ with }i\leq j\leq k,
\end{split}\]
where \({\rm id}_{X_i}\colon X_i\to X_i\) stands for the identity
map \(X_i\ni x\mapsto x\in X_i\). Moreover, by a \emph{target} of
the direct system
\(\big(\{(X_i,d_i,\mm_i,p_i)\}_{i\in\N},\{\varphi_{ij}\}_{i\leq j}\big)\)
we mean a couple \(\big((Y,d_Y,\mm_Y,p_Y),\{\psi_i\}_{i\in\N}\big)\),
where \((Y,d_Y,\mm_Y,p_Y)\) is a pointed Polish metric measure space,
while the maps \(\psi_i\colon X_i\to Y\) are morphisms of pointed
Polish metric measure spaces satisfying
\[\begin{tikzcd}[column sep=small]
X_i \arrow[rr,"\varphi_{ij}"] \arrow[rd,swap,"\psi_i"] & &
X_j \arrow[dl,"\psi_j"] \\
& Y &
\end{tikzcd}\]
for every \(i,j\in\N\) with \(i\leq j\). Finally,
we say that a target \(\big((X,d_X,\mm_X,p_X),\{\varphi_i\}_{i\in\N}\big)\)
is the \emph{direct limit} of
\(\big(\{(X_i,d_i,\mm_i,p_i)\}_{i\in\N},\{\varphi_{ij}\}_{i\leq j}\big)\) 
if it satisfies the following universal property: given any
target \(\big((Y,d_Y,\mm_Y,p_Y),\{\psi_i\}_{i\in\N}\big)\), there
exists a unique morphism \(\Phi\colon X\to Y\) of pointed Polish
metric measure spaces such that
\[\begin{tikzcd}
X_i \arrow[r,"\varphi_i"] \arrow[rd,swap,"\psi_i"] &
X \arrow[d,dashed,"\Phi"] \\
& Y
\end{tikzcd}\]
for every \(i\in\N\). Whenever the direct limit exists, it is
uniquely determined up to unique isomorphism.
With an abuse of notation, we will occasionally say that
\((X,d_X,\mm_X,p_X)\) is the direct limit, omitting the
reference to the associated morphisms \(\{\varphi_i\}_{i\in\N}\).
\medskip

A couple \(\big(\{(X_i,d_i,\mm_i,p_i)\}_{i\in\N},\{P_{ij}\}_{i\leq j}\big)\)
is said to be an \emph{inverse system of pointed Polish metric measure
spaces} provided each \((X_i,d_i,\mm_i,p_i)\) is a pointed
Polish metric measure space and the maps \(P_{ij}\colon X_j\to X_i\)
are morphisms of pointed Polish metric measure spaces satisfying
\[\begin{split}
P_{ii}={\rm id}_{X_i},&\quad\text{ for every }i\in\N,\\
P_{ik}=P_{ij}\circ P_{jk},&\quad\text{ for every }
i,j,k\in\N\text{ with }i\leq j\leq k.
\end{split}\]
Moreover, we say that a given couple \(\big((Y,d_Y,\mm_Y,p_Y),\{Q_i\}_{i\in\N}\big)\) \emph{projects on} the inverse system
\(\big(\{(X_i,d_i,\mm_i,p_i)\}_{i\in\N},\{P_{ij}\}_{i\leq j}\big)\)
provided \((Y,d_Y,\mm_Y,p_Y)\) is a pointed Polish metric measure space,
while the maps \(Q_i\colon Y\to X_i\) are morphisms of pointed
Polish metric measure spaces such that
\[\begin{tikzcd}[column sep=small]
& Y \arrow[dl,swap,"Q_j"] \arrow[dr,"Q_i"] & \\
X_j \arrow[rr,swap,"P_{ij}"] & & X_i
\end{tikzcd}\]
holds for every \(i,j\in\N\) with \(i\leq j\). The morphisms
\(\{Q_i\}_{i\in\N}\) are typically called \emph{projections} or
\emph{bonding maps}. Finally, a couple
\(\big((X,d_X,\mm_X,p_X),\{P_i\}_{i\in\N}\big)\) that projects on
the direct system
\(\big(\{(X_i,d_i,\mm_i,p_i)\}_{i\in\N},\{P_{ij}\}_{i\leq j}\big)\)
is said to be its \emph{inverse limit} if
it satisfies the following universal property: given any
couple \(\big((Y,d_Y,\mm_Y,p_Y),\{Q_i\}_{i\in\N}\big)\) that
projects on \(\big(\{(X_i,d_i,\mm_i,p_i)\}_{i\in\N},\{P_{ij}\}_{i\leq j}\big)\),
there exists a unique morphism \(\Phi\colon Y\to X\) of pointed Polish
metric measure spaces such that
\[\begin{tikzcd}
Y \arrow[d,dashed,swap,"\Phi"] \arrow[dr,"Q_i"] & \\
X \arrow[r,swap,"P_i"] & X_i
\end{tikzcd}\]
holds for every \(i\in\N\). Whenever the inverse limit exists,
it is uniquely determined up to unique isomorphism.
\section{Direct limits of pointed metric measure spaces}
To begin with, let us study direct limits in the category of
pointed Polish metric measure spaces.
\begin{theorem}[Direct limits of pointed metric measure spaces]\label{thm:DL}
Consider a direct system
\begin{equation}\label{eq:direct_system}
\Big(\big\{(X_i,d_i,\mm_i,p_i)\big\}_{i\in\N},\{\varphi_{ij}\}_{i\leq j}\Big)
\end{equation}
of pointed Polish metric measure spaces.
Then the direct system in \eqref{eq:direct_system}
admits a direct limit \((X,d_X,\mm_X,p_X)\) if and only if
\(\big((X_i,d_i,\mm_i,p_i)\big)\) is \(\omega\)-uniformly boundedly finite.
In this case,
\[
(X,d_X,\mm_X,p_X)=\big({\rm spt}(\mm_\omega),d_\omega|_{{\rm spt}(\mm_\omega)
\times{\rm spt}(\mm_\omega)},\mm_\omega|_{\mathscr B({\rm spt}(\mm_\omega))},
p_\omega\big),
\]
where \((X_\omega,d_\omega,\mm_\omega,p_\omega)\) stands for the
ultralimit \(\lim_{i\to\omega}(X_i,d_i,\mm_i,p_i)\).
\end{theorem}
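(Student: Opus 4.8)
The proof treats the two implications separately, and the recurring tool is a simple \emph{monotonicity} remark: since \((\varphi_{ij})_*\mm_i\le\mm_j\), since \(\varphi_{ij}|_{\spt(\mm_i)}\) is \(1\)-Lipschitz with \(\varphi_{ij}(p_i)=p_j\), and since \(\mm_i\) is concentrated on \(\spt(\mm_i)\) (the spaces being Polish), one has \(\varphi_{ij}\big(\bar B(p_i,R)\cap\spt(\mm_i)\big)\subset\bar B(p_j,R)\), hence \(\mm_i\big(B(p_i,R)\big)\le\mm_j\big(B(p_j,R)\big)\) for \(i\le j\). Thus \(i\mapsto\mm_i\big(B(p_i,R)\big)\) is non-decreasing, so \(\omega\)-uniform bounded finiteness of the sequence is the same as genuine uniform bounded finiteness; the same computation yields \(\varphi_{ij}\big(\spt(\mm_i)\big)\subset\spt(\mm_j)\) and \(\mm_j\big(B(\varphi_{ij}(x),\rho)\big)\ge\mm_i\big(B(x,\rho)\big)\) for \(x\in\spt(\mm_i)\). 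For the easy implication, if \((X,d_X,\mm_X,p_X)\) is the direct limit with structural morphisms \(\varphi_i\colon X_i\to X\), then exactly as above \(B(p_i,R)\cap\spt(\mm_i)\subset\varphi_i^{-1}\big(B(p_X,R)\big)\), whence \(\mm_i\big(B(p_i,R)\big)\le(\varphi_i)_*\mm_i\big(B(p_X,R)\big)\le\mm_X\big(B(p_X,R)\big)<+\infty\) for every \(i\) (using that \(\mm_X\) is boundedly finite); so the sequence is \(\omega\)-uniformly boundedly finite with \(\eta(R):=\mm_X\big(B(p_X,R)\big)+1\).

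Assume now \(\omega\)-uniform bounded finiteness, so that the ultralimit \((X_\omega,d_\omega,\mm_\omega,p_\omega)\) exists and \(\mm_\omega\) is boundedly finite (Lemma \ref{lem:m_omega_boundedly_finite}). The first point is that the sequence is \emph{asymptotically boundedly \(\mm_\omega\)-totally bounded} in the sense of Definition \ref{def:abm_omegatb}: given \(R,r,\varepsilon>0\), put \(m_R:=\sup_j\mm_j\big(B(p_j,R)\big)\), pick \(k\) with \(\mm_k\big(\bar B(p_k,R)\big)>m_R-\varepsilon/2\) and an \(r/2\)-net \((y_n)_{n=1}^M\subset\spt(\mm_k)\) covering \(\bar B(p_k,R)\cap\spt(\mm_k)\) up to \(\mm_k\)-measure \(\varepsilon/4\); for \(j\ge k\) the points \(x^j_n:=\varphi_{kj}(y_n)\) then work, the key being that \((\varphi_{kj})_*\mm_k\le\mm_j\) together with \(\mm_k\big(\bar B(p_k,R)\big)\le(\varphi_{kj})_*\mm_k\big(\bar B(p_j,R)\big)\le\mm_j\big(\bar B(p_j,R)\big)\le m_R\) forces all but \(\le\varepsilon/2\) of \(\mm_j\llcorner\bar B(p_j,R)\) to be \((\varphi_{kj})_*\mm_k\)-mass sitting, up to a further \(\varepsilon/4\), over \(\bar B(p_k,R)\cap\spt(\mm_k)\), which the net covers. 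By Theorem \ref{thm:equiv_conc_spt}, \(\mm_\omega\) is then concentrated on the closed, separable—hence Polish (Lemma \ref{lem:prop_spt})—set \(\spt(\mm_\omega)\), and \(p_\omega\in\spt(\mm_\omega)\) since \(\mm_\omega\big(B(p_\omega,r)\big)\ge\sup_i\mm_i\big(B(p_i,r)\big)\ge\mm_1\big(B(p_1,r)\big)>0\). Hence \(\big(\spt(\mm_\omega),d_\omega|,\mm_\omega|_{\mathscr B(\spt(\mm_\omega))},p_\omega\big)\) is a legitimate object of the category.

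It remains to identify this object with the direct limit. Define \(\varphi_i\colon X_i\to\spt(\mm_\omega)\) on \(\spt(\mm_i)\) by \(\varphi_i(x):=\big[\big[(\varphi_{ij}(x))_{j\ge i}\big]\big]\)—it is \(1\)-Lipschitz on \(\spt(\mm_i)\), lands in \(\spt(\mm_\omega)\) by the monotonicity remark, and sends \(p_i\) to \(p_\omega\)—and extend it to a Borel map on \(X_i\) making the triangles \(\varphi_j\circ\varphi_{ij}=\varphi_i\) commute on \(\spt(\mm_i)\). From \((\varphi_{ij})_*\mm_i\le\mm_j\) and monotonicity of pushforward, the measures \((\varphi_i)_*\mm_i\) form a non-decreasing sequence of Borel measures on \(\spt(\mm_\omega)\), and the crucial identity is \(\sup_i(\varphi_i)_*\mm_i=\mm_\omega|_{\mathscr B(\spt(\mm_\omega))}\): both inequalities are obtained by comparing the two sides on a ball \(B(z,r)\) with \(z=\varphi_m(\zeta)\in\bigcup_i\varphi_i(\spt(\mm_i))\) via the compatible representative \(z_k:=\varphi_{mk}(\zeta)\in\spt(\mm_k)\), for which \(k\mapsto\mm_k\big(B(z_k,r)\big)\) is non-decreasing and \(\varphi_{jk}^{-1}\big(B(z_k,r)\big)\cap\spt(\mm_j)\) is non-increasing in \(j\) and non-decreasing in \(k\)—exactly the monotonicity needed to make the relevant \(\omega\)-limits plain limits—together with the mass identity \(\mm_\omega\big(B(p_\omega,R)\big)=m_R=\big(\sup_i(\varphi_i)_*\mm_i\big)\big(B(p_\omega,R)\big)\) and a routine regularity argument. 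In particular \(\mm_\omega\) is concentrated on \(\overline{\bigcup_i\varphi_i(\spt(\mm_i))}\), so \(\spt(\mm_\omega)=\overline{\bigcup_i\varphi_i(\spt(\mm_i))}\). Finally, given a target \(\big((Y,d_Y,\mm_Y,p_Y),\{\psi_i\}\big)\), the rule \(\Phi\big(\varphi_i(x)\big):=\psi_i(x)\) for \(x\in\spt(\mm_i)\) is well-defined and \(1\)-Lipschitz on the dense set \(\bigcup_i\varphi_i(\spt(\mm_i))\) (from \(\psi_j\circ\varphi_{ij}=\psi_i\), the \(1\)-Lipschitz property of \(\psi_k|_{\spt(\mm_k)}\), and \(d_\omega\big(\varphi_i(x),\varphi_{i'}(x')\big)=\inf_k d_k(\cdot,\cdot)\)), hence extends to a \(1\)-Lipschitz (a fortiori Borel) \(\Phi\colon\spt(\mm_\omega)\to Y\) with \(\Phi(p_\omega)=p_Y\), \(\Phi\circ\varphi_i=\psi_i\), and \(\Phi_*\mm_\omega|_{\mathscr B(\spt(\mm_\omega))}\le\mm_Y\) (using \(\mm_\omega|_{\mathscr B(\spt(\mm_\omega))}=\sup_i(\varphi_i)_*\mm_i\), \(\Phi\circ\varphi_i=\psi_i\) and \((\psi_i)_*\mm_i\le\mm_Y\)), and \(\Phi\) is unique because determined on the dense set. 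Since a direct limit is determined only up to the category's notion of isomorphism—which sees only supports and measures—producing one such \(\Phi\) per target suffices, and the behaviour of the \(X_i\) off their supports, which feeds into no measure, can be normalised away; this gives that \(\big(\spt(\mm_\omega),d_\omega|,\mm_\omega|_{\mathscr B(\spt(\mm_\omega))},p_\omega\big)\) is the direct limit.

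I expect the genuine difficulties to be concentrated in the measure-theoretic core of the sufficiency direction: the asymptotic bounded \(\mm_\omega\)-total boundedness and, above all, the identity \(\mm_\omega|_{\mathscr B(\spt(\mm_\omega))}=\sup_i(\varphi_i)_*\mm_i\). Arbitrary sequences of measurable sets do not behave well under the ultralimit construction (their \(\omega\)-\(\limsup\) can be large and even non-measurable), so the argument must exploit the specific structure of a direct system—namely \((\varphi_{ij})_*\mm_i\le\mm_j\) and the cocycle \(\varphi_{ik}=\varphi_{jk}\circ\varphi_{ij}\)—to render the relevant preimages monotone and to force asymptotic near-equality of masses. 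Everything else (the Polishness, and the categorical bookkeeping for the universal property, including the harmless role of the non-support parts) is routine once this core is in place.
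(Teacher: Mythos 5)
Your overall architecture is sound and in places genuinely different from the paper's: the necessity direction and the monotonicity of \(i\mapsto\mm_i\big(B(p_i,R)\big)\) match the paper; your derivation of asymptotic bounded \(\mm_\omega\)-total boundedness from the near-saturation of the masses \(\mm_j\big(\bar B(p_j,R)\big)\) is a correct addition that the paper does not make inside this proof (it only harvests it afterwards, in Corollary \ref{cor:DL}); and your ``sandwich'' finish --- once one knows \(\sup_i(\varphi_i)_*\mm_i\leq\mm_\omega|_{\mathscr B(\spt(\mm_\omega))}\), equality of the total masses on the balls \(B(p_\omega,R)\) forces equality of the measures --- is a clean alternative to the paper's \textsc{Step 4}, which instead shows directly that every point off \(\overline{\bigcup_i\varphi_i(\spt(\mm_i))}\) has a null ball around it. The universal-property bookkeeping is essentially the paper's \textsc{Step 5}.

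The gap is the inequality \((\varphi_i)_*\mm_i\leq\mm_\omega|_{\mathscr B(\spt(\mm_\omega))}\) itself, which your identity \(\sup_i(\varphi_i)_*\mm_i=\mm_\omega|_{\mathscr B(\spt(\mm_\omega))}\) silently requires. You propose to obtain it by comparing the two measures on balls \(B(z,r)\) centred at compatible points \(z=\varphi_m(\zeta)\) and then invoking ``a routine regularity argument''. The ball comparison at such centres does work (the monotonicity you describe is correct), but the passage from an inequality on balls to an inequality between Borel measures is not routine and is in general false: on a Polish (even compact) metric space two distinct finite Borel measures can agree on every ball (Davies' example), and there is no doubling or Vitali structure here that would allow a differentiation argument. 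The paper closes exactly this gap by inner regularity: it reduces to compact sets \(K\subset\varphi_i^{-1}(C)\cap\spt(\mm_i)\) and proves the inclusion \(\Pi_{k\to\omega}\varphi_{ik}(K)\subset{\rm cl}_{O(\bar X_\omega)}\big(\bar\varphi_i(K)\big)\) (its \eqref{eq:DL_aux1}), where compactness of \(K\) is essential to extract finite \(\varepsilon\)-nets and transfer them through the ultraproduct; from this, \(\bar\mm_\omega\big(\pi^{-1}(C)\big)\geq\lim_{k\to\omega}\mm_k\big(\varphi_{ik}(K)\big)\geq\mm_i(K)\). Nothing in your sketch substitutes for this step, and you yourself identify it as the place where ``arbitrary sequences of measurable sets do not behave well under the ultralimit construction'' --- but the tool you then offer (balls plus regularity) does not overcome that obstruction. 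Once this inequality is supplied, the rest of your argument (mass identity, identification of the support, universal property) goes through.
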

\begin{proof}
We subdivide the proof into several steps:\\
{\color{blue}\textsc{Step 1.}} First of all, we claim that if
\(\big((X_i,d_i,\mm_i,p_i)\big)\) is not \(\omega\)-uniformly boundedly finite,
then the direct system in \eqref{eq:direct_system} does not admit any target
in the category of pointed Polish metric measure spaces, thus in particular it
does not have a direct limit in such category. To this aim, suppose
\(\lim_{i\to\omega}\mm_i\big(B(p_i,R)\big)=+\infty\) for some \(R>0\).
Then we can find a subsequence \((i_j)_j\) such that \(\lim_{j\to\infty}\mm_{i_j}\big(B(p_{i_j},R)\big)=+\infty\). We argue by contradiction: suppose to have a
target \(\big((Y,d_Y,\mm_Y,p_Y),\{\psi_i\}_{i\in\N}\big)\) of the direct system
in \eqref{eq:direct_system}. Since \(\psi_i|_{{\rm spt}(\mm_i)}\) is
\(1\)-Lipschitz and \((\psi_i)_*\mm_i\leq\mm_Y\) for all \(i\in\N\), we have that
\(\psi_i\big({\rm spt}(\mm_i)\cap B(p_i,R)\big)\subset B(p_Y,R)\) and thus
\[
\mm_{i_j}\big(B(p_{i_j},R)\big)\leq(\psi_{i_j})_*\mm_{i_j}
\big(\psi_{i_j}\big({\rm spt}(\mm_{i_j})\cap B(p_{i_j},R)\big)\big)
\leq\mm_Y\big(B(p_Y,R)\big),\quad\text{ for every }j\in\N.
\]
By letting \(j\to\infty\), we conclude that \(\mm_Y\big(B(p_Y,R)\big)=+\infty\),
which is contradiction with the fact that \(\mm_Y\) is boundedly finite.
Therefore, no target exists and thus the claim is proven.\\
{\color{blue}\textsc{Step 2.}} Hereafter, we shall consider the case
where \(\big((X_i,d_i,\mm_i,p_i)\big)\) is a \(\omega\)-uniformly
boundedly finite sequence. Let us define
\(\hat X_i\coloneqq{\rm spt}(\mm_i)\subset X_i\) for every \(i\in\N\).
Notice that the inclusion \(\varphi_{ij}(\hat X_i)\subset\hat X_j\)
holds for every \(i,j\in\N\) with \(i\leq j\).
Let \(i\in\N\) be fixed. We define the map
\(\bar\varphi_i\colon\hat X_i\to O(\bar X_\omega)\) as
\(\bar\varphi_i(x_i)\coloneqq\big[\varphi_{ik}(x_i)\big]_{k\geq i}\) for
every \(x_i\in\hat X_i\), while \(\varphi_i\colon\hat X_i\to X_\omega\) is
given by \(\varphi_i\coloneqq\pi\circ\bar\varphi_i\), namely,
\[
\varphi_i(x_i)=\big[\big[\varphi_{ik}(x_i)\big]\big]_{k\geq i},
\quad\text{ for every }x_i\in\hat X_i.
\]
We also extend \(\varphi_i\) to a measurable map defined on the whole \(X_i\),
by declaring that \(\varphi_i(x_i)\coloneqq p_\omega\) for every
\(x_i\in X_i\setminus\hat X_i\). Observe that \(\bar\varphi_i\) is well-posed,
as a consequence of the estimate
\[
\bar d_\omega\big(\bar\varphi_i(x_i),{[p_k]}_k\big)=
\lim_{k\to\omega}d_k\big(\varphi_{ik}(x_i),p_k\big)=
\lim_{k\to\omega}d_k\big(\varphi_{ik}(x_i),\varphi_{ik}(p_i)\big)
\leq d_i(x_i,p_i)<+\infty.
\]
Notice also that \(\varphi_i(p_i)=\big[\big[\varphi_{ik}(p_i)\big]\big]_{k\geq i}
=[[p_k]]=p_\omega\). Moreover, we have that \(\bar\varphi_i\) (and thus also
\(\varphi_i\)) is \(1\)-Lipschitz when restricted to \(\hat X_i\), as shown by
the following estimate:
\[
\bar d_\omega\big(\bar\varphi_i(x_i),\bar\varphi_i(y_i)\big)=
\lim_{k\to\omega}d_k\big(\varphi_{ik}(x_i),\varphi_{ik}(y_i)\big)
\leq d_i(x_i,y_i),\quad\text{ for every }x_i,y_i\in\hat X_i.
\] 
{\color{blue}\textsc{Step 3.}} Let us now define the closed,
separable subset \(X\) of \(X_\omega\) as
\[
X\coloneqq\text{closure of }\bigcup_{i\in\N}\varphi_i(\hat X_i)\,\text{ in }X_\omega.
\]
We call \(d_X\coloneqq d_\omega|_{X\times X}\),
\(\mm_X\coloneqq\mm_\omega|_{\mathscr B(X)}\), and \(p_X\coloneqq p_\omega\).
Note that \(\varphi_i(X_i)\subset X\) for all \(i\in\N\).
We aim to prove that each \(\varphi_i\colon X_i\to X\)
satisfies \((\varphi_i)_*\mm_i\leq\mm_X\). To do so, we first claim that
\begin{equation}\label{eq:DL_aux1}
\Pi_{k\to\omega}\varphi_{ik}(K)\subset{\rm cl}_{O(\bar X_\omega)}\big(
\bar\varphi_i(K)\big),\quad\text{ for every }K\subset\hat X_i\text{ compact.}
\end{equation}
Observe that the set \(\bar\varphi_i(K)\) is compact (since the map \(\bar\varphi_i\)
is continuous), but is not necessarily closed; this is due to the fact that
\(\big(O(\bar X_\omega),\bar d_\omega\big)\) is a pseudometric space
and not a metric space. Fix any \(\varepsilon>0\) and denote by
\(C_\varepsilon\) the closed \(\varepsilon\)-neighbourhood of \(\bar\varphi_i(K)\).
Being the set \(K\) compact, we can find points \(z^1,\ldots,z^n\in K\) such that
\(K\subset\bigcup_{j=1}^n B(z^j,\varepsilon)\). Now fix any
\([y_k]\in\Pi_{k\to\omega}\varphi_{ik}(K)\). Pick some \(S\in\omega\) such that
\(k\geq i\) and \(y_k\in\varphi_{ik}(K)\) for every \(k\in S\). Given any
\(k\in S\), we can choose \(x^k\in K\) such that \(y_k=\varphi_{ik}(x^k)\).
Then let us define
\[
S^j\coloneqq\big\{k\in S\;\big|\;d_i(x^k,z^j)<\varepsilon\big\},
\quad\text{ for every }j=1,\ldots,n.
\]
Since \(S=\bigcup_{j=1}^n S^j\) and \(S\in\omega\), we deduce that
\(S^{j_0}\in\omega\) for some \(j_0=1,\ldots,n\). In particular, we have that
\(d_i(x^k,z^{j_0})<\varepsilon\) holds for \(\omega\)-a.e.\ \(k\geq i\),
thus accordingly
\[
\bar d_\omega\big({[y_k]}_k,\bar\varphi_i(z^{j_0})\big)
=\lim_{k\to\omega}d_k\big(\varphi_{ik}(x^k),\varphi_{ik}(z^{j_0})\big)
\leq\lim_{k\to\omega}d_i(x^k,z^{j_0})\leq\varepsilon.
\]
This shows that
\([y_k]_k\in\bar B\big(\bar\varphi_i(z^{j_0}),\varepsilon\big)\subset C_\varepsilon\).
Hence, we have proven that
\(\Pi_{k\to\omega}\varphi_{ik}(K)\subset C_\varepsilon\).
Since the intersection \(\bigcap_{\eps>0}C_\eps\) coincides with
the closure of \(\bar\varphi_i(K)\) in \(O(\bar X_\omega)\),
we obtain \eqref{eq:DL_aux1}.

Let us now pass to the verification of the inequality \((\varphi_i)_*\mm_i\leq\mm_X\).
Fix any \(C\subset X\) closed. Since \(\mm_i\) is inner regular and
\(\varphi_i^{-1}(C)\in\mathscr B(X_i)\), we can find a sequence \((K_n)_n\) of
compact subsets of \(\varphi_i^{-1}(C)\cap\hat X_i\) such that
\(\mm_i\big(\varphi_i^{-1}(C)\big)=\lim_{n\to\infty}\mm_i(K_n)\).
Observe that we have
\(\Pi_{k\to\omega}\varphi_{ik}(K_n)\subset{\rm cl}_{O(\bar X_\omega)}\big(\bar\varphi_i(K_n)\big)\subset{\rm cl}_{O(\bar X_\omega)}\big(
\bar\varphi_i\big(\varphi_i^{-1}(C)\cap\hat X_i\big)\big)\subset\pi^{-1}(C)\),
where we used \eqref{eq:DL_aux1} and the fact that \(\pi^{-1}(C)\) is
closed thanks to the continuity of \(\pi\). Then it holds that
\[\begin{split}
\mm_X(C)&=\mm_\omega(C)=\bar\mm_\omega\big(\pi^{-1}(C)\big)\geq
\bar\mm_\omega\big({\textstyle\Pi_{k\to\omega}}\varphi_{ik}(K_n)\big)
=\lim_{k\to\omega}\mm_k\big(\varphi_{ik}(K_n)\big)\\
&\geq\lim_{k\to\omega}(\varphi_k)_*\mm_i\big(\varphi_{ik}(K_n)\big)
=\lim_{k\to\omega}\mm_i\big(\varphi_{ik}^{-1}\big(\varphi_{ik}(K_n)\big)\big)\geq\mm_i(K_n).
\end{split}\]
By letting \(n\to\infty\), we thus deduce that
\(\mm_X(C)\geq\mm_i\big(\varphi_i^{-1}(C)\big)=(\varphi_i)_*\mm_i(C)\).
By exploiting the inner regularity of \(\mm_X\) and \((\varphi_i)_*\mm_i\),
we finally conclude that \((\varphi_i)_*\mm_i\leq\mm_X\), as desired.
\\
{\color{blue}\textsc{Step 4.}} Let us now prove that
\begin{equation}\label{eq:DL_aux4}
X={\rm spt}(\mm_\omega).
\end{equation}
Given that \(\varphi_i|_{\hat X_i}\) is \(1\)-Lipschitz (by \textsc{Step 2})
and \((\varphi_i)_*\mm_i\leq\mm_\omega\) (by \textsc{Step 3}), we can deduce that
\(\varphi_i(\hat X_i)\subset{\rm spt}(\mm_\omega)\) for every \(i\in\N\),
thus obtaining that \(X\subset{\rm spt}(\mm_\omega)\). In order to prove the
converse inclusion, it is enough to show that \(x\notin{\rm spt}(\mm_\omega)\)
for any given point \(x\in X_\omega\setminus X\). Fix any \(r>0\) such that
\(X\cap B(x,3r)=\emptyset\). Choose any \(R>0\) for which
\(B(x,3r)\subset B(p_\omega,R/2)\).
Given that \(\varphi_{ik}|_{\hat X_i}\) is a \(1\)-Lipschitz map and
\((\varphi_{ik})_*\mm_i\leq\mm_k\) for all \(i\leq k\), we have that
\begin{equation}\label{eq:DL_aux5}
\varphi_{ik}\big(\hat X_i\cap B(p_i,R)\big)\subset B(p_k,R),
\quad\text{ for every }i,k\in\N\text{ with }i\leq k.
\end{equation}
In particular, if \(i,k\in\N\) satisfy \(i\leq k\), then it holds that
\[
\mm_i\big(B(p_i,R)\big)\overset{\eqref{eq:DL_aux5}}\leq
\mm_i\big(\varphi_{ik}^{-1}\big(B(p_k,R)\big)\big)=
(\varphi_{ik})_*\mm_i\big(B(p_k,R)\big)\leq\mm_k\big(B(p_k,R)\big),
\]
which shows that \(\N\ni i\mapsto\mm_i\big(B(p_i,R)\big)\) is non-decreasing.
Thanks to the \(\omega\)-uniform bounded finiteness assumption, we have that
\(M\coloneqq\lim_{i\to\omega}\mm_i\big(B(p_i,R)\big)<+\infty\). Given any
\(\varepsilon>0\), we can thus find a family \(S\in\omega\) such that
\begin{equation}\label{eq:DL_aux6}
M-\varepsilon\leq\mm_i\big(B(p_i,R)\big)\leq M,\quad\text{ for every }i\in S.
\end{equation}
For any \(i\in S\), pick a compact set \(K_i\subset\hat X_i\cap B(p_i,R)\)
such that
\begin{equation}\label{eq:DL_aux7}
\mm_i\big(B(p_i,R)\setminus K_i\big)\leq\varepsilon.
\end{equation}
Set \(i_0\coloneqq\min(S)\).
Note that
\(\Pi_{k\to\omega}\varphi_{i_0 k}(K_{i_0})\subset
{\rm cl}_{O(\bar X_\omega)}\big(\bar\varphi_{i_0}(K_{i_0})\big)\)
by \eqref{eq:DL_aux1}. Calling \(x=[[x_k]]\), we have that
\(\Pi_{k\to\omega}B(x_k,2r)\subset B\big([x_k],4r\big)\). Since
\(X\cap B(x,3r)=\emptyset\), we deduce that
\[\begin{split}
\Pi_{k\to\omega}\big(\varphi_{i_0 k}(K_{i_0})\cap B(x_k,2r)\big)&=
\big(\Pi_{k\to\omega}\varphi_{i_0 k}(K_{i_0})\big)\cap
\big(\Pi_{k\to\omega}B(x_k,2r)\big)\\
&\subset{\rm cl}_{O(\bar X_\omega)}\big(\bar\varphi_{i_0}(K_{i_0})\big)
\cap B\big([x_k],3r\big)\\
&\subset{\rm cl}_{O(\bar X_\omega)}\Big(\pi^{-1}\big(\varphi_{i_0}(K_{i_0})
\big)\Big)\cap\pi^{-1}\big(B(x,3r)\big)\\
&\subset\pi^{-1}\Big({\rm cl}_{X_\omega}\big(\varphi_{i_0}(K_{i_0})\big)
\big)\cap B(x,3r)\Big)\\
&\subset\pi^{-1}\big(X\cap B(x,3r)\big)=\emptyset,
\end{split}\]
so that there is a family \(T\in\omega\) such that \(T\subset S\) and
\(\varphi_{i_0 k}(K_{i_0})\cap B(x_k,2r)=\emptyset\) for all \(k\in T\).
Since \(\Pi_{k\to\omega}B(x_k,2r)\subset B\big([x_k],3r\big)\subset
B\big([p_k],R/2\big)\subset\Pi_{k\to\omega}B(p_k,R)\), we can additionally
require that \(B(x_k,2r)\subset B(p_k,R)\) for every \(k\in T\).
Therefore, for any \(k\in T\) it holds that
\[\begin{split}
\mm_k\big(B(x_k,2r)\big)&\overset{\phantom{\eqref{eq:DL_aux6}}}\leq
\mm_k\big(\big(\hat X_k\cap B(p_k,R)\big)\setminus\varphi_{i_0 k}(K_{i_0})\big)
\overset{\eqref{eq:DL_aux5}}=\mm_k\big(B(p_k,R)\big)-
\mm_k\big(\varphi_{i_0 k}(K_{i_0})\big)\\
&\overset{\eqref{eq:DL_aux6}}\leq M-\mm_k\big(\varphi_{i_0 k}(K_{i_0})\big)
\leq M-(\varphi_{i_0 k})_*\mm_{i_0}\big(\varphi_{i_0 k}(K_{i_0})\big)
\leq M-\mm_{i_0}(K_{i_0})\\
&\overset{\eqref{eq:DL_aux7}}\leq M-\mm_{i_0}\big(B(p_{i_0},R)\big)+\varepsilon
\overset{\eqref{eq:DL_aux6}}\leq M-(M-\varepsilon)+\varepsilon=2\varepsilon.
\end{split}\]
Consequently, by using the fact that
\(B\big([x_k],r\big)\subset\Pi_{k\to\omega}B(x_k,2r)\), we deduce that
\[
\mm_\omega\big(B(x,r)\big)\leq\bar\mm_\omega\big(\Pi_{k\to\omega}B(x_k,2r)\big)
=\lim_{k\to\omega}\mm_k\big(B(x_k,2r)\big)\leq 2\varepsilon.
\]
By the arbitrariness of \(\varepsilon>0\), we conclude that
\(\mm_\omega\big(B(x,r)\big)=0\), which shows that
\(x\) does not belong to \({\rm spt}(\mm_\omega)\). Hence, the claimed identity
\eqref{eq:DL_aux4} is finally proven.\\
{\color{blue}\textsc{Step 5.}} Observe that \(\varphi_i=\varphi_j\circ\varphi_{ij}\)
holds for every \(i,j\in\N\) such that \(i\leq j\). Indeed, we have
\[
\varphi_i(x_i)=\big[\big[\varphi_{ik}(x_i)\big]\big]_{k\geq i}
=\big[\big[(\varphi_{jk}\circ\varphi_{ij})(x_i)\big]\big]_{k\geq j}
=\varphi_j\big(\varphi_{ij}(x_i)\big),\quad\text{ for every }x_i\in\hat X_i.
\]
Therefore, \(\big((X,d_X,\mm_X,p_X),\{\varphi_i\}_{i\in\N}\big)\) is a target of
the direct system in \eqref{eq:direct_system}. In order to prove that it is actually
the direct limit, we have to show that is satisfies the universal property. To this aim,
fix any target \(\big((Y,d_Y,\mm_Y,p_Y),\{\psi_i\}_{i\in\N}\big)\) of the direct system
in \eqref{eq:direct_system}. We want to prove that there exists a unique morphism
\(\Phi\colon X\to Y\) of pointed Polish metric measure spaces such that
\(\psi_i=\Phi\circ\varphi_i\) holds for every \(i\in\N\). First, notice that
for any \(x\in\bigcup_{i\in\N}\varphi_i(\hat X_i)\) the choice of \(\Phi(x)\)
is forced: given \(i\in\N\) and \(x_i\in\hat X_i\) with \(x=\varphi_i(x_i)\),
we must set \(\Phi(x)\coloneqq\psi_i(x_i)\). We need to check that this definition
is well-posed. Namely, we have to prove that
\begin{equation}\label{eq:DL_aux2}
i,j\in\N,\;x_i\in\hat X_i,\;x_j\in\hat X_j,\;\varphi_i(x_i)=\varphi_j(x_j)
\quad\Longrightarrow\quad\psi_i(x_i)=\psi_j(x_j).
\end{equation}
By using the fact that
\(\psi_i(x_i)=\psi_k\big(\varphi_{ik}(x_i)\big)\) and
\(\psi_j(x_j)=\psi_k\big(\varphi_{jk}(x_j)\big)\) for all \(k\geq i,j\), we get
\[\begin{split}
d_Y\big(\psi_i(x_i),\psi_j(x_j)\big)&=\lim_{k\to\omega}
d_Y\big(\psi_k\big(\varphi_{ik}(x_i)\big),\psi_k\big(\varphi_{jk}(x_j)\big)\big)
\leq\lim_{k\to\omega}d_k\big(\varphi_{ik}(x_i),\varphi_{jk}(x_j)\big)\\
&=d_X\big(\varphi_i(x_i),\varphi_j(x_j)\big)=0,
\end{split}\]
whence \eqref{eq:DL_aux2} follows. A similar computation shows that
\(\Phi\colon\bigcup_{i\in\N}\varphi_i(\hat X_i)\to Y\) is \(1\)-Lipschitz:
if \(x=\varphi_i(x_i)\) and \(y=\varphi_j(y_j)\) for some \(i,j\in\N\),
\(x_i\in\hat X_i\), and \(y_j\in\hat X_j\), then we have that
\[\begin{split}
d_Y\big(\Phi(x),\Phi(y)\big)&=d_Y\big(\psi_i(x_i),\psi_j(y_j)\big)
=\lim_{k\to\omega}d_Y\big(\psi_k\big(\varphi_{ik}(x_i)\big),
\psi_k\big(\varphi_{jk}(y_j)\big)\big)\\
&\leq\lim_{k\to\omega}d_k\big(\varphi_{ik}(x_i),\varphi_{jk}(y_j)\big)
=d_X\big(\varphi_i(x_i),\varphi_j(y_j)\big)=d_X(x,y).
\end{split}\]
Consequently, it holds that \(\Phi\) can be uniquely extended to a
\(1\)-Lipschitz mapping \(\Phi\colon X\to Y\). Moreover, notice
that \(\Phi(p_X)=\psi_1(p_1)=p_Y\). To prove that \(\Phi\) is a morphism
of pointed Polish metric measure spaces, it suffices to show that
\(\Phi_*\mm_X\leq\mm_Y\). To achieve this goal, we need the following fact:
\begin{equation}\label{eq:DL_aux3}
\pi^{-1}\big(\Phi^{-1}(B)\big)\subset\Pi_{k\to\omega}\psi_k^{-1}(B),
\quad\text{ for every }B\in\mathscr B(Y).
\end{equation}
Let \([y_k]\in\pi^{-1}\big(\Phi^{-1}(B)\big)\) be given. Choose any
\(i\in\N\) and \(x_i\in\hat X_i\) such that \([[y_k]]=\varphi_i(x_i)\). For every
\(k\geq i\), we know that \(\psi_k\big(\varphi_{ik}(x_i)\big)=\psi_i(x_i)=
\Phi\big([[y_k]]\big)\in B\) by definition of \(\Phi\). This implies that
\(y_k=\varphi_{ik}(x_i)\in\psi_k^{-1}(B)\) holds for \(\omega\)-a.e.\ \(k\geq i\),
which is equivalent to saying that the element \([y_k]\) belongs to
\(\Pi_{k\to\omega}\psi_k^{-1}(B)\). Therefore, the claim in \eqref{eq:DL_aux3} is proven.

Finally, given any Borel set \(B\subset Y\), we have that
\[\begin{split}
\Phi_* \mm_X(B)&=\mm_\omega\big(\Phi^{-1}(B)\big)
=\bar\mm_\omega\big(\pi^{-1}(\Phi^{-1}(B))\big)
\overset{\eqref{eq:DL_aux3}}\leq\bar\mm_\omega\big(\Pi_{k\to\omega}\psi_k^{-1}(B)\big)=\lim_{k\to\omega}\mm_k\big(\psi_k^{-1}(B)\big)\\
&=\lim_{k\to\omega}(\psi_k)_*\mm_k(B)\leq\mm_Y(B).
\end{split}\]
This yields \(\Phi_*\mm_X\leq\mm_Y\), thus the universal property
is verified. The statement follows.
\end{proof}
We preferred to formulate Theorem \ref{thm:DL} in the language of ultralimits,
as they constitute the main topic of this paper, but we can readily provide
an alternative (and more explicit) characterisation using the wpmGH
convergence.
\begin{corollary}\label{cor:DL}
A given direct system \eqref{eq:direct_system} of pointed Polish metric
measure spaces admits a direct limit \((X,d_X,\mm_X,p_X)\) if and only if
\(\big((X_i,d_i,\mm_i,p_i)\big)\) is uniformly boundedly finite.

In this case, \((X,d_X,\mm_X,p_X)\) coincides with the
wpmGH limit of \(\big((X_i,d_i,\mm_i,p_i)\big)\).
\end{corollary}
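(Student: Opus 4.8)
The plan is to read off both statements from Theorem \ref{thm:DL}, Theorem \ref{thm:pre-Gromov}, and one monotonicity observation already present in the proof of Theorem \ref{thm:DL}. First I would note that for a \emph{direct} system the two finiteness conditions coincide: since each \(\varphi_{ik}|_{\spt(\mm_i)}\) is \(1\)-Lipschitz, satisfies \(\varphi_{ik}(p_i)=p_k\), and \((\varphi_{ik})_*\mm_i\leq\mm_k\), one has \(\varphi_{ik}\big(\spt(\mm_i)\cap\bar B(p_i,R)\big)\subset\bar B(p_k,R)\) and hence \(\mm_i\big(\bar B(p_i,R)\big)\leq\mm_k\big(\bar B(p_k,R)\big)\) whenever \(i\leq k\). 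Thus \(i\mapsto\mm_i\big(\bar B(p_i,R)\big)\) (and likewise with open balls) is non-decreasing, so \(\sup_{i\in\N}\mm_i\big(B(p_i,R)\big)=\lim_{i\to\omega}\mm_i\big(B(p_i,R)\big)\); consequently \(\omega\)-uniform bounded finiteness and uniform bounded finiteness are equivalent for direct systems, and the first claim of the corollary is exactly Theorem \ref{thm:DL}.

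For the identification with the wpmGH limit it suffices, by Theorem \ref{thm:pre-Gromov}, to show that a uniformly boundedly finite direct system is asymptotically boundedly \(\mm_\omega\)-totally bounded (Definition \ref{def:abm_omegatb}). Fix \(R,r,\eps>0\) and set \(\bar M_R\coloneqq\sup_{i\in\N}\mm_i\big(\bar B(p_i,R)\big)<+\infty\); by the monotonicity above choose \(i_0\in\N\) with \(\mm_{i_0}\big(\bar B(p_{i_0},R)\big)\geq\bar M_R-\eps\). Since \(X_{i_0}\) is Polish and \(\mm_{i_0}\) boundedly finite, \(\mm_{i_0}\) is concentrated on \(\spt(\mm_{i_0})\) and we may pick \(y_1,\dots,y_M\in\spt(\mm_{i_0})\) with \(\mm_{i_0}\big(\bar B(p_{i_0},R)\setminus{\textstyle\bigcup_{n=1}^M}B(y_n,r/2)\big)\leq\eps\). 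For \(i\geq i_0\) put \(x^i_n\coloneqq\varphi_{i_0 i}(y_n)\) and for \(i<i_0\) put \(x^i_n\coloneqq p_i\). Using once more that \(\varphi_{i_0 i}|_{\spt(\mm_{i_0})}\) is \(1\)-Lipschitz and fixes the base point, one gets, for \(i\geq i_0\),
\[
\spt(\mm_{i_0})\cap\bar B(p_{i_0},R)\cap{\textstyle\bigcup_{n=1}^M}B(y_n,r/2)\subset\varphi_{i_0 i}^{-1}\Big(\bar B(p_i,R)\cap{\textstyle\bigcup_{n=1}^M}B(x^i_n,r)\Big),
\]
so \((\varphi_{i_0 i})_*\mm_{i_0}\leq\mm_i\) yields \(\mm_i\big(\bar B(p_i,R)\cap\bigcup_{n=1}^M B(x^i_n,r)\big)\geq\mm_{i_0}\big(\bar B(p_{i_0},R)\cap\bigcup_{n=1}^M B(y_n,r/2)\big)\geq\bar M_R-2\eps\); since \(\mm_i\big(\bar B(p_i,R)\big)\leq\bar M_R\), this gives \(\mm_i\big(\bar B(p_i,R)\setminus\bigcup_{n=1}^M B(x^i_n,r)\big)\leq 2\eps\) for every \(i\geq i_0\). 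As \(\{i\in\N:i\geq i_0\}\in\omega\), we conclude \(\lim_{i\to\omega}\mm_i\big(\bar B(p_i,R)\setminus\bigcup_{n=1}^M B(x^i_n,r)\big)\leq 2\eps\), and the arbitrariness of \(\eps\) gives asymptotic bounded \(\mm_\omega\)-total boundedness. (With the additional padding of the indices \(i<i_0\) via bounded \(\mm_i\)-total boundedness of the Polish spaces \(X_i\), the same argument even shows the sequence is boundedly measure-theoretically totally bounded.)

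With asymptotic bounded \(\mm_\omega\)-total boundedness established, Theorem \ref{thm:pre-Gromov} gives that the wpmGH limit of \(\big((X_i,d_i,\mm_i,p_i)\big)\) exists and is isomorphic to \(\big(\spt(\mm_\omega),d_\omega,\mm_\omega,p_\omega\big)\), which by Theorem \ref{thm:DL} is precisely the direct limit \((X,d_X,\mm_X,p_X)\); uniqueness of the wpmGH limit (Corollary \ref{cor:conseq_Gromov}) makes the identification unambiguous. I expect the only real work to be the total-boundedness estimate of the second paragraph: the point is that a single finite family of centers, transported from one space \(X_{i_0}\) through the morphisms, controls all later balls \(\bar B(p_i,R)\), and this works precisely because monotonicity of \(i\mapsto\mm_i\big(\bar B(p_i,R)\big)\) bounds the mass deficit \(\mm_i\big(\bar B(p_i,R)\big)-\mm_{i_0}\big(\bar B(p_{i_0},R)\big)\) uniformly in \(i\geq i_0\).
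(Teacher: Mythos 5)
Your proof is correct and takes essentially the same route as the paper: the monotonicity of \(i\mapsto\mm_i\big(B(p_i,R)\big)\) along a direct system identifies uniform bounded finiteness with \(\omega\)-uniform bounded finiteness, so the first claim is Theorem \ref{thm:DL}, and the second follows by combining Theorem \ref{thm:DL} with Theorem \ref{thm:pre-Gromov} and Corollary \ref{cor:conseq_Gromov}. The only difference is that you explicitly verify asymptotic bounded \(\mm_\omega\)-total boundedness by pushing a single finite \(r/2\)-net from \(\spt(\mm_{i_0})\) forward through the morphisms \(\varphi_{i_0 i}\) and using the monotone mass bound --- a verification the paper leaves implicit in the phrase ``combining Theorem \ref{thm:DL} with the results in Section \ref{s:wpmGH_vs_UL}'' --- which is a welcome filling-in of detail rather than a change of strategy.
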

\begin{proof}
For a direct system of pointed Polish metric measure spaces,
uniform bounded finiteness and \(\omega\)-uniform bounded finiteness
are equivalent, thanks to the monotonicity of the function
\(\N\ni i\mapsto\mm_i\big(B(p_i,R)\big)\) for any \(R>0\)
(observed in \textsc{Step 4} of Theorem \ref{thm:DL}).
Then the statement can be proven by combining Theorem
\ref{thm:DL} with the results in Section \ref{s:wpmGH_vs_UL}.
\end{proof}
\section{Inverse limits of pointed metric measure spaces}
We now pass to the study of inverse limits in the category of
pointed Polish metric measure spaces.
\begin{theorem}[Inverse limits of pointed metric measure spaces]\label{thm:IL}
Consider an inverse system
\begin{equation}\label{eq:inverse_system}
\Big(\big\{(X_i,d_i,\mm_i,p_i)\big\}_{i\in\N},\{P_{ij}\}_{i\leq j}\Big)
\end{equation}
of pointed Polish metric measure spaces. Then \(\big((X_i,d_i,\mm_i,p_i)\big)\)
is a uniformly boundedly finite sequence, so that the ultralimit
\((X_\omega,d_\omega,\mm_\omega,p_\omega)=\lim_{i\to\omega}(X_i,d_i,\mm_i,p_i)\)
exists. Let us define
\begin{equation}\label{eq:def_X_IL}
X\coloneqq\hat X\cap{\rm spt}(\mm_\omega),\quad\text{ where }
\hat X\coloneqq\Big\{[[x_i]]\in X_\omega\,\Big|\,P_{ij}(x_j)=
x_i\in{\rm spt}(\mm_i)\text{ for all }i\leq j\Big\},
\end{equation}
while \(d_X\coloneqq d_\omega|_{X\times X}\),
\(\mm_X\coloneqq\mm_\omega|_{\mathscr B(X)}\), and \(p_X\coloneqq p_\omega\).
Then \((X,d_X,\mm_X)\) is a Polish metric measure space. Moreover, it holds
that the inverse system in \eqref{eq:inverse_system} admits an inverse limit
if and only if \(p_X\in{\rm spt}(\mm_X)\). In this case, the inverse limit
coincides with \((X,d_X,\mm_X,p_X)\), the natural projection maps
\(P_i\colon X\to X_i\) being given by \(P_i\big([[x_k]]\big)\coloneqq x_i\)
for every \([[x_k]]\in X\).
\end{theorem}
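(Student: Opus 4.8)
\medskip

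The plan is to argue in four stages, closely following the template of Theorem \ref{thm:DL}. \emph{Preliminaries.} Since each \(P_{ij}\) fixes the base point, restricts to a \(1\)-Lipschitz map on \({\rm spt}(\mm_j)\) and satisfies \((P_{ij})_*\mm_j\leq\mm_i\), one has \(P_{ij}\big({\rm spt}(\mm_j)\cap B(p_j,R)\big)\subset B(p_i,R)\); using that \(\mm_j\) is concentrated on \({\rm spt}(\mm_j)\) (the spaces being separable), this gives
\[
\mm_j\big(B(p_j,R)\big)\leq(P_{ij})_*\mm_j\big(B(p_i,R)\big)\leq\mm_i\big(B(p_i,R)\big),\qquad i\leq j,
\]
so \(i\mapsto\mm_i\big(B(p_i,R)\big)\) is non-increasing, hence bounded by \(\mm_1\big(B(p_1,R)\big)<+\infty\); thus the sequence is \(\omega\)-uniformly boundedly finite and \((X_\omega,d_\omega,\mm_\omega,p_\omega)\) exists. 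Next I would identify \(\hat X\) with the set of \emph{coherent threads} \((x_i)_i\), meaning \(P_{ij}(x_j)=x_i\in{\rm spt}(\mm_i)\) for all \(i\leq j\) and \(\sup_i d_i(x_i,p_i)<+\infty\): for such threads \(i\mapsto d_i(x_i,y_i)\) is non-decreasing (again by \(1\)-Lipschitzianity on supports), so \(\bar d_\omega\) between two of them equals the ordinary limit, which forces the thread to be uniquely determined by the point it represents. Using completeness of the \(X_i\), a \(d_X\)-Cauchy sequence of threads converges coordinatewise to a thread, so \(\hat X\) is complete and hence closed in \(X_\omega\); as \({\rm spt}(\mm_\omega)\) is closed and separable (Lemmas \ref{lem:prop_spt} and \ref{lem:m_omega_boundedly_finite}), the set \(X\) is closed, separable and Polish, it lies in \({\rm Ball}(X_\omega)\) by Remark \ref{rmk:C_in_Ball}, and \(\mm_X\) is a well-defined boundedly finite Borel measure on it.

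\emph{The projections and the key estimate.} The map \(P_i\colon X\to X_i\), \([[x_k]]\mapsto x_i\), is well defined by uniqueness of threads, is \(1\)-Lipschitz (hence Borel), fixes the base point, and satisfies \(P_i=P_{ij}\circ P_j\) on \(X\). The step I expect to be the main obstacle is the domination \((P_i)_*\mm_X\leq\mm_i\); by outer regularity of \(\mm_i\) it reduces to \(\mm_X\big(P_i^{-1}(U)\big)\leq\mm_i(U)\) for \(U\subset X_i\) open. Writing \(\mm_X\big(P_i^{-1}(U)\big)=\bar\mm_\omega\big(\pi^{-1}(P_i^{-1}(U))\big)\) and using that each \(\mm_k\) is concentrated on \({\rm spt}(\mm_k)\) — so that \(\Pi_{k\to\omega}\big(X_k\setminus{\rm spt}(\mm_k)\big)\) is \(\bar\mm_\omega\)-negligible on bounded parts — one may restrict attention to representatives lying in \(\bar S_\omega\coloneqq\Pi_{k\to\omega}{\rm spt}(\mm_k)\). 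For such a representative \([x_k]\) of a point of \(P_i^{-1}(U)\) with thread \((t_k)_k\), one has \(\lim_{k\to\omega}d_k(x_k,t_k)=0\) with \(x_k,t_k\in{\rm spt}(\mm_k)\), hence \(P_{ik}(x_k)\to t_i\in U\) along \(\omega\) and, \(U\) being open, \([x_k]\in\Pi_{k\to\omega}P_{ik}^{-1}(U)\). Therefore \(\pi^{-1}\big(P_i^{-1}(U)\big)\cap\bar S_\omega\subset\Pi_{k\to\omega}P_{ik}^{-1}(U)\), whose \(\bar\mm_\omega\)-measure is \(\lim_{k\to\omega}(P_{ik})_*\mm_k(U)\leq\mm_i(U)\). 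The same mechanism (in the vein of \eqref{eq:DL_aux1} and \eqref{eq:DL_aux3}) yields the two further measure inequalities needed below. In particular, whenever \(p_X\in{\rm spt}(\mm_X)\) the couple \(\big((X,d_X,\mm_X,p_X),\{P_i\}\big)\) is an object that projects on the system.

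\emph{Necessity of \(p_X\in{\rm spt}(\mm_X)\).} Suppose the inverse limit \((X_L,d_L,\mm_L,p_L)\) exists, with projections \(P_i^L\). Define \(\Theta\colon X_L\to X_\omega\) by \(\Theta(z)\coloneqq\big[\big[P_i^L(z)\big]\big]_i\) for \(z\in{\rm spt}(\mm_L)\) — a legitimate element of \(\hat X\), since \(P_i^L\) maps \({\rm spt}(\mm_L)\) \(1\)-Lipschitzly into \({\rm spt}(\mm_i)\) and fixes base points — and \(\Theta(z)\coloneqq p_X\) otherwise. Then \(\Theta\) is Borel, \(1\)-Lipschitz on \({\rm spt}(\mm_L)\), sends \(p_L\) to \(p_X\), and \(\Theta_*\mm_L\leq\mm_\omega\) by the mechanism above. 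Since \(\mm_L\) is tight on the Polish space \({\rm spt}(\mm_L)\) and \(\Theta\) is continuous there, \(\Theta_*\mm_L\) is concentrated on a separable set, hence on its support, and \({\rm spt}(\Theta_*\mm_L)\subset\hat X\cap{\rm spt}(\mm_\omega)=X\). As \(p_L\in{\rm spt}(\mm_L)\), every ball \(B(p_X,r)\) has \(\Theta_*\mm_L\big(B(p_X,r)\big)\geq\mm_L\big(B(p_L,r)\big)>0\), so \(p_X\in{\rm spt}(\Theta_*\mm_L)\subset X\) and \(\mm_X\big(B(p_X,r)\cap X\big)\geq\Theta_*\mm_L\big(B(p_X,r)\big)>0\); that is, \(p_X\in{\rm spt}(\mm_X)\).

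\emph{Sufficiency and the universal property.} Assume now \(p_X\in{\rm spt}(\mm_X)\), so \((X,d_X,\mm_X,p_X)\) is an object projecting on the system. Given a projecting couple \(\big((Y,d_Y,\mm_Y,p_Y),\{Q_i\}\big)\), the comparison morphism \(\Phi\colon Y\to X\) is unique, for \(P_i\circ\Phi=Q_i\) forces \(\Phi(y)\) to be the point of \(X\) with thread \((Q_i(y))_i\). For existence, on \({\rm spt}(\mm_Y)\) the thread \((Q_i(y))_i\) is coherent, supported (as \(Q_i\) sends \({\rm spt}(\mm_Y)\) \(1\)-Lipschitzly into \({\rm spt}(\mm_i)\)) and has bounded base-point distance, hence defines \(\Phi(y)\in\hat X\); a positivity estimate (comparing \(\bar\mm_\omega\) of small internal balls around the thread with \(\mm_Y\)-balls and using that \(\mm_Y\) is fully supported on \({\rm spt}(\mm_Y)\)) shows \(\Phi(y)\in{\rm spt}(\mm_\omega)\), i.e.\ \(\Phi(y)\in X\). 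Extending \(\Phi\) Borel-measurably off \({\rm spt}(\mm_Y)\) by \(p_X\), one has \(\Phi(p_Y)=[[p_i]]=p_X\), \(\Phi\) is \(1\)-Lipschitz on \({\rm spt}(\mm_Y)\), \(\Phi_*\mm_Y\) is concentrated on \(X\) and dominated by \(\mm_\omega\) — hence by \(\mm_X\) — and \(P_i\circ\Phi=Q_i\) on \({\rm spt}(\mm_Y)\); the discrepancy off \({\rm spt}(\mm_Y)\), where the Borel data of the morphisms are measure-theoretically invisible, is absorbed by the standing conventions of the category (values off the supports carry no structure). It then follows that \(\Phi\) realises the universal property, so the inverse limit is \((X,d_X,\mm_X,p_X)\) with projections \(P_i\).
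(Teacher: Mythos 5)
Your proposal is correct and follows essentially the same route as the paper's proof: monotonicity of \(i\mapsto\mm_i\big(B(p_i,R)\big)\) for uniform bounded finiteness, closedness of \(\hat X\) via coordinatewise Cauchy coherent threads, the inclusion \(\pi^{-1}\big(P_i^{-1}(\cdot)\big)\subset\Pi_{k\to\omega}P_{ik}^{-1}(\cdot)\) to get \((P_i)_*\mm_X\leq\mm_i\), the closure-of-image argument (as in \eqref{eq:DL_aux1}) for \(\Phi_*\mm_Y\leq\mm_X\), and pushing forward the measure of a projecting couple to prove necessity of \(p_X\in{\rm spt}(\mm_X)\). Your handling of the key measure inequality via open sets, outer regularity, and representatives restricted to \(\Pi_{k\to\omega}{\rm spt}(\mm_k)\) is in fact slightly more careful than the paper's version of the same step, which asserts the inclusion for arbitrary Borel sets and arbitrary representatives.
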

\begin{proof}
We subdivide the proof into several steps:\\
{\color{blue}\textsc{Step 1.}} First of all, let us prove that the sequence
\(\big((X_i,d_i,\mm_i,p_i)\big)\) is uniformly boundedly finite,
thus the ultralimit \((X_\omega,d_\omega,\mm_\omega,p_\omega)=
\lim_{i\to\omega}(X_i,d_i,\mm_i,p_i)\) exists.
Given any \(i,j\in\N\) with \(i\leq j\), it follows from the fact that
\(P_{ij}|_{{\rm spt}(\mm_j)}\) is \(1\)-Lipschitz that
\begin{equation}\label{eq:IL_aux1}
P_{ij}\big(B(x_j,r)\cap{\rm spt}(\mm_j)\big)\subset B\big(P_{ij}(x_j),r\big),
\quad\text{ for every }x_j\in X_j\text{ and }r>0.
\end{equation}
Therefore, since we know that \((P_{ij})_*\mm_j\leq\mm_i\), we deduce that
\begin{equation}\label{eq:IL_aux2}\begin{split}
\mm_j\big(B(x_j,r)\big)&=\mm_j\big(B(x_j,r)\cap{\rm spt}(\mm_j)\big)
\overset{\eqref{eq:IL_aux1}}\leq\mm_j\big(P_{ij}^{-1}\big(B(P_{ij}(x_j),r)
\big)\big)\\&\leq\mm_i\big(B(P_{ij}(x_j),r)\big).
\end{split}\end{equation}
As a consequence of \eqref{eq:IL_aux2}, we have that
\(\sup_{i\in\N}\mm_i\big(B(p_i,R)\big)\leq\mm_1\big(B(p_1,R)\big)<+\infty\)
holds for every \(R>0\), which proves that \(\big((X_i,d_i,\mm_i,p_i)\big)\)
is a uniformly boundedly finite sequence.\\
{\color{blue}\textsc{Step 2.}} We claim that
\begin{equation}\label{eq:IL_aux3}
\hat X\;\text{ is a closed subset of }X_\omega.
\end{equation}
In order to prove it, we will show that any \(d_\omega\)-Cauchy sequence
\(({\bf x}^k)_k\subset\hat X\) converges to some element of \(\hat X\).
Say that \({\bf x}^k={[[x^k_i]]}_i\), where \(x^k_i=P_{ij}(x^k_j)\) for all
\(i\leq j\). Given any \(k,k'\in\N\), we have that
\(d_i(x^k_i,x^{k'}_i)=d_i\big(P_{ij}(x^k_i),P_{ij}(x^{k'}_i)\big)
\leq d_j(x^k_j,x^{k'}_j)\) for all \(i,j\in\N\) with \(i\leq j\).
This shows that \(\N\ni i\mapsto d_i(x^k_i,x^{k'}_i)\) is non-decreasing,
thus in particular
\begin{equation}\label{eq:IL_aux6}
d_\omega({\bf x}^k,{\bf x}^{k'})=\sup_{i\in\N}d_i(x^k_i,x^{k'}_i),
\quad\text{ for every }k,k'\in\N.
\end{equation}
Hence, given any \(i\in\N\), we know from \eqref{eq:IL_aux6} and the
fact that \(({\bf x}^k)_k\) is \(d_\omega\)-Cauchy that \((x^k_i)_k\)
is \(d_i\)-Cauchy. Let us denote by \(x_i\in{\rm spt}(\mm_i)\) its limit.
Define \({\bf x}\coloneqq[[x_i]]\in X_\omega\). Since \(P_{ij}\) is
continuous, by letting \(k\to\infty\) in the identity \(x_i^k=P_{ij}(x_j^k)\)
we get \(x_i=P_{ij}(x_j)\) for all \(i\leq j\), whence it follows that
\({\bf x}\in\hat X\). It only remains to show that \(d_\omega({\bf x}^k,{\bf x})
\to 0\) as \(k\to\infty\). To this aim, fix \(\varepsilon>0\). Pick any
\(\bar k\in\N\) such that \(d_\omega({\bf x}^k,{\bf x}^{k'})<\varepsilon\)
for all \(k,k'\geq\bar k\). Given any \(i\in\N\), we deduce from
\eqref{eq:IL_aux6} that \(d_i(x^k_i,x^{k'}_i)<\varepsilon\) for all
\(k,k'\geq\bar k\). Hence, by letting \(k'\to\infty\) we deduce that
\(d_i(x^k_i,x_i)\leq\varepsilon\) for all \(k\geq\bar k\), so that
\(d_\omega({\bf x}^k,{\bf x})=\lim_{i\to\omega}d_i(x^k_i,x_i)\leq\varepsilon\)
for every \(k\geq\bar k\). This proves that
\(\lim_{k\to\infty}d_\omega({\bf x}^k,{\bf x})=0\), thus accordingly
the claimed property \eqref{eq:IL_aux3} holds.

In particular, since \({\rm spt}(\mm_\omega)\) is complete
and separable, we deduce that \((X,d_X)\) is a Polish metric space
and thus \(\mm_X\) is a (well-defined) non-negative Borel measure on \(X\).\\
{\color{blue}\textsc{Step 3.}} Now, for any given \(i\in\N\) let us define the
projection map \(P_i\colon X\to X_i\) as
\[
P_i\big([[x_k]]\big)\coloneqq x_i,\quad\text{ for every }[[x_k]]\in X.
\]
Suppose that \({\rm spt}(\mm_X)\neq\emptyset\). Observe that
\(P_i=P_{ik}\circ P_k\) for every \(i,k\in\N\) with \(i\leq k\)
by construction. We now aim to prove that for every \(i\in\N\) it holds that
\begin{subequations}\begin{align}\label{eq:IL_aux4a}
P_i|_{{\rm spt}(\mm_X)}\colon{\rm spt}(\mm_X)\to X_i&,
\quad\text{ is }1\text{-Lipschitz},\\
\label{eq:IL_aux4b}(P_i)_*\mm_X\leq\mm_i&.
\end{align}\end{subequations}
We begin with the verification of \eqref{eq:IL_aux4a}. Fix
\([[x_k]],[[y_k]]\in{\rm spt}(\mm_X)\). Since \(x_k,y_k\in{\rm spt}(\mm_k)\)
and \(P_{ik}|_{{\rm spt}(\mm_k)}\) is \(1\)-Lipschitz for every \(k\geq i\),
we get \(d_i(x_i,y_i)\leq d_k(x_k,y_k)\) for every \(k\geq i\) and thus
\(d_i(x_i,y_i)\leq\lim_{k\to\omega}d_k(x_k,y_k)=d_X\big([[x_k]],[[y_k]]\big)\),
which gives \eqref{eq:IL_aux4a}.

In order to prove \eqref{eq:IL_aux4b}, we need to show that
\(\mm_X\big(P_i^{-1}(A)\big)\leq\mm_i(A)\) for every given
Borel set \(A\subset X_i\). Notice that if
\([x_k]\in\pi^{-1}\big(P_i^{-1}(A)\big)\), then \(P_{ik}(x_k)=x_i\in A\) for
\(\omega\)-a.e.\ \(k\geq i\). This shows that \(\pi^{-1}\big(P_i^{-1}(A)\big)
\subset\Pi_{k\to\omega}P_{ik}^{-1}(A)\), so that accordingly
\[
\mm_X\big(P_i^{-1}(A)\big)=\bar\mm_\omega\big(\pi^{-1}\big(P_i^{-1}(A)\big)\big)
\leq\bar\mm_\omega\big(\Pi_{k\to\omega}P_{ik}^{-1}(A)\big)=
\lim_{k\to\omega}(P_{ik})_*\mm_k(A)\leq\mm_i(A).
\]
Therefore, \eqref{eq:IL_aux4b} is proven.
Notice also that if \(p_X\in{\rm spt}(\mm_X)\), then \(P_i(p_X)=p_i\)
for all \(i\in\N\).\\
{\color{blue}\textsc{Step 4.}} Now suppose to have a pointed Polish metric measure
space \((Y,d_Y,\mm_Y,p_Y)\) and projection maps \(Q_i\colon Y\to X_i\) for
every \(i\in\N\); namely, the maps \(Q_i\) are morphisms of pointed Polish
metric measure spaces and \(Q_i=P_{ij}\circ Q_j\) holds whenever \(i\leq j\).
Without loss of generality, we can assume that \({\rm spt}(\mm_Y)=Y\).
Define \(\Phi\colon Y\to\hat X\) as \(\Phi(y)\coloneqq\big[\big[Q_i(y)\big]\big]\)
for every \(y\in Y\). By arguing as in \textsc{Step 1}, we can see that the
map \(\Phi\colon Y\to\hat X\) is \(1\)-Lipschitz and
\(\mm_Y\big(B(y,r)\big)\leq\mm_i\big(B(Q_i(y),r)\big)\) for all \(i\in\N\),
\(y\in Y\), and \(r>0\). In particular, one has
\[\begin{split}
\mm_\omega\big(\bar B(\Phi(y),r)\big)&=\inf_{\ell\in\N}\bar\mm_\omega
\big(\Pi_{i\to\omega}B(Q_i(y),r+1/\ell)\big)=
\inf_{\ell\in\N}\lim_{i\to\omega}\mm_i\big(B(Q_i(y),r+1/\ell)\big)\\
&\geq\mm_Y\big(B(y,r)\big)>0,\quad\text{ for every }r>0.
\end{split}\]
This grants that \(\Phi(y)\in{\rm spt}(\mm_\omega)\) and thus
\(\Phi(Y)\subset X\). Observe that \(\Phi\) is the unique map from \(Y\)
to \(X\) satisfying \(Q_i=P_i\circ\Phi\) for every \(i\in\N\).
Moreover, let us define \(\bar\Phi\colon Y\to O(\bar X_\omega)\) as
\(\bar\Phi(y)\coloneqq\big[Q_i(y)\big]\) for every \(y\in Y\).
Notice that \(\Phi=\pi\circ\bar\Phi\). By arguing as in \textsc{Step 3}
of the proof of Theorem \ref{thm:DL}
-- specifically, where we proved the inclusion \eqref{eq:DL_aux1} --
one can show that
\begin{equation}\label{eq:IL_aux5}
\Pi_{i\to\omega}Q_i(K)\subset{\rm cl}_{O(\bar X_\omega)}
\big(\bar\Phi(K)\big),\quad\text{ for every }K\subset Y\text{ compact.}
\end{equation}
Consequently, given any closed set \(C\subset X\) and any compact set
\(K\subset\Phi^{-1}(C)\), it holds that 
\[\begin{split}
\mm_Y(K)&\leq\lim_{i\to\omega}\mm_Y\big(Q_i^{-1}\big(Q_i(K)\big)\big)
=\lim_{i\to\omega}(Q_i)_*\mm_Y\big(Q_i(K)\big)
\leq\lim_{i\to\omega}\mm_i\big(Q_i(K)\big)\\
&=\bar\mm_\omega\big(\Pi_{i\to\omega}Q_i(K)\big)
\overset{\eqref{eq:IL_aux5}}\leq
\bar\mm_\omega\big({\rm cl}_{O(\bar X_\omega)}\big(\bar\Phi(K)\big)\big)
\leq\mm_\omega(C),
\end{split}\]
where in the last inequality we used the fact that \(\pi^{-1}(C)\)
is closed (by continuity of \(\pi\)) and thus
\({\rm cl}_{O(\bar X_\omega)}\big(\bar\Phi(K)\big)\subset
{\rm cl}_{O(\bar X_\omega)}\big(\bar\Phi\big(\Phi^{-1}(C)\big)\big)
\subset\pi^{-1}(C)\). Thanks to the inner regularity of \(\mm_Y\),
we deduce that \(\mm_Y\big(\Phi^{-1}(C)\big)\leq\mm_X(C)\),
which in turn yields \(\Phi_*\mm_Y\leq\mm_X\) as a consequence of the
inner regularity of the measures \(\Phi_*\mm_Y\) and \(\mm_X\).
\\
{\color{blue}\textsc{Step 5.}} We are now in a position to conclude the
proof of the statement. First, we claim that if \(p_X\notin{\rm spt}(\mm_X)\),
then no pointed Polish metric measure space projects on the inverse system in
\eqref{eq:inverse_system}, thus in particular such inverse system does not admit
an inverse limit in the category of pointed Polish metric measure spaces. We
argue by contradiction: suppose there exists a pointed Polish metric measure
space \((Y,d_Y,\mm_Y,p_Y)\) that projects on the given inverse system, via
some morphisms \(Q_i\colon Y\to X_i\). Since \(p_X\notin{\rm spt}(\mm_X)\),
we can choose a radius \(r>0\) such that \(\mm_X\big(B(p_X,r)\big)=0\).
By using what we proved in \textsc{Step 4}, we obtain
\[
\mm_Y\big(B(p_Y,r)\big)\leq\mm_Y\big(\Phi^{-1}\big(B(p_X,r)\big)\big)
\leq\mm_X\big(B(p_X,r)\big)=0.
\]
This implies that \(p_Y\notin{\rm spt}(\mm_Y)\), which leads to a contradiction.
Hence, the claim is proven.

Conversely, suppose \(p_X\in{\rm spt}(\mm_X)\). Then \textsc{Step 3}
grants that \(\big((X,d_X,\mm_X,p_X),\{P_i\}_{i\in\N}\big)\) projects on
the inverse system in \eqref{eq:inverse_system}, while \textsc{Step 4}
shows the validity of the universal property. All in all, we have proven
that \(\big((X,d_X,\mm_X,p_X),\{P_i\}_{i\in\N}\big)\) is the inverse limit
of the inverse system in \eqref{eq:inverse_system}. Therefore, the
statement is finally achieved.
\end{proof}
\begin{remark}{\rm
Under the assumption of Theorem \ref{thm:IL}, the following implication
trivially holds:
\begin{equation}\label{eq:implic_belong_spt}
p_X\in{\rm spt}(\mm_X)\quad\Longrightarrow\quad p_\omega\in{\rm spt}(\mm_\omega).
\end{equation}
Note that the fact that \(p_\omega\) belongs to the support of \(\mm_\omega\)
can be equivalently characterised as:
\[
p_\omega\in{\rm spt}(\mm_\omega)\quad\Longleftrightarrow\quad
\lim_{i\to\omega}\mm_i\big(B(p_i,r)\big)>0,\text{ for every }r>0.
\]
It follows from the inclusions
\(B\big([p_i],r\big)\subset\Pi_{i\to\omega}B(p_i,2r)\subset
B\big([p_i],3r\big)\), which hold for all \(r\).

On the other hand, we are not aware of any alternative characterisation
of the fact that \(p_X\) belongs to \({\rm spt}(\mm_X)\). Nevertheless,
in the ensuing Example \ref{ex:p_X_not_spt_m_X} we will see that the converse implication of the one in \eqref{eq:implic_belong_spt} might fail.
\fr}\end{remark}
\begin{example}\label{ex:p_X_not_spt_m_X}{\rm
Let us denote by \((e_j)_{j\in\N}\) the canonical basis of \(\ell^\infty\),
namely, \(e_j\coloneqq(\delta_{ij})_{i\in\N}\). Given any \(i\in\N\), we
define the subset \(X_i\subset\ell^\infty\) in the following way:
\[
X_i\coloneqq\bigcup_{k=1}^\infty X_i^k,\quad\text{ where we set }
X_i^k\coloneqq\bigg\{\frac{e_j}{k}\;\bigg|\;j=1,\ldots,2^i\bigg\}
\text{ for every }k\in\N.
\]
Moreover, we define \(p_i\coloneqq e_1/i\), while we set
\(d_i(x,y)\coloneqq\|x-y\|_{\ell^\infty}\) for every \(x,y\in X_i\) and
\[
\mm_i\coloneqq\sum_{k=1}^\infty\frac{1}{2^k}\bigg(
\sum_{j=1}^{2^i}\frac{1}{2^i}\,\delta_{e_j/k}\bigg)\in\mathscr P(X_i).
\]
The bonding maps \(\{P_{ij}\}_{i\leq j}\) are given as follows:
for any \(i\in\N\), we set \(P_{i,i+1}\colon X_{i+1}\to X_i\) as
\[
P_{i,i+1}\big(e_{2j-1}/k\big)=P_{i,i+1}\big(e_{2j}/k\big)\coloneqq
e_j/k,\quad\text{ for every }k\in\N\text{ and }j=1,\ldots,2^i,
\]
while we set \(P_{ij}\coloneqq P_{i,i+1}\circ\ldots\circ P_{j-1,j}
\colon X_j\to X_i\) for every \(i,j\in\N\) with \(i<j\). Observe that
the maps \(P_{ij}\) are \(1\)-Lipschitz and satisfy
\((P_{ij})_*\mm_j=\mm_i\). Hence, the sequence
\(\big\{(X_i,d_i,\mm_i,p_i)\big\}_{i\in\N}\) together with the maps
\(\{P_{ij}\}_{i\leq j}\) form an inverse system in the category of
pointed Polish metric measure spaces. Let \((X,d_X,\mm_X,p_X)\) be
as in Theorem \ref{thm:IL}. Then we claim that
\begin{equation}\label{eq:p_X_not_spt_m_X_claim}
{\rm spt}(\mm_\omega)=\{p_\omega\},\qquad{\rm spt}(\mm_X)=\emptyset.
\end{equation}

Let us first show that \(p_\omega\in{\rm spt}(\mm_\omega)\). Given any \(r>0\),
choose \(\bar k\in\N\) such that \(2/\bar k<r/2\). Then for any
\(i\geq\bar k\) we have \(\mm_i\big(B(p_i,r/2)\big)\geq\sum_{k\geq\bar k}2^{-k}
=2^{-\bar k+1}\), so that
\[
\mm_\omega\big(B(p_\omega,r)\big)\geq
\lim_{i\to\omega}\mm_i\big(B(p_i,r/2)\big)\geq\frac{1}{2^{\bar k-1}}>0.
\]
By the arbitrariness of \(r>0\), we deduce that
\(p_\omega\in{\rm spt}(\mm_\omega)\). Now fix any
\(x=[[x_i]]\in X_\omega\setminus\{p_\omega\}\). We aim to show that
\(p_\omega\notin{\rm spt}(\mm_\omega)\). Since
\(\lim_{i\to\omega}d_i(x_i,p_i)=d_\omega(x,p_\omega)>0\), we can find
\(\bar k\in\N\) with \(x_i\in\bigcup_{k\leq\bar k}X_i^k\) for
\(\omega\)-a.e.\ \(i\). Then there exist \(k_0\in\{1,\ldots,\bar k\}\)
and \(S\in\omega\) such that \(x_i\in X_i^{k_0}\) for every \(i\in S\).
Given that \(B\big(x_i,1/k_0(k_0+1)\big)=\{x_i\}\) and
\(\mm_i\big(\{x_i\}\big)=2^{-k_0-i}\) for every \(i\in S\),
we conclude that \(\mm_\omega\big(B\big(x,1/k_0(k_0+2)\big)\big)\leq
\lim_{i\to\omega}\mm_i\big(B\big(x_i,1/k_0(k_0+1)\big)\big)=0\),
thus proving that \(x\notin{\rm spt}(\mm_\omega)\). All in all,
the first part of \eqref{eq:p_X_not_spt_m_X_claim} is proven.
To prove the second one, it is sufficient to show that
\(\mm_\omega\big(\{p_\omega\}\big)=0\), since this implies that
\(\mm_X=0\) and thus \({\rm spt}(\mm_X)=\emptyset\). To this aim,
fix any \(r>0\). Given that \(\mm_i\big(B(p_i,r)\big)\leq 2^{2-1/r}\)
for all \(i\in\N\) with \(1/i<r\), we deduce that
\(\mm_\omega\big(\{p_\omega\}\big)\leq\lim_{i\to\omega}
\mm_i\big(B(p_i,r)\big)\leq 2^{2-1/r}\). By letting \(r\searrow 0\),
we can finally conclude that \(\mm_\omega\big(\{p_\omega\}\big)=0\),
as claimed above.
\fr}\end{example}
\appendix
\numberwithin{equation}{chapter}
\chapter{Prokhorov theorem for ultralimits}\label{s:Prokhorov}
In this appendix, we obtain a variant for ultralimits of the celebrated
Prokhorov theorem; see Theorem \ref{thm:Prokhorov}.
Even though we only needed one of the two implications
(in the proof of Theorem \ref{thm:pre-Gromov}), we prove
the full result, since we believe it might be
of independent interest.
\begin{lemmaAPP}\label{lem:asympt_Cauchy}
Let \((X,d)\) be a complete metric space.
Let \((x_i)_{i\in\N}\subset X\) be an
\emph{asymptotically Cauchy} sequence, meaning that
for every \(\varepsilon>0\) there exists \(S\in\omega\)
such that \(d(x_i,x_j)\leq\varepsilon\) holds for every \(i,j\in S\).
Then the ultralimit \(x\coloneqq\lim_{i\to\omega}x_i\in X\) exists.
\end{lemmaAPP}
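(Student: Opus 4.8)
The plan is to mimic the argument in the proof of Lemma~\ref{lem:non_ex_UL}: I would extract a genuine Cauchy subsequence from $(x_i)$, use completeness to produce its limit, and then show that this limit is in fact the $\omega$-limit of the whole sequence.

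First I would fix a sequence $\varepsilon_k\searrow 0$. By the asymptotic Cauchy hypothesis, for each $k$ I can pick $T^k\in\omega$ with $d(x_i,x_j)\leq\varepsilon_k$ for all $i,j\in T^k$; setting $S^k\coloneqq\bigcap_{\ell\leq k}T^\ell$ gives a nested sequence $S^1\supset S^2\supset\cdots$ with each $S^k\in\omega$, because finite intersections of members of the filter $\omega$ stay in $\omega$. Here I would use the standard fact that a non-principal ultrafilter contains only infinite sets (if a finite $A\in\omega$, then $\N\setminus A=\bigcap_{a\in A}(\N\setminus\{a\})\in\omega$, contradicting $A\in\omega$), so each $S^k$ is in particular nonempty and I may choose $i_k\in S^k$.

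Next I would verify that $(x_{i_k})_{k\in\N}$ is Cauchy: for $k\leq k'$ one has $i_k,i_{k'}\in S^k$ since $S^{k'}\subset S^k$, hence $d(x_{i_k},x_{i_{k'}})\leq\varepsilon_k$, and $\varepsilon_k\to 0$. By completeness of $(X,d)$ the limit $x\coloneqq\lim_{k\to\infty}x_{i_k}\in X$ exists. Finally I would show $x=\lim_{i\to\omega}x_i$: given $\varepsilon>0$, choose $k$ with $\varepsilon_k<\varepsilon/2$ and $d(x_{i_k},x)<\varepsilon/2$; then for every $i\in S^k$ we have $i,i_k\in S^k$, so $d(x_i,x)\leq d(x_i,x_{i_k})+d(x_{i_k},x)<\varepsilon_k+\varepsilon/2<\varepsilon$. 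Thus $\big\{i\in\N\,\big|\,d(x_i,x)\leq\varepsilon\big\}\supset S^k\in\omega$ and therefore belongs to $\omega$; by arbitrariness of $\varepsilon>0$ this means exactly that $x$ is the ultralimit of $(x_i)$.

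I do not expect a genuine obstacle in this argument — it is essentially a one-step simplification of Lemma~\ref{lem:non_ex_UL}, where the asymptotic Cauchy hypothesis replaces the covering argument. The only point requiring a moment's care is recalling that members of a non-principal ultrafilter are infinite, which is what makes the choice of the $i_k$ legitimate.
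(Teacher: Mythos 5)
Your proof is correct and follows essentially the same route as the paper's: fix $\varepsilon_k\searrow 0$, build a nested sequence of ultrafilter sets, extract a Cauchy subsequence, and use completeness plus upward-closedness of $\omega$ to identify the limit as the ultralimit. The only cosmetic difference is that the paper forces the indices $i_k=\min(S_k)$ to be strictly increasing via an extra intersection, whereas you simply pick any $i_k\in S^k$ — which suffices, since the Cauchy estimate only uses $i_k,i_{k'}\in S^{\min(k,k')}$.
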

\begin{proof}
Fix any sequence \(\varepsilon_k\searrow 0\). Given any \(k\in\N\),
we can find \(S'_k\in\omega\) such that \(d(x_i,x_j)\leq\varepsilon_k\)
for every \(i,j\in S'_k\). Define \(S_1\coloneqq S'_1\) and
\[
S_k\coloneqq S'_k\cap S_{k-1}\cap
\big\{i\in\N\,:\,i>\min(S_{k-1})\big\}\in\omega,
\quad\text{ for every }k\geq 2.
\]
Then the sequence \((i_k)_{k\in\N}\), given by
\(i_k\coloneqq\min(S_k)\), is strictly increasing.
Since \(d(x_{i_\ell},x_{i_k})\leq\varepsilon_k\) for all
\(\ell\geq k\), we deduce that \((x_{i_k})_{k\in\N}\) is
a Cauchy sequence, thus it admits a limit \(x\in X\). We
claim that \(x=\lim_{i\to\omega}x_i\). In order to prove it,
let \(\varepsilon>0\) be fixed. Pick any \(k\in\N\) such that
\(\varepsilon_k\leq\varepsilon\) and \(d(x_{i_k},x)\leq\varepsilon\).
Since \(d(x_i,x_{i_k})\leq\varepsilon_k\leq\varepsilon\) for
all \(i\in S_k\), we have that \(d(x_i,x)\leq 2\varepsilon\) for
\(\omega\)-a.e.\ \(i\). By arbitrariness of \(\varepsilon\),
we conclude that \(x=\lim_{i\to\omega}x_i\), as required.
\end{proof}

Given a Polish metric space \((X,d)\) and a constant \(\lambda>0\),
we define the family \(\mathcal M_\lambda(X)\) as
\[
\mathcal M_\lambda(X)\coloneqq\big\{\mu\geq 0\text{ Borel measure on }
X\;\big|\;\mu(X)\leq\lambda\big\}.
\]
By \emph{weak topology} in duality with \(C_b(X)\) we mean the
coarsest topology on \(\mathcal M_\lambda(X)\) such that the
function \(\mathcal M_\lambda(X)\ni\mu\to\int f\,\d\mu\in\R\)
is continuous for every \(f\in C_b(X)\). The weak topology is
metrised by the following distance:
\[
\delta(\mu,\nu)\coloneqq\sum_{n\in\N}\frac{1}{2^n}
\bigg|\int f_n\,\d(\mu-\nu)\bigg|,\quad\text{ for every }
\mu,\nu\in\mathcal M_\lambda(X),
\]
where \((f_n)_{n\in\N}\) is a suitably chosen sequence of
bounded, Lipschitz functions from \(X\) to \(\R\) satisfying
\(\|f_n\|_{C_b(X)}=1\) for all \(n\in\N\). For instance,
one can take as \((f_n)_n\) the countable family
\(\big\{g/\|g\|_{C_b(X)}\,\big|\,g\in\mathcal F,\,g\neq 0\big\}\),
where we define
\[
\mathcal F\coloneqq\big\{\pm\max\{g_1,\ldots,g_k\}\;\big|
\;k\in\N,\,g_1,\ldots,g_k\in\tilde{\mathcal F}\big\},
\]
while (for some given countable, dense subset \(D\) of \(X\)) we define
\[
\tilde{\mathcal F}\coloneqq\Big\{\max\big\{\alpha-\beta\,d(\cdot,y),
\gamma\big\}\;\Big|\;\alpha,\beta,\gamma\in\mathbb Q,\,y\in D\Big\}.
\]
Observe that, given any \(f\in C_b(X)\),
\(\mu\in\mathcal M_\lambda(X)\), and \(\varepsilon>0\),
there exist functions \(g,g'\in\mathcal F\) such that
\(g\leq f\leq g'\) and \(\int g'\,\d\mu-\varepsilon\leq\int f\,\d\mu
\leq\int g\,\d\mu+\varepsilon\).
\begin{remarkAPP}\label{rmk:Polish_P(X)}{\rm
Given any Polish metric space \((X,d)\), it holds that
\(\big(\mathscr P(X),\delta\big)\) is a Polish metric space,
where \(\mathscr P(X)\) stands for the space of all
Borel probability measures on \((X,d)\). See, for instance,
\cite[Theorem 15.15]{AliprantisBorder99}.
\fr}\end{remarkAPP}
\begin{propositionAPP}\label{prop:char_weak_ultraconv_meas}
Let \((X,d)\) be a Polish metric space. Let
\(\mu,\mu_i\in\mathcal M_\lambda(X)\) for some \(\lambda>0\).
Then it holds \(\mu=\lim_{i\to\omega}\mu_i\) with
respect to the weak convergence in \(\mathcal M_\lambda(X)\)
if and only if
\begin{equation}\label{eq:char_weak_ultraconv_meas}
\int f\,\d\mu=\lim_{i\to\omega}\int f\,\d\mu_i,
\quad\text{ for every }f\in C_b(X).
\end{equation}
\end{propositionAPP}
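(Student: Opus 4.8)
The idea is to use that the metric \(\delta\) tests measures only against the countable family \((f_n)_{n\in\N}\subset C_b(X)\) extracted from \(\mathcal F\), while the approximation property of \(\mathcal F\) recorded right before the statement lets one pass from this family to an arbitrary \(f\in C_b(X)\). I would prove the two implications separately.

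For the implication ``\(\delta\)-ultralimit \(\Rightarrow\) \eqref{eq:char_weak_ultraconv_meas}'', I would first note that, since the summands defining \(\delta\) are nonnegative, \(\delta(\mu_i,\mu)\to_\omega 0\) forces \(\int f_n\,\d\mu_i\to_\omega\int f_n\,\d\mu\) for every fixed \(n\), hence \(\int g\,\d\mu_i\to_\omega\int g\,\d\mu\) for every \(g\in\mathcal F\) (each such \(g\) being a scalar multiple of some \(f_n\)). Then, for a fixed \(f\in C_b(X)\) and \(\varepsilon>0\), I would invoke the \(\mathcal F\)-sandwich to pick \(g,g'\in\mathcal F\) with \(g\le f\le g'\) on \(X\) and \(\int(g'-g)\,\d\mu\le\varepsilon\). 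From \(g\le f\le g'\) one gets \(\int g\,\d\mu_i\le\int f\,\d\mu_i\le\int g'\,\d\mu_i\) for every \(i\); since \(\omega\)-limits of bounded real sequences exist and respect inequalities, this yields \(\int g\,\d\mu\le\lim_{i\to\omega}\int f\,\d\mu_i\le\int g'\,\d\mu\), which together with \(\int g\,\d\mu\le\int f\,\d\mu\le\int g'\,\d\mu\) and the choice of \(g,g'\) gives \(\big|\lim_{i\to\omega}\int f\,\d\mu_i-\int f\,\d\mu\big|\le\varepsilon\). Letting \(\varepsilon\downarrow 0\) closes this direction. (If one prefers to avoid the a priori existence of the \(\omega\)-limit, the same sandwich shows that \(\{i:\int g\,\d\mu_i>\int f\,\d\mu-2\varepsilon\}\) and \(\{i:\int g'\,\d\mu_i<\int f\,\d\mu+2\varepsilon\}\) both lie in \(\omega\), so their intersection does too, which is exactly \eqref{eq:char_weak_ultraconv_meas} up to \(2\varepsilon\).)

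For the converse, assuming \eqref{eq:char_weak_ultraconv_meas}, I would argue straight from the definition of ultralimit in the metric space \(\big(\mathcal M_\lambda(X),\delta\big)\). Given \(\varepsilon>0\), the bound \(|\int f_n\,\d(\mu_i-\mu)|\le 2\lambda\) (valid for all \(n,i\) because \(\|f_n\|_{C_b(X)}=1\) and \(\mu_i(X),\mu(X)\le\lambda\)) makes the tail \(\sum_{n>N}2^{-n}|\int f_n\,\d(\mu_i-\mu)|\le 2\lambda\,2^{-N}\) smaller than \(\varepsilon/2\) for \(N\) large, uniformly in \(i\); and for each \(n\le N\), hypothesis \eqref{eq:char_weak_ultraconv_meas} applied to \(f_n\in C_b(X)\) gives \(\{i:|\int f_n\,\d(\mu_i-\mu)|\le\varepsilon/2\}\in\omega\). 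Intersecting these finitely many sets produces \(S\in\omega\) with \(\delta(\mu_i,\mu)\le\varepsilon\) for all \(i\in S\), and since \(\varepsilon\) was arbitrary this means \(\mu=\lim_{i\to\omega}\mu_i\) in \(\mathcal M_\lambda(X)\).

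The only genuinely delicate point — and where I would spend care — is the first implication: \(\delta\) controls only the countable family, so one must transfer convergence to a general \(f\in C_b(X)\), keeping in mind that the approximants \(g,g'\in\mathcal F\) produced by the remark depend on the (fixed) target measure \(\mu\), which is harmless here. The converse is a routine tail estimate whose only subtlety is the uniformity in \(i\) needed for the finite intersection over \(\omega\).
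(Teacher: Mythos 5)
Your proof is correct and follows essentially the same route as the paper: the forward implication rests on the \(\mathcal F\)-sandwich recorded in the remark before the statement (the paper phrases it as a pointwise bound \(|\int f\,\d(\mu_i-\mu)|\le c\,\delta(\mu_i,\mu)+\varepsilon\) before passing to the \(\omega\)-limit, while you pass to the limit along the \(f_n\) first, which is an immaterial reordering), and the converse is the same finite-intersection tail estimate. Your tail bound \(2\lambda\,2^{-N}\) is in fact slightly more careful than the paper's, which tacitly bounds \(|\int f_n\,\d(\mu_i-\mu)|\) by \(1\) rather than \(2\lambda\).
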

\begin{proof}
Suppose \(\mu=\lim_{i\to\omega}\mu_i\) with respect to
the weak convergence. Let \(f\in C_b(X)\) be fixed.
Given \(\varepsilon>0\), we can find \(g,g'\in\mathcal F\)
such that \(g\leq f\leq g'\) and \(\int g'\,\d\mu-\varepsilon
\leq\int f\,\d\mu\leq\int g\,\d\mu+\varepsilon\). Choose \(n,m\in\N\)
with \(f_n=g/\|g\|_{C_b(X)}\) and \(f_m=g'/\|g'\|_{C_b(X)}\).
Then for any \(i\in\N\) it holds
\[\begin{split}
\int f\,\d\mu_i-\int f\,\d\mu&\geq
\|g\|_{C_b(X)}\bigg(\int f_n\,\d(\mu_i-\mu)\bigg)-\varepsilon
\geq-2^n\,\|g\|_{C_b(X)}\,\delta(\mu_i,\mu)-\varepsilon,\\
\int f\,\d\mu_i-\int f\,\d\mu&\leq
\|g'\|_{C_b(X)}\bigg(\int f_m\,\d(\mu_i-\mu)\bigg)+\varepsilon
\leq 2^m\,\|g'\|_{C_b(X)}\,\delta(\mu_i,\mu)+\varepsilon.
\end{split}\]
All in all, for any \(i\in\N\) we have that
\[
\bigg|\int f\,\d\mu_i-\int f\,\d\mu\bigg|
\leq c\,\delta(\mu_i,\mu)+\varepsilon,\quad\text{ where }
c\coloneqq\max\big\{2^n\,\|g\|_{C_b(X)},2^m\,\|g'\|_{C_b(X)}\big\}.
\]
Therefore, we conclude that
\(\lim_{i\to\omega}\big|\int f\,\d\mu_i-\int f\,\d\mu\big|
\leq c\lim_{i\to\omega}\delta(\mu_i,\mu)+\varepsilon=\varepsilon\),
whence the desired property \eqref{eq:char_weak_ultraconv_meas}
follows thanks to the arbitrariness of \(\varepsilon\).

Conversely, suppose \eqref{eq:char_weak_ultraconv_meas}
is verified. Let \(\varepsilon>0\) be fixed. Pick \(N\in\N\)
such that \(1/2^N\leq\varepsilon/2\). It follows from
the assumption \eqref{eq:char_weak_ultraconv_meas} that
\[
S\coloneqq\bigg\{i\in\N\;\bigg|\;\bigg|\int f_n\,\d(\mu_i-\mu)\bigg|
\leq\frac{\varepsilon}{2}\;\text{ for every }n=1,\ldots,N\bigg\}
\in\omega.
\]
Therefore, we finally conclude that
\[
\delta(\mu_i,\mu)\leq\sum_{n=1}^N\frac{1}{2^n}
\bigg|\int f_n\,\d(\mu_i-\mu)\bigg|+\sum_{n>N}\frac{1}{2^n}
\leq\frac{\varepsilon}{2}+\frac{1}{2^N}\leq\varepsilon,
\quad\text{ for every }i\in S,
\]
so that \(\delta(\mu_i,\mu)\leq\varepsilon\) for
\(\omega\)-a.e.\ \(i\), thus \(\mu=\lim_{i\to\omega}\mu_i\)
with respect to the weak convergence.
\end{proof}

Alternatively, we could have proven Proposition
\ref{prop:char_weak_ultraconv_meas} by just observing that
for any given measure \(\mu\in\mathcal M_\lambda(X)\) the
collection \(\big\{U_{f,\eps}\,:\,f\in C_b(X),\,\eps>0\big\}\),
which is given by
\[
U_{f,\eps}\coloneqq\bigg\{\nu\in\mathcal M_\lambda(X)\;\bigg|\;
\bigg|\int f\,\d\mu-\int f\,\d\nu\bigg|<\eps\bigg\},
\]
forms a neighbourhood basis for \(\mu\) with respect to the weak topology.
\begin{corollaryAPP}\label{cor:sc_weak_ultraconv}
Let \((X,d)\) be a Polish metric space. Let \(\lambda>0\)
be given. Let \(\mu,\mu_i\in\mathcal M_\lambda(X)\) satisfy
\(\mu=\lim_{i\to\omega}\mu_i\) with respect to the weak
convergence in \(\mathcal M_\lambda(X)\). Then it holds that
\begin{subequations}\begin{align}
\label{eq:sc_weak_ultraconv1}
\mu(U)\leq\lim_{i\to\omega}\mu_i(U)&,\quad\text{ for every open set }
U\subset X,\\
\label{eq:sc_weak_ultraconv2}
\mu(C)\geq\lim_{i\to\omega}\mu_i(C)&,\quad\text{ for every closed set }
C\subset X.
\end{align}\end{subequations}
In particular, it holds that \(\mu(X)=\lim_{i\to\omega}\mu_i(X)\).
\end{corollaryAPP}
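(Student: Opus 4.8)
The plan is as follows. First I would observe that the final equality $\mu(X)=\lim_{i\to\omega}\mu_i(X)$ is an immediate consequence of Proposition \ref{prop:char_weak_ultraconv_meas} applied to the constant function $1\in C_b(X)$, so that it suffices to prove \eqref{eq:sc_weak_ultraconv1} and \eqref{eq:sc_weak_ultraconv2}. I would record preliminarily that all the $\omega$-limits occurring in the statement are well-defined: the sequences $\big(\mu_i(U)\big)_i$, $\big(\mu_i(C)\big)_i$, $\big(\mu_i(X)\big)_i$ take values in the compact interval $[0,\lambda]$, and every sequence in a compact metric space admits an ultralimit. I would also use freely the elementary facts that the $\omega$-limit is monotone and additive on bounded real sequences.

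To establish \eqref{eq:sc_weak_ultraconv1}, I would fix an open set $U\subset X$ and introduce the functions $f_n\colon X\to[0,1]$ given by $f_n(x)\coloneqq\min\big\{1,n\,{\rm dist}(x,X\setminus U)\big\}$ (with the convention ${\rm dist}(\cdot,\emptyset)\equiv+\infty$, so that $f_n\equiv 1$ when $U=X$). Each $f_n$ is bounded and $1$-Lipschitz, hence belongs to $C_b(X)$, and the sequence $(f_n)_n$ increases pointwise to $\nchi_U$, because ${\rm dist}(x,X\setminus U)>0$ for every $x\in U$. Since $f_n\leq\nchi_U$, one has $\int f_n\,\d\mu_i\leq\mu_i(U)$ for every $i\in\N$, whence, combining the monotonicity of the $\omega$-limit with Proposition \ref{prop:char_weak_ultraconv_meas},
\[
\int f_n\,\d\mu=\lim_{i\to\omega}\int f_n\,\d\mu_i\leq\lim_{i\to\omega}\mu_i(U),
\quad\text{ for every }n\in\N.
\]
Letting $n\to\infty$ and invoking the monotone convergence theorem on the left-hand side gives $\mu(U)\leq\lim_{i\to\omega}\mu_i(U)$, which is \eqref{eq:sc_weak_ultraconv1}.

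Finally, I would deduce \eqref{eq:sc_weak_ultraconv2} by complementation: given a closed set $C\subset X$, the set $U\coloneqq X\setminus C$ is open, and using $\mu_i(C)=\mu_i(X)-\mu_i(U)$, the additivity of the $\omega$-limit, the already established equality $\mu(X)=\lim_{i\to\omega}\mu_i(X)$, and \eqref{eq:sc_weak_ultraconv1}, one obtains
\[
\lim_{i\to\omega}\mu_i(C)=\mu(X)-\lim_{i\to\omega}\mu_i(U)\leq\mu(X)-\mu(U)=\mu(C).
\]
The argument is entirely elementary: the only place I expect to require a little care is the verification that the $\omega$-limit is monotone and additive on bounded sequences, but this is a routine consequence of the ultrafilter axioms and poses no genuine obstacle.
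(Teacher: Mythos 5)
Your proof is correct. For \eqref{eq:sc_weak_ultraconv1} you follow exactly the paper's route: an increasing sequence of bounded continuous functions converging pointwise to \(\nchi_U\), Proposition \ref{prop:char_weak_ultraconv_meas}, monotonicity of the \(\omega\)-limit, and monotone convergence (your explicit choice \(f_n=\min\{1,n\,{\rm dist}(\cdot,X\setminus U)\}\) is \(n\)-Lipschitz rather than \(1\)-Lipschitz, but this is immaterial since all that is needed is membership in \(C_b(X)\)). The only genuine divergence is in \eqref{eq:sc_weak_ultraconv2}: the paper proves it directly, by taking \(g_n\in C_b(X)\) with \(g_n\leq 1\) and \(g_n\searrow\nchi_C\) and applying dominated convergence, whereas you deduce it from \eqref{eq:sc_weak_ultraconv1} by complementation, using the additivity of the \(\omega\)-limit on bounded sequences together with the already-established identity \(\mu(X)=\lim_{i\to\omega}\mu_i(X)\). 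Your shortcut is slightly more economical (no second approximating sequence, no dominated convergence) at the cost of invoking additivity of ultralimits, which is indeed a routine consequence of the ultrafilter axioms; both arguments are equally valid.
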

\begin{proof}
To prove \eqref{eq:sc_weak_ultraconv1}, pick a sequence
\((f_n)_{n\in\N}\subset C_b(X)\) with
\(f_n\nearrow\nchi_U\) everywhere on \(X\). Thanks to
Proposition \ref{prop:char_weak_ultraconv_meas}, we see that
\[
\int f_n\,\d\mu=\lim_{i\to\omega}\int f_n\,\d\mu_i
\leq\lim_{i\to\omega}\mu_i(U),\quad\text{ for every }n\in\N,
\]
whence by letting \(n\to\infty\) and using the monotone convergence
theorem we can conclude that
\[
\mu(U)=\lim_{n\to\infty}\int f_n\,\d\mu\leq\lim_{i\to\omega}\mu_i(U).
\]
This shows the validity of \eqref{eq:sc_weak_ultraconv1}.
To prove \eqref{eq:sc_weak_ultraconv2}, we can argue in a similar way.
Pick a sequence \((g_n)_{n\in\N}\subset C_b(X)\) such that
\(g_n\leq 1\) for all \(n\in\N\) and \(g_n\searrow\nchi_C\) everywhere
on \(X\). By using Proposition \ref{prop:char_weak_ultraconv_meas}
we get \(\int g_n\,\d\mu=\lim_{i\to\omega}\int g_n\,\d\mu_i\geq
\lim_{i\to\omega}\mu_i(C)\) for all \(n\in\N\), so by
dominated convergence theorem we conclude that \(\mu(C)=
\lim_{n\to\infty}\int g_n\,\d\mu\geq\lim_{i\to\omega}\mu_i(C)\).
This shows the validity of \eqref{eq:sc_weak_ultraconv2}.
The final statement immediately follows.
\end{proof}

A subset \(T\) of a metric space \((X,d)\) is said to be
\emph{\(\varepsilon\)-totally bounded} (for some \(\varepsilon>0\))
provided it is contained in the union of finitely many balls
of radius \(\varepsilon\). Observe that a subset
of \(X\) is totally bounded if and only if it is
\(\varepsilon\)-totally bounded for every \(\varepsilon>0\).
\begin{theoremAPP}[Prokhorov theorem for ultralimits]\label{thm:Prokhorov}
Let \((X,d)\) be a given Polish metric space. Let
\((\mu_i)_{i\in\N}\) be a sequence of Borel measures
on \((X,d)\) such that
\(\lambda\coloneqq\lim_{i\to\omega}\mu_i(X)<+\infty\).
Then the following conditions are equivalent:
\begin{itemize}
\item[\(\rm i)\)] Given any \(\varepsilon>0\), there exists an
\(\varepsilon\)-totally bounded, Borel subset \(T\) of \(X\)
such that
\[
\lim_{i\to\omega}\mu_i(X\setminus T)\leq\varepsilon.
\]
\item[\(\rm ii)\)] The ultralimit
\(\mu\coloneqq\lim_{i\to\omega}\mu_i\) exists
(with respect to the weak convergence in duality with $C_b(X)$).
\end{itemize}
\end{theoremAPP}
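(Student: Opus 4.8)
The plan is to prove the two implications separately, treating the ``tightness $\Rightarrow$ convergence'' direction as the substantial one and the converse as a soft consequence of the Corollaries above. Throughout I would work with $\lambda':=\lambda+1$ (or any fixed bound exceeding $\limsup_{i\to\omega}\mu_i(X)$) so that, after discarding a $\nchi_\omega$-null set of indices, all $\mu_i$ lie in $\mathcal M_{\lambda'}(X)$; this lets me invoke Proposition \ref{prop:char_weak_ultraconv_meas} and Corollary \ref{cor:sc_weak_ultraconv}, which are phrased for $\mathcal M_\lambda$.

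\textbf{Direction ii) $\Rightarrow$ i).} Assume $\mu=\lim_{i\to\omega}\mu_i$ weakly. Fix $\varepsilon>0$. Since $(X,d)$ is Polish, $\mu$ is a tight Borel measure, so there is a compact (hence totally bounded) set $K\subset X$ with $\mu(X\setminus K)\le\varepsilon/2$. Choose an open set $U\supset K$ whose closure $\bar U$ is still ``small'' for $\mu$; more precisely, by outer regularity pick open $U\supset K$ with $\mu(U)\le\mu(K)+\varepsilon/2$, and let $T:=\bar U$. Then $T$ is totally bounded (the closure of a subset of a $\delta$-neighbourhood of a compact set is totally bounded), in particular $\varepsilon$-totally bounded, and Borel. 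By \eqref{eq:sc_weak_ultraconv1} applied to the open set $X\setminus T\subset X\setminus K$ — careful: $X\setminus T$ is open but I want an upper bound on $\lim_{i\to\omega}\mu_i(X\setminus T)$, so instead apply \eqref{eq:sc_weak_ultraconv2} to the closed set $T$: $\mu(T)\ge\lim_{i\to\omega}\mu_i(T)$. Combined with $\mu(X)=\lim_{i\to\omega}\mu_i(X)$ from Corollary \ref{cor:sc_weak_ultraconv}, this gives $\lim_{i\to\omega}\mu_i(X\setminus T)=\lim_{i\to\omega}\mu_i(X)-\lim_{i\to\omega}\mu_i(T)\le\mu(X)-\mu(T)\le\mu(X)-\mu(K)\le\varepsilon$, as required.

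\textbf{Direction i) $\Rightarrow$ ii).} This is the heart of the matter. The natural candidate for the limit is the set function $\mu(f):=\lim_{i\to\omega}\int f\,\d\mu_i$ for $f\in C_b(X)$; this limit exists in $\R$ for every $f$ because $\big|\int f\,\d\mu_i\big|\le\|f\|_{C_b(X)}\,\mu_i(X)$ is $\omega$-a.e.\ bounded, and $\mu(\cdot)$ is clearly a positive linear functional on $C_b(X)$ with $\mu(1)=\lambda$. The issue is that $C_b(X)$ is too big for the Riesz representation theorem to directly produce a Borel measure; one must exploit the tightness in i) to get a \emph{Radon} measure. I would proceed as follows. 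First, by i) choose for each $k\in\N$ a $(1/k)$-totally bounded Borel set $T_k$ with $\lim_{i\to\omega}\mu_i(X\setminus T_k)\le 1/k$; replacing $T_k$ by $\bigcap_{j\le k}T_j$ (still $1/k$-totally bounded, and the exceptional-index estimate only worsens by a factor absorbed into a slightly larger bound, say $\le 1/k$ after re-indexing) we may assume $T_1\supset T_2\supset\cdots$. Let $K_k:=\overline{T_k}$; this is a totally bounded closed subset of the complete space $X$, hence compact, and $\lim_{i\to\omega}\mu_i(X\setminus K_k)\le 1/k$.

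Now restrict attention to the $\sigma$-compact set $Z:=\bigcup_k K_k$. On each compact $K_k$, consider the measures $\mu_i\llcorner K_k\in\mathcal M_{\lambda'}(K_k)$; since $\big(\mathscr M_{\lambda'}(K_k),\delta\big)$ is compact (this is the classical weak-compactness of sub-probability measures on a compact metric space, or one can cite Remark \ref{rmk:Polish_P(X)} together with a scaling/one-point-compactification argument), and since $\delta$ is a metric, the ultralimit $\nu_k:=\lim_{i\to\omega}\mu_i\llcorner K_k$ exists in $\mathcal M_{\lambda'}(K_k)$ by the general fact that ultralimits of sequences in compact metric spaces exist (Definition of ultralimit in metric spaces applied to $\delta$, using that a sequence in a compact metric space is covered by finitely many small balls so Lemma \ref{lem:non_ex_UL}'s obstruction cannot occur — alternatively invoke directly that ultralimits always exist in compact spaces). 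A routine check using $K_k\subset K_{k+1}$ and Proposition \ref{prop:char_weak_ultraconv_meas} shows $\nu_k=\nu_{k+1}\llcorner K_k$, so the $\nu_k$ patch together into a single Borel measure $\mu:=\lim_{k\to\infty}\nu_k$ on $Z$, extended by zero to $X$; it is finite since $\mu(X)=\sup_k\nu_k(K_k)\le\lambda'$. Finally I would verify $\mu=\lim_{i\to\omega}\mu_i$ weakly: given $f\in C_b(X)$ and $\varepsilon>0$, pick $k$ with $\|f\|_{C_b(X)}/k\le\varepsilon$ and $\mu(X\setminus K_k)\le\varepsilon/\|f\|_{C_b(X)}$ (the latter holds for $k$ large since $\nu_k(K_k)\to\mu(X)$), then estimate
\[
\bigg|\int f\,\d\mu-\lim_{i\to\omega}\int f\,\d\mu_i\bigg|
\le\bigg|\int f\,\d\nu_k-\lim_{i\to\omega}\int f\,\d(\mu_i\llcorner K_k)\bigg|
+\|f\|_{C_b(X)}\Big(\mu(X\setminus K_k)+\lim_{i\to\omega}\mu_i(X\setminus K_k)\Big),
\]
where the first term is $0$ by construction of $\nu_k$ and Proposition \ref{prop:char_weak_ultraconv_meas} applied on $K_k$, and the second is $\le 3\varepsilon$. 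Letting $\varepsilon\to 0$ and invoking Proposition \ref{prop:char_weak_ultraconv_meas} once more (now on $X$) gives that $\mu_i\to\mu$ weakly in $\mathcal M_{\lambda'}(X)$, which is exactly ii).

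\textbf{Main obstacle.} The delicate point is the extraction of the limit measure in i) $\Rightarrow$ ii): a positive linear functional on $C_b(X)$ need not be represented by a Borel measure (it may have a ``mass at infinity'' part), so the tightness hypothesis must be used essentially, not cosmetically. Organizing this via an exhaustion by compact sets $K_k$ and taking ultralimits of the restricted measures — each living in a genuinely compact metric space $\mathcal M_{\lambda'}(K_k)$ — is what makes the argument go through; the bookkeeping needed to upgrade the ``$\omega$-a.e.'' estimates $\lim_{i\to\omega}\mu_i(X\setminus T_k)\le 1/k$ into the compatibility relations $\nu_k=\nu_{k+1}\llcorner K_k$ and into the final weak-convergence estimate is routine but must be done with some care about which quantities are honest limits versus ultralimits.
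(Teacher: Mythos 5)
Your proposal has a genuine gap in each direction; the one in i) \(\Rightarrow\) ii) is fatal to the strategy, not merely to the writeup.

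In ii) \(\Rightarrow\) i) your final inequality points the wrong way. From \eqref{eq:sc_weak_ultraconv2} you get \(\lim_{i\to\omega}\mu_i(T)\le\mu(T)\), hence \(\lim_{i\to\omega}\mu_i(X\setminus T)=\mu(X)-\lim_{i\to\omega}\mu_i(T)\ge\mu(X)-\mu(T)\): an upper bound for the ultralimit mass of a closed set gives a \emph{lower} bound for the mass of its complement. What you need is a lower bound on \(\lim_{i\to\omega}\mu_i(T)\), which forces you to apply \eqref{eq:sc_weak_ultraconv1} to an \emph{open} totally bounded set containing \(K\) (e.g.\ \(T=K^{\varepsilon/2}\), or a finite union of \(\varepsilon\)-balls around a dense sequence, which is what the paper does); then \(\lim_{i\to\omega}\mu_i(T)\ge\mu(T)\ge\mu(K)\ge\mu(X)-\varepsilon\) and you conclude. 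This is an easy repair, after which your argument essentially coincides with the paper's.

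The direction i) \(\Rightarrow\) ii) is where the real problem lies. Your plan hinges on producing \emph{compact} sets \(K_k\) with \(\lim_{i\to\omega}\mu_i(X\setminus K_k)\le 1/k\) by setting \(K_k:=\overline{T_k}\). But \(T_k\) is only \((1/k)\)-totally bounded: it is covered by finitely many balls of radius \(1/k\) and nothing is asserted at smaller scales, so \(\overline{T_k}\) need not be compact (think of an infinite \((1/k)\)-separated net). Replacing \(T_k\) by \(\bigcap_{j\le k}T_j\) does not help, since that set is \(\varepsilon\)-totally bounded only for \(\varepsilon\ge 1/k\); and passing to the infinite intersection \(\bigcap_j T_j\) destroys the measure estimate because \(\lim_{i\to\omega}\) does not commute with countable unions: each \(\lim_{i\to\omega}\mu_i(X\setminus T_j)\) can be small while \(\lim_{i\to\omega}\mu_i\big(\bigcup_j(X\setminus T_j)\big)\) is not. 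This is not a bookkeeping issue: Proposition \ref{prop:Prokh_sharp} exhibits a sequence satisfying ii) (hence i)) for which \(\lim_{i\to\omega}\mu_i(K)=0\) for \emph{every} compact \(K\subset X\), so the compact exhaustion your argument requires simply does not exist. Any proof of i) \(\Rightarrow\) ii) must work at one scale \(\varepsilon\) at a time; this is precisely why the paper's proof instead shows that the normalised measures \(\tilde\mu_i=\mu_i/\mu_i(X)\) are asymptotically Cauchy for the metric \(\delta\) on \(\mathscr P(X)\) --- a condition verifiable scale by scale, via a finite Borel partition of a single \(\eta\)-totally bounded set and a discrete approximation --- and then concludes from completeness of \(\big(\mathscr P(X),\delta\big)\) through Lemma \ref{lem:asympt_Cauchy} and Remark \ref{rmk:Polish_P(X)}, with no compactness anywhere. (A smaller inaccuracy in the same step: even where the \(K_k\) exist, the compatibility \(\nu_k=\nu_{k+1}\llcorner K_k\) fails in general, since restriction to a closed set is not weakly continuous; only the monotonicity \(\nu_k\le\nu_{k+1}\) holds, which would in fact suffice for your patching.)
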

\begin{proof}
The proof goes along the lines of the proof of Prokhorov theorem,
see for example \cite{Bogachev07}.\\
{\color{blue}\({\rm ii)}\Longrightarrow{\rm i)}\).}
Suppose \(\rm ii)\) holds and fix \(\varepsilon>0\).
Choose any dense sequence \((x_j)_{j\in\N}\) in \(X\).
Then we claim that
\begin{equation}\label{eq:Prokhorov_aux1}
\lim_{i\to\omega}\mu_i\bigg(\bigcup_{j=1}^N
B(x_j,\varepsilon)\bigg)\geq\lambda-\varepsilon,
\quad\text{ for some }N\in\N.
\end{equation}
We argue by contradiction: suppose that \(\lim_{i\to\omega}
\mu_i\big(\bigcup_{j=1}^N B(x_j,\varepsilon)\big)<
\lambda-\varepsilon\) for every \(N\in\N\).  Hence, Corollary
\ref{cor:sc_weak_ultraconv} grants that
\(\mu\big(\bigcup_{j=1}^N B(x_j,\varepsilon)\big)\leq
\lim_{i\to\omega}\mu_i\big(\bigcup_{j=1}^N B(x_j,\varepsilon)\big)
<\lambda-\varepsilon\), whence by letting \(N\to\infty\) we get
\(\mu(X)\leq\lambda-\varepsilon\); we are using the
fact that \(X=\bigcup_{j\in\N}B(x_j,\varepsilon)\) by density
of the sequence \((x_j)_{j\in\N}\). On the other hand, we also
know from Corollary \ref{cor:sc_weak_ultraconv} that
\(\mu(X)=\lim_{i\to\omega}\mu_i(X)=\lambda\), thus
leading to a contradiction. This shows the validity
of \eqref{eq:Prokhorov_aux1}. Therefore, the Borel set
\(T\coloneqq\bigcup_{j=1}^N B(x_j,\varepsilon)\)
(where \(N\in\N\) is given as in \eqref{eq:Prokhorov_aux1})
is \(\varepsilon\)-totally bounded and satisfies
\(\lim_{i\to\omega}\mu_i(X\setminus T)=\lim_{i\to\omega}\mu_i(X)
-\lim_{i\to\omega}\mu_i(T)\leq\varepsilon\), thus yielding
\({\rm i})\).\\
{\color{blue}\({\rm i)}\Longrightarrow{\rm ii)}\).}
Suppose \({\rm i)}\) holds. If \(\lambda=0\), then
we trivially have \(\lim_{i\to\omega}\mu_i=0\) with respect
to the weak convergence, so that \(\rm ii)\) is verified.
Hence, we can assume that \(\lambda>0\). We claim that
\begin{equation}\label{eq:Prokhorov_aux2}
(\tilde\mu_i)_{i\in\N}\;\text{ is asymptotically Cauchy in }
\big(\mathscr P(X),\delta\big),\quad\text{ where we set }
\tilde\mu_i\coloneqq\frac{\mu_i}{\mu_i(X)}.
\end{equation}
In order to prove it, fix any \(\varepsilon\in(0,\lambda)\).
Pick some \(N\in\N\) for which \(1/2^N\leq\varepsilon\). Define
\[
\eta\coloneqq\frac{\varepsilon}{1+2\max\big\{{\rm Lip}(f_1),
\ldots,{\rm Lip}(f_N)\big\}}.
\]
By using \({\rm i})\), we find an \(\eta\)-totally bounded
set \(T\) such that \(\lim_{i\to\omega}\mu_i(X\setminus T)
/\mu_i(X)<\eta\leq\varepsilon\). Write \(T\) as a disjoint union
\(F_1\cup\ldots\cup F_k\) of Borel sets \(F_1,\ldots,F_k\neq\emptyset\)
having diameter smaller than \(2\,\eta\). Pick \(y_j\in F_j\) for any
\(j=1,\ldots,k\). Calling \(S'\coloneqq\big\{i\in\N\,:\,
\mu_i(X\setminus T)/\mu_i(X)\leq\varepsilon\big\}\in\omega\), we define
the Borel measures \((\nu_i)_{i\in S'}\) on \(X\) as
\[
\nu_i\coloneqq\frac{\sum_{j=1}^k\mu_i(F_j)\,\delta_{y_j}}{\mu_i(X)}
\in\mathcal M_1(X),\quad\text{ for every }i\in S'.
\]
If we set \(\lambda_j\coloneqq\lim_{i\to\omega}\mu_i(F_j)\) for
every \(j=1,\ldots,k\), then it holds that
\[
S\coloneqq\bigg\{i\in S'\;\bigg|\;\bigg|\frac{\mu_i(F_j)}{\mu_i(X)}
-\frac{\lambda_j}{\lambda}\bigg|\leq\frac{\varepsilon}{k}\;
\text{ for every }j=1,\ldots,k\bigg\}\in\omega.
\]
Let us now define
\[
\nu\coloneqq\frac{\sum_{j=1}^k\lambda_j\,\delta_{y_j}}{\lambda}
\in\mathcal M_1(X).
\]
Given any \(n=1,\ldots,N\) and \(i\in S\), we may estimate
\[\begin{split}
\bigg|\int f_n\,\d(\tilde\mu_i-\nu_i)\bigg|&\leq
\frac{1}{\mu_i(X)}\bigg|\int_{X\setminus T}f_n\,\d\mu_i\bigg|
+\frac{1}{\mu_i(X)}\sum_{j=1}^k\bigg|\int_{F_j}f_n\,\d\mu_i
-\mu_i(F_j)\,f_n(y_j)\bigg|\\
&\leq\frac{\mu_i(X\setminus T)}{\mu_i(X)}+
\frac{1}{\mu_i(X)}\sum_{j=1}^k\int_{F_j}\big|f_n(x)-f_n(y_j)\big|
\,\d\mu_i(x)\\
&\leq\varepsilon+\frac{{\rm Lip}(f_n)}{\mu_i(X)}\sum_{j=1}^k
\int_{F_j}d(x,y_j)\,\d\mu_i(x)\leq\varepsilon+
\frac{2\,\eta\,{\rm Lip}(f_n)}{\mu_i(X)}\sum_{j=1}^k\mu_i(F_j)\\
&\leq\varepsilon+\frac{\varepsilon\,\mu_i(T)}{\mu_i(X)}
\leq 2\,\varepsilon.
\end{split}\]
Moreover, it holds that
\[
\bigg|\int f_n\,\d(\nu_i-\nu)\bigg|=
\bigg|\sum_{j=1}^k f_n(y_j)\bigg(\frac{\mu_i(F_j)}{\mu_i(X)}
-\frac{\lambda_j}{\lambda}\bigg)\bigg|
\leq\sum_{j=1}^k\big|f_n(y_j)\big|
\bigg|\frac{\mu_i(F_j)}{\mu_i(X)}-\frac{\lambda_j}{\lambda}\bigg|
\leq\varepsilon.
\]
All in all, we obtain
\(\big|\int f_n\,\d(\tilde\mu_i-\nu)\big|\leq
\big|\int f_n\,\d(\tilde\mu_i-\nu_i)\big|+
\big|\int f_n\,\d(\nu_i-\nu)\big|\leq 3\,\varepsilon\)
for every \(i\in S\) and \(n=1,\ldots,N\), thus accordingly
we conclude that
\[\begin{split}
\delta(\tilde\mu_i,\tilde\mu_\ell)&=\sum_{n=1}^N
\frac{1}{2^n}\bigg|\int f_n\,\d(\tilde\mu_i-\tilde\mu_\ell)\bigg|+
\sum_{n>N}\frac{1}{2^n}\bigg|\int f_n\,\d(\tilde\mu_i
-\tilde\mu_\ell)\bigg|\\
&\leq\sum_{n=1}^N
\frac{1}{2^n}\bigg|\int f_n\,\d(\tilde\mu_i-\nu)\bigg|+
\sum_{n=1}^N
\frac{1}{2^n}\bigg|\int f_n\,\d(\nu-\tilde\mu_\ell)\bigg|
+\sum_{n>N}\frac{1}{2^n}\\
&\leq 6\,\varepsilon+\frac{1}{2^N}\leq 7\,\varepsilon,
\quad\text{ for every }i,\ell\in S.
\end{split}\]
This shows the claim \eqref{eq:Prokhorov_aux2}. Therefore,
by using Lemma \ref{lem:asympt_Cauchy} and Remark
\ref{rmk:Polish_P(X)} we get existence of a measure
\(\tilde\mu\in\mathscr P(X)\) such that \(\tilde\mu=
\lim_{i\to\omega}\tilde\mu_i\) with respect to the weak
convergence. Let us define \(\mu\coloneqq\lambda\,\tilde\mu
\in\mathcal M_\lambda(X)\). Then it can be readily checked that
\(\mu=\lim_{i\to\omega}\mu_i\) with respect to the weak
convergence, whence \(\rm ii)\) follows. The proof of the
statement is achieved.
\end{proof}
\begin{propositionAPP}[Sharpness of Prokhorov Theorem
\ref{thm:Prokhorov}]\label{prop:Prokh_sharp}
Let \(\omega\) be a non-principal ultrafilter on \(\N\)
satisfying the property that is described in Lemma \ref{lem:non_p-pt}.
Then there exists a Polish metric space \((X,d)\) and a sequence
\((\mu_i)_{i\in\N}\subset\mathscr P(X)\) of measures such that
\(\mu=\lim_{i\to\omega}\mu_i\) (with respect to the weak
convergence) for some \(\mu\in\mathscr P(X)\), but
\begin{equation}\label{eq:Prokh_sharp_cl2}
\lim_{i\to\omega}\mu_i(K)=0,\quad\text{ for every compact set }
K\subset X.
\end{equation}
In particular, this shows that Theorem \ref{thm:Prokhorov}
might fail if in its item \(\rm i)\) we replace
``\(\varepsilon\)-totally bounded'' with ``compact''.
\end{propositionAPP}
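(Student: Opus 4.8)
The plan is to construct, for the given ultrafilter $\omega$ with its underlying partition $\N=\bigsqcup_n N_n$, a non-locally-compact Polish space $X$, a point $p\in X$, and probability measures $\mu_i$ that $\omega$-converge weakly to $\mu:=\delta_p$ while satisfying $\lim_{i\to\omega}\mu_i(K)=0$ for every compact $K$. The Proposition then follows: item \(\rm ii)\) of Theorem \ref{thm:Prokhorov} holds by construction, while item \(\rm i)\) can be witnessed by an \(\varepsilon\)-totally bounded set such as $T=B(p,r)$ but \emph{never} by a compact one, since $\lim_{i\to\omega}\mu_i(X\setminus K)=1$ for all compact $K$. Two observations guide the construction. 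First, a weakly \emph{convergent} sequence of probability measures is tight (one direction of Prokhorov), so our sequence cannot be weakly convergent: $\omega$ must single out a ``concentrating at $p$'' behaviour from a sequence whose mass otherwise escapes. Second, by Proposition \ref{prop:char_weak_ultraconv_meas} it suffices to arrange $\int f\,\d\mu_i\to\int f\,\d\mu$ along $\omega$ for all $f\in C_b(X)$, and $\lim_{i\to\omega}\mu_i(K)=0$ is equivalent to $\{i:\mu_i(K)\ge\delta\}\notin\omega$ for every $\delta>0$; by Lemma \ref{lem:non_p-pt} this last fact holds whenever $\{i:\mu_i(K)\ge\delta\}$ is contained in a finite union of blocks $N_n$, because $\N\setminus N_n\in\omega$ for each $n$.

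Concretely, take $X=\ell^2$ with orthonormal basis $(e_m)_m$ and $p=0$. To the index $i$, which is the $k$-th element of its block $N_n$, attach a \emph{fresh} finite set $\Gamma_i\subset\N$ of coordinates (the $\Gamma_i$ pairwise disjoint, with $\min\Gamma_i\to\infty$), a dimension $d_i:=\#\Gamma_i\to\infty$, and a variance $\sigma_i^2$ with $\sigma_i\searrow 0$ and $\sigma_i\sqrt{d_i}\to 0$; set $\mu_i:=N\big(0,\sigma_i^2\,\mathrm{Id}_{\Gamma_i}\big)$, the centred Gaussian supported on $\mathrm{span}\{e_m:m\in\Gamma_i\}$. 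Since each $\mu_i$ puts all but exponentially little mass in $\bar B\big(0,2\sigma_i\sqrt{d_i}\big)$, one has $\mu_i\rightharpoonup\delta_0$ even in the ordinary sense, hence $\lim_{i\to\omega}\mu_i=\delta_0=:\mu$; this is the easy half. For the escape property, fix a compact $K\subset\ell^2$ and set $a_m:=\sup_{x\in K}|\langle x,e_m\rangle|$, so that $a_m\to 0$. Then $\mu_i(K)\le\prod_{m\in\Gamma_i}\mathbb P\big(|\sigma_iZ_m|\le a_m\big)$ with $Z_m$ standard normal, and each coordinate $m\in\Gamma_i$ with $a_m\le\sigma_i$ contributes a factor at most $\mathbb P(|Z|\le 1)=:q<1$; since $\{m:a_m>\sigma_i\}$ is finite, this gives $\mu_i(K)\le q^{\,d_i-\#\{m\in\Gamma_i:\,a_m>\sigma_i\}}$, which tends to $0$ as soon as the fresh window $\Gamma_i$ eventually lies beyond $\{m:a_m>\sigma_i\}$ up to $o(d_i)$ exceptions.

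The step I expect to be the main obstacle is to choose $\Gamma_i,d_i,\sigma_i$ so that this last condition holds for $\omega$-almost every $i$, \emph{simultaneously for all compact $K$}. Because the admissible decay rates $m\mapsto a_m$ form a non-dominated family, a single sequence of window positions cannot outrun all of them; the remedy is to organise the choice along the partition --- tightening $\min\Gamma_i$, $d_i$ and $\sigma_i$ from block to block, with the within-block position $k$ as the running parameter --- and then to invoke the defining property of $\omega$ in Lemma \ref{lem:non_p-pt} (every set that is cofinite in all but finitely many blocks belongs to $\omega$, and $N_n\notin\omega$): for each fixed $K$ the inequality $\mu_i(K)<\delta$ fails on at most finitely many blocks and, within each surviving block, on at most finitely many indices, so the bad set is contained in $\bigcup_{n\in E}N_n$ for some finite $E$ and hence is not in $\omega$, i.e.\ $\{i:\mu_i(K)<\delta\}\in\omega$. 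Granting this, one concludes as in the first paragraph; and if $\omega$ is the ultrafilter of Lemma \ref{lem:non_p-pt} the same estimate gives $\lim_{i\to\omega}\mu_i(K)=0$ for all compact $K$, which is \eqref{eq:Prokh_sharp_cl2}.
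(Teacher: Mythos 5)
There is a genuine and, as far as I can see, unfixable gap in your construction: it contradicts the converse half of the classical Prokhorov theorem. You correctly observe at the outset that a weakly convergent sequence of probability measures on a Polish space is tight, and that therefore your sequence \emph{cannot} be weakly convergent in the ordinary sense. But your Gaussian measures \(\mu_i=N(0,\sigma_i^2\,\mathrm{Id}_{\Gamma_i})\) with \(\sigma_i\sqrt{d_i}\to 0\) \emph{do} converge weakly to \(\delta_0\) along the full sequence \(i\to\infty\) (as you yourself note: ``even in the ordinary sense''). Since \(\ell^2\) is complete and separable, the family \(\{\mu_i\}_{i\in\N}\cup\{\delta_0\}\) is weakly compact and hence uniformly tight, so there exists a \emph{single} compact set \(K_0\subset\ell^2\) with \(\mu_i(K_0)\geq\tfrac12\) for every \(i\in\N\). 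For that \(K_0\) one has \(\lim_{i\to\omega}\mu_i(K_0)\geq\tfrac12\), so \eqref{eq:Prokh_sharp_cl2} fails no matter how you choose \(\Gamma_i\), \(d_i\), \(\sigma_i\). This is why the step you flag as ``the main obstacle'' — arranging the escape estimate simultaneously for all compact \(K\) — cannot be completed: the bad set \(\{i:\mu_i(K_0)\geq\delta\}\) is all of \(\N\), not a finite union of blocks, and no diagonalisation along the partition can repair that. (Your coordinate-wise bound \(\mu_i(K)\leq\prod_{m\in\Gamma_i}\mathbb P(|\sigma_iZ_m|\leq a_m)\) is correct but simply fails to be small for \(K_0\): the tight compact set has \(a_m\gg\sigma_i\) on the relevant windows.)

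The structural lesson is that the sharpness example must be a sequence that is \emph{not} tight and \emph{not} ordinarily weakly convergent, whose ultralimit exists only because \(\omega\) discards the non-convergent behaviour block by block. This is exactly what the paper's proof does: it takes the countable set \(X=\{0\}\cup\{e_j/n\}\subset\ell^\infty\) and, on the \(k\)-th index of the block \(N_n\), the measure \(\frac1k\sum_{j=1}^k\delta_{e_j/n}\). Within each fixed block the mass spreads over infinitely many points at mutual distance \(1/n\) (no weak limit, no tightness), yet \(\omega\)-a.e.\ index lies in a block with \(n\) large, where all the mass sits inside \(B(0,r)\); this gives \(\lim_{i\to\omega}\mu_i=\delta_0\) while every compact \(K\) meets only finitely many points of each block and hence receives \(\omega\)-negligible mass. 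If you want to salvage a Gaussian variant, you would have to make \(\sigma_i\sqrt{d_i}\) bounded away from \(0\) along each block and small only in the \(\omega\)-limit, i.e.\ abandon ordinary weak convergence — at which point you are essentially reproducing the paper's mechanism.
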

\begin{proof}
Let us define the subset \(X\) of \(\ell^\infty\) as
\[
X\coloneqq\{0\}\cup\bigg\{\frac{e_j}{n}\;\bigg|\;j,n\in\N\bigg\},
\]
where \(e_j\coloneqq(\delta_{jk})_{k\in\N}\) stands for the
\(j\)-th element of the canonical basis of \(\ell^\infty\).
Calling \(d\) the distance on \(X\) induced by the
\(\ell^\infty\)-norm, we have that \((X,d)\) is a Polish metric
space. Fix a partition \(\{N_n\}_{n\in\N}\) of \(\N\) into
infinite sets as in Lemma \ref{lem:non_p-pt}. Say that
each set \(N_n\) is written as \(\{m^n_k\}_{k\in\N}\), where
\(m^n_1<m^n_2<m^n_3<\ldots\). Let us then define
\(\mu_{m^n_k}\in\mathscr P(X)\) as
\[
\mu_{m^n_k}\coloneqq\frac{1}{k}\sum_{j=1}^k\delta_{e_j/n},
\quad\text{ for every }n,k\in\N.
\]
First of all, we claim that the resulting sequence
\((\mu_i)_{i\in\N}\) of measures satisfies
\begin{equation}\label{eq:Prokh_sharp_cl1}
\delta_0=\lim_{i\to\omega}\mu_i,\quad
\text{ with respect to the weak convergence in duality with }
C_b(X).
\end{equation}
In order to prove it, let \(f\in C_b(X)\) and \(\varepsilon>0\)
be fixed. By continuity, there exists \(r>0\) such that
\(\big|f(x)-f(0)\big|<\varepsilon\) for every \(x\in B(0,r)\).
Pick any \(\bar n\in\N\) such that \(1/\bar n\leq r\), so that we
have \(e_j/n\in B(0,r)\) for all \(n>\bar n\) and \(j\in\N\).
Consequently, for any \(n>\bar n\) and \(k\in\N\) it holds
\[
\bigg|\int f\,\d\mu_{m^n_k}-\int f\,\d\delta_0\bigg|
=\bigg|\frac{1}{k}\sum_{j=1}^k f(e_j/n)-f(0)\bigg|
\leq\frac{1}{k}\sum_{j=1}^k\big|f(e_j/n)-f(0)\big|
\leq\varepsilon.
\]
In other words, one has
\(\big|\int f\,\d\mu_i-\int f\,\d\delta_0\big|\leq\varepsilon\)
for every \(i\in\N\setminus N_{\bar n}\). Given that
\(\N\setminus N_{\bar n}\in\omega\) (recall Lemma
\ref{lem:non_p-pt}), thanks to the arbitrariness of
\(\varepsilon>0\) we can conclude that \eqref{eq:Prokh_sharp_cl1}
holds.

Let us now prove that \eqref{eq:Prokh_sharp_cl2} is verified.
Let \(K\subset X\) be a given compact set. It can be readily
checked that necessarily
\(\lim_{j\to\infty}{\rm diam}(K\cap\R e_j)=0\). We argue by
contradiction: suppose that \(\lambda\coloneqq\lim_{i\to\omega}
\mu_i(K)>0\). Then there exists \(S\in\omega\) such that
\(\mu_i(K)\geq\lambda/2\) for all \(i\in S\). By Lemma
\ref{lem:non_p-pt} we can find \(n\in\N\) for which
\(S\cap N_n\) is infinite (otherwise,
\(N_m\setminus(\N\setminus S)\) would be finite for all
\(m\in\N\) and thus \(\N\setminus S\in\omega\), contradicting
the fact that \(S\in\omega\)). Choose a subsequence
\((k_\ell)_{\ell\in\N}\) such that \(m^n_{k_\ell}\in S\) holds
for every \(\ell\in\N\). Now let us fix some \(\bar\ell\in\N\)
satisfying \(K\cap\R e_{k_\ell}\subset B(0,1/n)\) for all
\(\ell\geq\bar\ell\). Therefore, one has that
\(\mu_{m^n_{k_\ell}}(K)\leq k_{\bar\ell}/k_\ell\) for every
\(\ell\geq\bar\ell\). Since \(\lim_{\ell\to\infty}k_{\bar\ell}
/k_\ell=0\), we see that \(\inf_{i\in S}\mu_i(K)\leq
\inf_{\ell\in\N}\mu_{m^n_{k_\ell}}(K)=0\), which leads to
a contradiction with the fact that
\(\inf_{i\in S}\mu_i(K)\geq\lambda/2\).
Hence, \eqref{eq:Prokh_sharp_cl2} is proven.
\end{proof}
\chapter{Tangents to pointwise doubling metric measure spaces}
\label{s:tg_cone}
With the notion of wpmGH-convergence at our disposal, we can now introduce
a natural definition of \emph{tangent cone} to a Polish metric measure space
at a fixed point of its support.
\begin{definitionAPP}[wpmGH-tangent cone]
Let \((X,d,\mm,p)\) be a pointed Polish metric measure space
with \(p\in\spt(\mm)\). Then we define \({\rm Tan}(X,d,\mm,p)\) as the
family of all those pointed Polish metric measure spaces \((Y,d_Y,\mm_Y,q)\)
satisfying \(q\in\spt(\mm_Y)\) and
\[
(X,d/{r_i},\mm^p_{r_i},p)\longrightarrow(Y,d_Y,\mm_Y,q),\quad
\text{ in the wpmGH-sense,}
\]
for some sequence \(r_i\searrow 0\), where the Borel measure
\(\mm_{r_i}^p\geq 0\) over \((X,d/{r_i})\) is given by
\[
\mm_{r_i}^p\coloneqq\frac{\mm}{\mm\big(B_d(p,{r_i})\big)}.
\]
\end{definitionAPP}

Note that, trivially, the identity \(B_{d/r}(x,s)=B_d(x,rs)\)
holds for every \(x\in X\) and \(r,s>0\).
\begin{definitionAPP}[Pointwise doubling]\label{def:ptwse_doubl}
Let \((X,d,\mm)\) be a Polish metric measure space. Then we say that
\((X,d,\mm)\) is \emph{pointwise doubling} provided it holds that
\[
\lims_{r\searrow 0}\frac{\mm\big(B(x,2r)\big)}{\mm\big(B(x,r)\big)}<+\infty,
\quad\text{ for }\mm\text{-a.e.\ }x\in X.
\]
\end{definitionAPP}

By suitably adapting the proof of \cite[Theorem 1.6]{Heinonen01},
one can see that every pointwise doubling space \((X,d,\mm)\) is a
\emph{Vitali space} (in the sense of \cite[Remark 1.13]{Heinonen01}),
so that Lebesgue's differentiation theorem holds. In particular, given any
Borel set \(E\subset X\), we have that \(\mm\)-a.e.\ point
\(x\in E\) is of density \(1\) for \(E\), meaning that
\(\lim_{r\searrow 0}\mm\big(E\cap B(x,r)\big)/\mm\big(B(x,r)\big)=1\).
The following statement about pointwise doubling spaces is proven
in \cite[Lemma 8.3]{Bate12}.
\begin{lemmaAPP}\label{lem:ptwse_doubl_Lusin}
Let \((X,d,\mm)\) be a pointwise doubling Polish metric measure space.
Then there exists a family \((E_n)_{n\in\N}\) of pairwise disjoint Borel
subsets of \(X\) such that \(\mm\big(X\setminus\bigcup_{n\in\N}E_n\big)=0\) and
\(\big(E_n,d|_{E_n\times E_n}\big)\) is metrically doubling for every \(n\in\N\).
\end{lemmaAPP}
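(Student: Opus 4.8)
The plan (cf.\ \cite{Bate12}) is to decompose $X$ into countably many Borel pieces on each of which the measure doubling is uniform at some fixed small scale, and which moreover have small diameter and lie inside $\spt(\mm)$; each such piece will be metrically doubling, and a standard disjointification then produces the claimed family. Two elementary observations will be used throughout. First, since $X$ is separable, the open set $X\setminus\spt(\mm)$ is a union of $\mm$-null balls and hence --- by Lindel\"of's lemma --- itself $\mm$-null; thus we may freely work inside $\spt(\mm)$, which is closed (Lemma \ref{lem:prop_spt}) and where every ball has strictly positive measure. Second, for each fixed $r>0$ the map $x\mapsto\mm\big(B(x,r)\big)$ is lower semicontinuous (apply Fatou's lemma to the indicators $\nchi_{B(x_n,r)}$ along a convergent sequence $x_n\to x$), hence Borel.

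Next I would set up the decomposition. For $m,k\in\N$ put
\[
A_{m,k}\coloneqq\Big\{x\in\spt(\mm)\ \Big|\ 0<\mm\big(B(x,1/k)\big)<+\infty\ \text{ and }\ \mm\big(B(x,2r)\big)\le m\,\mm\big(B(x,r)\big)\ \text{ for all }r\in(0,1/k)\Big\}.
\]
Using the lower semicontinuity recorded above, together with continuity of $\mm$ along increasing unions (which reduces the quantifier ``for all $r\in(0,1/k)$'' to ``for all rational $r\in(0,1/k)$''), one checks that $A_{m,k}\in\mathscr B(X)$. Since the pointwise doubling property of Definition \ref{def:ptwse_doubl} holds $\mm$-a.e., and at any point where it holds $\mm\big(B(x,r)\big)$ is necessarily finite --- and positive, once $x\in\spt(\mm)$ --- for all small $r$, we obtain $\mm\big(X\setminus\bigcup_{m,k\in\N}A_{m,k}\big)=0$. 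Fixing a countable dense set $\{z_\ell\}_{\ell\in\N}\subset X$, I would set $E_{m,k,\ell}\coloneqq A_{m,k}\cap B\big(z_\ell,1/(40k)\big)$: each $E_{m,k,\ell}$ is a Borel subset of $\spt(\mm)$ of diameter $<1/(16k)$, and $\bigcup_{m,k,\ell}E_{m,k,\ell}$ has full $\mm$-measure.

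The heart of the argument is to show that each $E\coloneqq E_{m,k,\ell}$, equipped with the restricted metric, is metrically doubling with constant $m^{10}$; by the usual equivalent formulation of metric doubling it suffices to bound by $m^{10}$, uniformly in $r>0$ and $x\in E$, the cardinality of any subset $S\subset B_X(x,2r)\cap E$ whose points are pairwise at distance $\ge r$. If $r\ge1/(16k)$, then $\mathrm{diam}(E)<1/(16k)\le r$ forces any two points of $E$ to be at distance $<r$, so $|S|\le1$. If instead $r<1/(16k)$, then $8r<1/k$, so the factor-$2$ inequality defining $A_{m,k}$ may be iterated freely along radii below $1/k$: one gets $\mm\big(B(y,16r)\big)\le m^{5}\mm\big(B(y,r/2)\big)$ for every $y\in E$. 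For $y,y'\in S$ one has $d(y,y')<4r$, hence the balls $\{B(y,r/2)\}_{y\in S}$ are pairwise disjoint and all contained in $B(y_1,8r)$ for a fixed $y_1\in S$, while conversely $B(y_1,8r)\subset B(y,16r)$ for each $y\in S$. Therefore $\mm\big(B(y,r/2)\big)\ge m^{-5}\mm\big(B(y_1,8r)\big)\ge m^{-5}\mm\big(B(y_1,r/2)\big)$; summing over $S$ and comparing with $\mm\big(B(y_1,8r)\big)\le\mm\big(B(y_1,16r)\big)\le m^{5}\mm\big(B(y_1,r/2)\big)$ yields $|S|\,m^{-5}\mm\big(B(y_1,r/2)\big)\le m^{5}\mm\big(B(y_1,r/2)\big)$. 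Since $y_1\in\spt(\mm)$ and $r/2<1/k$, the quantity $\mm\big(B(y_1,r/2)\big)$ is finite and strictly positive, so it cancels and $|S|\le m^{10}$.

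Finally I would relabel $\{E_{m,k,\ell}\}$ as a sequence $(E_n)_{n\in\N}$ and pass to the disjointified family $E_n'\coloneqq E_n\setminus\bigcup_{j<n}E_j$: these are pairwise disjoint Borel sets with $\mm\big(X\setminus\bigcup_nE_n'\big)=0$, and since an $r$-separated subset of a ball in $E_n'$ is in particular an $r$-separated subset of the corresponding ball in $E_n$, each $E_n'$ inherits metric doubling (with the same constant as $E_n$). The only genuinely delicate point I anticipate is the need to control \emph{all} scales $r$ simultaneously --- measure doubling at a point only governs small radii, and the device at large radii is precisely that each piece has been arranged to have diameter below the threshold $1/(16k)$; the Borel-measurability bookkeeping and the reduction to the support are routine by comparison.
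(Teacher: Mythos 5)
Your proof is correct. Note that the paper does not prove this lemma at all --- it simply cites \cite[Lemma 8.3]{Bate12} --- so there is no internal argument to compare against; what you have written is a complete, self-contained proof along the standard lines (which are also essentially Bate's): decompose into Borel sets where the measure-doubling inequality holds with a uniform constant $m$ at all scales below a uniform threshold $1/k$, truncate to pieces of diameter below that threshold so that large radii are trivial, and then run the classical disjoint-balls volume-counting argument to convert uniform measure doubling into metric doubling. The points at which such a proof could go wrong all check out: the Borel-measurability of $A_{m,k}$ does follow from lower semicontinuity of $x\mapsto\mm\big(B(x,r)\big)$ together with left-continuity and monotonicity of $r\mapsto\mm\big(B(x,r)\big)$ (which legitimately reduces the quantifier to rational radii); the chain $16r\to 8r\to 4r\to 2r\to r\to r/2$ uses the defining inequality of $A_{m,k}$ only at radii below $1/k$ since $8r<1/(2k)$; the quantity $\mm\big(B(y_1,r/2)\big)$ that you cancel is strictly positive because $E\subset\spt(\mm)$ and finite because $r/2<1/k$ and $\mm\big(B(y_1,1/k)\big)<+\infty$ is built into $A_{m,k}$; and disjointification preserves metric doubling since separated subsets of a subset are separated subsets of the ambient piece. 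The only mild imprecision is the implicit reading of the pointwise doubling condition as forcing $\mm\big(B(x,r)\big)<+\infty$ for small $r$ at a.e.\ $x$ (to rule out an $\infty/\infty$ ratio), but this is the standard interpretation and is the one the paper needs in its application, so it is not a gap.
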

\begin{remarkAPP}\label{rmk:rescalings_omega-ubf}{\rm
Suppose \(\lims_{r\searrow 0}\frac{\mm(B(p,2r))}{\mm(B(p,r))}<+\infty\).
Then for any sequence \(r_i\searrow 0\) it holds that
\[
\lim_{i\to\omega}\mm_{r_i}^p\big(B_{d/r_i}(p,R)\big)\leq
\lims_{i\to\infty}\mm_{r_i}^p\big(B_d(p,Rr_i)\big)=
\lims_{i\to\infty}\frac{\mm\big(B_d(p,Rr_i)\big)}{\mm\big(B_d(p,r_i)\big)}
<+\infty,\quad\text{ for any }R>0.
\]
Therefore, the sequence \(\big((X,d/r_i,\mm_{r_i}^p,p)\big)\)
is \(\omega\)-uniformly boundedly finite.
\fr}\end{remarkAPP}
\begin{theoremAPP}[Existence of wpmGH tangents to pointwise doubling spaces]
\label{thm:tg_cone}
Let \((X,d,\mm)\) be a pointwise doubling Polish metric measure space. Let
\(r_i\searrow 0\) be given. Then for \(\mm\)-a.e.\ point \(p\in X\)
the sequence \(\big((X,d/r_i,\mm_{r_i}^p,p)\big)\) is asymptotically
boundedly \(\mm_\omega\)-totally bounded.

In particular, it holds that
\begin{equation}\label{eq:nonempty_Tan}
{\rm Tan}(X,d,\mm,p)\neq\emptyset,\quad\text{ for }\mm\text{-a.e.\ }p\in X.
\end{equation}
\end{theoremAPP}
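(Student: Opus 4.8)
The plan is to reduce the statement to the already-established theory by combining the Lebesgue–Vitali differentiation property of pointwise doubling spaces (which follows from the adaptation of \cite[Theorem 1.6]{Heinonen01} mentioned above) with the Lusin-type decomposition of Lemma \ref{lem:ptwse_doubl_Lusin}, and then verifying the asymptotic bounded \(\mm_\omega\)-total boundedness condition directly at a \emph{good} point \(p\). Fix a sequence \(r_i\searrow 0\). Using Lemma \ref{lem:ptwse_doubl_Lusin}, write \(X=\big(\bigcup_{n\in\N}E_n\big)\cup Z\) with \(\mm(Z)=0\), the \(E_n\) pairwise disjoint Borel sets, and each \((E_n,d|_{E_n\times E_n})\) metrically doubling. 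I would call a point \(p\in X\) \emph{good} if there is some \(n\in\N\) such that \(p\in E_n\), \(p\) is a density-\(1\) point of \(E_n\) (in the Lebesgue–Vitali sense), and moreover \(\lims_{r\searrow 0}\frac{\mm(B(p,2r))}{\mm(B(p,r))}<+\infty\) so that Remark \ref{rmk:rescalings_omega-ubf} applies and the rescaled sequence \(\big((X,d/r_i,\mm^p_{r_i},p)\big)\) is \(\omega\)-uniformly boundedly finite. By Definition \ref{def:ptwse_doubl}, the density theorem, and countable additivity, \(\mm\)-a.e.\ \(p\in X\) is good; so it suffices to prove the asymptotic bounded \(\mm_\omega\)-total boundedness at every good point.

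Next I would do the key estimate. Fix a good point \(p\), with the associated index \(n\) and set \(E\coloneqq E_n\ni p\). Fix \(R,r,\varepsilon>0\); I must produce \(M\in\N\) and, for each \(i\), points \((x^i_k)_{k=1}^M\subset X\) with
\[
\lim_{i\to\omega}\mm^p_{r_i}\big(\bar B_{d/r_i}(p,R)\setminus{\textstyle\bigcup_{k=1}^M}B_{d/r_i}(x^i_k,r)\big)\leq\varepsilon.
\]
Translate everything back to the original metric: \(\bar B_{d/r_i}(p,R)=\bar B_d(p,Rr_i)\) and \(B_{d/r_i}(x,r)=B_d(x,rr_i)\), while \(\mm^p_{r_i}=\mm/\mm(B_d(p,r_i))\). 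Since \(E\) is metrically doubling, the set \(E\cap\bar B_d(p,Rr_i)\) can be covered by \(M\) balls (in the \(d/r_i\)-metric, radius \(r\)) where \(M\) depends only on \(R\), \(r\), and the doubling constant of \(E\) — crucially \emph{not} on \(i\); pick the centres \(x^i_k\) among points of \(E\cap\bar B_d(p,Rr_i)\). Then
\[
\bar B_d(p,Rr_i)\setminus{\textstyle\bigcup_{k=1}^M}B_d(x^i_k,rr_i)\subset\bar B_d(p,Rr_i)\setminus E,
\]
so the \(\mm^p_{r_i}\)-measure of the left-hand set is at most \(\mm\big(\bar B_d(p,Rr_i)\setminus E\big)/\mm\big(B_d(p,r_i)\big)\). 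Here I invoke that \(p\) is a density-\(1\) point of \(E\): \(\mm\big(B_d(p,Rr_i)\setminus E\big)/\mm\big(B_d(p,Rr_i)\big)\to 0\), and \(\mm\big(B_d(p,Rr_i)\big)/\mm\big(B_d(p,r_i)\big)\) stays bounded along a subsequence by the asymptotic doubling property — more carefully, I should phrase the density statement for the radii \(Rr_i\) directly and use that \(\mm(B_d(p,Rr_i))\le C\,\mm(B_d(p,r_i))\) for \(\omega\)-a.e.\ \(i\) with \(C\) from the \(\lims\) bound. Combining, \(\lim_{i\to\omega}\) of the quotient is \(0\le\varepsilon\), which is what we need (for any \(\varepsilon>0\), with the same \(M\)). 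This establishes condition (iv) of Theorem \ref{thm:equiv_conc_spt}-type language, i.e.\ asymptotic bounded \(\mm_\omega\)-total boundedness, at \(p\).

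Finally, for the ``in particular'' part \eqref{eq:nonempty_Tan}: for a good point \(p\), the sequence \(\big((X,d/r_i,\mm^p_{r_i},p)\big)\) is \(\omega\)-uniformly boundedly finite (Remark \ref{rmk:rescalings_omega-ubf}) and, by the previous paragraph, asymptotically boundedly \(\mm_\omega\)-totally bounded, so Theorem \ref{thm:pre-Gromov} gives that it wpmGH-subconverges to \(\big(\spt(\mm_\omega),d_\omega,\mm_\omega,p_\omega\big)\). It remains only to check \(p_\omega\in\spt(\mm_\omega)\), which by the characterisation recalled after Theorem \ref{thm:IL} amounts to \(\lim_{i\to\omega}\mm^p_{r_i}\big(B_{d/r_i}(p,\rho)\big)>0\) for all \(\rho>0\); but this quotient equals \(\mm(B_d(p,\rho r_i))/\mm(B_d(p,r_i))\), which is \(\ge 1\) once \(\rho\ge 1\) and, for \(\rho<1\), is bounded below using again the asymptotic doubling bound (iterate \(\mm(B(p,r))\le C\,\mm(B(p,r/2))\) finitely many times). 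Hence the limit space lies in \({\rm Tan}(X,d,\mm,p)\), so \({\rm Tan}(X,d,\mm,p)\neq\emptyset\) for \(\mm\)-a.e.\ \(p\).

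The main obstacle I expect is the interplay between the two ``almost every'' statements and the uniformity of \(M\): the doubling constant of \(E_n\) depends on \(n\), so \(M\) depends on \(n\), but since \(n\) is fixed once \(p\) is fixed (good point lies in a single \(E_n\)), this is fine — the subtlety is just making sure the density-\(1\) estimate is applied at the radii \(Rr_i\) rather than \(r_i\), and that the ratio \(\mm(B(p,Rr_i))/\mm(B(p,r_i))\) is controlled; this is where the asymptotic (pointwise) doubling hypothesis is genuinely used, and one must be slightly careful that the \(\lims\) bound only gives control for \(\omega\)-a.e.\ \(i\) (equivalently, along a subsequence), which is exactly enough since the conclusion involves \(\lim_{i\to\omega}\).
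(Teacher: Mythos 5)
Your proposal is correct and follows essentially the same route as the paper's proof: the Lusin-type decomposition of Lemma \ref{lem:ptwse_doubl_Lusin}, the density-\(1\) point argument, and the covering of \(E_n\cap\bar B_d(p,Rr_i)\) by \(M\) balls with \(M\) independent of \(i\) via metric doubling of \(E_n\). Your two points of extra care --- controlling the ratio \(\mm(B_d(p,Rr_i))/\mm(B_d(p,r_i))\) by iterating the pointwise doubling bound at \(p\), and verifying \(p_\omega\in\spt(\mm_\omega)\) so that the limit genuinely belongs to \({\rm Tan}(X,d,\mm,p)\) --- are both implicit in the paper's argument and worth making explicit.
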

\begin{proof}
Let \((E_n)_{n\in\N}\) be chosen as in Lemma \ref{lem:ptwse_doubl_Lusin}.
Fix a point \(p\in X\) that (belongs to and) is of density \(1\) for \(E_n\),
for some \(n\in\N\); as observed after Definition \ref{def:ptwse_doubl},
\(\mm\)-almost every point of \(X\) has this property. Now let
\(R,r,\varepsilon>0\) be fixed. Given that \(\big(E_n,d|_{E_n\times E_n}\big)\)
is metrically doubling by Lemma \ref{lem:ptwse_doubl_Lusin}, we can find a
constant \(M\in\N\) such that the following property is satisfied: given any
\(s\in(0,1)\), the ball (in \(E_n\)) of center \(p\) and
radius \(Rs\) can be covered by \(M\) balls of radius \(rs\). Hence,
for any \(i\in\N\) there exist \(x^i_1,\ldots,x^i_M\in E_n\cap B_d(p,Rr_i)\)
such that
\begin{equation}\label{eq:rescal_abmtb}
E_n\cap\bar B_{d/r_i}(p,R)=E_n\cap\bar B_d(p,Rr_i)\subset
\bigcup_{j=1}^M B_d(x^i_j,rr_i)=\bigcup_{j=1}^M B_{d/r_i}(x^i_j,r).
\end{equation}
By using the fact that \(p\) is of density \(1\) for \(E_n\), we thus conclude that
\[\begin{split}
\lim_{i\to\omega}\mm_{r_i}^p\big(\bar B_{d/r_i}(p,R)\setminus{\textstyle
\bigcup_{j=1}^M}B_{d/r_i}(x^i_j,r)\big)
&\overset{\phantom{\eqref{eq:rescal_abmtb}}}\leq
\lims_{i\to\infty}\mm_{r_i}^p\big(\bar B_{d/r_i}(p,R)\setminus{\textstyle
\bigcup_{j=1}^M}B_{d/r_i}(x^i_j,r)\big)\\
&\overset{\eqref{eq:rescal_abmtb}}\leq
\lims_{i\to\infty}\mm_{r_i}^p\big(\bar B_{d/r_i}(p,R)\setminus E_n\big)\\
&\overset{\phantom{\eqref{eq:rescal_abmtb}}}=
\lims_{i\to\infty}\frac{\mm\big(\bar B_d(p,Rr_i)\setminus E_n\big)}
{\mm\big(B_d(p,Rr_i)\big)}\frac{\mm\big(B_d(p,Rr_i)\big)}
{\mm\big(B_d(p,r_i)\big)}=0.
\end{split}\]
This shows that \(\big((X,d/r_i,\mm_{r_i}^p,p)\big)\) is
asymptotically boundedly \(\mm_\omega\)-totally bounded.
The last statement \eqref{eq:nonempty_Tan} now immediately follows
from the first one together with Theorem \ref{thm:Gromov_cpt_main}.
\end{proof}
\addtocontents{toc}{\protect\setcounter{tocdepth}{-1}}
\newpage

\begin{thebibliography}{10}

\bibitem{Ale51}
{\sc A.~D. Alexandrov}, {\em A theorem on triangles in a metric space and some
  of its applications}, Trudy Mat. Inst. Steklov., 38 (1951), pp.~5--23.

\bibitem{AliprantisBorder99}
{\sc C.~Aliprantis and K.~Border}, {\em Infinite {D}imensional {A}nalysis: {A}
  {H}itchhiker's {G}uide}, Studies in Economic Theory, Springer, 1999.

\bibitem{AmbrosioGigliMondinoRajala12}
{\sc L.~Ambrosio, N.~Gigli, A.~Mondino, and T.~Rajala}, {\em Riemannian {R}icci
  curvature lower bounds in metric measure spaces with $\sigma$-finite
  measure}, Trans. Amer. Math. Soc., 367 (2015), pp.~4661--4701.

\bibitem{AmbrosioGigliSavare11-2}
{\sc L.~Ambrosio, N.~Gigli, and G.~Savar{\'e}}, {\em Metric measure spaces with
  {R}iemannian {R}icci curvature bounded from below}, Duke Math. J., 163
  (2014), pp.~1405--1490.

\bibitem{Bate12}
{\sc D.~Bate}, {\em Structure of measures in {L}ipschitz differentiability
  spaces}, Journal of the American Mathematical Society, 28 (2012).

\bibitem{Bogachev07}
{\sc V.~I. Bogachev}, {\em Measure theory. {V}ol. {I}, {II}}, Springer-Verlag,
  Berlin, 2007.

\bibitem{BH99}
{\sc M.~R. Bridson and A.~Haefliger}, {\em Metric spaces of non-positive
  curvature}, vol.~319 of Grundlehren der Mathematischen Wissenschaften,
  Springer-Verlag, Berlin, 1999.

\bibitem{Brue20}
{\sc E.~Bru\`{e}}, {\em Structure of non-smooth spaces with {R}icci curvature
  bounded below}, 2020.
\newblock PhD Thesis, Scuola Normale Superiore di Pisa.

\bibitem{BBI01}
{\sc D.~Burago, Y.~Burago, and S.~Ivanov}, {\em A {C}ourse in {M}etric
  {G}eometry}, vol.~33, 2001.
\newblock Graduate Studies in Math.

\bibitem{BGP92}
{\sc Y.~D. Burago, M.~L. Gromov, and G.~Y. Perel'man}, {\em A. {D}.
  {A}leksandrov spaces with curvature bounded below}, Uspekhi Mat. Nauk, 47
  (1992), pp.~3--51.

\bibitem{Cheeger-Colding97I}
{\sc J.~Cheeger and T.~H. Colding}, {\em On the structure of spaces with
  {R}icci curvature bounded below. {I}}, J. Differential Geom., 46 (1997),
  pp.~406--480.

\bibitem{Cheeger-Colding97II}
\leavevmode\vrule height 2pt depth -1.6pt width 23pt, {\em On the structure of
  spaces with {R}icci curvature bounded below. {II}}, J. Differential Geom., 54
  (2000), pp.~13--35.

\bibitem{Cheeger-Colding97III}
\leavevmode\vrule height 2pt depth -1.6pt width 23pt, {\em On the structure of
  spaces with {R}icci curvature bounded below. {III}}, J. Differential Geom.,
  54 (2000), pp.~37--74.

\bibitem{CheegerKleiner13}
{\sc J.~Cheeger and B.~Kleiner}, {\em Realization of {M}etric {S}paces as
  {I}nverse {L}imits, and {B}ilipschitz {E}mbedding in ${L}^1$}, Geometric and
  Functional Analysis, 23 (2013), pp.~96--133.

\bibitem{Conley-Kechris-Tucker-Drob}
{\sc C.~T. Conley, A.~S. Kechris, and R.~D. Tucker-Drob}, {\em Ultraproducts of
  measure preserving actions and graph combinatorics}, Ergodic Theory Dynam.
  Systems, 33 (2013), pp.~334--374.

\bibitem{Cutland00}
{\sc N.~J. Cutland}, {\em Loeb measures in practice: {R}ecent advances},
  vol.~1751 of Lecture Notes in Mathematics, Springer-Verlag Berlin Heidelberg,
  2000.

\bibitem{Edwards75}
{\sc D.~A. Edwards}, {\em The {S}tructure of {S}uperspace},  (1975).
\newblock Published in: Studies in Topology, Academic Press.

\bibitem{Elek}
{\sc G.~Elek}, {\em Samplings and observables. {I}nvariants of metric measure
  spaces}, 2012.
\newblock Preprint, arXiv:1205.6936.

\bibitem{Fukaya87}
{\sc K.~Fukaya}, {\em Collapsing of {R}iemannian manifolds and eigenvalues of
  {L}aplace operator}, Inventiones mathematicae, 87 (1987), pp.~517--547.

\bibitem{Gigli12}
{\sc N.~Gigli}, {\em On the differential structure of metric measure spaces and
  applications}, Mem. Amer. Math. Soc., 236 (2015), pp.~vi+91.

\bibitem{Gigli-Mondino-Savare}
{\sc N.~Gigli, A.~Mondino, and G.~Savar\'{e}}, {\em Convergence of pointed
  non-compact metric measure spaces and stability of {R}icci curvature bounds
  and heat flows}, Proc. Lond. Math. Soc. (3), 111 (2015), pp.~1071--1129.

\bibitem{Gromov81}
{\sc M.~Gromov}, {\em Groups of polynomial growth and expanding maps},
  Publications Math\'{e}matiques de l'Institut des Hautes \'{E}tudes
  Scientifiques, 53 (1981), pp.~53--78.

\bibitem{Gromov07}
\leavevmode\vrule height 2pt depth -1.6pt width 23pt, {\em Metric structures
  for {R}iemannian and non-{R}iemannian spaces}, Modern Birkh\"auser Classics,
  Birkh\"auser Boston Inc., Boston, MA, english~ed., 2007.
\newblock Based on the 1981 French original, With appendices by M. Katz, P.
  Pansu and S. Semmes, Translated from the French by Sean Michael Bates.

\bibitem{GP91}
{\sc K.~Grove and P.~Petersen}, {\em Manifolds near the boundary of existence},
  J. Differential Geom., 33 (1991), pp.~379--394.

\bibitem{Heinonen01}
{\sc J.~Heinonen}, {\em Lectures on Analysis on Metric Spaces}, Universitext
  (Berlin. Print), Springer New York, 2001.

\bibitem{Jansen17}
{\sc D.~Jansen}, {\em Notes on {P}ointed {G}romov-{H}ausdorff {C}onvergence}.
\newblock Preprint, arXiv:1703.09595, 2017.

\bibitem{Lohr}
{\sc W.~L\"{o}hr}, {\em Equivalence of {G}romov-{P}rohorov- and {G}romov's
  {$\underline\square_\lambda$}-metric on the space of metric measure spaces},
  Electron. Commun. Probab., 18 (2013), pp.~no. 17, 10.

\bibitem{Lott-Villani09}
{\sc J.~Lott and C.~Villani}, {\em Ricci curvature for metric-measure spaces
  via optimal transport}, Ann. of Math. (2), 169 (2009), pp.~903--991.

\bibitem{MacLane98}
{\sc S.~Mac~Lane}, {\em Categories for the working mathematician}, vol.~5 of
  Graduate Texts in Mathematics, Springer-Verlag, New York, second~ed., 1998.

\bibitem{Roe03}
{\sc J.~Roe}, {\em Lectures on coarse geometry}, 2003.
\newblock American Mathematical Society.

\bibitem{Semola20}
{\sc D.~Semola}, {\em Recent developments about {G}eometric {A}nalysis on $\rm
  {R}{C}{D}({K},{N})$ spaces}, 2020.
\newblock PhD Thesis, Scuola Normale Superiore di Pisa.

\bibitem{Sturm06I}
{\sc K.-T. Sturm}, {\em On the geometry of metric measure spaces. {I}}, Acta
  Math., 196 (2006), pp.~65--131.

\bibitem{Sturm06II}
\leavevmode\vrule height 2pt depth -1.6pt width 23pt, {\em On the geometry of
  metric measure spaces. {II}}, Acta Math., 196 (2006), pp.~133--177.

\end{thebibliography}
\end{document}